\numberwithin{equation}{section}
\newtheorem{lemma}{Lemma}[section]
\newtheorem{corollary}[lemma]{Corollary}
\newtheorem{proposition}[lemma]{Proposition}
\newtheorem{theorem}[lemma]{Theorem}
\theoremstyle{definition}
\newtheorem{remark}[lemma]{Remark}
\def\th@plain{%
  \thm@notefont{}% same as heading font
  \itshape % body font
}
\def\th@definition{%
  \thm@notefont{}% same as heading font
  \normalfont % body font
}
\renewcommand{\qedsymbol}{$\blacksquare$}
\newcommand{\C}{\mathbb{C}}
\newcommand{\Z}{\mathbb{Z}}
\newcommand{\bH}{\mathbb{H}}
\renewcommand{\a}{\alpha}
\newcommand{\e}{\varepsilon}
\newcommand{\cA}{\mathcal{A}}
\newcommand{\cD}{\mathcal{D}}
\newcommand{\cM}{\mathcal{M}}
\newcommand{\cR}{\mathcal{R}}
\newcommand{\cT}{\mathcal{T}}
\newcommand{\cZ}{\mathcal{Z}}
\newcommand{\fA}{\mathfrak{A}}
\newcommand{\fS}{\mathfrak{S}}
\newcommand{\fsl}{\mathfrak{sl}}
\newcommand{\fgl}{\mathfrak{gl}}
\newcommand{\fg}{\mathfrak{g}}
\newcommand{\ta}{\widetilde{a}}
\newcommand{\tb}{\widetilde{b}}
\newcommand{\tc}{\widetilde{c}}
\newcommand{\tD}{\widetilde{D}}
\newcommand{\tY}{\widetilde{Y}}
\newcommand{\hy}{\widehat{y}}
\newcommand{\htt}{\widehat{t}}
\newcommand{\HH}{H\hspace{-0.2em}H}
\DeclareMathOperator{\id}{id}
\DeclareMathOperator{\trig}{trig}
\DeclareMathOperator{\Res}{Res}
\DeclareMathOperator{\Mat}{Mat}
\DeclareMathOperator{\rk}{rank}
\DeclareMathOperator{\gr}{gr}
\title{$q$-Analogue of the degree zero part of a rational Cherednik algebra}
\author{Misha Feigin%\thanks{%Corresponding author: 
%Misha.Feigin@glasgow.ac.uk%, ORCID 0000-0001-9184-3940, phone +44 141 330 6293
%} 
}
\author{Martin Vrabec%\thanks{martinvrabec222@gmail.com %, ORCID 0000-0003-3159-5433
%}
}
\date{\today}
\affil{School of Mathematics and Statistics, University of Glasgow, University Place, Glasgow G12 8QQ, UK}
\begin{document}
\maketitle

\begin{abstract}
   Inside the double affine Hecke algebra of type $GL_n$, which depends on two parameters $q$ and $\tau$, we define a subalgebra~$\mathbb{H}^{\mathfrak{gl}_n}$ that may be thought of as a 
    $q$-analogue 
    of the degree zero part of the corresponding rational Cherednik algebra. We prove that the algebra $\mathbb{H}^{\mathfrak{gl}_n}$ is a flat $\tau$-deformation of the crossed product of the group algebra of the symmetric group with the image of the Drinfeld--Jimbo quantum group~$U_q(\mathfrak{gl}_n)$ under the $q$-oscillator (Jordan--Schwinger) representation. We find all the defining relations and an explicit PBW basis for the algebra~$\mathbb{H}^{\mathfrak{gl}_n}$. We describe its centre and establish a double centraliser property.
    As an application, we also obtain new integrable generalisations of Hamiltonians introduced by van Diejen.
    
    %as well as of Hamiltonians due to Chalykh. 

\vspace{1em}
\noindent \textbf{Keywords:} $q$-Dunkl operator, double affine Hecke algebra, quantum group, Macdonald--Ruijsenaars and van Diejen systems

\noindent \textbf{MSC classification code:} 81R12

\end{abstract}

\section{Introduction}
Rational Cherednik algebras (RCA) are a remarkable class of algebras associated with finite Coxeter groups $W$~\cite{EG}. They have deep connections to integrable systems, geometry, and combinatorics, as well as a rich representation theory (see e.g.~\cite{Etingof}).  

They admit a faithful representation on a space of polynomials. In this representation, and in the case of the symmetric group~$W=\fS_n$ ($n \geq 1$), the corresponding RCA $H_n = H_{n,c}$ ($c \in \C$) acts on  $\C[X_1, \dots, X_n]$, and it is generated by the transpositions $s_{ij} = (i,j) \in \fS_n$, multiplication operators~$X_i$, and the rational Dunkl operators~\cite{Dunkl}
\begin{equation*}
    \nabla_i = \partial_{X_i} - \sum_{\substack{j=1 \\ j \neq i}}^n \frac{c}{X_i-X_j}(1-s_{ij}).
\end{equation*}
Here $\partial_{X_i} = \frac{\partial}{\partial X_i}$ is a partial derivative.

The RCA $H_n$ is a graded algebra, where the grading is determined by assigning degree 0 to elements of the group $\fS_n$, degree~$1$ to the multiplication operators~$X_i$, and degree $-1$ to Dunkl operators.  
The degree zero subalgebra~$H^{\fgl_n} = H_c^{\fgl_n}$ is also interesting in its own right from various points of view. 
It is generated by~$\fS_n$ and the operators $X_i \nabla_j$ ($1 \leq i, j \leq n$). 

The algebra $H^{\fgl_n}$ enjoys, as the notation for it suggests, a link to Lie theory. More precisely, it is a flat $c$-deformation of the crossed product of the group algebra~$\C \fS_n$ with a certain quotient $U(\fgl_n)/I$ of the universal enveloping algebra~$U(\fgl_n)$ of the Lie algebra~$\fgl_n$ over a two-sided ideal~$I$, as was established by Hakobyan and one of the authors in~\cite{FH}. 
The quotient~$U(\fgl_n)/I$ is the image of $U(\fgl_n)$ under the so-called oscillator (also known as Jordan--Schwinger) representation $\rho_{\text{JS}}$ that maps the standard generators of~$\fgl_n$ to the operators~$X_i \partial_{X_j}$.

Similarly to the RCA itself, the algebra 
$H^{\fgl_n}$ is a quadratic algebra of Poincar\'e--Birkhoff--Witt (PBW) type. In contrast to the RCA, the defining relations of $H^{\fgl_n}$ include relations that are not of a commutator type. The associated graded algebra is the crossed product of $\C \fS_n$ with the algebra of polynomial functions on the space of $n\times n$ complex matrices of rank at most one~\cite{FH}.

The centre of the RCA is trivial~\cite{BG}, but the RCA has a commutative subalgebra which acts (in the polynomial representation) on symmetric polynomials as the rational Calogero--Moser operator and its quantum integrals~\cite{Heckman}.
On the other hand, the centre of the degree zero subalgebra $H^{\fgl_n}$ is generated by the Euler operator~$eu$, which can be related to the rational quantum Calogero--Moser Hamiltonian with an additional harmonic potential term by an automorphism of the RCA~\cite{FH}. The central quotient $\overline{H}^{\fgl_n} = H^{\fgl_n} / (eu + \, \text{const})$ is isomorphic to the algebra of global sections of a sheaf of Cherednik algebras on the projective space~\cite{Etingof2, BM}. Further properties of this algebra and its `$t=0$' (classical) version were studied recently in \cite{BFH}. The spherical subalgebra of the classical version of the algebra $\overline{H}^{\fgl_n}$ gives a deformation of the conic symplectic singularity $\overline {\mathcal O}_{min}/\fS_n$, where ${\mathcal O}_{min}$ is 
%given as the quotient of the closure of 
the minimal nilpotent orbit in~$\fgl_n$ which carries  a natural action of the symmetric group $\fS_n$. In turn, the spherical subalgebra of $\overline{H}^{\fgl_n}$ is a quantisation of the algebra of functions on the quotient $\overline {\mathcal O}_{min}/\fS_n$.

In this paper, we generalise the main parts of the above theory to the $q$-deformed setting by introducing and studying a certain subalgebra $\bH^{\fgl_n}$ inside Cherednik's double affine Hecke algebra (DAHA) $\bH_n = \bH_{n,q,\tau}$ of type $GL_n$. We note that even though the DAHA $\bH_n$ has a natural grading, the subalgebra $\bH^{\fgl_n}\subset \bH_n$ is in general  strictly smaller than the degree zero part. Another important difference with the RCA case is that the algebra~$\bH^{\fgl_n}$ contains the $Y$-elements of the DAHA. The main idea behind the definition of $\bH^{\fgl_n}$ is to replace the role of $U(\fgl_n)$ by the Drinfeld--Jimbo quantum group~$U_q(\fgl_n)$.
The algebra $U_q(\fgl_n)$ admits a representation $\rho$ which is a $q$-multiplicative generalisation of the Jordan--Schwinger map $\rho_{\text{JS}}$. 

We consider the image $A = \rho(U_q(\fgl_n))$ and define an algebra $\cA = \C \fS_n \ltimes A$, where the symmetric group acts in a natural way. We then define inside the DAHA $\bH_n$ a subalgebra~$\bH^{\fgl_n}$ whose generators are $\tau$-deformations of those of~$\cA$. Moreover, in a suitable $q \to 1$ limit, the algebra~$\bH^{\fgl_n}$ reduces to the degree zero part $H^{\fgl_n}$ of the RCA.
The following diagram summarises the relationships between the various algebras:

\vspace{1em}
\centerline{
                \begin{minipage}{.6\linewidth}
                     \begin{tikzcd}[ampersand replacement=\&] 
                        %\bH_n \supset 
                        \bH^{\fgl_n} \dar{q \to 1} \rar{\tau \to 1} \& \cA = \C \fS_n \ltimes \rho(U_q(\fgl_n)) \dar{q \to 1} \\ 
                         %H_n \supset 
                         H^{\fgl_n} \rar{c \to 0} \& \C \fS_n \ltimes \rho_{\text{JS}}(U(\fgl_n)),
                     \end{tikzcd}
                \end{minipage}
}
\vspace{1em}
\noindent where in the left vertical arrow we also take the limit $\tau\to 1$ such that $\tau = q^{-c/2}$.

We give all the defining relations of the algebra $\bH^{\fgl_n}$, and show that it is an algebra of PBW type by explicitly constructing a PBW basis. We show that $\bH^{\fgl_n}$ is a flat $\tau$-deformation of the algebra $\cA$. 
We prove that the centre of~$\bH^{\fgl_n}$ is generated by a single invertible element $\tY$. When $q \to 1$, the central element $(1-q)^{-1}(1-\tY)$ reduces to the generator $eu$ of the centre~$\cZ(H^{\fgl_n})$. We also prove a double centraliser property that is related to the $(\fgl_n, \fgl_1)$ Howe duality.

The DAHA $\bH_n$ contains pairwise-commuting elements $D_i$ which can be thought of as a $q$-generalisation of the Dunkl operators, and which we use to define the algebra~$\bH^{\fgl_n}$. Similar but different commuting elements appear in the definition of a cyclotomic DAHA inside $\bH_n$ by Braverman, Etingof, and Finkelberg~\cite{BEF}. 
%, which is a far-reaching generalisation of the $q$-Weyl algebra~\cite{Hayashi}. 
We show that the algebra $\bH^{\fgl_n}$ is isomorphic to the subalgebra of degree zero elements in this cyclotomic DAHA. 

We also consider pairwise-commuting elements~$\cD_i = \cD^{(l_1, l_2)}_i \in \bH_n$ of a more general form than $D_i$. The former depend on parameters $l_1, l_2 \in \Z_{\geq 0}$, and $a_j \in \C$ ($j=-l_1, \dots, l_2$). In the case $l_2 = 0$, they are equivalent to certain generators of a general cyclotomic DAHA~\cite{BEF}. By looking at the action of symmetric combinations of~$\cD_i$ on the space of symmetric Laurent polynomials, we arrive at families of new commuting $q$-difference operators related to the Macdonald--Ruijsenaars system with Morse term introduced by van Diejen~\cite{vD, vDE}. 

For example, in the case $l_1=l_2 =1$ we obtain the following Hamiltonian
\begin{align*}
            M &= \a \sum_{i=1}^n \frac{1}{X_i} \left( \prod_{\substack{j = 1 \\ j \neq i}}^n \frac{\tau X_i - X_j}{X_i - X_j} \right)t_i  + \beta \sum_{i=1}^n \frac{1}{X_i} \left( \prod_{\substack{j = 1 \\ j \neq i}}^n \frac{ X_i - \tau X_j}{X_i - X_j} \right)t_i^{-1} + \gamma \sum_{i=1}^n \frac{1}{X_i},
\end{align*}
where $t_i$ is the $q$-shift operator in the coordinate $X_i$, and $\a,\beta, \gamma$ are independent parameters.  
%, and $\tau$ are complex parameters.

Relations to the known Hamiltonians are as follows.
In the case of $\a=0$ (corresponding to $l_2 = 0$ and $l_1 = 1$), the operator $M$ appeared in the paper~\cite{BF} by Baker and Forrester. A more general version of  their $q$-difference operator was found earlier by van Diejen without using $q$-Dunkl operators~\cite{vD}. Van Diejen's operator has a limit to the operator $M$ with an extra constraint on the parameters $\a$, $\beta$, and $\gamma$ \cite{vDE}.
Higher $l_1$ with $l_2=0$ generalisations of the Baker--Forrester operator %from~\cite{BF} 
were considered in~\cite{BEF}, which recover as a special case Chalykh's operators from~\cite{Chalykh}; see  \cite{ChalykhFairon} for an explicit form of such a Hamiltonian for $l_1=2$.  

Furthermore, Macdonald--Ruijsenaars operators of type $A$ admit integrable generalisations to systems with two types of particles \cite{Ch'00, SV}. They are related to submodules of the polynomial representation of DAHA at special values of the parameters \cite{FS}.
We generalise van Diejen's operator from~\cite{vD} to a Hamiltonian involving two types of particles, and we explain how to obtain quantum integrals for it. This also leads to a generalisation of the operator $M$ for two types of particles. 

%Such a Hamiltonian has the form
%$$
%M_{N_1, N_2} = M_{N_1, N_2} (X, Y, q, \tau) + M_{N_2, N_1} (Y, X, \tau^{-1}, q^{-1}),
%$$
%where
%$$
%M_{N_1, N_2}(X, Y, q, \tau) = \sum_{i=1}^{N_1} \frac{1}{X_i}\left(A_{N_1, N_2}^i(X, Y, q, \tau) t^q_{X_i}+ B_{N_1, N_2}^i(X, Y, q, \tau) t^{q^{-1}}_{X_i} +c(1-\tau)\right)
%$$
%and
%$$
%A^i_{N_1, N_2}(X, Y, q, \tau) = a(1-\tau^{-1}) \prod \frac{\tau X_i-X_j}{X_i-X_j} \prod_{k=1}^{N_2} \frac{\tau X_i - Y_k}{q \tau X_i - Y_k}
%$$

The structure of the paper is as follows.
In Section~\ref{sec: RCA}, we recall the definitions and properties of the rational and trigonometric Cherednik algebras of type $GL_n$. In Section~\ref{sec: quantum group}, we review 
the definition of the quantum group~$U_q(\fgl_n)$ and its Jordan--Schwinger representation~$\rho$. We define the algebras~$A$ and~$\cA$, and study their properties. In Section~\ref{sec: DAHA}, we recall the definition of the DAHA~$\bH_n$. In Section~\ref{sec: Hgln}, we define the algebra~$\bH^{\fgl_n}$. In Section~\ref{sec: D}, we study the properties of the commuting elements~$D_i$. In Section~\ref{sec: presentation}, which is the most technical part of the paper, we give all the defining relations of~$\bH^{\fgl_n}$ and a basis for it. In Section~\ref{sec: centre}, we describe its centre and establish a double centraliser property that $\bH^{\fgl_n}$ satisfies as a subalgebra of a cyclotomic DAHA~\cite{BEF}. In Section~\ref{sec: integrable systems}, we derive new generalisations of van Diejen's and related systems.

\section{Rational and trigonometric Cherednik algebras of type $GL_n$}\label{sec: RCA}

Let $c \in \C$ be a parameter. The RCA $H_n = H_{n, c}$ of type~$GL_n$ is the (unital, associative) algebra over $\C$ generated by the simple transpositions $s_k = (k, k+1)$ from the symmetric group $\fS_n$ ($1 \leq k \leq n-1$), and elements~$X_i$ and $y_i$ ($1 \leq i \leq n$) subject to the following relations \cite{EG}:
\begin{align}
    &[y_i, y_j] = 0 = [X_i, X_j], \nonumber \\
    %&s_k^2 = 1,  \qquad s_l s_{l+1} s_l = s_{l+1}s_l s_{l+1},  \qquad [s_k, s_l] = 0 \textnormal{ if } |k-l| > 1, \\
    &s_k X_k s_k = X_{k+1}, \quad \ [s_k, X_i] = 0 \textnormal{ for } i \neq k, k+1, \nonumber \\
    &s_k y_k s_k = y_{k+1},  \quad \ \ \ [s_k, y_i] = 0 \textnormal{ for } i \neq k, k+1, \nonumber \\
    &S_{ij} \coloneqq [y_i, X_j] = \begin{cases}
        1 - c \sum_{l \neq i} s_{il} \textnormal{ if } i = j,\\
        c s_{ij} \textnormal{ if } i \neq j. 
    \end{cases} \label{eqn: S_ij}
\end{align}
Here $j,l \in \{1, \dots, n\}$, and $s_{ij}$ denote the transpositions $(i,j) \in \fS_n$. The bracket~$[\cdot, \cdot]$ denotes the commutator.

The algebra $H_n$ admits a faithful representation, called the polynomial representation, on the space of polynomials $\C[X_1, \dots, X_n]$. 
%\cite[Proposition 4.5]{EG}
%one could also make it act on $$\C[X_1^{\pm 1}, \dots, X_n^{\pm 1}]$$ and it still would be faithful
The elements~$s_k$ act by swapping $X_k$ and $X_{k+1}$, the elements~$X_i$ act by multiplication, and~$y_i$ act as the standard commuting rational Dunkl operators $\nabla_i$ introduced in~\cite{Dunkl}, which $c$-deform~$\partial_{X_i}$ $= \tfrac{\partial}{\partial X_i}$ and are given by
\begin{equation*}
    \nabla_i = \partial_{X_i} - \sum_{\substack{j=1 \\ j \neq i}}^n \frac{c}{X_i-X_j }(1-s_{ij}).
\end{equation*}

The trigonometric Cherednik algebra (also known as degenerate double affine Hecke algebra) $\bH_n^{\trig} = \bH_{n,c}^{\trig}$ of type~$GL_n$ is the (unital, associative) algebra over $\C$ generated by~$s_k \in \fS_n$ ($1 \leq k \leq n-1$), and elements~$X_i^{\pm 1}$ and $\hy_i$ ($1 \leq i \leq n$) subject to the following relations:
\begin{align*}
    &[\hy_i, \hy_j] = 0 = [X_i, X_j], \qquad X_iX_i^{-1} = X_i^{-1}X_i = 1, \\
    %&s_k^2 = 1,  \qquad s_l s_{l+1} s_l = s_{l+1}s_l s_{l+1},  \qquad [s_k, s_l] = 0 \textnormal{ if } |k-l| > 1, \\
    &s_k X_k s_k = X_{k+1}, \qquad \quad \quad  [s_k, X_i] = 0 \textnormal{ for } i \neq k, k+1, \\
    &s_k\widehat{y}_{k+1} - \widehat{y}_k s_k = c,  \qquad \quad [s_k, \hy_i] = 0 \textnormal{ for } i \neq k, k+1,\\
    %&\tX \widehat{y}_i =
    %(\widehat{y}_i - 1)\tX, \\
    &(\widehat{y}_1 + \cdots + \widehat{y}_n)X_i = X_i (1 + \widehat{y}_1 + \cdots + \widehat{y}_n), \\
    & \widehat{y}_2 - X_1\widehat{y}_2X_1^{-1} = cs_1,
\end{align*}
where $1 \leq j \leq n$.
%, and $\tX = \prod_{i=1}^n X_i$. 
These can be obtained by taking the relations of the DAHA of type~$GL_n$ given in \cite[p.~100]{Cherednik} and performing a trigonometric degeneration (see Section~\ref{sec: DAHA} below). 
Slightly different but equivalent sets of generators and relations are used in~\cite{AST} %\cite[Proposition~1.3.7]{AST}  
(see also~\cite[Section~3]{Suzuki}). 

The algebra $\bH_n^{\trig}$ admits a faithful representation, called the polynomial representation, on the space of Laurent polynomials $\C[X_1^{\pm 1}, \dots, X_n^{\pm 1}]$. The elements $s_k$ act by swapping $X_k$ and $X_{k+1}$, while~$X_i^{\pm 1}$ act by multiplication, and $\hy_i$ act as Cherednik's commuting trigonometric Dunkl operators~\cite{Ch'91}
\begin{equation*}
    \nabla_i^{\trig} = X_i \partial_{X_i} - \sum_{\substack{j=1 \\ j \neq i}}^n \frac{c}{1-X_j X_i^{-1}}(1-s_{ij}) - c\sum_{\substack{j=1 \\ j > i}}^n s_{ij} = X_i \nabla_i -  c\sum_{\substack{j=1 \\ j > i}}^n s_{ij}.
\end{equation*}

From the respective polynomial representations, one sees that there is an algebra embedding of $H_n$ into $\bH_n^{\trig}$ given as follows \cite[Proposition~4.1(i)]{Suzuki}:   
\begin{equation}
    \begin{aligned}\label{eqn: embedding of H_n}
        &s_k \mapsto s_k, \qquad X_i \mapsto X_i, \\
        &y_i \mapsto X_i^{-1}\left( \hy_i + c\sum_{\substack{j=1 \\ j > i}}^n s_{ij} \right) 
        = X_i^{-1} (s_i s_{i+1} \cdots s_{n-1}) \hy_n (s_{n-1} \cdots s_{i+1} s_i).
    \end{aligned}
\end{equation}
On the other hand, the degenerate \textit{affine} Hecke algebra that is the subalgebra of~$\bH_n^{\trig}$ generated by $\fS_n$ and $\C[\hy_1, \dots, \hy_n]$ embeds into $H_n$ via $s_k \mapsto s_k$, $\hy_i \mapsto X_i y_i -  c\sum_{j > i} s_{ij}$ \cite[Proposition~4.3(ii)]{EG}.

\subsection{Degree zero part of $H_n$}\label{sec: deg-0 part of RCA} 
The RCA $H_n$ admits a grading in which $\deg s_k = 0$, $\deg X_i = 1$, and $\deg y_i = -1$. The subalgebra $H^{\fgl_n}= H_n^{(0)}$ of those elements that have degree zero was studied in \cite{FH}. It is generated by the elements~$s_k$ and the products $E_{ij}\coloneqq X_iy_j$ ($i,j \in \{1, \dots, n\}$) with relations
\begin{align*}
    &s_k E_{ij} = E_{s_k(i), s_k(j)} s_k, \\
    &E_{ij} E_{kl} - E_{il} E_{kj} =  E_{il} S_{jk} - E_{ij} S_{kl},  \\
    &E_{ij} E_{kl} - E_{kj} E_{il} = S_{jk} E_{il} - S_{ij} E_{kl}. 
\end{align*}
Equivalently, the third set of relations could be replaced by the commutator-type relations
\begin{equation*}
    [E_{ij}, E_{kl}] = E_{il}S_{jk} - S_{il}E_{kj} + [S_{kl}, E_{ij}].
\end{equation*}
The associated graded algebra is $\gr H^{\fgl_n} = \C \fS_n \ltimes \C[\cM]$, where
    \begin{equation*}
        \cM = \{ M \in \Mat_n(\C) \colon \rk M \leq 1 \}.
    \end{equation*}

The algebra $H^{\fgl_n}$ admits a PBW-type basis consisting of monomials
\begin{equation*}
     wE_{i_1 j_1}^{k_1} \cdots E_{i_r j_r}^{k_r},
\end{equation*}
where $w \in \fS_n$, $r \in \Z_{\geq 0}$, $k_u \in \Z_{>0}$, $1 \leq i_1 \leq \cdots \leq i_r \leq n$, and $1 \leq j_1  \leq \cdots \leq j_r \leq n$ with $i_u = i_{u+1} \Rightarrow j_u < j_{u+1}$. 
The algebra is a flat $c$-deformation of~$\C \fS_n \ltimes \rho_{\text{JS}}(U(\fgl_n))$, where $\rho_{\text{JS}}$ is the oscillator (also known as Jordan--Schwinger) representation of the universal enveloping algebra $U(\fgl_n)$ mapping the standard basis of the Lie algebra~$\fgl_n$ to the operators $X_i \partial_{X_j}$ ($i,j \in \{1, \dots, n\}$). 
The algebra $H^{\fgl_n}$ is an example of a non-homogeneous quadratic algebra over $\C\fS_n$ of PBW type (cf.~\cite{BGe}).

The element $eu = \sum_{i=1}^n X_i y_i - c\sum_{i < j} s_{ij}$ (which in the polynomial representation is equal up to a constant to the Euler operator $\sum_{i=1}^n X_i \partial_{X_i}$) generates the centre~$\cZ(H^{\fgl_n})$. 

The algebra~$\bH_n^{\trig}$ also has a grading, given by $\deg s_k = \deg \hy_i = 0$ and $\deg X_i^{\pm 1} = \pm 1$ \cite{BEF}. The embedding~\eqref{eqn: embedding of H_n} preserves the respective gradings. The degree zero part~$\bH_n^{\trig, \, (0)}$ is generated by the set of elements $s_k$, $\hy_i$, and $X_iX_j^{-1}$. The algebra~$H^{\fgl_n} = H_n^{(0)}$ embeds into~$\bH_n^{\trig, \, (0)}$ via a restriction of the mapping~\eqref{eqn: embedding of H_n}.

\section{Quantum group $U_q(\fgl_n)$} \label{sec: quantum group}
Let $q \in \C^\times$ be not a root of unity. 
The quantum group $U_q(\fgl_n)$ is the (unital, associative) algebra over $\C$ generated by $g_i^{\pm 1}$ ($1 \leq i \leq n$) and $e_k$, $f_k$ ($1 \leq k \leq n-1$) subject to the following relations~\cite[p.~163--164]{KS} (cf.~\cite{Jimbo}):
\begin{align*}
    &g_i g_i^{-1} = g_i^{-1} g_i = 1, \qquad [g_i, g_j] = 0, \\
    &g_ie_kg_i^{-1} = q^{\delta_{ik} - \delta_{i, k+1}}e_k, \qquad g_i f_k g_i^{-1} = q^{\delta_{i, k+1} - \delta_{ik}}f_k,\\
    &[e_k, f_l] = \delta_{kl}\frac{g_k g_{k+1}^{-1} - g_k^{-1} g_{k+1}}{q - q^{-1}}, \\
    &[e_k, e_l] = 0 = [f_k, f_l] \textnormal{ if } |k-l| > 1, \\
    &e_k^2e_l - (q + q^{-1})e_k e_l e_k + e_l e_k^2 = 0 \textnormal{ if } |k-l| = 1,  \\
    &f_k^2f_l - (q + q^{-1})f_k f_l f_k + f_l f_k^2 = 0 \textnormal{ if } |k-l| = 1.
\end{align*}
Here $1 \leq j \leq n$, $1 \leq l \leq n-1$, and $\delta_{ij}$ is the Kronecker delta.
%The paper by Jimbo specifies also the Hopf algebra structure.
It follows from these relations that $\prod_{i=1}^n g_i$ lies in the centre $\cZ(U_q(\fgl_n))$ of this algebra, and \cite[Proposition~4]{Jimbo} provides some further central elements.

In this section, we recall a representation of $U_q(\fgl_n)$ on the space of Laurent polynomials $\C[X_1^{\pm 1},\dots, X_n^{\pm 1}]$. Let us firstly set up some notations.
Let~$t_i = q^{X_i \partial_{X_i}}$ be the~$q$-shift operator which acts on functions~$f$ by~$(t_if)(X_1, \dots, X_n) = f(X_1, \dots, qX_i, \dots, X_n)$. Let us also consider the following operator
\begin{equation}\label{eqn: d_i}
   d_i = \frac{X_i^{-1}(t_i - t_i^{-1})}{q-q^{-1}}.
\end{equation}
In the $q \to 1$ limit, it satisfies $d_i \to \partial_{X_i}$.

The next lemma collects some properties of $d_i$ and $t_i$, which can be checked by a direct computation. We use the notation $[a, b]_\lambda$ for $\lambda \in \C$ to mean $ab - \lambda ba$.
\begin{lemma}\label{lemma: properties of d_i at the quantum group level}
For all $i,j \in \{1, \dots, n\}$, we have
\begin{enumerate}[(1)]
    \item 
    $[t_i, X_j]_{q^{\delta_{ij}}} = 0$,
    \item $[d_i, d_j] = 0 = [t_i, t_j]$,
    \item $[d_i, t_j]_{q^{\delta_{ij}}} = 0$, 
    \item 
    $d_i X_i = (q-q^{-1})^{-1}(qt_i - q^{-1}t_i^{-1})$, %$X_i d_i = (q-q^{-1})^{-1}(t_i - t_i^{-1})$, \\ 
    and $[d_i, X_j]_{q^{\pm \delta_{ij}}} = \delta_{ij} t_i^{\mp 1}$. 
\end{enumerate}
\end{lemma}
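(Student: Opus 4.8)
The statement to prove is Lemma~\ref{lemma: properties of d_i at the quantum group level}, which collects four commutation-type identities among the $q$-shift operators $t_i$, the multiplication operators $X_j$, and the operators $d_i = X_i^{-1}(t_i - t_i^{-1})/(q-q^{-1})$ acting on $\C[X_1^{\pm1},\dots,X_n^{\pm1}]$. As the text itself indicates, everything here can be checked by a direct computation, so the plan is essentially to organize that computation cleanly, exploiting the fact that operators indexed by distinct $i,j$ act in independent variables.

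\textbf{Setup and the base identity.} First I would record the single fundamental relation from which everything follows: for any Laurent polynomial $f$, $(t_i X_j f)(X) = (qX_i)^{\delta_{ij}} X_j \cdot f(\dots,qX_i,\dots) = q^{\delta_{ij}} X_j (t_i f)(X)$, i.e.\ $t_i X_j = q^{\delta_{ij}} X_j t_i$ as operators. This is precisely part~(1), written as $[t_i,X_j]_{q^{\delta_{ij}}} = t_iX_j - q^{\delta_{ij}}X_jt_i = 0$. I would also note the trivial facts $[X_i,X_j]=0$ and $[t_i,t_j]=0$ (shifts in different, or the same, variables commute), the latter being half of part~(2).

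\textbf{Deriving (2), (3) from (1).} Since $d_i$ is a $\C[X^{\pm1}]$-linear combination of $t_i$ and $t_i^{-1}$, and since by~(1) we have $t_i X_j^{-1} = q^{-\delta_{ij}} X_j^{-1} t_i$ (invert part~(1)), I would compute $d_i t_j$ and $t_j d_i$ directly: $t_j d_i = t_j X_i^{-1}(t_i-t_i^{-1})/(q-q^{-1}) = q^{-\delta_{ij}} X_i^{-1} t_j (t_i - t_i^{-1})/(q-q^{-1}) = q^{-\delta_{ij}} d_i t_j$, using $[t_i,t_j]=0$; rearranged this is $[d_i,t_j]_{q^{\delta_{ij}}} = d_i t_j - q^{\delta_{ij}} t_j d_i = 0$, giving part~(3). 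For $[d_i,d_j]=0$ with $i\neq j$: all factors $X_i^{-1}, t_i^{\pm1}$ commute with all factors $X_j^{-1}, t_j^{\pm1}$, so $d_i d_j = d_j d_i$ immediately. For $i=j$ the claim $[d_i,d_i]=0$ is vacuous. That completes part~(2).

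\textbf{Deriving (4).} For the first formula I would just substitute: $d_i X_i = X_i^{-1}(t_i - t_i^{-1})X_i/(q-q^{-1})$, and by part~(1), $t_i X_i = q X_i t_i$ while $t_i^{-1} X_i = q^{-1} X_i t_i^{-1}$, so $d_i X_i = X_i^{-1} X_i (q t_i - q^{-1} t_i^{-1})/(q-q^{-1}) = (q-q^{-1})^{-1}(q t_i - q^{-1} t_i^{-1})$. For the second formula, when $i\neq j$ everything in $d_i$ commutes with $X_j$, so $[d_i,X_j]_{q^{\pm0}} = [d_i,X_j] = 0 = \delta_{ij}t_i^{\mp1}$, consistent. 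When $i=j$ I would compute $d_i X_i = (q-q^{-1})^{-1}(qt_i - q^{-1}t_i^{-1})$ (just done) and $X_i d_i = (q-q^{-1})^{-1}(t_i - t_i^{-1})$ (since $X_i X_i^{-1}=1$), hence $d_i X_i - q^{\pm1} X_i d_i = (q-q^{-1})^{-1}\big((q-q^{\pm1})t_i - (q^{-1} - q^{\pm1})t_i^{-1}\big)$, which for the upper sign gives $(q-q^{-1})^{-1}(0\cdot t_i + (q-q^{-1})t_i^{-1}) = t_i^{-1}$ and for the lower sign gives $(q-q^{-1})^{-1}((q-q^{-1})t_i - 0\cdot t_i^{-1}) = t_i$; i.e.\ $[d_i,X_i]_{q^{\pm1}} = t_i^{\mp1}$, as claimed. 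This also matches the stated $q\to1$ limit $d_i\to\partial_{X_i}$, since both $\pm$ cases collapse to the Leibniz rule $[\partial_{X_i},X_i]=1$.

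\textbf{Main obstacle.} There is no real obstacle here; the only thing to be careful about is sign and exponent bookkeeping in the $\lambda$-commutator notation $[a,b]_\lambda = ab - \lambda ba$, in particular getting the direction of the $q$-power right when inverting part~(1) to handle $X_j^{-1}$, and matching the $\mp$ versus $\pm$ conventions in part~(4). I would present the argument in the order (1) $\Rightarrow$ (2), (3) $\Rightarrow$ (4), reusing each identity as a lemma for the next, so that the whole proof amounts to the base relation $t_iX_j = q^{\delta_{ij}}X_jt_i$ plus linearity of $d_i$ in $t_i^{\pm1}$ over $\C[X^{\pm1}]$.
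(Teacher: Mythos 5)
Your proof is correct and follows exactly the route the paper intends: the paper gives no written proof beyond noting that the identities ``can be checked by a direct computation,'' and your argument is precisely that computation, organized around the base relation $t_iX_j = q^{\delta_{ij}}X_jt_i$ and the $\C[X^{\pm 1}]$-linearity of $d_i$ in $t_i^{\pm 1}$. All the exponent and sign bookkeeping in parts (3) and (4) checks out, so nothing further is needed.
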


In terms of $d_i$, $t_i$, and the multiplication operators $X_i$, one can write down a representation of $U_q(\fgl_n)$ on $\C[X_1^{\pm 1}, \dots, X_n^{\pm 1}]$ as follows. It is called the Jordan--Schwinger or $q$-oscillator representation.

\begin{proposition}\textnormal{\textbf{\cite{Hayashi}}}
    There is a representation $\rho$ of $U_q(\fgl_n)$ given on the generators by $\rho \colon g_i^{\pm 1} \mapsto t_i^{\pm 1}$, $e_k \mapsto X_k d_{k+1}$, and $f_k \mapsto X_{k+1} d_k$.
\end{proposition}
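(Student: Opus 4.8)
The plan is to verify directly that the proposed operators satisfy all the defining relations of $U_q(\fgl_n)$ listed in Section~\ref{sec: quantum group}, using the commutation properties of $d_i$, $t_i$, and the multiplication operators $X_i$ from Lemma~\ref{lemma: properties of d_i at the quantum group level}. Since everything acts on $\C[X_1^{\pm 1}, \dots, X_n^{\pm 1}]$, it suffices to check the relations as operator identities. I set $g_i \mapsto t_i^{\pm 1}$, $e_k \mapsto X_k d_{k+1}$, $f_k \mapsto X_{k+1} d_k$, and go through the relations one family at a time.

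First, the relations among the $g_i$ are immediate: $t_i t_i^{-1} = t_i^{-1} t_i = 1$ and $[t_i, t_j] = 0$ by Lemma~\ref{lemma: properties of d_i at the quantum group level}(2). For the adjoint action relations $g_i e_k g_i^{-1} = q^{\delta_{ik} - \delta_{i,k+1}} e_k$, I compute $t_i (X_k d_{k+1}) t_i^{-1}$ by moving $t_i$ past $X_k$ using part (1) (which gives a factor $q^{\delta_{ik}}$) and past $d_{k+1}$ using part (3) (which gives a factor $q^{-\delta_{i,k+1}}$), yielding exactly $q^{\delta_{ik}-\delta_{i,k+1}} X_k d_{k+1}$; the computation for $f_k$ is analogous with the signs reversed. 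Next, the Serre-type relations for the $e_k$ when $|k-l|=1$: here the key point is that $X_k d_{k+1}$ and $X_l d_{l+1}$ share exactly one index, so I expand $e_k^2 e_l - (q+q^{-1}) e_k e_l e_k + e_l e_k^2$ and use the $q$-commutation of $d$'s with $X$'s (part (4)) together with $[d_i,d_j]=0$ and $[d_i,X_j]=0$ for $i\neq j$ to collapse the expression; the coefficient $q+q^{-1}$ appears precisely because the two operators $q$-commute up to that quantum integer. The relations $[e_k,e_l]=[f_k,f_l]=0$ for $|k-l|>1$ are clear since the operators then involve disjoint sets of indices.

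The most delicate relation — and the one I expect to be the main obstacle — is $[e_k, f_l] = \delta_{kl} \frac{g_k g_{k+1}^{-1} - g_k^{-1} g_{k+1}}{q-q^{-1}}$. For $k \neq l$ with $|k-l|\geq 2$ this is trivial (disjoint indices), and for $|k-l|=1$ one checks that $X_k d_{k+1}$ and $X_{l+1} d_l$ commute because, writing out the single shared index, the $X$ and $d$ on that index sit in an order that does not produce a commutator (one gets $d_{k+1}$ paired with $X_{k+2}$ or $X_k$ with $d_{k-1}$, never $d_i$ adjacent to $X_i$ in a non-commuting way). The substantive case is $k=l$: I must show $[X_k d_{k+1}, X_{k+1} d_k] = \frac{t_k t_{k+1}^{-1} - t_k^{-1} t_{k+1}}{q-q^{-1}}$. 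To do this I move $d_{k+1}$ past $X_{k+1}$ using part (4) — namely $d_{k+1} X_{k+1} = (q-q^{-1})^{-1}(q t_{k+1} - q^{-1} t_{k+1}^{-1})$ — and $d_k$ past $X_k$ using the same identity, being careful that the leftover factors of $X$ and $d$ on the other index commute through; after substituting and simplifying the two resulting expressions the $X$-dependence cancels and one is left exactly with $\frac{t_k t_{k+1}^{-1} - t_k^{-1} t_{k+1}}{q-q^{-1}}$. Keeping track of the order of the shift operators and the $q^{\pm 1}$ factors from part (3) is the fiddly part here.

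Finally I note that all of these are finite, explicit computations with operators on the Laurent polynomial ring, so no deeper structural input is needed; the faithfulness or other properties of $\rho$ are not asserted in the statement and are not required. I would organise the write-up by first recording the elementary $g$-relations, then the adjoint relations, then dispatching the easy cases of the remaining three families by the disjoint-index observation, and devoting the bulk of the argument to the $k=l$ case of the $[e_k,f_l]$ relation and one representative instance of each Serre relation, the general case following by the same manipulation.
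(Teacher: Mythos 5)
Your proposal is correct. Note that the paper gives no proof of this proposition at all --- it is quoted from Hayashi's paper --- and the standard argument is precisely the direct verification you outline: check each family of defining relations of $U_q(\fgl_n)$ as operator identities on $\C[X_1^{\pm 1},\dots,X_n^{\pm 1}]$ using Lemma~\ref{lemma: properties of d_i at the quantum group level}, with the only substantive computations being the $k=l$ case of $[e_k,f_l]$ (where, as you say, $X_k d_k=(q-q^{-1})^{-1}(t_k-t_k^{-1})$ makes the $X$-dependence cancel and leaves exactly $(q-q^{-1})^{-1}(t_kt_{k+1}^{-1}-t_k^{-1}t_{k+1})$) and the Serre relations. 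One caveat on the latter: your parenthetical heuristic is loose, since $e_k$ and $e_{k\pm 1}$ do not literally $q$-commute (for instance $e_k e_{k+1}=q\,e_{k+1}e_k+X_k t_{k+1}^{-1} d_{k+2}$). After pulling out the spectator factors ($X_k^2$ and $d_{k+2}$, or $X_{k-1}$ and $d_{k+1}^2$), the Serre relations reduce to the single-index identities $d_i^2X_i-(q+q^{-1})d_iX_id_i+X_id_i^2=0$ and $X_i^2d_i-(q+q^{-1})X_id_iX_i+d_iX_i^2=0$, which do follow from the tools you cite ($d_iX_i=qX_id_i+t_i^{-1}$ together with $d_it_i^{-1}=q^{-1}t_i^{-1}d_i$ and $t_i^{-1}X_i=q^{-1}X_it_i^{-1}$); so this is an imprecision in the narrative rather than a gap, and your plan goes through as stated.
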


In fact, this representation $\rho$ has a submodule $\C[X_1, \dots, X_n]$, which we revisit later in Section~\ref{sec: centre}.

 Let us consider the algebra $A$ generated by the images of the generators of $U_q(\fgl_n)$ under the representation $\rho$: 
 $$
    A = \langle t_i^{\pm 1} \ (1 \leq i \leq n), \ X_k d_{k+1}, \ X_{k+1}d_k \ (1 \leq k \leq n-1) \rangle.
 $$ 
By the isomorphism theorems, $A$ is isomorphic to $U_q(\fgl_n)/I_q$ for~$I_q = \ker(\rho)$.  
We next describe the algebra $A$ abstractly by (a different set of) generators and relations.

Let $E_{ij}^q = X_i d_j$ ($i,j \in \{1, \dots ,n \}$).
Then~$E_{ii}^q = (q-q^{-1})^{-1}(t_i - t_i^{-1}) \in A$.
%By using Lemma~\ref{lemma: properties of d_i at the quantum group level}, one can see that 
The operators~$E_{ij}^q$ for $|i-j| > 1$ are related to Jimbo's analogue of the non-simple root vectors of $\fgl_n$ from \cite[Proposition~1]{Jimbo}. The following formulae hold for all $1 \leq i<j \leq n-1$ \cite[(3.3)]{DKKV}: 
     \begin{align*}
        &E_{i,j+1}^q = \rho\left([e_i,[e_{i+1}, \cdots [e_{j-1},e_j]_q\cdots]_q]_q \ g_{i+1} g_{i+2} \cdots g_j\right), \\
        &E_{j+1, i}^q = \rho\left([f_j,[f_{j-1},\cdots[f_{i+1},f_i]_{q^{-1}}\cdots]_{q^{-1}}]_{q^{-1}}g_{i+1}^{-1} \ g_{i+2}^{-1} \cdots g_j^{-1}\right).
    \end{align*}
It follows that $E_{ij}^q \in A$ for all $i, j$, and
$A = \langle t_i^{\pm 1}, \  E_{ij}^q \ (i \neq j) \rangle$ as an algebra.

In the $q \to 1$ limit, the representation $\rho$ recovers the oscillator representation of the universal enveloping algebra~$U(\fgl_n)$, since $\lim_{q \to 1} E_{ij}^q = X_i \partial_{X_j}$ for all $i, j$.

The next two propositions describe relations satisfied by the generators~$E_{ij}^q$ and $t_i$. 
Let us introduce the notation 
$$
    S_{ij}^q \coloneqq [d_i, X_j] = \begin{cases}
         (q+1)^{-1}\left(qt_i + t_i^{-1}\right) \textnormal{ if } i = j, \\
         0 \textnormal{ if } i \neq j,
    \end{cases}
$$
where the second equality follows from Lemma~\ref{lemma: properties of d_i at the quantum group level}~(4).
Then the following proposition is a straightforward consequence of the definitions %of $\tS_{ij}$, $E_{ij}^q$ 
and the commutativity of the operators $d_i$. (It will also follow from the $\tau = 1$ limit of the more general discussions presented in the next section.) 
\begin{proposition}\label{prop: relations among E_ij^q}
For all $i,j,k,l \in \{1, \dots, n\}$,
\begin{equation}\label{swapping j,l or i,k}
    \begin{aligned}
        &E_{ij}^qE_{kl}^q - E_{il}^qE_{kj}^q =  E_{il}^q S_{jk}^q - E_{ij}^q S_{lk}^q,   \\
        &E_{ij}^qE_{kl}^q - E_{kj}^qE_{il}^q = S_{jk}^q E_{il}^q - S_{ji}^q E_{kl}^q. 
    \end{aligned}
\end{equation}
\end{proposition}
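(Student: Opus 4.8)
The plan is to verify both identities in~\eqref{swapping j,l or i,k} by a direct operator computation on the space $\C[X_1^{\pm 1}, \dots, X_n^{\pm 1}]$, using only $E_{ab}^q = X_a d_b$, the definition of $S_{ab}^q = [d_a, X_b]$, and the commutativity $[d_i, d_j] = 0$ from Lemma~\ref{lemma: properties of d_i at the quantum group level}~(2). First I would expand the left-hand side of the first identity:
\begin{equation*}
    E_{ij}^q E_{kl}^q - E_{il}^q E_{kj}^q = X_i d_j X_k d_l - X_i d_l X_k d_j.
\end{equation*}
Writing $d_j X_k = X_k d_j + S_{jk}^q$ and $d_l X_k = X_k d_l + S_{lk}^q$, the right-hand side becomes $X_i X_k d_j d_l + X_i S_{jk}^q d_l - X_i X_k d_l d_j - X_i S_{lk}^q d_j$. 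The terms $X_i X_k d_j d_l$ and $X_i X_k d_l d_j$ cancel because $[d_j, d_l] = 0$, leaving $X_i S_{jk}^q d_l - X_i S_{lk}^q d_j$. Since $S_{jk}^q$ is (by Lemma~\ref{lemma: properties of d_i at the quantum group level}~(4), as recorded above) either zero or a function of $t_k$ alone, I would note that $S_{jk}^q$ commutes with $d_l$ whenever $l \neq k$, and when $l = k$ one uses Lemma~\ref{lemma: properties of d_i at the quantum group level}~(3). In either case $X_i S_{jk}^q d_l = E_{il}^q S_{jk}^q$ requires care: one has $X_i S_{jk}^q d_l$, and we want $X_i d_l S_{jk}^q = E_{il}^q S_{jk}^q$; the discrepancy is $X_i [S_{jk}^q, d_l]$, which I would show vanishes. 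This yields exactly $E_{il}^q S_{jk}^q - E_{ij}^q S_{lk}^q$, proving the first relation.

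For the second identity, the computation is the mirror image, now moving $X_i$ past a $d$ on the \emph{left}. Expanding $E_{ij}^q E_{kl}^q - E_{kj}^q E_{il}^q = X_i d_j X_k d_l - X_k d_j X_i d_l$, I would use $d_j X_i = X_i d_j + S_{ji}^q$ (note the index order: $S_{ji}^q = [d_j, X_i]$) to rewrite the second term, obtaining after cancellation of the $X_i X_k d_j d_l$-type terms the expression $S_{jk}^q X_i d_l - S_{ji}^q X_k d_l$. Then, again using that $S_{ab}^q$ is a function of $t_b$ (or $t_a$, the same thing when nonzero $a=b$) together with the relation $[t_b, X_c]_{q^{\delta_{bc}}} = 0$ from Lemma~\ref{lemma: properties of d_i at the quantum group level}~(1), I would move $X_i$ to the left of $S_{jk}^q$ and $X_k$ to the left of $S_{ji}^q$ to land on $S_{jk}^q E_{il}^q - S_{ji}^q E_{kl}^q$, matching the claimed right-hand side.

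The only genuine subtlety — the "main obstacle," though a mild one — is bookkeeping the commutation of the $S_{ab}^q$ (which are honest nontrivial operators, being $q$-analogues rather than scalars as in the rational case) past the remaining $d$'s and $X$'s. Here I would invoke the stated properties of $d_i, t_i, X_i$ in Lemma~\ref{lemma: properties of d_i at the quantum group level}, checking the two cases (the relevant index equal versus unequal) separately; in each case the $q$-commutation factors $q^{\pm\delta}$ conspire correctly, which is precisely the content that makes this a clean $q$-deformation. As the excerpt already notes, this Proposition also drops out as the $\tau = 1$ specialisation of the DAHA-level relations established in the next section, so an alternative (and shorter) route would be simply to cite that degeneration; but the self-contained direct check above is immediate and I would include it for completeness.
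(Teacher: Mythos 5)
Your overall strategy (a direct operator computation from $E_{ab}^q = X_a d_b$, $S_{ab}^q = [d_a, X_b]$ and $[d_a,d_b]=0$) is the same as the paper's, which treats the proposition as a straightforward consequence of these facts (and of the $\tau$-deformed computation in Proposition~\ref{prop: relations among e_ij}). However, one step fails as you state it: the commutators you claim vanish do not always vanish. For the first relation you need $X_i S_{jk}^q d_l = X_i d_l S_{jk}^q$, i.e.\ $[S_{jk}^q, d_l]=0$; this holds when $j\neq k$ (then $S_{jk}^q=0$) and when $j=k\neq l$ (then $S_{kk}^q$ is a function of $t_k$ which commutes with $d_l$), but it fails when $j=k=l$: Lemma~\ref{lemma: properties of d_i at the quantum group level}~(3) gives only the $q$-commutation $d_k t_k = q t_k d_k$, whence $d_k S_{kk}^q = (q+1)^{-1}(q^2 t_k + q^{-1}t_k^{-1})d_k \neq S_{kk}^q d_k$ since $q$ is not a root of unity. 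The same issue arises in your second relation when you move $X_i$ past $S_{jk}^q$ in the case $i=j=k$, because $t_k X_k = q X_k t_k$. Your conclusion survives only because in exactly these coincidence cases ($j=l$ for the first relation, $i=k$ for the second) both sides of \eqref{swapping j,l or i,k} are identically zero --- equivalently, the two offending commutator terms are equal and cancel in the difference --- but your write-up asserts a vanishing that is not there, so you must either dispose of these degenerate cases separately or point out the cancellation.

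A cleaner route, and the one the paper effectively takes (see the proof of Proposition~\ref{prop: relations among e_ij}, of which this proposition is the $\tau=1$ specialisation), is to order the manipulation so that the $S$-terms are created exactly in the positions in which they occur in the statement: $E_{ij}^qE_{kl}^q = X_i d_j(d_l X_k - S_{lk}^q) = X_i d_l d_j X_k - E_{ij}^q S_{lk}^q = X_i d_l (X_k d_j + S_{jk}^q) - E_{ij}^q S_{lk}^q = E_{il}^qE_{kj}^q + E_{il}^q S_{jk}^q - E_{ij}^q S_{lk}^q$, and for the second relation one instead writes $X_i d_j = d_j X_i - S_{ji}^q$ and then $d_j X_k = X_k d_j + S_{jk}^q$, giving $E_{ij}^qE_{kl}^q = E_{kj}^qE_{il}^q + S_{jk}^q E_{il}^q - S_{ji}^q E_{kl}^q$. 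This never requires commuting $S_{ab}^q$ past any $d$ or $X$, so no case analysis is needed.
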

% \begin{proof}
%     By the definition of $\tS_{ij}$, $E_{ij}^q$ and commutativity of $d_i$'s, we have
% \begin{align*}
%         E_{ij}^qE_{kl}^q &= E_{ij}^q\left(d_lX_k - \tS_{lk} \right) =  E_{il}^qd_jX_k -  E_{ij}^q \tS_{lk} = E_{il}^q \left( E_{kj}^q + \tS_{jk} \right) - E_{ij}^q \tS_{lk},
%     \end{align*}
%     as required. Similarly for the other set of relations.
% \end{proof}

The following statement holds as a result of Lemma~\ref{lemma: properties of d_i at the quantum group level} (1) and (3).
\begin{proposition}\label{prop: relations between E_ij^q and t_i}
    For all $i,j,k \in \{1, \dots, n\}$, 
    \begin{equation}\label{moving t_i to the right}
        t_i E_{jk}^q t_i^{-1} = q^{\delta_{ij} - \delta_{ik}}E_{jk}^q.
    \end{equation}
\end{proposition}

The preceding two propositions lead to a PBW-type basis and a presentation for the algebra $A$.
\begin{proposition}\label{prop: PBW basis for A}
A linear basis of the algebra $A$ is formed by elements 
\begin{align}\label{eqn: PBW basis for A}
        (E_{i_1 j_1}^q)^{k_1} \cdots (E_{i_r j_r}^q)^{k_r} \prod_{l = 1}^n  t_l^{m_l}, 
\end{align}
where $r \in \Z_{\geq 0}$, $k_u \in \Z_{>0}$, $m_l \in \Z$, $1 \leq i_1 \leq \cdots \leq i_r \leq n$, $1 \leq j_1  \leq \cdots \leq j_r \leq n$ with $i_u = i_{u+1} \Rightarrow j_u < j_{u+1}$, and none of the indices $i_u$ equal any of the indices $j_v$. 

The algebra $A$ has a presentation by generators~$t_i^{\pm 1}$, $E_{ij}^q \ (i \neq j)$ and relations \eqref{swapping j,l or i,k} with $i \neq j$ and $k \neq l$, \eqref{moving t_i to the right} with~$j \neq k$, and the Laurent relations for $t_i^{\pm 1}$, namely $t_i t_i^{-1} = t_i^{-1} t_i = 1$ and $[t_i, t_j] = 0$ for all $i,j$.
\end{proposition}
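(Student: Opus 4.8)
The plan is to prove the two assertions of Proposition~\ref{prop: PBW basis for A} together, in the standard way for non-homogeneous quadratic algebras of PBW type over a base ring. First I would define an abstract algebra $\widetilde{A}$ by the generators $t_i^{\pm 1}$, $E_{ij}^q$ ($i \neq j$) and the listed relations, so that there is a surjective algebra homomorphism $\pi \colon \widetilde{A} \twoheadrightarrow A$ sending each generator to the corresponding operator on $\C[X_1^{\pm 1}, \dots, X_n^{\pm 1}]$. The goal is then to show (i) that the monomials \eqref{eqn: PBW basis for A} span $\widetilde{A}$, and (ii) that their images under $\pi$ are linearly independent operators; together these force $\pi$ to be an isomorphism and establish both the basis and the presentation simultaneously.

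For the spanning statement (i), I would treat the $E^q$-part and the $t$-part separately. Using relation~\eqref{moving t_i to the right} one can move all the $t_l^{\pm 1}$ to the right past any $E_{jk}^q$ at the cost of a power of $q$, and the Laurent relations collect them into $\prod_l t_l^{m_l}$; so it suffices to order a product of $E^q_{ij}$'s with $i \neq j$ modulo the $t$-part. For that I would set up a rewriting procedure: the relations \eqref{swapping j,l or i,k} express a ``wrongly ordered'' product $E_{ij}^q E_{kl}^q$ as a correctly ordered one plus lower-complexity terms, where the complexity is measured by, say, the number of inversions in the sequence of indices together with a secondary count for the correction terms $E_{il}^q S_{jk}^q$ etc.\ (note $S_{jk}^q$ is a Laurent polynomial in $t_j$, hence already in ordered form after applying \eqref{moving t_i to the right}). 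One has to check that this rewriting terminates and that it never creates a factor $E_{ii}^q$ with $i$ among the surviving indices of the monomial — the diagonal elements $E_{ii}^q$ being re-expressed via $t_i$ through $E_{ii}^q = (q-q^{-1})^{-1}(t_i - t_i^{-1})$ — so that the constraint ``none of the $i_u$ equal any of the $j_v$'' is preserved. The condition $i_u = i_{u+1} \Rightarrow j_u < j_{u+1}$ is handled by the observation that $E_{ij}^q$ and $E_{ik}^q$ commute (they are both of the form $X_i d_\bullet$ and the $d$'s commute and commute with $X_i$ up to... in fact $[E_{ij}^q, E_{ik}^q]$ follows from the second relation in \eqref{swapping j,l or i,k} with the appropriate indices, giving a clean commutation when the row index is shared), so equal row indices can be sorted by column index.

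For linear independence (ii), I would exhibit the operators $(E_{i_1 j_1}^q)^{k_1} \cdots (E_{i_r j_r}^q)^{k_r} \prod_l t_l^{m_l}$ acting on a suitable space of Laurent polynomials and read off their leading behaviour. Writing $E_{ij}^q = X_i d_j$ with $d_j = \frac{X_j^{-1}(t_j - t_j^{-1})}{q-q^{-1}}$, an ordered monomial applied to a monomial $X^\mu$ produces $X^{\mu'} \prod_j (\text{Laurent polynomial in } q^{\mu_j})$ for an explicit exponent shift $\mu \mapsto \mu'$ determined by how many times each index appears as a row ($i_u$) versus a column ($j_u$), times $\prod q^{m_l \mu_l}$. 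Distinct ordered monomials either produce different exponent shifts, or the same shift but different ``weight polynomials'' in the $q^{\mu_l}$, and one separates them by evaluating on sufficiently many monomials $X^\mu$ and using that $q$ is not a root of unity (so $q^{\mu_l}$ takes infinitely many distinct values). This is essentially the same Vandermonde-type argument one uses to prove faithfulness of the polynomial representation. I expect the main obstacle to be the bookkeeping in part (i): verifying that the rewriting system is confluent and terminating, i.e.\ that no ``diamond'' closes with an unexpected obstruction — in the language of~\cite{BGe}, checking that the only potential obstructions to the PBW property (the overlap ambiguities coming from triples of indices) actually resolve. Concretely this reduces to a finite check that applying the relations \eqref{swapping j,l or i,k} in two different orders to a degree-$3$ monomial $E_{ij}^q E_{kl}^q E_{mn}^q$ yields the same normal form; given the explicit operator realisation this can also be verified directly on the polynomial representation, which I would use as a shortcut to avoid a lengthy purely combinatorial argument.
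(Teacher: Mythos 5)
Your proposal follows essentially the same route as the paper: rewrite any element into the span of the ordered monomials \eqref{eqn: PBW basis for A} using relations \eqref{swapping j,l or i,k}, \eqref{moving t_i to the right} (with diagonal $E_{ii}^q$ and $S_{ii}^q$ absorbed into the $t_i$'s), then prove linear independence of these monomials as operators on $\C[X_1^{\pm 1},\dots,X_n^{\pm 1}]$ through their leading behaviour, using the explicit formula for $(E_{ij}^q)^k$ and that $q$ is not a root of unity, with the presentation statement following from the resulting surjection-becomes-isomorphism argument. The only real difference is that your confluence/diamond-lemma worry is superfluous: spanning of the abstract algebra plus linear independence of the images in the faithful operator realisation already forces uniqueness of normal forms, which is exactly how the paper avoids any such check.
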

\begin{proof}
    It follows from relations~\eqref{swapping j,l or i,k} and~\eqref{moving t_i to the right} that any element of $A$ can be written as a linear combination of elements of the form~\eqref{eqn: PBW basis for A}, thus they span~$A$. We now show that they are linearly independent over $\C$ as operators on~$\C[X_1^{\pm 1}, \dots, X_n^{\pm 1}]$.

    For any $k \in \Z_{> 0}$ and $i \neq j$, by using Lemma~\ref{lemma: properties of d_i at the quantum group level} we get
    \begin{align*}
        (E_{ij}^q)^k =(q-q^{-1})^{-k} X_i^k X_j^{-k} \prod_{l = 0}^{k-1}(q^{-l}t_j- q^l t_j^{-1}).
    \end{align*}
    More generally, for elements of the form~\eqref{eqn: PBW basis for A} we have 
    \begin{equation*}
        (E_{i_1 j_1}^q)^{k_1} \cdots (E_{i_r j_r}^q)^{k_r} \prod_{l = 1}^n  t_l^{m_l} \sim X_{i_1}^{k_1} \cdots X_{i_r}^{k_r}X_{j_1}^{-k_1} \cdots X_{j_r}^{-k_r}t^{k_1}_{j_1} \cdots t^{k_r}_{j_r} \prod_{l = 1}^n  t_l^{m_l} + \, \dots,
    \end{equation*}
    where $\dots$ denotes terms in which the overall sum of the exponents on the~$t_i$'s is lower than in the above leading term, and $\sim$ denotes proportionality by a non-zero factor, which may depend on $q$. 

    Assume a non-trivial linear dependence of some terms of the form~\eqref{eqn: PBW basis for A}. This implies a non-trivial linear dependence of their corresponding leading (with highest degree in $t_i$'s) terms 
    \begin{equation}\label{leading terms}
        X_{i_1}^{k_1} \cdots X_{i_r}^{k_r}X_{j_1}^{-k_1} \cdots X_{j_r}^{-k_r}t^{k_1}_{j_1} \cdots t^{k_r}_{j_r} \prod_{l = 1}^n  t_l^{m_l}.
    \end{equation}
    By the assumptions on the indices of the monomials~\eqref{eqn: PBW basis for A}, their leading terms~\eqref{leading terms} are different, and
    since $\prod_{l=1}^n X_l^{n_l} \prod_{l=1}^n t_l^{n'_l}$ ($n_l, n'_l \in \Z$) are linearly independent over $\C$ as operators on $\C[X_1^{\pm 1}, \dots, X_n^{\pm 1}]$ (we are using here that~$q$ is not a root of unity), we get a contradiction. The statement follows.
\end{proof}

The above basis allows for the following proof of what the centre of $A$ is.
\begin{proposition}\label{prop: centre of A}
    The centre $\cZ(A)$ is generated by $(\prod_{i=1}^n t_i)^{\pm 1}$.
\end{proposition}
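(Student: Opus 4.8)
The strategy is to use the PBW basis of Proposition~\ref{prop: PBW basis for A} together with the $t$-conjugation relation~\eqref{moving t_i to the right} to pin down central elements. First I would verify the easy inclusion: since $t_i E_{jk}^q t_i^{-1} = q^{\delta_{ij}-\delta_{ik}} E_{jk}^q$, the product $\prod_{i=1}^n t_i$ commutes with every $E_{jk}^q$ (the exponent $\sum_i(\delta_{ij}-\delta_{ik}) = 1 - 1 = 0$), and it obviously commutes with all $t_l$; hence $(\prod_i t_i)^{\pm 1} \in \cZ(A)$. For the reverse inclusion, take an arbitrary central element $z$ and expand it in the PBW basis~\eqref{eqn: PBW basis for A} as $z = \sum_\alpha c_\alpha\, b_\alpha$ with $b_\alpha = (E_{i_1 j_1}^q)^{k_1}\cdots (E_{i_r j_r}^q)^{k_r}\prod_{l} t_l^{m_l}$.

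The key step is to exploit that conjugation by each $t_i$ acts diagonally on the PBW basis: from~\eqref{moving t_i to the right} (and $[t_i,t_l]=0$) one gets $t_i b_\alpha t_i^{-1} = q^{w_i(\alpha)} b_\alpha$ where $w_i(\alpha) = \sum_{u} k_u(\delta_{i, i_u} - \delta_{i, j_u})$. Since $q$ is not a root of unity, the condition $t_i z t_i^{-1} = z$ for all $i$ forces $w_i(\alpha) = 0$ for every $i$ and every $\alpha$ appearing in $z$; that is, in each surviving monomial the multiset of left indices $\{i_u\}$ (with multiplicity $k_u$) equals the multiset of right indices $\{j_u\}$. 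But the PBW basis of Proposition~\ref{prop: PBW basis for A} requires that \emph{none} of the indices $i_u$ equal any of the indices $j_v$; the only way both conditions hold simultaneously is $r = 0$. Therefore $z = \sum c_\alpha \prod_l t_l^{m_l}$ lies in the commutative subalgebra $\C[t_1^{\pm 1}, \dots, t_n^{\pm 1}]$.

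It remains to determine which Laurent polynomials in the $t_l$ are central in $A$. Such a $p = p(t_1, \dots, t_n)$ must commute with every $E_{jk}^q$ for $j \neq k$. Using $t_l E_{jk}^q = q^{\delta_{lj}-\delta_{lk}} E_{jk}^q t_l$, one computes $p(t_1,\dots,t_n)\, E_{jk}^q = E_{jk}^q\, p(q^{\delta_{1j}-\delta_{1k}}t_1, \dots, q^{\delta_{nj}-\delta_{nk}}t_n) = E_{jk}^q\, p(\dots, qt_j, \dots, q^{-1}t_k, \dots)$; equating with $E_{jk}^q p(t)$ and using that the $E_{jk}^q$ are non-zero operators (and again that $q$ is not a root of unity) yields $p(t) = p(\dots, qt_j,\dots,q^{-1}t_k,\dots)$ for all $j\neq k$. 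Writing $p$ as a finite sum of monomials $\prod_l t_l^{n_l}$, this invariance forces $n_j = n_k$ for all $j \neq k$, i.e. every monomial is a power of $t_1\cdots t_n$; hence $p \in \C[(\prod_i t_i)^{\pm 1}]$, proving $\cZ(A) = \C[(\prod_i t_i)^{\pm 1}] = \langle (\prod_{i=1}^n t_i)^{\pm 1}\rangle$.

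I expect the only mild subtlety to be the bookkeeping in the second step — checking that the ``no shared index'' constraint of the PBW basis genuinely rules out all $r \geq 1$ once the balancing condition $w_i(\alpha)=0$ is imposed — but this is a short combinatorial argument rather than a real obstacle; everything else is a direct application of~\eqref{moving t_i to the right} and the linear independence already established in Proposition~\ref{prop: PBW basis for A}.
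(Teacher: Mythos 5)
Your proposal is correct and follows essentially the same route as the paper: expand a central element in the PBW basis of Proposition~\ref{prop: PBW basis for A}, use conjugation by the $t_i$ together with the fact that $q$ is not a root of unity and the no-shared-index condition of the basis to eliminate all terms with $r>0$, and then use relation~\eqref{moving t_i to the right} against the $E_{jk}^q$ ($j\neq k$) to force all exponents $m_l$ to be equal. The only cosmetic point is that cancelling $E_{jk}^q$ on the left is most cleanly justified by linear independence of the basis elements $E_{jk}^q\prod_l t_l^{m_l}$ (rather than by ``$E_{jk}^q$ is a non-zero operator''), which is exactly how the paper's argument implicitly proceeds.
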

\begin{proof}
    For the monomial~\eqref{eqn: PBW basis for A},
    if $i_1 = i_2 = \cdots = i_{\widetilde r} < i_{\widetilde r + 1} \leq \cdots \leq i_r$ then
    \begin{equation*}
        t_{i_1}(E_{i_1 j_1}^q)^{k_1} \cdots (E_{i_r j_r}^q)^{k_r} \left(\prod_{l = 1}^n  t_l^{m_l}\right) t_{i_1}^{-1} = q^{k_1 + k_2 + \cdots + k_{\widetilde r}} (E_{i_1 j_1}^q)^{k_1} \cdots (E_{i_r j_r}^q)^{k_r} \prod_{l = 1}^n  t_l^{m_l}.
    \end{equation*}
    Together with our assumption that $q$ is not a root of unity, this implies for any~$f \in \cZ(A)$ that its expansion in the PBW basis from Proposition~\ref{prop: PBW basis for A} cannot involve any basis elements for which $r > 0$.

    Similarly, since for all $1 \leq k \leq n-1$ we have
    \begin{equation*}
       \left(\prod_{l = 1}^n  t_l^{m_l}\right) E_{k, k+1}^q = q^{m_k - m_{k+1}} E_{k, k+1}^q\prod_{l = 1}^n  t_l^{m_l},
    \end{equation*}
    we get that the expansion of $f$ can contain only terms of the form $\prod_{l = 1}^n  t_l^{m_l}$ where all $m_l$ are equal. Conversely, all such terms do belong to the centre. The statement follows.
\end{proof}

Additionally, let us consider the crossed (equivalently, semi-direct or smash) product algebra $\cA \coloneqq \C\fS_n \ltimes A$. As a vector space, $\cA\cong \C\fS_n\otimes A$ with the algebra structure defined by the natural action of the symmetric group~$\fS_n$ on $A$ 
given by
 \begin{equation}\label{eqn: moving s_k to the right}
     s_kt_i^{\pm 1} = t_{s_k(i)}^{\pm 1}s_k, \qquad s_k E_{ij}^q = E_{s_k(i), s_k(j)}^qs_k,
 \end{equation}
$1 \leq k \leq n-1$, where $s_k = (k, k+1) \in \fS_n$.
This action is well-defined as it preserves the defining relations of~$A$ given in Proposition~\ref{prop: PBW basis for A}.
%In other words, we have a homomorphism $\fS_n \to \Aut(A)$ given by $w \mapsto \phi_{w}$ with the latter acting by $t_i^{\pm 1} \mapsto t_{w(i)}^{\pm 1}$ and $E_{ij}^q \mapsto E_{w(i), w(j)}^q$. Thus, indeed, we can form the semi-direct product, whose multiplication is defined by $(w, x)(w', y) = (ww', \phi_{w^{-1}}(x)y)$, which leads to the relations \ref{eqn: moving s_k to the right}. 
The algebra~$\cA$ has a presentation analogous to that of $A$, just with the extra generators $s_k$ and the extra relations~\eqref{eqn: moving s_k to the right} along with the Coxeter relations among $s_k$ that hold in $\fS_n$. 
A basis of PBW type for $\cA$ consists of elements of the form 
\begin{align}\label{eqn: PBW basis for cA}
        w(E_{i_1 j_1}^q)^{k_1} \cdots (E_{i_r j_r}^q)^{k_r} \prod_{l = 1}^n  t_l^{m_l},  \qquad\qquad (w \in \fS_n)
\end{align}
with the same restrictions on the indices as above in Proposition~\ref{prop: PBW basis for A}.

\begin{proposition}\label{prop: centre of cA}
    The centre of $\cA$ satisfies $\cZ(\cA) = \cZ(A) = \langle (\prod_{i=1}^n t_i)^{\pm 1} \rangle$.
\end{proposition}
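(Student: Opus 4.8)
The plan is to show $\cZ(\cA)\subseteq \cZ(A)$, since the reverse inclusion is clear: the elements $(\prod_{i=1}^n t_i)^{\pm 1}$ are central in $A$ by Proposition~\ref{prop: centre of A}, and they are also fixed by the $\fS_n$-action~\eqref{eqn: moving s_k to the right} (the product $\prod_i t_i$ is symmetric), hence they commute with all $w\in\fS_n$ as well, so they lie in $\cZ(\cA)$.

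For the inclusion $\cZ(\cA)\subseteq\cZ(A)$, I would take an arbitrary $f\in\cZ(\cA)$ and expand it in the PBW basis~\eqref{eqn: PBW basis for cA}, grouping the terms according to the group element $w\in\fS_n$:
\[
    f = \sum_{w\in\fS_n} w\, a_w, \qquad a_w\in A,
\]
where each $a_w$ is a linear combination of basis elements of the form~\eqref{eqn: PBW basis for A}. The first step is to use centrality of $f$ with respect to the $t_i$'s (which lie in $A\subseteq\cA$) exactly as in the proof of Proposition~\ref{prop: centre of A}: conjugating $w a_w$ by $t_{i}$ sends it to $t_i w a_w t_i^{-1} = w\,(t_{w^{-1}(i)} a_w t_{w^{-1}(i)}^{-1})$ using~\eqref{eqn: moving s_k to the right}, and since the $w$-components in a PBW expansion are independent, we may argue $w$ by $w$. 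The key point is that conjugation by a suitable $t_j$ scales a monomial~\eqref{eqn: PBW basis for A} by $q$ raised to a nonzero integer whenever $r>0$ (pick $j=i_1$, as in Proposition~\ref{prop: centre of A}), so since $q$ is not a root of unity, each $a_w$ can only involve the basis elements with $r=0$, i.e.\ $a_w\in\C[t_1^{\pm1},\dots,t_n^{\pm1}]$.

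Now I would exploit that $f$ commutes with each $s_k$. Since $s_k$ commutes with monomials $\prod_l t_l^{m_l}$ up to permuting indices, and $a_w\in\C[t_1^{\pm1},\dots,t_n^{\pm1}]$, the condition $s_k f = f s_k$ gives, comparing the $w$-components on both sides (note $s_k(w a_w) = (s_k w)(\,{}^{s_k}a_w)$ where ${}^{s_k}a_w$ means applying the permutation $s_k$ to the $t$-indices of $a_w$, while $(w a_w)s_k = w s_k ({}^{s_k}a_w) = (w s_k)({}^{s_k}a_w)$): this yields $a_{s_k w} = a_{w s_k}$ for all $w$ and all $k$. Since $\fS_n$ is generated by the $s_k$, left and right translations act transitively and the relation $a_{uw} = a_{wu}$ for all $u,w$ forces $a_w = a_e$ to depend only on the conjugacy class of $w$; in particular it is enough to conclude that the support of $f$ is a union of conjugacy classes, but more is true. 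A cleaner route: from $s_k f s_k = f$ combined with $a_w\in\C[t^{\pm1}]$, I compare the coefficient of a fixed monomial $\prod_l t_l^{m_l}$ and of a fixed $w$; this says the coefficient function is invariant under simultaneously conjugating $w$ by $s_k$ and permuting the exponent vector $(m_l)$ by $s_k$. Finally I apply centrality against the $E_{ij}^q$: using $\bigl(\prod_l t_l^{m_l}\bigr)E_{k,k+1}^q = q^{m_k-m_{k+1}}E_{k,k+1}^q\prod_l t_l^{m_l}$ and the analogue for $w E_{ij}^q = E_{w(i),w(j)}^q w$, the condition $[f, E_{k,k+1}^q]=0$ forces, for each $w$ in the support, all the relevant exponents to coincide so that $a_w = \lambda_w\bigl(\prod_l t_l\bigr)^{m}$, and then the $\fS_n$-invariance plus the requirement that $f$ commute with arbitrary $w'\in\fS_n$ (which permutes the $E^q_{ij}$ nontrivially) kills every $w\neq e$, leaving $f\in\C[(\prod_l t_l)^{\pm1}]=\cZ(A)$.

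The main obstacle I anticipate is the bookkeeping in the last step: carefully combining the $s_k$-invariance with centrality against the root vectors $E^q_{ij}$ to rule out all group elements $w\neq e$ in the support of $f$. One has to be slightly careful because conjugation by $w$ acts both on the $\fS_n$-label and on the $t$-indices, so the cleanest approach is probably to first establish $a_w\in\C[t^{\pm1}]$ for all $w$ (easy, as above), then note $\C\fS_n\ltimes\C[t^{\pm1}]$ is just the group algebra of $\Z^n\rtimes\fS_n$ and directly compute its centre: a central element must be supported on $w$ with $w m = m$ forced for its exponents, and commuting with all $E^q_{ij}$ (equivalently, the combined condition already used in Proposition~\ref{prop: centre of A} applied inside each coset) pins down $m$ to be a constant vector and $w=e$. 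I would present this as: (i) reduce to $f\in\C\fS_n\ltimes\C[t^{\pm1}]$, (ii) quote/adapt the standard description of the centre of a group algebra of a semidirect product, (iii) intersect with the extra constraints coming from the $E^q_{ij}$, obtaining the claim.
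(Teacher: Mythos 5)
Your overall architecture could be made to work, but the first reduction contains a genuine error. When you conjugate a $w$-component by $t_i$, relation \eqref{eqn: moving s_k to the right} gives $t_i\,w\,a_w\,t_i^{-1} = w\,t_{w^{-1}(i)}\,a_w\,t_i^{-1}$, \emph{not} $w\,\bigl(t_{w^{-1}(i)}\,a_w\,t_{w^{-1}(i)}^{-1}\bigr)$: the rightmost factor $t_i^{-1}$ sits to the right of $a_w\in A$ and never crosses $w$, so it is not converted into $t_{w^{-1}(i)}^{-1}$. Hence for $w\neq\id$ centrality of $f$ does not say that $a_w$ is fixed by conjugation with all the $t_j$; it says $t_{w^{-1}(i)}\,a_w = a_w\,t_i$, a twisted relation, to which your argument ``conjugation by $t_{i_1}$ scales monomials with $r>0$ by a nontrivial power of $q$'' does not apply. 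So, as written, the conclusion $a_w\in\C[t_1^{\pm1},\dots,t_n^{\pm1}]$ for $w\neq\id$ is unjustified, and everything you build afterwards on the crossed product $\C\fS_n\ltimes\C[t^{\pm1}]$ rests on that unproved step.

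The repair is immediate, and it is precisely the paper's proof: from the correct relation $t_j a_w = a_w t_i$ with $j \coloneqq w^{-1}(i)\neq i$, group the monomials \eqref{eqn: PBW basis for A} occurring in $a_w$ according to their $E$-part; since $t_j E = q^{a}E t_j$ for some integer $a$, each group yields $E\,u\,(q^{a}t_j - t_i)=0$ with $u$ a Laurent polynomial in the $t$'s, and because the Laurent polynomial ring is a domain and $q^{a}t_j\neq t_i$ when $i\neq j$, every such $u$ vanishes. Thus $a_w=0$ for all $w\neq\id$, i.e.\ $f\in A$, and then $\cZ(\cA)\cap A\subseteq\cZ(A)$ together with Proposition~\ref{prop: centre of A} finishes the argument. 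In particular, your later machinery (the $s_k$-invariance bookkeeping, the centre of the group algebra of $\Z^n\rtimes\fS_n$, and the $E^q_{ij}$-commutation, which by itself would indeed kill the terms with $w\neq\id$ once each $a_w$ is reduced to a power of $\prod_l t_l$) becomes unnecessary; it was only needed because the first step, as stated, does not deliver what you claim.
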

\begin{proof}
    Since elements of $\cZ(A)$, described in Proposition~\ref{prop: centre of A}, are $\fS_n$-invariant, we have $\cZ(A) \subseteq \cZ(\cA)$. Since $A \subset \cA$, we have $\cZ(\cA) \cap A \subseteq \cZ(A)$. 
    %We will argue that $\cZ(\cA) \subseteq A$, hence $\cZ(\cA) \cap A = \cZ(\cA)$, and so $\cZ(\cA) \subseteq \cZ(A) \subseteq \cZ(\cA)$, as required. 
It is now sufficient to show that     $\cZ(\cA) \subseteq A$.

    Denote the elements of the basis~\eqref{eqn: PBW basis for cA} schematically as $wET$ where $E = (E_{i_1 j_1}^q)^{k_1} \cdots (E_{i_r j_r}^q)^{k_r}$ and $T = \prod_{l = 1}^n  t_l^{m_l}$. We have $t_iwET \sim wETt_{w^{-1}(i)}$, where the proportionality factor is a power of $q$. For any~$f \in \cZ(\cA)$, in its expansion in the basis~\eqref{eqn: PBW basis for cA} let us group terms that have the same~$w$ and~$E$ parts. Each such group has the form $wE\sum_k \a_k T^{(k)}$, where $\a_k \in \C$ and $T^{(k)} = \prod_{l = 1}^n  t_l^{m_{kl}}$. Any~$t_i$ has to commute individually with each of the groups. Suppose $w \neq \id$. Take $i, j$ such that $j = w^{-1}(i) \neq i$. We have $t_jE = q^aEt_j$ for some $a$. Commutativity requires
    \begin{equation*}
        wE\sum_k \a_k T^{(k)}t_i = t_i wE \sum_k \a_k T^{(k)} = q^a wE \sum_k \a_k T^{(k)} t_j,
    \end{equation*}
    or equivalently, $wE (q^a - t_it_j^{-1}) \sum_k \a_k T^{(k)} = 0$, which forces $\sum_k \a_k T^{(k)} = 0$.

    This showed for any~$f \in \cZ(\cA)$ that its expansion in the basis~\eqref{eqn: PBW basis for cA} cannot involve any basis elements for which $w \neq \id$, as was required.
\end{proof}

In the next section, we define inside the DAHA of type $GL_n$ a subalgebra that deforms the algebra $\cA$ in a natural way.

\section{A subalgebra of a double affine Hecke algebra} \label{sec: subalgebra}
\subsection{DAHA of type $GL_n$}\label{sec: DAHA}
We start by recalling the definition of the DAHA of type $GL_n$. 
Let $\tau$ be a formal parameter, and $q \in \C^\times$ not a root of unity. Let $\C_\tau$ $=\C[\tau^{\pm 1}]$ denote the ring of Laurent polynomials in the variable $\tau$. The DAHA~$\bH_n = \bH_{n, q,\tau}$ of type~$GL_n$ is the (unital, associative) $\C_\tau$-algebra generated by $T_k$ ($1 \leq k \leq n-1$), $X_i^{\pm 1}$, and $Y_i^{\pm 1}$ ($1 \leq i \leq n$) with the following relations~\cite[p.~100]{Cherednik}:
\begin{align}
    &(T_k-\tau)(T_k +\tau^{-1}) = 0, \label{eqn: Hecke relations} \\
    &T_l T_{l+1} T_l = T_{l+1} T_l T_{l+1} \quad (1 \leq l \leq n-2), \quad \ [T_k, T_l] = 0 \textnormal{ if } |k-l| > 1, \label{eqn: braid relations} \\
    &T_kX_kT_k = X_{k+1}, \qquad [T_k, X_i] = 0 \textnormal{ for } i \neq k, k+1, \label{eqn: T_k and X_i}\\
    &T_k^{-1} Y_k T_k^{-1} = Y_{k+1}, \quad \, [T_k, Y_i] = 0 \textnormal{ for } i \neq k, k+1, \label{eqn: T_k and Y_i}\\
    &\tY X_i = q X_i \tY,  \nonumber \\ %\qquad \tX Y_i = q^{-1} Y_i \tX  \\
    &Y_2^{-1} X_1 Y_2 X_1^{-1} = T_1^2,  \nonumber \\
    &\textnormal{and the Laurent relations for } X_i^{\pm 1}, \ Y_i^{\pm 1} \textnormal{ (that is, } X_iX_i^{-1} = X_i^{-1}X_i = 1, \nonumber \\
    &[X_i, X_j] = 0, \ 1 \leq j \leq n; \textnormal{ similarly for } Y_i^{\pm 1} \textnormal{)}, \nonumber 
\end{align}
where $\tY = \prod_{i=1}^n Y_i$. Relations~\eqref{eqn: T_k and Y_i} imply that $\tY$ commutes with all $T_k$, which generate a subalgebra isomorphic to the Hecke algebra of type $A_{n-1}$.
%By \cite[Theorem~1.4.8]{Cherednik}, the map $X_i \mapsto Y_i^{-1}$, $Y_i \mapsto X_i^{-1}$, $T_k \mapsto T_k$ defines an anti-automorphism of $\bH_n$.

The DAHA $\bH_n$ admits a grading in which $\deg T_k = \deg Y_i^{\pm 1} = 0$ and $\deg X_i^{\pm 1} = \pm 1$. The degree zero part $\bH_n^{(0)}$ is generated by the set of elements~$T_k$, $Y_i^{\pm 1}$, and $X_i X_j^{-1}$.

As in \cite[(1.4.57)]{Cherednik}, let $\pi = Y_1^{-1} T_1 \cdots T_{n-1}$. Relations~\eqref{eqn: T_k and Y_i} imply that
\begin{equation}\label{eqn: Y_i in terms of pi}
    Y_i = T_i T_{i+1} \cdots T_{n-1} \pi^{-1} T_1^{-1}T_2^{-1} \cdots T_{i-1}^{-1}
\end{equation}
for all $1 \leq i \leq n$ (for $i=1$ and $i=n$, this is to be interpreted as $Y_1 = T_1 \cdots T_{n-1} \pi^{-1}$ and $Y_n = \pi^{-1} T_1^{-1} \cdots T_{n-1}^{-1}$, respectively). 

The algebra $\bH_n$ admits a faithful representation, known as the polynomial representation, on the space of Laurent polynomials $\C_\tau[X_1^{\pm 1}, \dots, X_n^{\pm 1}]$. This representation is determined by
\begin{align}
    &T_k \mapsto \tau s_k + \frac{\tau - \tau^{-1}}{X_kX_{k+1}^{-1} - 1}(s_k - 1), \label{eqn: action of T_k} \\
    &\pi^{-1}(X_1^{a_1} X_2^{a_2} \cdots X_n^{a_n}) = q^{a_1}X_1^{a_2} \cdots X_{n-1}^{a_n} X_n^{a_1} \qquad (a_i \in \Z),
    \label{eqn: action of pi}
\end{align}
and the action of $X_i^{\pm 1}$ by multiplication \cite[p.~101]{Cherednik}. Thus at $\tau = 1$, the elements $T_k$ act as~$s_k \in \fS_n$; and by equality~\eqref{eqn: Y_i in terms of pi} and formula~\eqref{eqn: action of pi}, the elements $Y_i$ act at $\tau = 1$ as the $q$-shift operators $t_i$ from Section~\ref{sec: quantum group}. 
% \begin{align*}
%     &T_i\cdots T_{n-1} \pi^{-1} T_1^{-1} \cdots T_{i-1}^{-1}(X_1^{a_1} \cdots X_n^{a_n}) =  T_i\cdots T_{n-1} \pi^{-1}(X_1^{a_i}X_2^{a_1} \cdots X_i^{a_{i-1}} X_{i+1}^{a_{i+1}} \cdots X_n^{a_n}) \\
%     &= q^{a_i} T_i\cdots T_{n-1} (X_1^{a_1}X_2^{a_2} \cdots X_{i-1}^{a_{i-1}} X_{i}^{a_{i+1}} \cdots X_{n-1}^{a_n}X_n^{a_i}) = q^{a_i}X_1^{a_1} \cdots X_n^{a_n}.
% \end{align*}
 
The trigonometric degeneration of $\bH_n$, obtained by putting 
\begin{equation}\label{eqn: trigonometric degeneration}
    Y_i = e^{\hbar \widehat{y}_i}, \quad q = e^\hbar, \quad \tau = e^{-\hbar c/2}, \quad T_k = s_ke^{-\hbar c s_k/2}
\end{equation}
and letting $\hbar \to 0$ (equivalently $q \to 1$), recovers the trigonometric Cherednik algebra $\bH_n^{\trig}$ of type $GL_n$ from Section~\ref{sec: RCA}.

In the next subsection, we describe a subalgebra, which we denote~$\bH^{\fgl_n}$. As we explain, this subalgebra is a $q$-deformation of the degree zero part~$H^{\fgl_n}$ of the RCA of type $GL_n$, and it is a $\tau$-deformation of the algebra $\cA \cong \C\fS_n \ltimes (U_q(\fgl_n)/I_q)$ from Section~\ref{sec: quantum group}.

\subsection{Subalgebra $\bH^{\fgl_n}$} \label{sec: Hgln}
We will use throughout the following shorthand notations
\begin{align*}
    &T_{ij}^+ = \begin{cases}
        T_i T_{i+1} \cdots T_j \textnormal{ if } i \leq j, \\
        1 \textnormal{ if } i > j,
    \end{cases}
    \qquad 
    (T^{-1})_{ij}^+ = \begin{cases}
        T_i^{-1} T_{i+1}^{-1} \cdots T_j^{-1} \textnormal{ if } i \leq j, \\
        1 \textnormal{ if } i > j,
    \end{cases} \\
    &T_{ij}^- = \begin{cases}
        T_i T_{i-1} \cdots T_j \textnormal{ if } i \geq j, \\
        1 \textnormal{ if } i < j,
    \end{cases}
    \qquad 
    (T^{-1})_{ij}^- = \begin{cases}
        T_i^{-1} T_{i-1}^{-1} \cdots T_j^{-1} \textnormal{ if } i \geq j, \\
        1 \textnormal{ if } i < j,
    \end{cases} 
    \\
            &(\cR^\e)_{ij}^{\pm} \coloneqq \begin{cases}
                (T^\e)_{i-1, j+1}^- T_j^{\pm 2} (T^{-\e})_{j+1, i-1}^+ \textnormal{ if } i > j, \\
            1 \textnormal{ if } i \leq  j,
            \end{cases}
            \textnormal{ with } \e \in \{1, -1\}. 
\end{align*}
We write $\cR$ for $\cR^1$ and $T$ for $T^1$. We note that $(\cR^\e)_{ij}^- (\cR^\e)_{ij}^+ = 1 = T_{ij}^+ (T^{-1})_{ji}^-$.

Let $D_n = (q-q^{-1})^{-1}X_n^{-1}(Y_n - Y_n^{-1})$, and let 
\begin{equation}\label{eqn: definition of D_i}
    D_i = T_{i, n-1}^+ D_n T_{n-1, i}^- = 
(q-q^{-1})^{-1}X_i^{-1}(T^{-1})_{i, n-1}^+ (Y_n - Y_n^{-1})T_{n-1, i}^-
\end{equation}
for $1 \leq i \leq n-1$.
With the assignments~\eqref{eqn: trigonometric degeneration}, upon performing the trigonometric degeneration~$q \to 1$,
we get $D_i \to y_i$, where we implicitly use the embedding~\eqref{eqn: embedding of H_n}. 
At~$\tau = 1$, the elements $D_i$ act in the polynomial representation of the DAHA as the operators $d_i$ from Section~\ref{sec: quantum group}.

Let  
$e_{ij}= X_iD_j$ ($i,j \in \{1, \dots, n\}$). We now define the main object of this paper.
Inside $\bH_n$, we define~$\bH^{\fgl_n} = \bH^{\fgl_n}_{q, \tau}$ as the following subalgebra:
\begin{equation*}
    \bH^{\fgl_n} = \langle T_k, \, Y_i^{\pm 1}, \, e_{ij} \colon 1 \leq k \leq n-1, \, 1 \leq i,j \leq n, \, i \neq j \rangle \subset \bH_n.
\end{equation*}
Note that, by equality~\eqref{eqn: definition of D_i}, we get
\begin{equation*}
    e_{ii} = (q-q^{-1})^{-1}(T^{-1})_{i, n-1}^+ (Y_n - Y_n^{-1})T_{n-1, i}^- \in \bH^{\fgl_n}.
\end{equation*}

At $\tau = 1$, the generators
$T_k, \, Y_i^{\pm 1}$, and $e_{ij}$ 
of $\bH^{\fgl_n}$ act (in the polynomial representation) respectively as $s_k$, $t_i^{\pm 1}$, and $E_{ij}^q$, which generate the algebra $\cA$ from Section~\ref{sec: quantum group}.
In the trigonometric limit $q \to 1$, we get $T_k \to s_k$,  $Y_i^{\pm 1}\to 1$, and $e_{ij} \to X_iy_j$ for all $i,j$, which are the generators of $H^{\fgl_n}$.

We note that $\bH^{\fgl_n} \subset \bH_n^{(0)}$, and $\bH^{\fgl_n} \neq \bH_n^{(0)}$ for $n\ge 2$. Indeed, in the limit $q \to 1$ we do not get, for example, the elements $X_iX_j^{-1}$ for $i \ne j$.

In the next remark, we explain that the algebra $\bH^{\fgl_n}$ is isomorphic to a subalgebra of a cyclotomic DAHA introduced in~\cite{BEF}.

\begin{remark}\label{rem: BEF's D_i}
Elements similar to but different from $D_i$ appear in the definition of the cyclotomic DAHA $\HH_{n,t}^l(Z,q^{-1})$ for $l=2$, $Z_1 = 1$, $Z_2 = -1$, $Z = (Z_1, Z_2)$ \cite[Section~3.6]{BEF}, where we assume that $t$ is a formal parameter and $q$ is numerical. Let us make the relation more precise. The following elements~$D_i^{\text{BEF}} \equiv D_i^{(2)}$ were considered in \cite{BEF}: 
\begin{equation}\label{eqn: D_i^(2)}
    D_i^{\text{BEF}} = (T^{-1})_{i-1,1}^- X_1^{-1}(Y_1^2 - 1)(T^{-1})_{1,i-1}^+.
\end{equation}
The DAHA $\HH_{n,t}(q)$ considered in \cite{BEF} is isomorphic to the DAHA~$\bH_n$ considered in this paper via an isomorphism $g \colon \HH_{n,t}(q) \to \bH_n$ given by 
\begin{equation*}
    g(T_k) = T_k, \quad g(X_i) = Y_i^{-1}, \quad g(Y_i) = X_i, \quad g(\mathbf{t}) = \tau,
\end{equation*}
where $t = \mathbf{t}^2$, and $q$ from \cite{BEF} corresponds to our~$q$.
According to \cite{BEF}, there is an isomorphism $\varphi\colon \HH_{n,t}(q^{-1}) \to \HH_{n,t}(q)$ given by
\begin{equation*}
    \varphi(T_k) = T_k^{-1}, \quad \varphi(X_i) = Y_i^{-1}, \quad 
    \varphi(Y_i) = X_i^{-1}, \quad  
    \varphi(\mathbf{t}) = \mathbf{t}^{-1}.
\end{equation*}
Also, it is straightforward to check that the DAHA $\bH_n$ has an automorphism~$h$ given by 
\begin{equation*}
    h(T_k) = T_{n-k}, \quad h(X_i) = X_{n-i+1}^{-1}, \quad h(Y_i) = Y_{n-i+1}^{-1}, \quad h(\tau) = \tau.
\end{equation*}
By combining these morphisms and applying them to $D_{n-i+1}^{\text{BEF}} \in \HH_{n,t}^l(Z,q^{-1})$, we get
%\begin{equation}
%\label{eqn: D_i vs D_i^(2)} 
% (h \circ g \circ \varphi)(D_{n-i+1}^{\text{BEF}}) = T_{i, n-1}^+ X_n^{-1}(Y_n^{-2} - 1)T_{n-1, i}^- 
%    = (q^{-1}-q) D_i  Y_i^{-1} T_{i,n-1}^+
%    T_{n-1, i}^-.  
%\end{equation}
 \begin{align}
    (h \circ g \circ \varphi)(D_{n-i+1}^{\text{BEF}}) &= T_{i, n-1}^+ X_n^{-1}(Y_n^{-2} - 1)T_{n-1, i}^- \nonumber \\
    &= (q^{-1}-q) D_i  Y_i^{-1} T_{i,n-1}^+
    T_{n-1, i}^-. \label{eqn: D_i vs D_i^(2)}
\end{align}
Equivalently, $D_i = (q^{-1}-q)^{-1}(h \circ g \circ \varphi)(D_{n-i+1}^{\text{BEF}})(T^{-1})_{i,n-1}^+ Y_n T_{n-1, i}^-$. It follows that $(\varphi \circ g^{-1} \circ h)(D_i) \in \HH_{n,t}^2((1,-1), q^{-1})$. This implies that $\bH^{\fgl_n}$ is isomorphic to a subalgebra of $\HH_{n,t}^2((1,-1), q^{-1}) \subset \HH_{n,t}^1(1, q^{-1})$. 
\hfill \qedsymbol
\end{remark}

The choice~\eqref{eqn: definition of D_i} of the elements $D_i$ is needed in order to be able to make the connection of the subalgebra $\bH^{\fgl_n}$ with the quantum group $U_q(\fgl_n)$. We now derive some properties of $D_i$ for later use.

\subsection{Properties of $D_i$}\label{sec: D}
We begin by some technical preliminaries. %to be used in the proofs of some of the results below. 
The following lemma and its corollary record some braid group identities.
\begin{lemma}\label{lemma: braid identities}
For all $n-1 \geq k > j\geq i \geq 1$ and $\varepsilon \in \{\pm 1\}$, we have
\begin{align*}
    T_{j+1}^{\varepsilon} T_{ik}^+  = T_{ik}^+ T_j^{\varepsilon}, \qquad  
    T_{ki}^- T_{j+1}^{\varepsilon}  = T_j^{\varepsilon} T_{ki}^-.
\end{align*}
\end{lemma}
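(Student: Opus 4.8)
The claim is a pair of braid-group identities: for $n-1 \geq k > j \geq i \geq 1$ and $\varepsilon \in \{\pm 1\}$,
\[
    T_{j+1}^{\varepsilon} T_{ik}^+ = T_{ik}^+ T_j^{\varepsilon}, \qquad T_{ki}^- T_{j+1}^{\varepsilon} = T_j^{\varepsilon} T_{ki}^-.
\]
Recall $T_{ik}^+ = T_i T_{i+1} \cdots T_k$ and $T_{ki}^- = T_k T_{k-1} \cdots T_i$.

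The plan is to prove the first identity and then deduce the second. For the first identity, I would split $T_{ik}^+ = T_i \cdots T_{j-1} \cdot T_j T_{j+1} \cdot T_{j+2} \cdots T_k$ and push $T_{j+1}^\varepsilon$ rightward through the product one factor at a time. Since $T_{j+1}$ commutes with each of $T_i, \dots, T_{j-1}$ (indices differing by more than $1$, using \eqref{eqn: braid relations}), we have $T_{j+1}^\varepsilon T_i \cdots T_{j-1} = T_i \cdots T_{j-1} T_{j+1}^\varepsilon$. The key step is the local move
\[
    T_{j+1}^{\varepsilon} T_j T_{j+1} = T_j T_{j+1} T_j^{\varepsilon},
\]
which for $\varepsilon = 1$ is exactly the braid relation $T_{j+1} T_j T_{j+1} = T_j T_{j+1} T_j$, and for $\varepsilon = -1$ follows by multiplying that relation on the left and right appropriately: from $T_{j+1} T_j T_{j+1} = T_j T_{j+1} T_j$ one gets $T_j T_{j+1} = T_{j+1}^{-1} T_j T_{j+1} T_j$, hence $T_{j+1}^{-1} T_j T_{j+1} = T_j T_{j+1} T_j^{-1}$. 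After this move, the trailing factor $T_j^\varepsilon$ commutes with $T_{j+2}, \dots, T_k$ (again indices differ by more than $1$), so it passes all the way to the right, yielding $T_{ik}^+ T_j^\varepsilon$. One should note the edge cases: if $j = i$ the prefix $T_i \cdots T_{j-1}$ is empty, and if $j = k-1$ the suffix $T_{j+2} \cdots T_k$ is empty; the constraint $k > j \geq i$ guarantees both $T_j$ and $T_{j+1}$ actually appear in $T_{ik}^+$, so the argument is valid.

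For the second identity, the cleanest route is to apply the anti-automorphism of the Hecke algebra that reverses words and fixes each $T_k$ (this exists because the Hecke relations \eqref{eqn: Hecke relations} and braid relations \eqref{eqn: braid relations} are invariant under word reversal). Applying it to the first identity sends $T_{j+1}^\varepsilon T_{ik}^+ = T_{ik}^+ T_j^\varepsilon$ to $(T_{ik}^+)^{\mathrm{op}} T_{j+1}^\varepsilon = T_j^\varepsilon (T_{ik}^+)^{\mathrm{op}}$, and since reversing the word $T_i T_{i+1} \cdots T_k$ gives $T_k T_{k-1} \cdots T_i = T_{ki}^-$, this is precisely $T_{ki}^- T_{j+1}^\varepsilon = T_j^\varepsilon T_{ki}^-$. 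Alternatively, one can run the same direct push-through argument on $T_{ki}^- = T_k \cdots T_{j+1} T_j \cdot T_{j-1} \cdots T_i$ from the right.

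I do not anticipate a real obstacle here; the only thing to be careful about is bookkeeping of the index ranges and the empty-product edge cases, and isolating the single genuinely nontrivial step (the length-three braid move, and its $\varepsilon = -1$ variant). Everything else is repeated application of the far-commutativity relation $[T_a, T_b] = 0$ for $|a-b| > 1$.
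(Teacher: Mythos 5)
Your proof is correct and follows essentially the same route as the paper: push $T_{j+1}^{\varepsilon}$ through the commuting prefix, apply the local move $T_{j+1}^{\varepsilon}T_jT_{j+1}=T_jT_{j+1}T_j^{\varepsilon}$, and push $T_j^{\varepsilon}$ through the commuting suffix. The paper disposes of the second identity with ``similarly''; your word-reversal anti-automorphism is just a tidy packaging of that same observation.
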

\begin{proof}
    By using the braid relations, we compute
    \begin{equation*}
        T_{j+1}^{\varepsilon} T_{ik}^+ = T_{i, j-1}^+ T_{j+1}^{\e} T_j T_{j+1} T_{j+2, k}^+ = T_{i, j-1}^+ T_j T_{j+1} T_j^{\e} T_{j+2, k}^+ 
        = T_{ik}^+ T_j^{\varepsilon},
    \end{equation*}
    as required. Similarly for the other relation.
\end{proof}

The following is a straightforward corollary of the preceding lemma.
\begin{corollary}\label{cor: braid identities}
For all $n \geq j > i \geq 1$ and $\varepsilon \in \{\pm 1\}$, we have
    \begin{enumerate}[(i)]
        \item $(T^{\e})_{j, n-1}^+ T_{i, n-1}^+ = T_{i, n-1}^+(T^{\e})_{j-1, n-2}^+$,
        \item $(T^{\e})_{n-1, j}^- T_{i, n-1}^+ = T_{i, n-1}^+(T^{\e})_{n-2, j-1}^-$,
        \item $T_{n-1, i}^- (T^\e)_{n-1, j}^- = (T^\e)_{n-2, j-1}^-T_{n-1, i}^-$,
        \item $T_{n-1, i}^- (T^\e)_{j, n-1}^+ = (T^\e)_{j-1, n-2}^+T_{n-1, i}^-$.
        %\item $T_{n-1} \cdots T_2 T_1 T_2 \cdots T_{n-1} = T_1 \cdots T_{n-2}T_{n-1}T_{n-2} \cdots T_1$
        %\item $T_{n-1} \cdots T_2 T_1 T_{n-1} \cdots T_2 = T_{n-2} \cdots T_2 T_1 T_{n-1} \cdots T_2 T_1$
        %\item $T_2 \cdots T_{n-1} T_1 T_2 \cdots T_{n-1} = T_1 T_2 \cdots T_{n-1} T_1 T_2 \cdots T_{n-2}$.
    \end{enumerate}
\end{corollary}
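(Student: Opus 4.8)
The plan is to derive all four identities directly from Lemma~\ref{lemma: braid identities} by iterating it, i.e.\ by moving a single generator $T_{j+1}^{\e}$ (or $T_j^{\e}$) step by step past the long product $T_{i,n-1}^+$ or $T_{n-1,i}^-$. The key observation is that the products $(T^\e)_{j,n-1}^+$ and $(T^\e)_{n-1,j}^-$ appearing on the left-hand sides are themselves ordered products of the single generators $T_j^\e, T_{j+1}^\e, \dots, T_{n-1}^\e$, so it suffices to commute each of these single generators past $T_{i,n-1}^+$ (resp.\ $T_{n-1,i}^-$) and keep track of the index shift $T_{m+1}^\e \rightsquigarrow T_m^\e$ that Lemma~\ref{lemma: braid identities} produces.

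For part~(i): start from $(T^\e)_{j,n-1}^+ T_{i,n-1}^+$. Applying Lemma~\ref{lemma: braid identities} with the choice of generator $T_j^\e = T_{(j-1)+1}^\e$ and the long word $T_{i,n-1}^+$ (valid since $n-1 \ge n-1 > j-1 \ge i$ when $j > i$; the boundary cases $j=i$ or involving $j-1 < i$ must be checked separately and are either trivial or follow from $[T_k,T_l]=0$ for $|k-l|>1$), we move $T_j^\e$ to the right of $T_{i,n-1}^+$, turning it into $T_{j-1}^\e$. Repeating with $T_{j+1}^\e, \dots, T_{n-2}^\e$ — each of which also gets its index lowered by one as it passes $T_{i,n-1}^+$ — we are left with $(T^\e)_{j-1,n-3}^+$ on the right; finally $T_{n-1}^\e$ passes through and becomes $T_{n-2}^\e$, assembling the full product $(T^\e)_{j-1,n-2}^+$. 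Hence $(T^\e)_{j,n-1}^+ T_{i,n-1}^+ = T_{i,n-1}^+ (T^\e)_{j-1,n-2}^+$. Parts~(ii)--(iv) are entirely analogous: (ii) uses the first identity of Lemma~\ref{lemma: braid identities} applied to the descending word $(T^\e)_{n-1,j}^-$ read from the generator of smallest index outward; (iii) and (iv) use the second identity $T_{ki}^- T_{j+1}^\e = T_j^\e T_{ki}^-$ of Lemma~\ref{lemma: braid identities} to push generators leftward past $T_{n-1,i}^-$, again with the uniform index shift by $-1$. In each case one also records the already-noted factorisation $T_{ij}^+(T^{-1})_{ji}^- = 1$ and the analogous ones to handle the degenerate ranges cleanly.

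The routine bookkeeping is the index ranges: Lemma~\ref{lemma: braid identities} requires $n-1 \ge k > j \ge i \ge 1$, so when a generator $T_m^\e$ with $m$ small (close to $i$) needs to pass $T_{i,n-1}^+$, one must separate the sub-case $m \le i-1$, where $T_m^\e$ commutes with the relevant initial segment outright by $[T_k,T_l]=0$, $|k-l|>1$, from the sub-case $m \ge i$, where Lemma~\ref{lemma: braid identities} applies. This case split is the only genuine obstacle, and it is mild; the algebraic content is entirely contained in Lemma~\ref{lemma: braid identities}. An alternative, cleaner route — which I would mention — is to observe that all four identities are images of one another under the anti-automorphism of the braid group sending $T_k \mapsto T_k$ and reversing words, together with the diagram automorphism $T_k \mapsto T_{n-k}$, so that it suffices to prove (i) and obtain (ii)--(iv) by symmetry.
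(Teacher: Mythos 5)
Your overall strategy --- iterating Lemma~\ref{lemma: braid identities}, with each generator's index dropping by one as it crosses the long word --- is exactly the argument the paper intends (it gives no written proof, calling the statement a straightforward corollary of that lemma), and your description of parts (ii)--(iv) is correct. In part (i), however, the first step as you state it is not a licit application of the lemma: $T_j^{\e}$ is the \emph{leftmost} factor of $(T^{\e})_{j,n-1}^+$ and is separated from $T_{i,n-1}^+$ by $T_{j+1}^{\e}\cdots T_{n-1}^{\e}$, with which it cannot simply be slid past (it fails to commute with $T_{j+1}^{\e}$), so you cannot move $T_j^{\e}$ across first and $T_{n-1}^{\e}$ last. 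The repair is immediate: peel generators off from the end adjacent to $T_{i,n-1}^+$, i.e.\ move $T_{n-1}^{\e}$ first, then $T_{n-2}^{\e}$, and so on, finishing with $T_j^{\e}$; equivalently, induct on the block length, writing $(T^{\e})_{j,n-1}^+ = T_j^{\e}(T^{\e})_{j+1,n-1}^+$, commuting the shorter block through by the inductive hypothesis and then the now-adjacent $T_j^{\e}$ by the lemma. Note also that your case split for generators of small index is vacuous: every generator you move has index $m\geq j>i$, so Lemma~\ref{lemma: braid identities} applies each time with $j'=m-1\geq i$ and $k=n-1>j'$; the only degenerate situations are empty products (e.g.\ $j=n$), which hold trivially.

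Your proposed symmetry shortcut is only half correct. The word-reversal anti-automorphism of the Hecke algebra (which fixes each $T_k$ and reverses products; the defining relations are palindromic) does carry (i) to (iii) and (ii) to (iv), so it genuinely halves the work. But the diagram automorphism $T_k\mapsto T_{n-k}$ sends $T_{i,n-1}^+$ to $T_{n-i,1}^-$, so it transforms (i) into an identity anchored at the bottom of the index range rather than into (ii); hence (i) and (ii) still require separate (parallel) computations, one for each identity of Lemma~\ref{lemma: braid identities}.
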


The next lemma gives some identities for $(\cR^\e)_{ji}^{\pm}$ in the Hecke algebra.
\begin{lemma}\label{lemma: Hecke algebra identity}
    For all $n \geq j > i \geq 1$ and $\varepsilon \in \{\pm 1\}$, we have
    $$(\cR^\e)_{ji}^{\pm} = (T^{-\e})_{i,j-2}^+ T_{j-1}^{\pm 2} (T^\e)_{j-2, i}^-.$$
    % \begin{enumerate}[(i)]
    %     \item $(T^{-1})_{j-1, i+1}^- T_i^{2\varepsilon} T_{i+1, j-1}^+ = T_{i, j-2}^+ T_{j-1}^{2\varepsilon}(T^{-1})_{j-2, i}^-$,
    %     \item $T_{j-1, i+1}^- T_i^{2\varepsilon} (T^{-1})_{i+1, j-1}^+ = (T^{-1})_{i, j-2}^+ T_{j-1}^{2\varepsilon}T_{j-2, i}^-$.
    % \end{enumerate}          
\end{lemma}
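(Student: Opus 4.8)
The plan is to unwind the definition of $(\cR^\e)_{ji}^{\pm}$ and push the factor $T_{j-1}^{\pm 2}$ through the braid-word prefix using Corollary~\ref{cor: braid identities}. Recall that for $j > i$ we have, by definition,
\[
  (\cR^\e)_{ji}^{\pm} = (T^\e)_{j-1, i+1}^- \, T_i^{\pm 2} \, (T^{-\e})_{i+1, j-1}^+ .
\]
Wait — here one must be careful: in the definition $(\cR^\e)_{ij}^{\pm}$ is indexed with the larger index first, so $(\cR^\e)_{ji}^{\pm}$ means the case ``first index $j$, second index $i$'' with $j>i$, giving $(\cR^\e)_{ji}^{\pm} = (T^\e)_{j-1,i+1}^- T_i^{\pm 2} (T^{-\e})_{i+1,j-1}^+$. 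So the claim to prove is
\[
  (T^\e)_{j-1,i+1}^- \, T_i^{\pm 2} \, (T^{-\e})_{i+1,j-1}^+ \; = \; (T^{-\e})_{i,j-2}^+ \, T_{j-1}^{\pm 2} \, (T^\e)_{j-2,i}^- .
\]
First I would treat this as two ``mirror-image'' computations: one moving $T_i^{\pm 2}$ rightward through the descending word $(T^\e)_{j-1,i+1}^-$ on its left and the ascending word $(T^{-\e})_{i+1,j-1}^+$ on its right, relocating the squared generator from position $i$ to position $j-1$. The engine is the elementary braid move $T_{k}T_{k+1}T_k^{\pm1} = T_k^{\pm1}T_{k+1}T_{k+1}^{\mp1}\cdot(\text{correction})$; more cleanly, the single-index shift $T_k^{\pm 2}$ conjugated appropriately by $T_{k\pm 1}$ lands as $T_{k\pm 1}^{\pm 2}$ up to the flanking braid letters, which is exactly the content packaged in Corollary~\ref{cor: braid identities}.

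The key steps, in order: (1) induct on $d = j - i \ge 1$; the base case $d=1$ is the tautology $T_{j-1}^{\pm2} = (T^{-\e})_{i,i-1}^+ T_i^{\pm2}(T^\e)_{i-1,i}^- $, i.e.\ both sides equal $T_i^{\pm2}$. (2) For the inductive step, peel off the outermost letter $T_{j-1}^{\e}$ on the left of $(T^\e)_{j-1,i+1}^-$ and the outermost $T_{j-1}^{-\e}$ on the right of $(T^{-\e})_{i+1,j-1}^+$, so that the middle factor becomes an $(\cR^\e)$-type word with indices $(j-1, i)$ — but now with the smaller range — and apply the inductive hypothesis to it. (3) Then move the surviving $T_{j-1}^{\pm\e}$ past the rewritten middle word using Lemma~\ref{lemma: braid identities} and Corollary~\ref{cor: braid identities}, which tells us precisely how $(T^\e)_{\cdot,\cdot}^{\pm}$ words absorb an adjacent $T_{j-1}^{\e}$ by shifting it to a $T_{j-2}^{\e}$ on the other side. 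Reassembling the flanking letters then produces the right-hand side $(T^{-\e})_{i,j-2}^+ T_{j-1}^{\pm2}(T^\e)_{j-2,i}^-$.

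The main obstacle I anticipate is purely bookkeeping: keeping the four families $(T^{\pm\e})_{\cdot,\cdot}^{\pm}$ straight and making sure each application of Corollary~\ref{cor: braid identities} has its indices in the stated range $n-1 \ge k > j \ge i \ge 1$, so that the braid relations (not just commutation of distant generators) are legitimately available. A clean way to sidestep sign-of-$\e$ case analysis is to note that the identity is symmetric under $\e \mapsto -\e$ combined with the substitution $T_k \mapsto T_k^{-1}$ in the flanking words only (the central square $T^{\pm2}$ is unaffected since $(T^{-1})^{\pm2} = T^{\mp2}$ — so one also flips the sign of the exponent $\pm$, which is fine as the claim is asserted for both signs). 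Hence it suffices to prove the case $\e = 1$, and within that the two exponent choices $+2$ and $-2$ run identically. Alternatively — and this may be the shortest route — one can bypass induction entirely by observing that both sides, when multiplied on the left by $(\cR^\e)_{ji}^{-}\!\cdot\!(\cR^\e)_{ji}^{+} = 1$-type reductions, reduce to a single long braid word read in two different ways; i.e.\ one checks directly that the reduced word for $(T^\e)_{j-1,i+1}^- T_i (T^{-\e})_{i+1,j-1}^+$ coincides with that for $(T^{-\e})_{i,j-2}^+ T_{j-1} (T^\e)_{j-2,i}^-$ after one application of a single braid relation, and then squares. Either way the verification is short; the only real care needed is the index ranges.
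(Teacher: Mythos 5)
Your proposal is correct, but it takes a genuinely different route from the paper's. The paper fixes $\e=-1$ (the case $\e=1$ being similar), clears the invertible flanking words to reduce the claim to the equivalent identity $T_i^{\pm 2}\,T_{i+1,j-1}^+T_{i,j-2}^+ = T_{i+1,j-1}^+T_{i,j-2}^+\,T_{j-1}^{\pm 2}$, proves the single-power version $T_i^{\delta}\,T_{i+1,j-1}^+T_{i,j-2}^+ = T_{i+1,j-1}^+T_{i,j-2}^+\,T_{j-1}^{\delta}$ by iterating a displayed three-factor braid identity over $l=i+1,\dots,j-1$, and then handles the square via the Hecke relation $T_i^{2\delta}=1+\delta(\tau-\tau^{-1})T_i^{\delta}$. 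You instead induct on $j-i$, using the recursion $(\cR^\e)_{ji}^{\pm}=T_{j-1}^{\e}(\cR^\e)_{j-1,i}^{\pm}T_{j-1}^{-\e}$, far commutations, and the adjacent-generator conjugation identity $T_{j-1}^{\e}T_{j-2}^{\pm2}T_{j-1}^{-\e}=T_{j-2}^{-\e}T_{j-1}^{\pm2}T_{j-2}^{\e}$; and your closing observation that both sides are literal conjugates $WT_i^{\pm2}W^{-1}$ and $VT_{j-1}^{\pm2}V^{-1}$, with $W=(T^{\e})_{j-1,i+1}^-$ and $V=(T^{-\e})_{i,j-2}^+$, shows it suffices to treat a single power of the middle generator. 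So, unlike the paper, you never need the quadratic Hecke relation: your argument works at the level of the braid group and is uniform in $\e$ and in the exponent, which is a modest gain in economy, while the paper's explicit shift identity is a self-contained computation requiring no induction set-up. Two small caveats: the conjugation identity you rely on is not literally the content of Corollary~\ref{cor: braid identities} (that corollary concerns words anchored at $n-1$); it is, however, an immediate consequence of the braid relation ($T_{j-1}T_{j-2}T_{j-1}^{-1}=T_{j-2}^{-1}T_{j-1}T_{j-2}$ and its inverse), so cite the braid relations directly. Likewise, the $\e\mapsto-\e$ reduction via $T_k\mapsto T_k^{-1}$ should be justified either inside the braid group or via the automorphism $T_k\mapsto T_k^{-1}$, $\tau\mapsto\tau^{-1}$ of the Hecke algebra; since the identity involves no $\tau$, this is harmless, but it deserves a sentence.
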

\begin{proof}
    Let $\e = -1$.  
    The claim trivially holds if $j = i+1$, so let $j>i+1$. We want to show for $\delta \in \{\pm 1\}$ that   
        \begin{align}\label{equivalent claim}
            T_i^{2\delta} T_{i+1, j-1}^+ T_{i, j-2}^+ = T_{i+1, j-1}^+ T_{i, j-2}^+T_{j-1}^{2\delta}.
        \end{align}
        Since $T_i^{2\delta} = 1 + \delta (\tau - \tau^{-1}) T_i^{\delta}$, the left-hand side of equality \eqref{equivalent claim} equals 
        \begin{equation}
            T_{i+1, j-1}^+ T_{i, j-2}^+ + \delta (\tau - \tau^{-1}) T_i^{\delta} T_{i+1, j-1}^+ T_{i, j-2}^+.   \label{intermediate step}
        \end{equation}
        By the braid relations, for any $n \geq j > l > i \geq 1$, we have
        \begin{equation*}
            (T_{i, l-2}^+T_{l-1}^\delta T_{l, j-1}^+)T_{l-1} = T_l(T_{i, l-1}^+T_l^\delta T_{l+1, j-1}^+),
        \end{equation*}
        (proved similarly to Lemma~\ref{lemma: braid identities}) which upon repeated application (for $l=i+1, \dots, j-1$) gives that $T_i^{\delta} T_{i+1, j-1}^+ T_{i, j-2}^+ = T_{i+1, j-1}^+ T_{i, j-2}^+ T_{j-1}^{\delta}$. Hence the expression~\eqref{intermediate step} equals
        \begin{equation*}
            T_{i+1, j-1}^+ T_{i, j-2}^+ + \delta (\tau - \tau^{-1}) T_{i+1, j-1}^+ T_{i, j-2}^+ T_{j-1}^{\delta} =  T_{i+1, j-1}^+ T_{i, j-2}^+T_{j-1}^{2\delta},
        \end{equation*}
        as required. The case when $\e = 1$ can be proved similarly.
\end{proof}

The next lemma is an analogue of relations~\eqref{eqn: T_k and X_i} and \eqref{eqn: T_k and Y_i} for $T_k$ and $D_i$.
\begin{lemma}\label{lemma: T_k and D_i}
    We have $[T_k, D_i] = 0$ for $i \neq k, k+1$, and $T_k^{-1}D_k T_k^{-1} = D_{k+1}$.
\end{lemma}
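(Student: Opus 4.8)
The plan is to reduce everything to the defining expression $D_i = T_{i,n-1}^+ D_n T_{n-1,i}^-$ (with $D_n = (q-q^{-1})^{-1}X_n^{-1}(Y_n - Y_n^{-1})$), together with the braid relations~\eqref{eqn: braid relations} and the $T$–$X$, $T$–$Y$ commutation relations~\eqref{eqn: T_k and X_i}, \eqref{eqn: T_k and Y_i} of the DAHA; beyond index bookkeeping, no new computation is needed.

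For the second identity $T_k^{-1}D_k T_k^{-1} = D_{k+1}$ I would simply peel off the outermost factors of the strings: writing $T_{k,n-1}^+ = T_k T_{k+1,n-1}^+$ and $T_{n-1,k}^- = T_{n-1,k+1}^- T_k$ gives $D_k = T_k\bigl(T_{k+1,n-1}^+ D_n T_{n-1,k+1}^-\bigr)T_k = T_k D_{k+1} T_k$, and this remains valid at the endpoint $k=n-1$ (where $D_{k+1}=D_n$ and both $T^{\pm}$-strings are empty). Conjugating by $T_k^{-1}$ yields the claim.

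For $[T_k, D_i]=0$ with $i \notin \{k,k+1\}$ I would split into two cases. If $i>k+1$: when $i=n$ the element $D_n$ is built only from $X_n^{\pm1}$ and $Y_n^{\pm1}$, which commute with $T_k$ by~\eqref{eqn: T_k and X_i}–\eqref{eqn: T_k and Y_i} since $k\le n-2$; when $i<n$ every factor occurring in $T_{i,n-1}^+$ and in $T_{n-1,i}^-$ has index $\ge i \ge k+2$, hence commutes with $T_k$ by the braid relations, while $T_k$ commutes with $D_n$ as just noted, so $[T_k,D_i]=0$. If $i<k$: here I use Lemma~\ref{lemma: braid identities} to slide $T_k$ across the two strings, $T_k T_{i,n-1}^+ = T_{i,n-1}^+ T_{k-1}$ and $T_{n-1,i}^- T_k = T_{k-1} T_{n-1,i}^-$ (applicable because $i<k\le n-1$), which reduces the problem to $[T_{k-1},D_n]=0$; this holds since $k-1\le n-2$, so $T_{k-1}$ avoids the indices $n-1$ and $n$ and therefore commutes with $X_n^{\pm1}$ and $Y_n^{\pm1}$. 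Chaining these three commutations gives $T_k D_i = T_{i,n-1}^+ D_n T_{n-1,i}^- T_k = D_i T_k$.

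The only point requiring care is the index bookkeeping that makes Lemma~\ref{lemma: braid identities} (equivalently Corollary~\ref{cor: braid identities}) applicable in the case $i<k$ and keeps $T_{k-1}$ away from the index $n-1$; there is no genuine obstacle here.
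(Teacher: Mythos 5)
Your proof is correct and takes essentially the same route as the paper's: the conjugation identity is read off from the definition by peeling $T_k$ off both strings, and $[T_k,D_i]=0$ is split into the cases $i\ge k+2$ (where every factor of $D_i$ commutes with $T_k$ directly) and $i<k$ (where Lemma~\ref{lemma: braid identities} slides $T_k$ through the strings as $T_{k-1}$, which commutes with $X_n^{-1}$ and $Y_n^{\pm 1}$). One tiny imprecision: in the $i<k$ case what is needed is only that neither index of $T_{k-1}$ equals $n$ (true since $k\le n-1$), not that it avoids both $n-1$ and $n$ --- for $k=n-1$ it does involve $n-1$, but this does not affect the argument.
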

\begin{proof}
    That $T_k^{-1}D_k T_k^{-1} = D_{k+1}$ is clear from the definition. If $i \neq k$, $k+1$, then either $i \geq k+2$, in which case $[T_k, D_i] = 0$ because $T_k$ commutes with $X_n^{-1}$, $Y_n^{\pm 1}$, and with both $T_{i,n-1}^+$ and $T_{n-1, i}^-$; or $i < k$, in which case
    $T_k T_{i,n-1}^+ = T_{i,n-1}^+ T_{k-1}$ and $T_{k-1}T_{n-1, i}^- = T_{n-1, i}^-T_k$
    by Lemma~\ref{lemma: braid identities}, and so $[T_k, D_i]=0$ follows, as~$[T_{k-1}, X_n^{-1}] =[T_{k-1}, Y_n^{\pm 1}] = 0$.
\end{proof}

The following lemma is a $\tau$-deformed version of Lemma~\ref{lemma: properties of d_i at the quantum group level}.
\begin{lemma} 
\label{lemma: properties of D_i}
    The following relations are satisfied. 
    \begin{enumerate}[(1)]
        \item (Relations between $Y_i$ and $X_j$) For $n \geq i \neq j \geq 1$, $ n \geq l \geq 1$, we have
            \begin{align*}
                %&Y_j \cR_{ji}^+ X_i %= Y_j T_{j-1, i+1}^- %T_i^2 (T^{-1})_{i+1, j-1}^+ X_i 
                %= X_i Y_j, \\
                &Y_i \cR_{ij}^+ X_j %= X_j (T^{-1})_{j-1, i+1}^- T_i^{-2} T_{i+1, j-1}^+ Y_i 
                = X_j (\cR^{-1})_{ji}^- Y_i, \\
                &Y_lX_l = q T_{l, n-1}^+ T_{n-1, l}^- X_lY_l T_{l-1, 1}^- T_{1,l-1}^+.
            \end{align*}
        \item $[D_i, D_j] = 0$ for all $i, j$. 
        \item (Relations between $Y_i$ and $D_j$) For $n \geq i \neq j \geq 1$, $ n \geq l \geq 1$, we have
            \begin{align*}
                &Y_iD_j %= (T^{-1})_{j-1, i+1}^- T_i^2 T_{i+1, j-1}^+ D_jY_i 
                = (\cR^{-1})_{ji}^+ D_jY_i\cR_{ij}^+, \\
                % &Y_jD_i %= D_iY_j (T^{-1})_{i, j-2}^+ T_{j-1}^2 T_{j-2, i}^- =
                %         %D_iY_j T_{j-1, i+1}^- T_i^2 (T^{-1})_{i+1, j-1}^+ 
                % = D_i Y_j \cR_{ji}^+,   \\
                &Y_l T_{l-1, 1}^- T_{1,l-1}^+ D_l = q^{-1} D_l (T^{-1})_{l, n-1}^+ (T^{-1})_{n-1, l}^- Y_l.
            \end{align*}
        \item (Relations between $X_i$ and $D_j$) For $n \geq l \geq 1$, we have
            \begin{align}
                &X_l D_l = (q-q^{-1})^{-1} (T^{-1})_{l, n-1}^+ (Y_n - Y_n^{-1})T_{n-1, l}^-, \label{eqn: X_l D_l} \\
                &D_l X_l = (q-q^{-1})^{-1} (T^{-1})_{l-1,1}^- (q Y_1 - q^{-1}Y_1^{-1})T_{1,l-1}^+. \label{eqn: D_l X_l}
            \end{align}
            For $n \geq j > i \geq 1$, we have 
            \begin{align}
                &[D_j, X_i] = \frac{\tau^{-1} - \tau}{q-q^{-1}} (T^{-1})_{i, n-1}^+ (T^{-1})_{j-2, 1}^- (qY_1 + Y_n^{-1})T_{n-1,j}^- T_{1, i-1}^+, \label{eqn: [D_j, X_i]} \\
                &[D_i, X_j] =  \frac{\tau^{-1} - \tau}{q-q^{-1}}(T^{-1})_{i-1,1}^- (T^{-1})_{j, n-1}^+ (q^{-1}Y_1^{-1} + Y_n)T_{1,j-2}^+ T_{n-1, i}^-. \label{eqn: [D_i, X_j]}
            \end{align}
    \end{enumerate}
\end{lemma}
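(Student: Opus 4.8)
The plan is to prove Lemma~\ref{lemma: properties of D_i} by a systematic bootstrap: establish the $X$-$Y$ commutation identities in part~(1) first, then use them to push $Y_i$ past $D_j$ in part~(3), deduce $[D_i,D_j]=0$ in part~(2), and finally obtain the $X$-$D$ relations of part~(4) by direct substitution of the definition~\eqref{eqn: definition of D_i}. Throughout, the main computational engine is the collection of braid identities in Lemma~\ref{lemma: braid identities}, Corollary~\ref{cor: braid identities}, and Lemma~\ref{lemma: Hecke algebra identity}, together with the DAHA relations~\eqref{eqn: T_k and X_i}, \eqref{eqn: T_k and Y_i}, and the cross relation $\tY X_i = qX_i\tY$. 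The key observation is that $D_i = T_{i,n-1}^+ D_n T_{n-1,i}^-$ reduces every statement about a general index to the corresponding statement about $D_n$, which only involves $X_n^{-1}$, $Y_n^{\pm 1}$, and Hecke generators, and then the $\cR$-factors are exactly the "conjugation debris" produced when commuting these braid words past each other.

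For part~(1), I would start from the defining DAHA relation $Y_2^{-1}X_1Y_2X_1^{-1} = T_1^2$ and its conjugates under the Hecke generators to get, for adjacent indices, an identity of the form $Y_{i+1}X_i = X_i(\text{Hecke word})Y_{i+1}$; then iterate using the braid lemmas to reach arbitrary $i\neq j$, producing the factor $\cR_{ij}^+$ on one side and $(\cR^{-1})_{ji}^-$ on the other — the shape $(\cR^{-1})_{ji}^-$ versus $\cR_{ij}^+$ matches $(\cR^\e)_{ij}^-(\cR^\e)_{ij}^+=1$ noted in Section~\ref{sec: Hgln}. The second identity in~(1), $Y_lX_l = qT_{l,n-1}^+T_{n-1,l}^-X_lY_lT_{l-1,1}^-T_{1,l-1}^+$, I would get by writing $Y_l$ in terms of $\pi$ via~\eqref{eqn: Y_i in terms of pi}, or more cleanly by using $\tY X_l = qX_l\tY$ together with the already-established mixed relations to isolate the diagonal piece; this is where the factor of $q$ enters. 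For part~(3), I would multiply the definition $X_jD_j = (q-q^{-1})^{-1}(\cdots)$ (equivalently use $D_j$ directly) on the left by $Y_i$ and commute $Y_i$ through $X_j^{-1}$ and through the braid words $(T^{-1})_{j,n-1}^+$, $(Y_n-Y_n^{-1})$, $T_{n-1,j}^-$ using part~(1) and the fact that $Y_i$ commutes with $Y_n^{\pm1}$ and with Hecke generators $T_k$ for $k\neq i-1,i$; the $\cR$-factors accumulate precisely as stated. Part~(2) then follows formally: since $D_i = (q-q^{-1})^{-1}X_i^{-1}(T^{-1})_{i,n-1}^+(Y_n-Y_n^{-1})T_{n-1,i}^-$, one checks $[D_i,D_j]=0$ by writing $D_j$ in the same form and using part~(3) (or Lemma~\ref{lemma: T_k and D_i} plus the $\tau=1$-style argument) to move everything past each other; alternatively, invoke that the $D_i$ are images under the morphism of Remark~\ref{rem: BEF's D_i} of the pairwise-commuting $D_i^{\text{BEF}}$, so commutativity is inherited — but I would prefer the self-contained computational route to keep the section independent.

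For part~(4), equation~\eqref{eqn: X_l D_l} is literally the definition~\eqref{eqn: definition of D_i} after multiplying by $X_l$, so nothing is to prove there. For~\eqref{eqn: D_l X_l} I would move $X_l$ to the left through $X_l^{-1}(T^{-1})_{l,n-1}^+(Y_n-Y_n^{-1})T_{n-1,l}^-$: the $X_l$ cancels the $X_l^{-1}$ only after first commuting it rightward past the Hecke words (picking up index shifts via Corollary~\ref{cor: braid identities}) and past $Y_n^{\pm1}$ via part~(1); the $qY_1-q^{-1}Y_1^{-1}$ on the right and the reversal of the braid words to $(T^{-1})_{l-1,1}^-$, $T_{1,l-1}^+$ are exactly what the $\pi$-rotation~\eqref{eqn: action of pi}/\eqref{eqn: Y_i in terms of pi} produces, with the lone factor of $q$ coming from $\tY X_1 = qX_1\tY$. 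Finally, the off-diagonal commutators~\eqref{eqn: [D_j, X_i]} and~\eqref{eqn: [D_i, X_j]}: for $j>i$, expand $D_j = T_{j,n-1}^+D_nT_{n-1,j}^-$ and commute $X_i$ through, using that $X_i$ commutes with $X_n^{-1}$ and with $T_k$ for $k\neq i-1,i$, while the single "defect" arises from the one place where $X_i$ must cross a $Y_n^{\pm1}$ — this yields a term proportional to $\tau-\tau^{-1}$ (the Hecke parameter discrepancy $(T_k-\tau)(T_k+\tau^{-1})=0$), which after cleanup is the stated product of braid words times $(qY_1+Y_n^{-1})$. The two cases $j>i$ and $i<j$ are mirror images under a $\pi$-rotation of indices, so I would prove one and deduce the other by symmetry (or the automorphism $h$ of Remark~\ref{rem: BEF's D_i}).

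I expect the main obstacle to be the bookkeeping in part~(1) and in~\eqref{eqn: [D_j, X_i]}/\eqref{eqn: [D_i, X_j]}: one must track, with care, exactly which truncated braid words $(T^{\e})_{ab}^{\pm}$ appear and how their endpoints shift under Corollary~\ref{cor: braid identities}, and one must correctly identify the single crossing of $X_i$ past $Y_n^{\pm1}$ (respectively $X_l$ past $Y_n^{\pm1}$ and the wrap-around to $Y_1^{\pm1}$) as the unique source of the $\tau-\tau^{-1}$ term, all other crossings being trivial commutations. The conceptual content is light — it is all DAHA relations and braid combinatorics — but the risk of an off-by-one error in a subscript is real, so I would organize the computation as: (a) base case for adjacent indices directly from the DAHA axioms; (b) inductive step on $|i-j|$ using the braid identities to introduce one more $T$ on each side; (c) assemble the $\cR$-factors and verify their shape against $(\cR^\e)_{ij}^-(\cR^\e)_{ij}^+=1$ as a consistency check. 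The commutativity statement~(2) is the cheapest part and serves as an independent sanity check on~(1) and~(3).
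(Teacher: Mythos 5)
Your route for parts (1), (3) and for the diagonal identities \eqref{eqn: X_l D_l}, \eqref{eqn: D_l X_l} is essentially the paper's (the paper dispenses with part (1) by citing \cite{Cherednik} and the $X$--$Y$ duality rather than rederiving it from $Y_2^{-1}X_1Y_2X_1^{-1}=T_1^2$, but your derivation is legitimate, only more laborious). The first genuine gap is part (2). It does not ``follow formally'' from part (3): once you expand $D_i=(q-q^{-1})^{-1}X_i^{-1}(T^{-1})_{i,n-1}^+(Y_n-Y_n^{-1})T_{n-1,i}^-$ and try to move $D_j$ through, you must at some point cross $D_j$ with $X_i^{-1}$, an $X$--$D$ relation which in your ordering (part (4) comes last) you do not yet possess and which part (3) does not supply; if instead you expand both $D$'s, the computation reduces to part (1) plus a genuinely new affine-Hecke identity that your outline does not anticipate, namely $[(Y_{n-1}-Y_{n-1}^{-1})(Y_n-Y_n^{-1}),T_{n-1}]=0$, which together with $(Y_n-Y_n^{-1})T_{n-1}X_n^{-1}=T_{n-1}X_n^{-1}(Y_{n-1}-Y_{n-1}^{-1})$ (a consequence of (1)) is exactly how the paper settles $[D_{n-1},D_n]=0$ before conjugating to general $i,j$ via Lemma~\ref{lemma: T_k and D_i}. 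Your fallback --- commutativity ``inherited'' from the $D_i^{\text{BEF}}$ --- is also unavailable: by \eqref{eqn: D_i vs D_i^(2)}, $D_i$ equals the image of $D_{n-i+1}^{\text{BEF}}$ only up to the nontrivial right factor $(T^{-1})_{i,n-1}^+Y_nT_{n-1,i}^-$ (equivalently, $Y_1^{-1}-Y_1$ is a Laurent rather than an ordinary polynomial, so the relevant elements are not cyclotomic-DAHA generators of \cite{BEF}), and such corrected images of commuting elements need not commute; the paper's second proof instead goes through the auxiliary algebra $\bH_n^-$ and the endomorphism $\theta_f$ used in Proposition~\ref{prop: D^l commute}, not through inheritance from \cite{BEF}.

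The second gap is the deduction of \eqref{eqn: [D_i, X_j]} from \eqref{eqn: [D_j, X_i]} ``by symmetry (or the automorphism $h$)''. The automorphism $h$ sends $D_i$ to a multiple of $X_{n-i+1}(T^{-1})_{n-i,1}^-(Y_1^{-1}-Y_1)T_{1,n-i}^+$ --- note $X_{n-i+1}$, not $X_{n-i+1}^{-1}$ --- so neither $h$ nor the morphisms $g,\varphi$ of Remark~\ref{rem: BEF's D_i} interchange the two commutators, and no symmetry of this kind is at hand. Moreover, in this case the correction is not the ``single crossing of $X$ past $Y_n^{\pm1}$'' you describe (that picture is accurate only for \eqref{eqn: [D_j, X_i]}): writing $[D_1,X_n]=[T_{1,n-1}^+D_nT_{n-1,1}^-,\,T_{n-1,1}^-X_1T_{1,n-1}^+]$, the clashing braid words force the quadratic relation $T_{n-1}^2=1+(\tau-\tau^{-1})T_{n-1}$ and produce, besides a conjugate of $[D_n,X_1]$, two further $(\tau^{-1}-\tau)$-terms containing $X_1D_1$ and $D_1X_1$, which must then be absorbed using \eqref{eqn: X_l D_l} and \eqref{eqn: D_l X_l}. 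So this case requires its own computation, as the paper carries out; your tools suffice for it, but the shortcut as stated fails.
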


The trigonometric degeneration $q \to 1$ of the above relations 
\eqref{eqn: X_l D_l}--\eqref{eqn: [D_i, X_j]} recovers the commutator relation~\eqref{eqn: S_ij} that holds in the RCA~$H_n$. 
We now proceed to prove  
each part of Lemma~\ref{lemma: properties of D_i} in turn.
Another proof of the commutativity of~$D_i$ 
%in Lemma~\ref{lemma: properties of D_i} (2) 
will follow from the proof of Proposition~\ref{prop: D^l commute} below.

\begin{proof}[Proof of Lemma~\ref{lemma: properties of D_i} (1)]
    This follows from \cite[(1.4.64) and (1.4.68)]{Cherednik} and the duality between $X$ and~$Y$ described in \cite[Theorem~1.4.8]{Cherednik}.
\end{proof}

\begin{proof}[Proof of Lemma~\ref{lemma: properties of D_i} (2)]
    As $Y_{n-1}X_n = X_nT_{n-1}^{-2}Y_{n-1}$ by part~(1), we have
    \begin{align}\label{claim 1}
        (Y_n - Y_n^{-1})T_{n-1}X_n^{-1} &= 
        T_{n-1}^{-1}Y_{n-1}X_n^{-1} - T_{n-1}Y_{n-1}^{-1}T_{n-1}^2 X_n^{-1} \nonumber \\ 
        &= T_{n-1}X_n^{-1}(Y_{n-1} - Y_{n-1}^{-1}).
    \end{align}
    Secondly, we have
    \begin{equation}\label{claim 2}
        [(Y_{n-1} - Y_{n-1}^{-1})(Y_n - Y_n^{-1}),T_{n-1}] = 0.
    \end{equation}
    Indeed, this follows from the fact that $[T_{n-1}, (Y_nY_{n-1})^{\pm 1}] = 0$ and that
    \begin{align*}
        &T_{n-1}(Y_{n-1}^{-1} Y_n + Y_{n-1} Y_n^{-1}) = Y_n^{-1}T_{n-1}^{-1}Y_n + (\tau - \tau^{-1})Y_{n-1} Y_n^{-1} + Y_n T_{n-1}Y_n^{-1} \\
        &=  Y_n^{-1}Y_{n-1}T_{n-1}^{-1} + (\tau - \tau^{-1})Y_{n-1} Y_n^{-1} + Y_n Y_{n-1}^{-1}T_{n-1} 
         =  (Y_{n-1}^{-1} Y_n + Y_{n-1} Y_n^{-1})T_{n-1},
    \end{align*} 
    where we used several times that $T_{n-1}^{\pm 1} = T_{n-1}^{\mp 1} \pm (\tau - \tau^{-1})$.
    
    By using relations~\eqref{claim 1}, \eqref{claim 2}, and that $D_{n-1} = T_{n-1}D_nT_{n-1}$, we get
    \begin{align*}
        (q-q^{-1})^2&[D_{n-1}, D_n] = [T_{n-1}X_n^{-1} (Y_n - Y_n^{-1})T_{n-1}, X_n^{-1} (Y_n - Y_n^{-1})] \\
        &\stackrel{\eqref{claim 1}}{=} X_{n-1}^{-1}X_n^{-1}(Y_{n-1} - Y_{n-1}^{-1})(Y_n - Y_n^{-1}) \\
            &\qquad-X_n^{-1}X_{n-1}^{-1}T_{n-1}^{-1}(Y_{n-1} - Y_{n-1}^{-1})(Y_n - Y_n^{-1})T_{n-1} \stackrel{\eqref{claim 2}}{=} 0. 
    \end{align*}
    For $1 \leq i \leq  n-2$, since $[D_n, T_{i,n-2}^+] = [D_n, T_{n-2, i}^-] = 0$, we get
    \begin{align*}
        [D_i, D_n] &= [T_{i,n-2}^+ D_{n-1} T_{n-2, i}^-, D_n] =  T_{i,n-2}^+ [D_{n-1}, D_n]T_{n-2, i}^- = 0.
    \end{align*}
    For $n-1 \geq j > i \geq 1$, 
    $[D_i, T_{j, n-1}^+] = [D_i, T_{n-1, j}^-] = 0$ by Lemma~\ref{lemma: T_k and D_i}, hence
    \begin{equation*}
        [D_i, D_j] = [D_i, T_{j, n-1}^+ D_n T_{n-1, j}^-] = T_{j,n-1}^+ [D_i, D_n] T_{n-1, j}^- = 0.
    \end{equation*}
    % \begin{align*}
    %     [D_i, D_j] %&= T_{i,n-1}^+ D_n T_{n-1, i}^- T_{j,n-1}^+ D_n T_{n-1, j}^- - T_{j,n-1}^+ D_n T_{n-1, j}^- T_{i,n-1}^+ D_n T_{n-1, i}^- \\
    %                    &= T_{i,n-1}^+ D_n T_{j-1, n-2}^+ T_{n-1, i}^- D_n T_{n-1, j}^- - T_{j,n-1}^+ D_n T_{i,n-1}^+ T_{n-2,j-1}^- D_n T_{n-1, i}^- \\
    %                    &= T_{i,n-1}^+ T_{j-1, n-2}^+ D_n T_{n-1, i}^- D_n T_{n-1, j}^- - T_{j,n-1}^+ D_n T_{i, n-1}^+ D_n T_{n-2, j-1}^- T_{n-1, i}^-  \\
    %                    &= T_{j,n-1}^+ T_{i,n-1}^+ D_n T_{n-1, i}^-  D_n T_{n-1, j}^- - T_{j,n-1}^+ D_n T_{i,n-1}^+ D_n T_{n-1, i}^- T_{n-1, j}^- \\
    %                    &= T_{j,n-1}^+ [D_i, D_n] T_{n-1, j}^- = 0.
    % \end{align*}
    This completes the proof.
\end{proof}

\begin{proof}[Proof of Lemma~\ref{lemma: properties of D_i} (3)]
    Let $n \geq j > i \geq 1$.
    Firstly, by using equality~\eqref{eqn: definition of D_i}, Lemma~\ref{lemma: properties of D_i}~(1), and that 
    $[Y_i, (T^{-1})_{j, n-1}^+] = [Y_i, T_{n-1, j}^-] = 0$, we get
    \begin{align*}
        (q-q^{-1}) Y_i D_j &= Y_iX_j^{-1}(T^{-1})_{j, n-1}^+ (Y_n - Y_n^{-1})T_{n-1, j}^- \\
                               &= (\cR^{-1})_{ji}^+ X_j^{-1}Y_i (T^{-1})_{j, n-1}^+ (Y_n - Y_n^{-1})T_{n-1, j}^- \\
                               %&= (T^{-1})_{j-1, i+1}^- T_i^2 T_{i+1, j-1}^+ X_j^{-1}(T^{-1})_{j, n-1}^+ (Y_n - Y_n^{-1})T_{n-1, j}^- Y_i \\
                               &= (q-q^{-1})(\cR^{-1})_{ji}^+ D_jY_i,
    \end{align*}
    % as required. Secondly, by using %the definition of $D_i$, 
    % the fact that
    % $T_{n-1, i}^- Y_j = %T_{n-1, j-1}^- Y_j T_{j-2 ,i}^- = 
    % Y_{j-1}T_{n-1, j}^- T_{j-1}^{-1}T_{j-2 ,i}^-$, then Lemma~\ref{lemma: properties of D_i}~(1), and next that
    % $T_{i, j-2}^+ T_{j-1}^{-1} T_{j, n-1}^+ Y_{j-1} = Y_j T_{i, n-1}^+$, we get
    % \begin{align*}
    %     (q-q^{-1})&D_iY_j (T^{-1})_{i, j-2}^+ T_{j-1}^2 T_{j-2, i}^- \\
    %     &= T_{i, n-1}^+ X_n^{-1}(Y_n - Y_n^{-1})T_{n-1,i}^- Y_j (T^{-1})_{i, j-2}^+ T_{j-1}^2 T_{j-2, i}^- \\
    %     %&= T_{i, n-1}^+ X_n^{-1}(Y_n - Y_n^{-1})T_{n-1, j-1}^- Y_j T_{j-1}^2 T_{j-2, i}^- \\
    %     &= T_{i, n-1}^+ X_n^{-1}Y_{j-1} (Y_n - Y_n^{-1})T_{n-1, i}^- \\
    %     %&= T_{i, n-1}^+ (T^{-1})_{n-1, j}^- T_{j-1}^{-2} T_{j, n-1}^+ Y_{j-1}X_n^{-1}(Y_n - Y_n^{-1})T_{n-1, i}^- \\
    %     &= T_{i, j-2}^+ T_{j-1}^{-1} T_{j, n-1}^+ Y_{j-1} X_n^{-1}(Y_n - Y_n^{-1})T_{n-1, i}^- \\
    %     %&= T_i \cdots T_{j-2}T_{j-1}^{-1}Y_{j-1}T_j \cdots T_{n-1}  X_n^{-1}(Y_n - Y_n^{-1})T_{n-1, i}^- \\
    %     %&= T_{i, j-2}^+ Y_j T_{j-1, n-1}^+  X_n^{-1}(Y_n - Y_n^{-1})T_{n-1, i}^- \\
    %     &= Y_jT_{i, n-1}^+  X_n^{-1}(Y_n - Y_n^{-1})T_{n-1, i}^- = (q-q^{-1})Y_jD_i,
    % \end{align*}
    as required. 
    Secondly, by using equality~\eqref{eqn: definition of D_i}, Lemmas~\ref{lemma: properties of D_i} (1) and~\ref{lemma: Hecke algebra identity}, and that 
   $Y_j (T^{-1})_{i, j-2}^+ T_{j-1} (T^{-1})_{j, n-1}^+ = $   $(T^{-1})_{i, n-1}^+ Y_{j-1}$, we get
    \begin{align*}
        (q-q^{-1})Y_jD_i &= Y_jX_i^{-1}(T^{-1})_{i, n-1}^+ (Y_n - Y_n^{-1})T_{n-1, i}^- \\
                         &= X_i^{-1} Y_j (T^{-1})_{i, j-2}^+ T_{j-1} (T^{-1})_{j, n-1}^+ (Y_n - Y_n^{-1})T_{n-1, i}^- \\
                         &= X_i^{-1} (T^{-1})_{i, n-1}^+ (Y_n - Y_n^{-1})Y_{j-1}T_{n-1, i}^- \\
                         &= (q-q^{-1})D_i (T^{-1})_{i, n-1}^+ Y_{j-1}T_{n-1, i}^-\\
                         &= (q-q^{-1})D_iY_j (T^{-1})_{i, j-2}^+ T_{j-1}^2 T_{j-2, i}^-
                         = (q-q^{-1})D_iY_j \cR_{ji}^+,
    \end{align*}
    as required.
    Thirdly, by a similar calculation 
    \begin{align*}
        (q-q^{-1})Y_l &T_{l-1, 1}^- T_{1,l-1}^+ D_l = Y_l T_{l-1, 1}^- T_{1,l-1}^+ X_l^{-1}(T^{-1})_{l, n-1}^+ (Y_n - Y_n^{-1})T_{n-1, l}^- \\
                    &= q^{-1}X_l^{-1} (T^{-1})_{l, n-1}^+ (T^{-1})_{n-1, l}^- Y_l (T^{-1})_{l, n-1}^+ (Y_n - Y_n^{-1})T_{n-1, l}^- \\
                    %&= q^{-1} X_l^{-1} (T^{-1})_{l, n-1}^+ Y_n(Y_n - Y_n^{-1})T_{n-1, l}^- \\
                    &= q^{-1} X_l^{-1} (T^{-1})_{l, n-1}^+ (Y_n - Y_n^{-1})(T^{-1})_{n-1, l}^- Y_l \\
                    &=q^{-1}(q-q^{-1})D_l (T^{-1})_{l, n-1}^+ (T^{-1})_{n-1, l}^- Y_l, 
    \end{align*}
    as required.
\end{proof}

\begin{proof}[Proof of Lemma~\ref{lemma: properties of D_i} (4)]
Relation~\eqref{eqn: X_l D_l} follows from equality~\eqref{eqn: definition of D_i}.

Next, using %$D_1 = T_{1, n-1}^+D_nT_{n-1,1}^-$ and
Lemma~\ref{lemma: properties of D_i} (1) with $l=n$, we compute
\begin{align}
    &(q-q^{-1})D_1X_1 %= T_1 \cdots T_{n-1} X_n^{-1}(Y_n - Y_n^{-1})T_{n-1} \cdots T_1 X_1 \\
                     = T_{1, n-1}^+ X_n^{-1}(Y_n - Y_n^{-1})X_n (T^{-1})_{n-1, 1}^- \nonumber \\
                     &= T_{1, n-1}^+ \left(q Y_n T_{n-1,1}^- T_{1, n-1}^+ - q^{-1}(T^{-1})_{n-1,1}^- (T^{-1})_{1, n-1}^+Y_n^{-1}\right)(T^{-1})_{n-1, 1}^- \nonumber \\
                     &= q T_{1, n-1}^+ Y_n T_{n-1, 1}^- - q^{-1}(T^{-1})_{1, n-1}^+ Y_n^{-1}(T^{-1})_{n-1, 1}^- = q Y_1 - q^{-1} Y_1^{-1}, \label{eqn: D_1X_1}
\end{align}
which proves relation~\eqref{eqn: D_l X_l} for $l=1$. For $2 \leq l \leq n$, we have 
\begin{align*}
    D_lX_l &= (T^{-1})_{l-1,1}^- D_1 (T^{-1})_{1,l-1}^+ X_l = (T^{-1})_{l-1,1}^-  D_1 X_1 T_{1,l-1}^+, 
\end{align*}
which combined with equality~\eqref{eqn: D_1X_1} completes the proof of relation~\eqref{eqn: D_l X_l}. 

Next, by using Lemma~\ref{lemma: properties of D_i} (1) with $i = n$, $j=1$, and the fact that $\cR_{n,1}^+ = 1 + (\tau - \tau^{-1})T_{n-1, 1}^- (T^{-1})_{2, n-1}^+$,
we get
\begin{align}
    &(q-q^{-1})[D_n, X_1] = X_n^{-1}[Y_n - Y_n^{-1}, X_1] \nonumber \\
    &= X_n^{-1}\left(Y_n(1- \cR_{n,1}^+)X_1 + (1- \cR_{n,1}^+)X_1 Y_n^{-1}\right) \nonumber \\
    &= (\tau^{-1}-\tau)X_n^{-1}\left(Y_n T_{n-1, 1}^- (T^{-1})_{2, n-1}^+ X_1 + T_{n-1, 1}^- (T^{-1})_{2, n-1}^+X_1Y_n^{-1}\right). \label{eqn: [D_n, X_1] intermediate} 
\end{align}
Here $T_{n-1, 1}^- (T^{-1})_{2, n-1}^+ X_1 = X_n (T^{-1})_{n-1, 1}^- (T^{-1})_{2, n-1}^+$, and we use~Lemma~\ref{lemma: properties of D_i}~(1) with $l=n$
to get that the expression~\eqref{eqn: [D_n, X_1] intermediate} equals
\begin{align}
    %&(\tau^{-1}-\tau)\left(X_n^{-1}Y_nX_n (T^{-1})_{n-1, 1}^- (T^{-1})_{2, n-1}^+ + (T^{-1})_{n-1, 1}^- (T^{-1})_{2, n-1}^+ Y_n^{-1}\right) \nonumber \\
    %&=  
    &(\tau^{-1}-\tau)(qY_n T_{n-1,1}^- (T^{-1})_{2, n-1}^+ + (T^{-1})_{n-1, 1}^- (T^{-1})_{2, n-1}^+ Y_n^{-1}) \nonumber \\
    %&=  (\tau^{-1}-\tau) T_{n-1}^{-1} \cdots T_2^{-1}T_1^{-1}(qY_1T_2^{-1} \cdots T_{n-1}^{-1} +T_2^{-1} \cdots T_{n-1}^{-1}Y_n^{-1}) \\
    &=  (\tau^{-1}-\tau) (T^{-1})_{n-1, 1}^- (T^{-1})_{2, n-1}^+ (qY_1+Y_n^{-1}). \label{eqn: [D_n, X_1]} 
\end{align}
%where we applied~Lemma~\ref{lemma: properties of D_i} (1) with $l=n$.
%which proves relation~\eqref{eqn: [D_j, X_i]} for $j=n$ and $i=1$. 
%The relation~\eqref{eqn: [D_j, X_i]} for arbitrary $n \geq j > i \geq 1$ then follows from~\eqref{eqn: [D_n, X_1]}, 
%if one uses the definition~$D_j = T_{j, n-1}^+ D_n T_{n-1, j}^-$, the fact that $X_i = T_{i-1, 1}^- X_1 T_{1,i-1}^+$, that
%by Lemma~\ref{lemma: T_k and D_i},
%$[X_1, T_k] = 0$ if $k \geq 2$, the 
%relations~\eqref{eqn: braid relations}, \eqref{eqn: T_k and X_i}
%, and that
%$(T^{-1})_{j-1,2}^- (T^{-1})_{1, n-1}^+$ $= (T^{-1})_{1, n-1}^+ (T^{-1})_{j-2, 1}^-$ by Lemma~\ref{lemma: braid identities}. 
We then note that, for $n \geq j > i \geq 1$, we have
\begin{align}
     [D_j, X_i] &= [T_{j, n-1}^+ D_n T_{n-1, j}^-, T_{i-1, 1}^- X_1 T_{1,i-1}^+] = T_{i-1, 1}^- T_{j, n-1}^+ [D_n, X_1] T_{n-1, j}^- T_{1,i-1}^+,
     \label{eqn: D_j, X_i}
\end{align}
as $[D_n, T_k] = 0$ if $k \leq n-2$, and $[X_1, T_k] = 0$ if $k \geq 2$.
Relation~\eqref{eqn: [D_j, X_i]} then follows from~\eqref{eqn: [D_n, X_1]}, \eqref{eqn: D_j, X_i}, and the fact that
$(T^{-1})_{j-1,2}^- (T^{-1})_{1, n-1}^+$ $= (T^{-1})_{1, n-1}^+ (T^{-1})_{j-2, 1}^-$. For the latter, we use the inverse of one of the relations in Lemma~\ref{lemma: braid identities} to move successively $T_2^{-1}, \dots, T_{j-1}^{-1}$ to the right of $(T^{-1})_{1, n-1}^+$.

It now only remains to prove relation~\eqref{eqn: [D_i, X_j]}.

By Corollary~\ref{cor: braid identities} (i) and (iii) (with $i=1$, $j=2$), we respectively get $(T_{1,n-1}^+)^2 = T_{2,n-1}^+ T_{1, n-2}^+ T_{n-1}^2$ 
and $(T_{n-1, 1}^-)^2 = T_{n-1}^2 T_{n-2,1}^- T_{n-1,2}^-$.
We will additionally use that $T_{n-1}^2 = 1 + (\tau - \tau^{-1})T_{n-1}$, that $[D_n, T_{n-2,1}^-] = 0 = [D_n, T_{1,n-2}^+]$, and
that $[X_1, T_{n-1,2}^-] = 0 = [X_1, T_{2,n-1}^+]$. We also apply Corollary~\ref{cor: braid identities}~(ii), (iv), relations~\eqref{eqn: [D_n, X_1]}, 
\eqref{eqn: X_l D_l} (with $l=1$), and~\eqref{eqn: D_1X_1}.
We get
\begin{align}
    [D_1, X_n] &= [T_{1,n-1}^+ D_n T_{n-1, 1}^-, T_{n-1, 1}^- X_1 T_{1,n-1}^+] \nonumber\\
               &= T_{1,n-1}^+D_nT_{n-1}^2 T_{n-2,1}^- T_{n-1,2}^- X_1T_{1,n-1}^+ - T_{n-1, 1}^- X_1 T_{2,n-1}^+ T_{1, n-2}^+ T_{n-1}^2 D_nT_{n-1, 1}^- \nonumber\\
               &=  T_{n-1,2}^- T_{1,n-1}^+ [D_n,  X_1] T_{n-1,1}^- T_{2,n-1}^+ \nonumber\\
               &\qquad + (\tau^{-1} - \tau) \big(T_{1,n-1}^+T_{n-2,1}^- X_1D_1 - D_1X_1 T_{n-1,1}^- T_{2,n-1}^+\big)  \nonumber\\
               &= \frac{\tau^{-1} - \tau}{q-q^{-1}}(q^{-1}Y_1^{-1} + Y_n) T_{n-1,1}^- T_{2,n-1}^+. \label{eqn: D_1, X_n}
\end{align}
%which proves relation~\eqref{eqn: [D_i, X_j]} for $i=1$ and $j=n$. 
We then note that, for $n \geq j > i \geq 1$, we have
% \begin{equation*}
%     [D_1, X_j] = [D_1, (T^{-1})_{j,n-1}^+ X_n (T^{-1})_{n-1, j}^-] = (T^{-1})_{j,n-1}^+ [D_1, X_n](T^{-1})_{n-1, j}^-,
% \end{equation*}
% as $[D_1, (T^{-1})_{j,n-1}^+]  = 0 = [D_1, (T^{-1})_{n-1, j}^-]$ by Lemma~\ref{lemma: T_k and D_i};
% and 
\begin{align}
     [D_i, X_j] &= [(T^{-1})_{i-1, 1}^- D_1 (T^{-1})_{1, i-1}^+, (T^{-1})_{j,n-1}^+ X_n (T^{-1})_{n-1, j}^-] \nonumber \\
     &= (T^{-1})_{i-1, 1}^- (T^{-1})_{j,n-1}^+ [D_1, X_n](T^{-1})_{n-1, j}^- (T^{-1})_{1, i-1}^+,
     \label{eqn: D_i, X_j}
\end{align}
as $[D_1, (T^{-1})_{j,n-1}^+] = [D_1, (T^{-1})_{n-1, j}^-] = 0$ by Lemma~\ref{lemma: T_k and D_i}, and $[X_j, (T^{-1})_{i-1, 1}^-]$ $= [X_j, (T^{-1})_{1, i-1}^+] = 0$. %by relations~\eqref{eqn: T_k and X_i}.
Relation~\eqref{eqn: [D_i, X_j]} then follows from~\eqref{eqn: D_1, X_n} and~\eqref{eqn: D_i, X_j} because we have 
$T_{n-1, 1}^- T_{2,j-1}^+ = T_{1,j-2}^+ T_{n-1, 1}^-$, which is seen by using a relation from Lemma~\ref{lemma: braid identities} to move successively $T_2, \dots, T_{j-1}$ to the left of $T_{n-1, 1}^-$.
\end{proof}

\subsection{A presentation of $\bH^{\fgl_n}$ and a basis} \label{sec: presentation}

We begin this section by describing relations among the generators of~$\bH^{\fgl_n}$.
Define $S_{ij}^\tau \coloneqq [D_i, X_j]$. Explicit formulae for $S_{ij}^\tau$ follow from Lemma~\ref{lemma: properties of D_i}~(4). In particular, one can see that $S_{ij}^\tau \in \bH^{\fgl_n}$.
The following statement is a consequence of the commutativity of the elements $D_i$. 
These relations in the special case of $\tau = 1$ appear earlier in Proposition~\ref{prop: relations among E_ij^q}.
\begin{proposition} \label{prop: relations among e_ij}
For all $1 \leq i \neq j \leq n$, $1 \leq k \neq l \leq n$, we have
    \begin{align*}
        &e_{ij}e_{kl} - e_{il}e_{kj} =  e_{il}S_{jk}^\tau - e_{ij}S_{lk}^\tau,  \\
        &e_{ij}e_{kl} - e_{kj}e_{il} = S_{jk}^\tau e_{il} - S_{ji}^\tau e_{kl}. 
    \end{align*}
\end{proposition}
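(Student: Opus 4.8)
The plan is to prove both identities by a single direct calculation carried out inside $\bH_n$, normal-ordering the generators so that every factor $D_b$ stands to the right of every factor $X_a$. The only inputs I expect to need are the definitions $e_{ij} = X_iD_j$ and $S_{ab}^\tau = [D_a, X_b]$, the commutativity $[D_i, D_j] = 0$ of Lemma~\ref{lemma: properties of D_i}~(2), the commutativity $[X_i, X_j] = 0$ that holds in $\bH_n$, and the Jacobi identity; in particular the explicit formulae for $S_{ab}^\tau$ play no role, so no case distinction according to which indices coincide is needed. The elementary move underlying everything is $D_aX_b = X_bD_a + S_{ab}^\tau$, which transports a single $D$ past a single $X$.

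For the first relation I would apply this move twice to each product, obtaining $e_{ij}e_{kl} = X_iD_jX_kD_l = X_iX_kD_jD_l + X_iS_{jk}^\tau D_l$ and $e_{il}e_{kj} = X_iX_kD_lD_j + X_iS_{lk}^\tau D_j$. Subtracting and using $[D_j, D_l] = 0$ together with $[X_i, X_k] = 0$ kills the leading terms and leaves $e_{ij}e_{kl} - e_{il}e_{kj} = X_iS_{jk}^\tau D_l - X_iS_{lk}^\tau D_j$. Moving $D_l$ and $D_j$ back to the left of the $S^\tau$'s then rewrites the right-hand side as $e_{il}S_{jk}^\tau - e_{ij}S_{lk}^\tau + X_i\bigl([S_{jk}^\tau, D_l] - [S_{lk}^\tau, D_j]\bigr)$, so the relation reduces to showing that the error commutator vanishes. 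This is exactly where commutativity of the $D_i$ is used: by the Jacobi identity,
\[
    [S_{jk}^\tau, D_l] = \bigl[[D_j, X_k], D_l\bigr] = \bigl[D_j, [X_k, D_l]\bigr] + \bigl[[D_j, D_l], X_k\bigr] = -[D_j, S_{lk}^\tau] = [S_{lk}^\tau, D_j],
\]
the middle bracket dropping out because $[D_j, D_l] = 0$.

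The second relation follows symmetrically, this time pushing the $X$'s rather than the $D$'s: expanding $e_{ij}e_{kl} - e_{kj}e_{il}$ and cancelling the $X_iX_kD_jD_l$ terms via $[X_i, X_k] = 0$ leaves $X_iS_{jk}^\tau D_l - X_kS_{ji}^\tau D_l$, and commuting $X_i$ and $X_k$ to the left of $S_{jk}^\tau$ and $S_{ji}^\tau$ produces $S_{jk}^\tau e_{il} - S_{ji}^\tau e_{kl} + \bigl([X_i, S_{jk}^\tau] - [X_k, S_{ji}^\tau]\bigr)D_l$; the Jacobi identity then gives $[X_i, S_{jk}^\tau] = \bigl[[X_i, D_j], X_k\bigr] + \bigl[D_j, [X_i, X_k]\bigr] = [X_k, S_{ji}^\tau]$, again using $[X_i, X_k] = 0$, so the error term vanishes once more. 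I do not anticipate any real obstacle here: the whole argument is bookkeeping, and the only thing to be careful about is choosing consistently which commutator to expand on which side. As a consistency check, specialising to $\tau = 1$ — where $D_i$, $S_{ij}^\tau$, $e_{ij}$ become $d_i$, $S_{ij}^q$, $E_{ij}^q$ — recovers Proposition~\ref{prop: relations among E_ij^q}.
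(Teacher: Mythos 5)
Your proof is correct and is essentially the paper's argument: a direct computation from $e_{ij}=X_iD_j$, the relation $D_aX_b=X_bD_a+S_{ab}^\tau$, and the commutativity $[D_i,D_j]=0$ (resp.\ $[X_i,X_j]=0$). The paper merely organises the substitutions sequentially inside the product (e.g.\ $e_{ij}e_{kl}=e_{ij}(D_lX_k-S_{lk}^\tau)=e_{il}D_jX_k-e_{ij}S_{lk}^\tau=e_{il}(e_{kj}+S_{jk}^\tau)-e_{ij}S_{lk}^\tau$), which makes your Jacobi-identity step unnecessary, but the content is the same.
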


\begin{proof}
The second relation is proved similarly to the first. We have
\begin{align*}
        e_{ij}e_{kl} &= e_{ij}\left(D_lX_k - S_{lk}^\tau \right) =  e_{il}D_jX_k - e_{ij}S_{lk}^\tau = e_{il} \left( e_{kj} + S_{jk}^\tau \right) - e_{ij}S_{lk}^\tau. \quad \qedhere
                     %&= e_{il}e_{kj} + \left( e_{il}S_{jk} - e_{ij}S_{lk} \right),
    \end{align*}
    %as required. Similarly for the other set of relations.    
\end{proof}

Further relations are as follows. One can move $T_k$ to the left through~$Y_i^{\pm 1}$ thanks to the defining relations of the DAHA $\bH_n$. 
The relations that enable us to move $T_k$ to the left through $e_{ij}$ ($i \neq j$) are given in the next proposition. These relations at $\tau = 1$ coincide with those from~\eqref{eqn: moving s_k to the right} between $s_k$ and $E_{ij}^q$.

\begin{proposition}\label{prop: moving T_k to the left}
    %For $1 \leq i \leq n-1$, $j \in \{1, \dots, n\} \setminus \{i, i+1 \}$, and $k \in \{1, \dots, n\} \setminus \{i, i+1,j\}$, we have
    For $i,j,k \in \{1, \dots, n\}$ with $i \neq n$, $j \neq i, i+1$, and $k \neq j, i, i+1$, we have
    \begin{align*}
         T_i e_{i, i+1} &T_i = e_{i+1, i} + (\tau^{-1} - \tau)(q-q^{-1})^{-1}(T^{-1})_{i+1, n-1}^+ (Y_n - Y_n^{-1})T_{n-1, i}^-, \\
         &T_i e_{ij} T_i =  e_{i+1, j}, \qquad T_i e_{j, i+1} T_i = e_{ji},  \qquad e_{jk}T_i = T_ie_{jk}.
    \end{align*}
\end{proposition}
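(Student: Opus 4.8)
The plan is to reduce everything to manipulations with $T_k$, $D_i$, and $X_i$ using the already-established Lemmas \ref{lemma: braid identities}, \ref{lemma: T_k and D_i}, and \ref{lemma: properties of D_i}. Since $e_{ij} = X_i D_j$, each identity we want is obtained by conjugating $X_i D_j$ by $T_i$ and pushing $T_i$ past the two factors separately. First I would handle the ``far'' relation $e_{jk}T_i = T_i e_{jk}$ for $k \neq j,i,i+1$: here $T_i$ commutes with $X_j$ by \eqref{eqn: T_k and X_i} (as $j \neq i,i+1$) and with $D_k$ by Lemma \ref{lemma: T_k and D_i} (as $k \neq i,i+1$), so the identity is immediate. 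For the mixed relations $T_i e_{ij} T_i = e_{i+1,j}$ and $T_i e_{j,i+1}T_i = e_{ji}$ (with $j \neq i,i+1$), I would write, using \eqref{eqn: T_k and X_i} in the form $T_i X_i T_i = X_{i+1}$ together with $T_i^2 = 1 + (\tau-\tau^{-1})T_i$ to get $T_i X_i = X_{i+1} T_i^{-1}$, and then $T_i e_{ij} T_i = T_i X_i D_j T_i = X_{i+1} T_i^{-1} D_j T_i = X_{i+1} D_j = e_{i+1,j}$, where the middle equality uses $[T_i, D_j] = 0$ from Lemma \ref{lemma: T_k and D_i} (valid since $j \neq i,i+1$). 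The second mixed relation is symmetric: $T_i e_{j,i+1} T_i = T_i X_j D_{i+1} T_i = X_j T_i D_{i+1} T_i = X_j D_i = e_{ji}$, now using $[T_i,X_j]=0$ and the relation $T_i^{-1} D_i T_i^{-1} = D_{i+1}$ from Lemma \ref{lemma: T_k and D_i} rearranged as $T_i D_{i+1} T_i = D_i$ (again via $T_i^2 = 1 + (\tau-\tau^{-1})T_i$, being careful with the $\tau$-correction terms, which cancel because $T_i$ commutes with exactly one of $X_j$, $D_i$ in each case).

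The genuinely substantive identity is the first one, $T_i e_{i,i+1} T_i = e_{i+1,i} + (\tau^{-1}-\tau)(q-q^{-1})^{-1}(T^{-1})_{i+1,n-1}^+(Y_n - Y_n^{-1})T_{n-1,i}^-$, since here $T_i$ fails to commute with either factor of $e_{i,i+1} = X_i D_{i+1}$. My approach would be to compute $T_i e_{i,i+1} T_i = T_i X_i D_{i+1} T_i$ and insert $1 = T_i^{-1} T_i$ appropriately, or more directly to use $T_i X_i = X_{i+1} T_i^{-1}$ and $T_i^{-1} D_{i+1} = D_i T_i$ (the latter from Lemma \ref{lemma: T_k and D_i}) to obtain $T_i X_i D_{i+1} T_i = X_{i+1} T_i^{-1} D_{i+1} T_i = X_{i+1} D_i T_i T_i$. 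Then $T_i^2 = 1 + (\tau - \tau^{-1})T_i$ gives $X_{i+1} D_i + (\tau-\tau^{-1}) X_{i+1} D_i T_i = e_{i+1,i} + (\tau - \tau^{-1}) X_{i+1} D_i T_i$. It remains to identify the correction term $X_{i+1} D_i T_i$. Using $X_{i+1} = T_i X_i T_i$ and $D_i = T_i D_{i+1} T_i$ would reintroduce $T_i$'s; instead I would rewrite $X_{i+1} D_i = e_{i+1,i}$, note that $e_{i+1,i} = D_i X_{i+1} + [X_{i+1}, D_i]$ is not obviously the cleanest route, and instead directly expand $X_{i+1} D_i = T_i X_i T_i \cdot D_i$ and use $T_i D_i = D_{i+1} T_i^{-1}$ (from $T_i^{-1} D_i T_i^{-1} = D_{i+1}$). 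A cleaner path: compute $X_{i+1} D_i T_i$ by substituting the explicit formula \eqref{eqn: definition of D_i} for $D_i$, namely $D_i = (q-q^{-1})^{-1} X_i^{-1}(T^{-1})_{i,n-1}^+(Y_n - Y_n^{-1})T_{n-1,i}^-$, so that $X_{i+1} D_i T_i = (q-q^{-1})^{-1} X_{i+1} X_i^{-1}(T^{-1})_{i,n-1}^+(Y_n - Y_n^{-1})T_{n-1,i}^- T_i$; then $(T^{-1})_{i,n-1}^+ = T_i^{-1}(T^{-1})_{i+1,n-1}^+$ and $T_{n-1,i}^- T_i = T_{n-1,i}^- T_i$ with $T_i^{-1} X_{i+1} X_i^{-1} = \dots$ needing the action on $X$'s; one checks $X_{i+1}X_i^{-1} T_i^{-1}$ simplifies against $T_{n-1,i}^- T_i$-type rearrangements to leave precisely $(q-q^{-1})^{-1}(T^{-1})_{i+1,n-1}^+(Y_n - Y_n^{-1})T_{n-1,i}^-$ after the $T_i$ on the far right is absorbed. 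The main obstacle is therefore this bookkeeping: tracking how the leftover $T_i^{\pm 1}$ factors interact with $X_{i+1}X_i^{-1}$ (using $T_i X_i T_i = X_{i+1}$, equivalently $X_{i+1}X_i^{-1} = T_i X_i T_i X_i^{-1}$ and $T_i X_i^{-1} T_i^{-1} = X_{i+1}^{-1}$ type moves) and with the long products $(T^{-1})_{i,n-1}^+$, $T_{n-1,i}^-$ via Lemma \ref{lemma: braid identities}, so that the $\tau$-dependent Hecke correction surfaces with the stated coefficient $(\tau^{-1}-\tau)(q-q^{-1})^{-1}$. I expect everything else to reduce to one- or two-line commutator checks, and I would present the far and mixed relations tersely, devoting the bulk of the written proof to this single correction-term computation.
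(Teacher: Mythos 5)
Your treatment of the three ``easy'' relations is correct and coincides with the paper's argument (push $T_i$ through the factor of $e_{jk}=X_jD_k$ it commutes with, and use $T_iX_iT_i=X_{i+1}$, $T_iD_{i+1}T_i=D_i$). The problem is in the main identity. The rearrangement you quote from Lemma~\ref{lemma: T_k and D_i}, namely $T_i^{-1}D_{i+1}=D_iT_i$, is false: the lemma gives $T_i^{-1}D_iT_i^{-1}=D_{i+1}$, whose valid one-sided forms are $T_iD_{i+1}=D_iT_i^{-1}$ and $D_{i+1}T_i=T_i^{-1}D_i$. Your version would amount to $D_{i+1}=T_iD_iT_i$, which is inconsistent with the correct $D_i=T_iD_{i+1}T_i$ unless $T_i^2=1$. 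Consequently your intermediate expression $X_{i+1}D_iT_i^2=e_{i+1,i}+(\tau-\tau^{-1})X_{i+1}D_iT_i$ is not equal to $T_ie_{i,i+1}T_i$, and the correction term you are left to identify, $(\tau-\tau^{-1})X_{i+1}D_iT_i=(\tau-\tau^{-1})e_{i+1,i}T_i$, cannot be matched to the stated $(\tau^{-1}-\tau)(q-q^{-1})^{-1}(T^{-1})_{i+1,n-1}^+(Y_n-Y_n^{-1})T_{n-1,i}^-$ --- note that even the sign of the $\tau$-factor comes out opposite. The ``bookkeeping'' you then describe (substituting the formula for $D_i$ and juggling $X_{i+1}X_i^{-1}$ against the long $T$-products) is chasing an identity that is not true, so it cannot be made to close.

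The fix along your own route is short: $T_iX_iD_{i+1}T_i=X_{i+1}T_i^{-1}D_{i+1}T_i=X_{i+1}T_i^{-2}D_i=e_{i+1,i}+(\tau^{-1}-\tau)X_{i+1}T_i^{-1}D_i=e_{i+1,i}+(\tau^{-1}-\tau)X_{i+1}D_{i+1}T_i$, after which formula~\eqref{eqn: X_l D_l} with $l=i+1$ together with $T_{n-1,i+1}^-T_i=T_{n-1,i}^-$ gives the stated correction term immediately. The paper's proof is the mirror-image one-liner: move $D_{i+1}T_i=T_i^{-1}D_i$ first, so that $T_ie_{i,i+1}T_i=T_iX_iT_i^{-1}D_i=X_{i+1}D_i+(\tau^{-1}-\tau)T_iX_iD_i$, and then apply \eqref{eqn: X_l D_l} with $l=i$, using $T_i(T^{-1})_{i,n-1}^+=(T^{-1})_{i+1,n-1}^+$. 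In either version the essential point you are missing is that the leftover factor should be $X_lD_l$ for a \emph{single} index $l$, so that \eqref{eqn: X_l D_l} applies directly; no manipulation of $X_{i+1}X_i^{-1}$ is needed.
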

\begin{proof}
    For $1 \leq i \leq n-1$, we have
    % \begin{align*}
    %     e_{i, i+1} T_i &= X_i D_{i+1}T_i = (T_i^{-1}X_{i+1}T_i^{-1})D_{i+1}T_i = T_i^{-1}X_{i+1}D_i \\
    %     &\qquad + (\tau^{-1} - \tau)T_i^{-1}X_{i+1}D_{i+1}T_i = T_i^{-1}e_{i+1, i} + (\tau^{-1} - \tau)X_iD_i,
    % \end{align*}
    \begin{equation*}
        T_i e_{i, i+1} T_i = T_i X_i T_i^{-1} D_i = X_{i+1}D_i 
        + (\tau^{-1} - \tau)T_i X_iD_i,
    \end{equation*}
    and the first relation follows.
    For $j \in \{1, \dots, n\} \setminus \{i, i+1 \}$, by Lemma~\ref{lemma: T_k and D_i} we have $[D_j, T_i] = 0$, and so $T_ie_{ij} T_i = T_iX_i T_i D_j = X_{i+1}D_j = e_{i+1, j}$, as required. The third relation is proved similarly, since
    $[X_j, T_i] = 0$ and $T_iD_{i+1}T_i = D_i$.
    If also $k \in \{1, \dots, n\} \setminus \{i, i+1 \}$, then $[e_{jk}, T_i]=0$ as $[D_k, T_i] = 0$, too.
\end{proof}

The relations that help us move $Y_i^{\pm 1}$ to the right through $e_{jk}$ ($j \neq k$) can be split into three cases: 
$Y_i^{\pm 1}$ with $e_{ij}$ for $i \neq j$; $Y_i^{\pm 1}$ with $e_{ji}$ for $i \neq j$; and~$Y_i^{\pm 1}$ with $e_{jk}$ for $i \neq j \neq k \neq i$.
We have the following statement, which at~$\tau = 1$ reproduces (parts of)~Proposition~\ref{prop: relations between E_ij^q and t_i}. 
\begin{proposition}\label{prop: moving Y_i to the right}
\begin{enumerate}[(1)]
    \item For $n \geq i \neq j \geq 1$, we have 
        \begin{flalign*}
                Y_i &e_{ij} (T^{-1})_{i-1, 1}^- (T^{-1})_{1, i-1}^+ Y_i^{-1} = q T_{i,n-1}^+ T_{n-1,i}^- e_{ij} 
                + (q-q^{-1})^{-1}(\tau - \tau^{-1}) C_1, & 
        \end{flalign*}
        where
        \begin{equation*}
            C_1 = \begin{cases}
                qT_{i, n-1}^+ (T^{-1})_{j-2, i}^- (Y_n - Y_n^{-1}) T_{n-1, j}^- \textnormal{ if } j > i, \\
                T_{i,n-1}^+ (T^{-1})_{j-1,1}^- (T^{-1})_{1, n-2}^+ Y_{n-1}^{-1} (Y_n^2 - 1) T_{n-1, j}^- (T^{-1})_{n-1, i}^-
                \textnormal{ if } i > j.
            \end{cases}
        \end{equation*}
    \item For $n \geq i \neq j \geq 1$, we have
            \begin{flalign*}
                Y_i &T_{i-1,1}^- T_{1,i-1}^+ e_{ji} Y_i^{-1} = q^{-1} e_{ji} (T^{-1})_{i,n-1}^+ (T^{-1})_{n-1,i}^- 
                + (q-q^{-1})^{-1}(\tau^{-1} - \tau)C_2, & 
            \end{flalign*}
        where 
        \begin{equation*}
            C_2 = \begin{cases}
                q^{-1}(T^{-1})_{j,n-1}^+ (Y_n - Y_n^{-1}) T_{i, j-2}^+ (T^{-1})_{n-1, i}^- \textnormal{ if } j > i, \\
                T_{i,n-1}^+ (T^{-1})_{j,n-1}^+ Y_{n-1} (1 - Y_n^{-2}) T_{n-2, 1}^- T_{1,j-1}^+ (T^{-1})_{n-1, i}^-
                \textnormal{ if } i > j.
            \end{cases}
        \end{equation*}
    \item For all $i,j,k \in \{1, \dots, n\}$ with $i \neq j \neq k \neq i$, we have
        \begin{align*}
            Y_i & \cR_{ij}^{+} e_{jk} \cR_{ik}^{-} Y_i^{-1} = (\cR^{-1})_{ki}^{+} e_{jk} (\cR^{-1})_{ji}^{-}.
        \end{align*}
\end{enumerate}
\end{proposition}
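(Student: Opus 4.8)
The plan is to prove each of the three identities by a direct computation inside $\bH_n$: in every case one substitutes $e_{jk}=X_jD_k$ and then conjugates the factor $Y_i^{\pm1}$ rightward, first across $X_j$ by the $Y$--$X$ relations of Lemma~\ref{lemma: properties of D_i}~(1), then across $D_k$ by the $Y$--$D$ relations of Lemma~\ref{lemma: properties of D_i}~(3). The Hecke-algebra factors $\cR^{\pm}_{ij}$ and $(\cR^{-1})^{\pm}_{ij}$ that decorate the three formulas are exactly the ones those relations generate, so the content of the proposition is to arrange the relations in the correct order and to collapse the remaining words in the $T_k^{\pm1}$ using the braid identities of Lemma~\ref{lemma: braid identities}, Corollary~\ref{cor: braid identities}, Lemma~\ref{lemma: Hecke algebra identity}, and the quadratic relation $T_k^{\pm2}=1\pm(\tau-\tau^{-1})T_k^{\pm1}$. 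Alternatively one could verify everything in the faithful polynomial representation, but the combinatorics there is no lighter.

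I would treat part~(3) first, as it is the model case. With $i,j,k$ pairwise distinct, $Y_i$ encounters $X_j$ and $D_k$ only through the off-diagonal relations $Y_i\cR_{ij}^{+}X_j=X_j(\cR^{-1})_{ji}^{-}Y_i$ and $Y_iD_k\cR_{ik}^{-}=(\cR^{-1})_{ki}^{+}D_kY_i$ (the latter obtained from Lemma~\ref{lemma: properties of D_i}~(3) together with $\cR_{ik}^{+}\cR_{ik}^{-}=1$), neither of which carries a $q$-scalar or a $(\tau-\tau^{-1})$-correction; the conjugating factors $\cR_{ij}^{+}$ on the left and $\cR_{ik}^{-}$ on the right are placed precisely so that applying the two relations in turn leaves $X_j(\cR^{-1})_{ji}^{-}(\cR^{-1})_{ki}^{+}D_k$, which one checks to coincide with $(\cR^{-1})_{ki}^{+}X_jD_k(\cR^{-1})_{ji}^{-}=(\cR^{-1})_{ki}^{+}e_{jk}(\cR^{-1})_{ji}^{-}$ by transporting the $T_m$'s not involving the indices $j-1,j$ (resp.\ $k-1,k$) past $X_j$ (resp.\ $D_k$), via $T_kX_kT_k=X_{k+1}$ and Lemma~\ref{lemma: T_k and D_i}. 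For parts~(1) and~(2) the index $i$ coincides with one of the two indices of $e$, so one must use instead the diagonal relations of Lemma~\ref{lemma: properties of D_i}, namely $Y_iX_i=qT_{i,n-1}^{+}T_{n-1,i}^{-}X_iY_iT_{i-1,1}^{-}T_{1,i-1}^{+}$ for part~(1) and $Y_iT_{i-1,1}^{-}T_{1,i-1}^{+}D_i=q^{-1}D_i(T^{-1})_{i,n-1}^{+}(T^{-1})_{n-1,i}^{-}Y_i$ for part~(2); these are the source of the scalars $q$ and $q^{-1}$ standing in front of the two formulas, while the remaining factor ($D_j$ in part~(1), $X_j$ in part~(2)) is crossed by $Y_i$ by the off-diagonal relations as above. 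After $Y_i^{\pm1}$ has been transported through and the resulting telescoping $T$-products cancelled against the right-hand conjugating factors $(T^{-1})_{i-1,1}^{-}(T^{-1})_{1,i-1}^{+}$ (resp.\ $T_{i-1,1}^{-}T_{1,i-1}^{+}$), one is reduced to rewriting $Y_iD_jY_i^{-1}$ for some $j\neq i$; by Lemma~\ref{lemma: properties of D_i}~(3) this equals $(\cR^{-1})_{ji}^{+}D_j$ up to a further factor $\cR_{ij}^{+}$ which is trivial exactly when $i<j$, and this is precisely the origin of the case split $j>i$ versus $i>j$ in $C_1$ and $C_2$. Expanding $(\cR^{-1})_{ji}^{+}-1=(\tau-\tau^{-1})(\cdots)$ by means of $T_k^{\pm2}=1\pm(\tau-\tau^{-1})T_k^{\pm1}$ then isolates the main term $qT_{i,n-1}^{+}T_{n-1,i}^{-}e_{ij}$ (resp.\ $q^{-1}e_{ji}(T^{-1})_{i,n-1}^{+}(T^{-1})_{n-1,i}^{-}$) and a $(\tau-\tau^{-1})$-proportional remainder; inserting the definition~\eqref{eqn: definition of D_i} of the inner $D_j$ into that remainder and simplifying with Corollary~\ref{cor: braid identities} and Lemma~\ref{lemma: braid identities} brings it to the stated form of $C_1$ (and, by the same scheme, $C_2$).

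The principal obstacle is the bookkeeping of the Hecke-algebra factors: one has to keep track of which $T_m^{\pm1}$ commute with $D_j$, $X_j$ and $Y_i$ and which do not, and repeatedly invoke Lemma~\ref{lemma: braid identities}, Corollary~\ref{cor: braid identities} and Lemma~\ref{lemma: Hecke algebra identity} to bring the non-commuting ones into the canonical products $T_{\bullet\bullet}^{+}$, $(T^{-1})_{\bullet\bullet}^{-}$, etc.\ occurring in $C_1,C_2$. The $i>j$ subcases are the heaviest, since there the longest $T$-words appear and both $Y_{n-1}$ and $Y_n$ enter the remainder; that part of the computation runs closely parallel to the proof of Lemma~\ref{lemma: properties of D_i}~(4). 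I would finish, in each part, by checking consistency at $\tau=1$, where the $(\tau-\tau^{-1})$-terms $C_1,C_2$ vanish and the three identities collapse to relation~\eqref{moving t_i to the right} of Proposition~\ref{prop: relations between E_ij^q and t_i}.
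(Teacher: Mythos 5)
Your strategy is the same as the paper's: substitute $e_{jk}=X_jD_k$, transport $Y_i^{\pm1}$ across the $X$- and $D$-factors with Lemma~\ref{lemma: properties of D_i}~(1) and~(3), and reduce the resulting Hecke words using Lemma~\ref{lemma: braid identities}, Corollary~\ref{cor: braid identities}, Lemma~\ref{lemma: Hecke algebra identity} and $T_k^{\pm2}=1\pm(\tau-\tau^{-1})T_k^{\pm1}$. For the subcases $j>i$ of parts (1)--(2) and for the skeleton of part (3) your outline matches the paper's computation; in (3) you correctly reduce the statement to the Hecke identity $X_j(\cR^{-1})_{ji}^-(\cR^{-1})_{ki}^+D_k=(\cR^{-1})_{ki}^+e_{jk}(\cR^{-1})_{ji}^-$, but be aware that this identity is the bulk of the proof (the paper verifies it separately in each index configuration, conjugating $X_j$ to $X_{j-1}$ and $D_k$ to $D_{k-1}$ and back via Lemma~\ref{lemma: Hecke algebra identity}), not a mere transport of commuting $T_m$'s.

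The genuine gap is in the subcases $i>j$ of parts (1) and (2). Your plan is to cancel the conjugating word against the trailing word produced by the diagonal relation and then be left with an expression of the form $Y_iD_jY_i^{-1}$ (resp.\ $Y_iX_jY_i^{-1}$). That cancellation requires moving $D_j$ (in (1)) or $X_j$ (in (2)) through $(T^{-1})_{i-1,1}^-(T^{-1})_{1,i-1}^+$ (resp.\ $T_{i-1,1}^-T_{1,i-1}^+$); when $j<i$ this word contains $T_{j-1}^{\mp1}$ and $T_j^{\mp1}$, which do not commute with $D_j$ or $X_j$ (Lemma~\ref{lemma: T_k and D_i}, relations~\eqref{eqn: T_k and X_i}), so the proposed step fails precisely in the subcase you flag as heaviest; moreover in that subcase $(\cR^{-1})_{ji}^+$ is trivial, so your source of the $(\tau-\tau^{-1})$-remainder disappears as well. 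The paper uses a different mechanism there: it first proves the case $i=n$, $j=n-1$ by a direct computation (using $T_{n-1}^2=1+(\tau-\tau^{-1})T_{n-1}$, $D_{n-1}=T_{n-1}D_nT_{n-1}$ and the conjugation of $Y_n^{\mp1}$ by $T_{n-1}$, which is where the factors $Y_{n-1}^{\mp1}$ in $C_1$, $C_2$ originate), then extends to $i=n$ with arbitrary $j$, and finally conjugates general $i>j$ down to $i=n$. Nothing in your outline produces the two-variable structure $Y_{n-1}^{\mp1}(Y_n^{\pm2}-1)$ of the remainder --- inserting the definition~\eqref{eqn: definition of D_i} of $D_j$ only yields $Y_n^{\pm1}$ factors --- so the $i>j$ formulas for $C_1$ and $C_2$ are not reached by the proposed route. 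The $\tau=1$ check at the end is consistent but cannot detect this, since the whole discrepancy is of order $\tau-\tau^{-1}$.
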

Throughout the proof of Proposition~\ref{prop: moving Y_i to the right}, we freely use Lemmas~\ref{lemma: T_k and D_i}, \ref{lemma: properties of D_i}. 

\begin{proof}[Proof of Proposition~\ref{prop: moving Y_i to the right} (1)]       
    For $n \geq j > i \geq 1$, we have
    \begin{align}
        Y_i &e_{ij} (T^{-1})_{i-1, 1}^- (T^{-1})_{1, i-1}^+ Y_i^{-1} = Y_i X_i (T^{-1})_{i-1, 1}^- (T^{-1})_{1, i-1}^+ D_j Y_i^{-1} \nonumber \\
        &= q T_{i, n-1}^+ T_{n-1, i}^- X_iY_iD_j Y_i^{-1} = q T_{i, n-1}^+ T_{n-1, i}^- X_i (\cR^{-1})_{ji}^+ D_j \nonumber \\
        %&= q T_{i, n-1}^+ T_{n-1, i}^-(T^{-1})_{j-1, i+1}^- X_iT_i^2 T_{i+1, j-1}^+ D_j \\
        &= q T_{i, n-1}^+ T_{n-1, i}^- e_{ij} 
        + q(\tau - \tau^{-1}) T_{i, n-1}^+ T_{n-1, i}^- X_i (T^{-1})_{j-1, i+1}^- T_{i, j-1}^+ D_j, \label{Y_i e_ij part 1}
    \end{align}
    where we used that $(\cR^{-1})_{ji}^+ = 1 + (\tau - \tau^{-1})(T^{-1})_{j-1, i+1}^- T_{i, j-1}^+$. Then 
    \begin{align}
        %&= q T_{i, n-1}^+ T_{n-1, i}^- e_{ij} 
        %+ q(\tau - \tau^{-1}) T_{i, n-1}^+ 
        &T_{n-1, i}^- X_i (T^{-1})_{j-1, i+1}^- T_{i, j-1}^+ D_j = T_{n-1, i}^- (T^{-1})_{j-1, i+1}^- X_i D_i (T^{-1})_{i, j-1}^+ \nonumber \\
        &=  T_{n-1, i}^- (T^{-1})_{j-1, i+1}^- (T^{-1})_{i, n-1}^+ X_nD_n T_{n-1, j}^- =  (T^{-1})_{j-2, i}^- X_nD_n T_{n-1, j}^-, \label{Y_i e_ij part 2}
    \end{align}
    since
    $T_{n-1, i}^- (T^{-1})_{j-1, i+1}^-$ $= (T^{-1})_{j-2, i}^- T_{n-1, i}^-$ by using a relation from Lemma~\ref{lemma: braid identities} to move successively $T_{j-1}^{-1}, \dots, T_{i+1}^{-1}$ to the left of $T_{n-1, i}^-$.
    Relations~\eqref{Y_i e_ij part 1} and~\eqref{Y_i e_ij part 2} imply the claim for $j > i$.

    Let us now establish the claim for $i=n$, $j=n-1$. We
    compute 
    \begin{align}
        &Y_n e_{n, n-1} (T^{-1})_{n-1, 1}^- (T^{-1})_{1, n-1}^+ Y_n^{-1} \label{Y_n e_n,n-1 LHS} \\
        &= q X_nY_n T_{n-1, 1}^- T_{1,n-2}^+ T_{n-1}^2 D_n  (T^{-1})_{n-2, 1}^- (T^{-1})_{1, n-1}^+ Y_n^{-1} \nonumber\\
        &= q X_nY_n T_{n-1} D_n T_{n-1}^{-1} Y_n^{-1} + (\tau - \tau^{-1})X_n D_n Y_n (T^{-1})_{n-2, 1}^- (T^{-1})_{1, n-1}^+ Y_n^{-1}, \nonumber
    \end{align}
    where we used that $T_{n-1}^2 = 1 + (\tau - \tau^{-1})T_{n-1}$, and then that is equal to 
    \begin{align}
        &q X_n Y_n D_{n-1} T_{n-1}^{-2} Y_n^{-1} + (\tau - \tau^{-1})(T^{-1})_{n-2, 1}^- (T^{-1})_{1, n-2}^+ X_nD_nY_n T_{n-1}^{-1} Y_n^{-1} \nonumber \\
        &= q X_nD_{n-1} + (\tau - \tau^{-1})(T^{-1})_{n-2, 1}^- (T^{-1})_{1, n-2}^+ X_nD_nY_n Y_{n-1}^{-1} T_{n-1}, \label{Y_n e_n,n-1}
    \end{align}
    from which the claim for $i = n$, $j = n-1$ follows.
     
    Then for $1 \leq j \leq n-1$,
    \begin{align}
        &Y_n e_{nj} (T^{-1})_{n-1, 1}^- (T^{-1})_{1, n-1}^+ Y_n^{-1}  \nonumber \\ 
        &= Y_n X_n T_{j,n-2}^+ D_{n-1} T_{n-2, j}^- (T^{-1})_{n-1, 1}^- (T^{-1})_{1, n-1}^+ Y_n^{-1} \nonumber \\
        %&= T_{j, n-2}^+ Y_n X_n D_{n-1} (T^{-1})_{n-1, 1}^- T_{n-1, j+1}^- (T^{-1})_{1, n-1}^+ Y_n^{-1} \\
        &=  T_{j, n-2}^+ Y_n e_{n, n-1} (T^{-1})_{n-1, 1}^- (T^{-1})_{1, n-1}^+  Y_n^{-1} T_{n-2, j}^-, \label{Y_n e_nj} 
        %&= T_{j, n-2}^+ \left(q e_{n, n-1} + (\tau - \tau^{-1})(T^{-1})_{n-2, 1}^- (T^{-1})_{1, n-2}^+ X_nD_nY_n Y_{n-1}^{-1} T_{n-1} \right) T_{n-2, j}^- \nonumber\\
        %&= q e_{nj} + (\tau - \tau^{-1}) (T^{-1})_{j-1, 1}^- (T^{-1})_{1, n-2}^+ X_nD_n Y_n Y_{n-1}^{-1} T_{n-1, j}^-,
        %\label{eqn: Y_n e_nj}
    \end{align}
    where 
    we used that $T_{n-2, j}^-$ commutes with $(T^{-1})_{n-1, 1}^- (T^{-1})_{1, n-1}^+$ as a consequence of Corollary~\ref{cor: braid identities} (i), (iv). Now we use the form~\eqref{Y_n e_n,n-1} for the expression~\eqref{Y_n e_n,n-1 LHS} to rearrange expression~\eqref{Y_n e_nj} as 
    \begin{equation*}
        q e_{nj} + (\tau - \tau^{-1}) (T^{-1})_{j-1, 1}^- (T^{-1})_{1, n-2}^+ X_nD_n Y_n Y_{n-1}^{-1} T_{n-1, j}^-,
        %\label{eqn: Y_n e_nj}
    \end{equation*}
    which completes the proof of the claim for $i=n$.
    
    For $n \geq i > j \geq 1$, we have
    \begin{equation}\label{eqn: Y_i e_ij}
            (T^{-1})_{n-1, i}^- Y_i e_{ij} (T^{-1})_{i-1, 1}^- (T^{-1})_{1, i-1}^+ Y_i^{-1} T_{i,n-1}^+ = Y_n e_{nj}  (T^{-1})_{n-1, 1}^- (T^{-1})_{1, n-1}^+ Y_n^{-1}. 
    \end{equation}
    The proof is completed by combining equality~\eqref{eqn: Y_i e_ij} with the claim for $i=n$, and using that 
    $e_{nj} (T^{-1})_{n-1, i}^- = T_{n-1, i}^- e_{ij}$.
\end{proof}
\begin{proof}[Proof of Proposition~\ref{prop: moving Y_i to the right} (2)]
     For $n \geq j > i \geq 1$, we have 
    \begin{align}
        Y_i &T_{i-1,1}^- T_{1,i-1}^+ e_{ji} Y_i^{-1} = Y_i X_j T_{i-1,1}^- T_{1,i-1}^+ D_i Y_i^{-1} =  X_j (\cR^{-1})_{ji}^- Y_i T_{i-1,1}^- T_{1,i-1}^+ D_i Y_i^{-1} \nonumber \\
        &= q^{-1}X_j (\cR^{-1})_{ji}^-  D_i (T^{-1})_{i, n-1}^+ (T^{-1})_{n-1, i}^- 
        = q^{-1} e_{ji} (T^{-1})_{i, n-1}^+ (T^{-1})_{n-1, i}^- \nonumber \\
        &\quad + q^{-1}(\tau^{-1}-\tau) X_j (T^{-1})_{j-1, i}^- T_{i+1, j-1}^+  D_i (T^{-1})_{i, n-1}^+ (T^{-1})_{n-1, i}^-, \label{Y_i e_ji part 1}
    \end{align}
    where we used that $(\cR^{-1})_{ji}^- = 1 + (\tau^{-1} - \tau)(T^{-1})_{j-1, i}^- T_{i+1, j-1}^+$. Then
    \begin{align}
        X_j &(T^{-1})_{j-1, i}^- T_{i+1, j-1}^+  D_i (T^{-1})_{i, n-1}^+ =
        T_{j-1, i}^- T_{i+1, j-1}^+  X_iD_i (T^{-1})_{i, n-1}^+ \nonumber \\
        &= T_{j-1, i}^- T_{i+1, j-1}^+ (T^{-1})_{i, n-1}^+ X_nD_n  
        = (T^{-1})_{j, n-1}^+ X_nD_n T_{i, j-2}^+, \label{Y_i e_ji part 2}  
    \end{align}
    since
    $T_{i+1, j-1}^+ (T^{-1})_{i, n-1}^+$ $= (T^{-1})_{i, n-1}^+ T_{i, j-2}^+$ by using the inverse of a relation in Lemma~\ref{lemma: braid identities} to move successively $T_{j-1}, \dots, T_{i+1}$ to the right of $(T^{-1})_{i, n-1}^+$. Relations~\eqref{Y_i e_ji part 1} and \eqref{Y_i e_ji part 2} imply the claim for $j > i$. 
    
    Let us now establish the claim for $i=n$, $j=n-1$. We
    compute
         \begin{align}
            Y_n &T_{n-1,1}^- T_{1,n-1}^+ e_{n-1, n} Y_n^{-1} = Y_n T_{n-1}^2 X_{n-1} T_{n-1,1}^- T_{1,n-2}^+ T_{n-1}^{-1} D_n Y_n^{-1} \label{Y_n e_n-1,n LHS} \\ 
            &= X_{n-1} Y_n T_{n-1,1}^- T_{1,n-2}^+ T_{n-1}^{-1} D_n Y_n^{-1} = X_{n-1} Y_n T_{n-1,1}^- T_{1,n-1}^+ D_n Y_n^{-1} \nonumber \\
             &\qquad + (\tau^{-1} - \tau)  X_{n-1} Y_n T_{n-1,1}^- T_{1,n-2}^+ D_n Y_n^{-1} \nonumber
        \end{align}
    where we used that $T_{n-1}^{-1} = T_{n-1} + \tau^{-1} - \tau$, and then that is equal to 
    \begin{align}
        &q^{-1} X_{n-1} D_n + (\tau^{-1} - \tau)  X_{n-1} T_{n-1}^{-1} Y_{n-1} D_n Y_n^{-1}  T_{n-2,1}^- T_{1,n-2}^+ \nonumber \\
        %&=  q^{-1} e_{n-1, n} + (\tau^{-1} - \tau)  X_{n-1} T_{n-1}^{-1} Y_{n-1} D_n Y_n^{-1}  (T_{n-2}\cdots T_2 T_1^2 T_2 \cdots T_{n-2}) \\
        %&= q^{-1} e_{n-1, n} + (\tau^{-1} - \tau)  X_{n-1} T_{n-1} D_n Y_{n-1} Y_n^{-1} T_{n-2,1}^- T_{1,n-2}^+ \\
        &= q^{-1} e_{n-1, n} + (\tau^{-1} - \tau) T_{n-1}^{-1} X_n D_n Y_{n-1} Y_n^{-1} T_{n-2,1}^- T_{1,n-2}^+, \label{Y_n e_n-1,n}
    \end{align}
    from which the claim for $i = n$, $j = n-1$ follows.
    
    Then for $1 \leq j \leq n-1$, 
    \begin{align}
        Y_n&T_{n-1, 1}^- T_{1,n-1}^+ e_{jn} Y_n^{-1} \nonumber \\
        &= Y_n T_{n-1, 1}^- T_{1,n-1}^+ (T^{-1})_{j, n-2}^+ X_{n-1} (T^{-1})_{n-2, j}^- D_n Y_n^{-1} \nonumber \\ 
        %&= Y_n(T_{n-1} \cdots T_1)(T_1 \cdots T_{n-1}) (T_j^{-1} T_{j+1}^{-1} \cdots T_{n-2}^{-1}) X_{n-1} D_n Y_n^{-1}(T_{n-2}^{-1} \cdots T_{j+1}^{-1} T_j^{-1}) \nonumber \\ 
        %&= Y_n(T_{n-1} \cdots T_1) (T_{j+1}^{-1} T_{j+2}^{-1} \cdots T_{n-1}^{-1})(T_1 \cdots T_{n-1}) X_{n-1} D_n Y_n^{-1}(T_{n-2}^{-1} \cdots T_{j+1}^{-1} T_j^{-1}) \nonumber \\ 
        %&= (T_j^{-1} T_{j+1}^{-1} \cdots T_{n-2}^{-1}) Y_n(T_{n-1} \cdots T_1) (T_1 \cdots T_{n-1}) X_{n-1} D_n Y_n^{-1}(T_{n-2}^{-1} \cdots T_{j+1}^{-1} T_j^{-1}) \nonumber \\ 
        &= (T^{-1})_{j, n-2}^+ Y_n T_{n-1, 1}^- T_{1,n-1}^+  e_{n-1, n} Y_n^{-1} (T^{-1})_{n-2, j}^- \label{eqn: Y_n e_jn}
        % &= (T_j^{-1} T_{j+1}^{-1} \cdots T_{n-2}^{-1}) Y_n(T_{n-1} \cdots T_1^2 \cdots T_{n-1}) e_{n-1,n} Y_n^{-1}(T_{n-2}^{-1} \cdots T_{j+1}^{-1} T_j^{-1}) \nonumber \\ 
        % &= (T_j^{-1} T_{j+1}^{-1} \cdots T_{n-2}^{-1}) \big(q^{-1} X_{n-1} D_n  \nonumber \\
        %   &\quad + (\tau^{-1} - \tau) T_{n-1}^{-1} X_n D_n Y_{n-1} Y_n^{-1}  (T_{n-2}\cdots T_2 T_1^2 T_2 \cdots T_{n-2}) \big)(T_{n-2}^{-1} \cdots T_{j+1}^{-1} T_j^{-1}) \nonumber \\
    \end{align}
    since $(T^{-1})_{j, n-2}^+$ commutes with $T_{n-1, 1}^- T_{1,n-1}^+$ as a consequence of Corollary~\ref{cor: braid identities}~(i) and~(iv). Now we use the form~\eqref{Y_n e_n-1,n} for the left-hand side of equality~\eqref{Y_n e_n-1,n LHS} to rearrange expression~\eqref{eqn: Y_n e_jn} as 
    \begin{equation*}
        q^{-1} e_{jn} +  (\tau^{-1} - \tau) (T^{-1})_{j, n-1}^+ X_n D_n Y_{n-1} Y_n^{-1}  T_{n-2,1}^- T_{1,j-1}^+,
    \end{equation*}
    which completes the proof of the claim for $i=n$.
    
    For $n \geq i > j \geq 1$, we have
    \begin{equation}\label{eqn: Y_i e_ji}
        (T^{-1})_{n-1, i}^- Y_i T_{i-1,1}^- T_{1,i-1}^+ e_{ji} Y_i^{-1} T_{i,n-1}^+ 
        = Y_n T_{n-1, 1}^- T_{1,n-1}^+ e_{jn} Y_n^{-1}.
    \end{equation}
    The proof is completed by combining equality~\eqref{eqn: Y_i e_ji} with the claim for $i=n$, and using that 
    $T_{i, n-1}^+ e_{jn} = e_{ji} (T^{-1})_{i, n-1}^+$.
\end{proof}
\begin{proof}[Proof of Proposition~\ref{prop: moving Y_i to the right} (3)]       
    For $j>i$, $k > i$, $j \neq k$, we have
    \begin{align}
         Y_i&e_{jk}Y_i^{-1} = X_j (\cR^{-1})_{ji}^- Y_i D_k Y_i^{-1} = X_j (\cR^{-1})_{ji}^- (\cR^{-1})_{ki}^+ D_k. 
            % \nonumber \\ 
            % &= X_j T_{i, j-2}^+ T_{j-1}^{-2}(T^{-1})_{j-2, i}^- T_{i, k-2}^+ T_{k-1}^2 (T^{-1})_{k-2, i}^- D_k, 
            \label{j neq k case}
    \end{align}
    By Lemma~\ref{lemma: Hecke algebra identity} applied to $(\cR^{-1})_{ji}^-$, $(\cR^{-1})_{ki}^+$, 
    if $k > j$ then the right-hand side of equality \eqref{j neq k case} equals
    \begin{align*}
             X_j &T_{i, j-2}^+ T_{j-1}^{-1} T_{j, k-2}^+ T_{k-1}^2 (T^{-1})_{k-2, i}^- D_k \\
             &= T_{i, k-2}^+ T_{k-1}^2 (T^{-1})_{k-2, j}^- X_{j-1} (T^{-1})_{j-1, i}^- D_k  \\
             &= T_{i, k-2}^+ T_{k-1}^2 (T^{-1})_{k-2, j-1}^- X_j T_{j-1}^{-2} (T^{-1})_{j-2, i}^- D_k  \\
             &= T_{i, k-2}^+ T_{k-1}^2 (T^{-1})_{k-2, j-1}^- e_{jk} T_{j-1}^{-2} (T^{-1})_{j-2, i}^- \\
             &= T_{i,k-2}^+ T_{k-1}^2 (T^{-1})_{k-2, i}^-e_{jk}T_{i, j-2}^+T_{j-1}^{-2}(T^{-1})_{j-2, i}^- 
             = (\cR^{-1})_{ki}^+ e_{jk}(\cR^{-1})_{ji}^-,
    \end{align*}
    as required. Similarly if $j > k$, the right-hand side of equality \eqref{j neq k case} equals
    \begin{align*}
            X_j &T_{i,j-2}^+ T_{j-1}^{-2} (T^{-1})_{j-2, k}^- T_{k-1} (T^{-1})_{k-2, i}^- D_k \\
                &= X_j T_{i, k-1}^+ D_{k-1} T_{k,j-2}^+ T_{j-1}^{-2} (T^{-1})_{j-2, i}^- \\
                &= X_j T_{i, k-2}^+ T_{k-1}^2 D_k T_{k-1,j-2}^+ T_{j-1}^{-2} (T^{-1})_{j-2, i}^- \\
                &= T_{i, k-2}^+ T_{k-1}^2 e_{jk} T_{k-1,j-2}^+ T_{j-1}^{-2} (T^{-1})_{j-2, i}^- \\
                &= T_{i,k-2}^+ T_{k-1}^2 (T^{-1})_{k-2, i}^-e_{jk} T_{i, j-2}^+T_{j-1}^{-2}(T^{-1})_{j-2, i}^-
                = (\cR^{-1})_{ki}^+ e_{jk}(\cR^{-1})_{ji}^-,
    \end{align*}
    as required. 
    Next, for $i > j$, $i> k$, $j\neq k$, we have
    \begin{align*}
        Y_i \cR_{ij}^+ e_{jk} \cR_{ik}^- Y_i^{-1} = X_j Y_i D_k \cR_{ik}^- Y_i^{-1} = e_{jk}Y_i\cR_{ik}^+ \cR_{ik}^- Y_i^{-1} = e_{jk},
    \end{align*}
    as required.
    Next, for $k > i > j$, we have
    \begin{align*}
        Y_i \cR_{ij}^+ e_{jk}Y_i^{-1} = X_jY_iD_kY_i^{-1} = X_j  (\cR^{-1})_{ki}^+ D_k = (\cR^{-1})_{ki}^+ e_{jk},
    \end{align*}
    since $[X_j, (\cR^{-1})_{ki}^+] = 0$, as required.
    Finally, for $j > i > k$, we have
    \begin{align*}
        Y_i &e_{jk} \cR_{ik}^- Y_i^{-1} = X_j (\cR^{-1})_{ji}^- Y_i D_k \cR_{ik}^- Y_i^{-1} = X_j (\cR^{-1})_{ji}^- D_k = e_{jk} (\cR^{-1})_{ji}^-,
    \end{align*}
    since $[D_k, (\cR^{-1})_{ji}^-] = 0$, as required. This covered all the possibilities. 
\end{proof}

In Proposition~\ref{prop: moving Y_i to the right}, the relations in cases
\begin{equation}\label{eqn: cases without Ts}
(1); (2) \textnormal{ for } i = 1; \textnormal{ and } (3) \textnormal{ for } j > i
    % \begin{aligned}
    %     &(1) \textnormal{ for all } i \neq j, \\
    %     &(2) \textnormal{ for } 1 = i \neq j, \\ 
    %     &(3) \textnormal{ for } j > i
    % \end{aligned}
\end{equation}
have the elements~$Y_i$ in the left-hand side placed immediately before the corresponding elements~$e_{jk}$. On the other hand, the relations in cases~(2) when $i \neq 1$, and (3) for $i > j$ have Hecke algebra elements $T_l$ in between the~$Y_i$ and $e_{jk}$ in the left-hand side.  

In order to be able to move an arbitrary $Y_i$ to the right through an arbitrary directly adjacent $e_{jk}$, we thus also need the following Lemma~\ref{lemma: moving Y_i to the right}. Whenever we encounter $Y_i$ directly adjacent to some $e_{jk}$ with their indices not falling into one of the cases \eqref{eqn: cases without Ts}, we can expand such a monomial into a sum of terms each of which can be handled, in the sense of moving~$Y$'s to the right. 
The case $Y_i e_{jk}$ with $i > j$ and $j \neq k \neq i$ can be dealt with by Lemma~\ref{lemma: moving Y_i to the right}~(ii) and Proposition~\ref{prop: moving Y_i to the right} (1), (3).
The case $Y_i e_{ji}$ with $j > i \geq 2$ can be dealt with by Lemma~\ref{lemma: moving Y_i to the right} (i) and Proposition~\ref{prop: moving Y_i to the right}~(2), (3).
The final case to consider is $Y_i e_{ji}$ with $i > j$. The first step is to apply Lemma~\ref{lemma: moving Y_i to the right}~(i). By then applying Proposition~\ref{prop: moving Y_i to the right} (2), we are left to consider terms $Y_k e_{ji}$ with~$k < i$. For the terms with $k < j$, we apply Proposition~\ref{prop: moving Y_i to the right}~(3); for the term with $k = j$, we apply Proposition~\ref{prop: moving Y_i to the right}~(1); and we deal with the terms with~$k > j$ by applying Lemma~\ref{lemma: moving Y_i to the right} (ii) and Proposition~\ref{prop: moving Y_i to the right}~(1),~(3).

\begin{lemma}\label{lemma: moving Y_i to the right}
\begin{enumerate}[(i)]
    \item %For $n \geq j > i \geq 2$ or $n \geq i > j \geq 1$, we have 
    For $n \geq i \geq 1$, we have
        \begin{align*}
             Y_i  = Y_i T_{i-1,1}^- T_{1, i-1}^+  + (\tau^{-1} - \tau)\sum_{k = 1}^{i-1} (T^{-1})_{i-1, k}^- T_{k+1, i-1}^+ Y_k. 
        \end{align*}   
    \item %For $n \geq i > j \neq k \geq 1$, as well as for $n \geq k > i > j \geq 1$, we have
    For $n \geq i > j \geq 1$, we have
        \begin{equation*}
            Y_i =
            Y_i \cR_{ij}^{+}  
            + (\tau^{-1} - \tau)(T^{-1})_{i-1, j}^- (T^{-1})_{j+1, i-1}^+ Y_j.
        \end{equation*}
\end{enumerate}
\end{lemma}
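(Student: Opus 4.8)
The plan is to prove both identities by direct computation in $\bH_n$, using only the Hecke relation in the form $T_k^{2} = 1 + (\tau - \tau^{-1})T_k$ (so $T_k^{-1} = T_k - (\tau - \tau^{-1})$), the relation $T_k^{-1}Y_k T_k^{-1} = Y_{k+1}$, and the commutations $[T_k, Y_i] = 0$ for $i \notin \{k, k+1\}$. From $T_k^{-1}Y_k T_k^{-1} = Y_{k+1}$ one extracts the two elementary ``one-step moves'' that do all the work: multiplying on the right by $T_k$ gives $Y_{k+1}T_k = T_k^{-1}Y_k$ (equivalently $Y_m T_{m-1} = T_{m-1}^{-1}Y_{m-1}$), and combining this with the Hecke relation gives $T_k^{-1}Y_k T_k = Y_{k+1}T_k^2 = Y_{k+1} + (\tau - \tau^{-1})T_k^{-1}Y_k$.

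For part (i) I would induct on $i$, the case $i = 1$ being the tautology $Y_1 = Y_1$ (both one-sided products are empty and the sum is void). For the step $i \to i+1$, I would write $T_{i,1}^- T_{1,i}^+ = T_i (T_{i-1,1}^- T_{1,i-1}^+) T_i$ and use $Y_{i+1} = T_i^{-1}Y_i T_i^{-1}$ to get $Y_{i+1}T_{i,1}^- T_{1,i}^+ = T_i^{-1}\big(Y_i T_{i-1,1}^- T_{1,i-1}^+\big)T_i$, then substitute the inductive hypothesis for the bracketed expression. Conjugation by $T_i$ sends each summand $(T^{-1})_{i-1,k}^- T_{k+1,i-1}^+ Y_k$ ($1 \le k \le i-1$) to $(T^{-1})_{i,k}^- T_{k+1,i}^+ Y_k$, because $T_i^{-1}(T^{-1})_{i-1,k}^- = (T^{-1})_{i,k}^-$, $T_{k+1,i-1}^+ T_i = T_{k+1,i}^+$, and $[Y_k, T_i] = 0$ for $k \le i-1$; while the leading term $Y_i$ becomes $T_i^{-1}Y_i T_i = Y_{i+1} + (\tau - \tau^{-1})T_i^{-1}Y_i$ by the second one-step move. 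The key observation is that the correction term $(\tau - \tau^{-1})T_i^{-1}Y_i$ equals $-(\tau^{-1}-\tau)(T^{-1})_{i,i}^- T_{i+1,i}^+ Y_i$, i.e. it is precisely the missing $k = i$ summand; adjoining it to the shifted sum over $k = 1, \dots, i-1$ produces $\sum_{k=1}^{i}$, and rearranging yields the statement for $i+1$.

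For part (ii) I would argue directly, with no induction. From $\cR_{ij}^+ = T_{i-1,j+1}^- T_j^{2}(T^{-1})_{j+1,i-1}^+$, the relation $T_j^2 = 1 + (\tau - \tau^{-1})T_j$, and the telescoping identity $T_{i-1,j+1}^-(T^{-1})_{j+1,i-1}^+ = 1$, one computes $Y_i\cR_{ij}^+ = Y_i + (\tau - \tau^{-1})\,Y_i\,T_{i-1,j+1}^-\,T_j\,(T^{-1})_{j+1,i-1}^+$, so the asserted identity follows once we show
\[
    Y_i\, T_{i-1,j+1}^-\, T_j\, (T^{-1})_{j+1,i-1}^+ = (T^{-1})_{i-1,j}^-\, (T^{-1})_{j+1,i-1}^+\, Y_j .
\]
For this, push $Y_i$ rightwards through $T_{i-1,j+1}^- = T_{i-1}T_{i-2}\cdots T_{j+1}$ by iterating $Y_m T_{m-1} = T_{m-1}^{-1}Y_{m-1}$, arriving at $(T^{-1})_{i-1,j+1}^- Y_{j+1}$; then $Y_{j+1}T_j = T_j^{-1}Y_j$ turns the left-hand side into $(T^{-1})_{i-1,j}^- Y_j (T^{-1})_{j+1,i-1}^+$; and finally $Y_j$ commutes past $(T^{-1})_{j+1,i-1}^+$ since the latter involves only $T_k$ with $k \ge j+1$, giving the right-hand side.

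In both parts the only real work is bookkeeping with the telescoping one-sided products $T^{\pm}_{\ast,\ast}$ and $(T^{-1})^{\pm}_{\ast,\ast}$, and I do not anticipate any conceptual obstacle. The step needing most care is the inductive step of part~(i): one must check that the correction term produced when $Y_i$ is conjugated by $T_i$ is exactly the new $k=i$ summand, and keep straight which $T_k$ commute with which $Y_m$ each time a factor $T_i^{\pm1}$ is absorbed into, or split off from, these products.
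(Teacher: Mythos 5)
Your proposal is correct and uses essentially the same ingredients as the paper's proof: the quadratic Hecke relation $T_k^2 = 1+(\tau-\tau^{-1})T_k$, the move $Y_{k+1}T_k = T_k^{-1}Y_k$, and the commutations $[T_k,Y_m]=0$; your part (ii) is in fact identical to the paper's. The only (cosmetic) difference is in part (i), where you induct on $i$ and conjugate by $T_i$, whereas the paper fixes $i$ and peels off the factors $T_1^2, T_2^2, \dots$ from the middle of $T_{i-1,1}^-T_{1,i-1}^+$ one at a time — the same one-step moves, organized differently.
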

\begin{proof}
    (i) The claim is trivial for $i=1$, so suppose $i>1$. By using $T_1^2 = 1 + (\tau - \tau^{-1})T_1$, we get
    \begin{align}
        Y_i T_{i-1,1}^- T_{1, i-1}^+ &= Y_i T_{i-1,2}^- T_{2, i-1}^+ + (\tau - \tau^{-1})Y_i T_{i-1,1}^- T_{2, i-1}^+ \nonumber \\
        &= Y_i T_{i-1,2}^- T_{2, i-1}^+ + (\tau - \tau^{-1})(T^{-1})_{i-1,1}^- T_{2, i-1}^+ Y_1. \label{eqn: i=2 case}
    \end{align}
    If $i = 2$, then the desired relation follows by rearranging equality~\eqref{eqn: i=2 case} for $Y_i T_{i-1,2}^- T_{2, i-1}^+ = Y_i$. 
    Assume $i > 2$, then we iterate $i-2$ times the manipulation of~\eqref{eqn: i=2 case}. Thus, at the next step we use $T_2^2 = 1 + (\tau - \tau^{-1})T_2$ to get
    that $Y_i T_{i-1,1}^- T_{1, i-1}^+$ equals
    \begin{align*}
         &Y_i T_{i-1,3}^- T_{3, i-1}^+  + (\tau - \tau^{-1}) Y_i T_{i-1,2}^- T_{3, i-1}^+ 
        + (\tau - \tau^{-1})(T^{-1})_{i-1,1}^- T_{2, i-1}^+ Y_1  \\
        &= Y_i T_{i-1,3}^- T_{3, i-1}^+ + 
        (\tau - \tau^{-1})\sum_{k=1}^2 (T^{-1})_{i-1,k}^- T_{k+1, i-1}^+ Y_k,
    \end{align*}
    and so forth until we obtain the desired relation.

    (ii) By using $T_j^2 = 1 + (\tau - \tau^{-1})T_j$, we get
    \begin{align*}
        Y_i \cR_{ij}^{+}  &=
        Y_i 
        + (\tau - \tau^{-1})Y_i T_{i-1, j}^- (T^{-1})_{j+1, i-1}^+ \\
        &= Y_i 
        + (\tau - \tau^{-1})(T^{-1})_{i-1, j}^- (T^{-1})_{j+1, i-1}^+ Y_j, 
    \end{align*}
    and the desired relation follows.
\end{proof}

In order to be able to move $Y_i^{-1}$ to the right past~$e_{jk}$, we use the relations from Proposition~\ref{prop: moving Y_i to the right} multiplied by~$Y_i^{-1}$ from the left and rearranged to find an expression for the first term in the right-hand side. We also need the next lemma serving an analogous purpose as Lemma~\ref{lemma: moving Y_i to the right} (proved similarly, too) to deal with the cases where we end up with Hecke algebra elements~$T_l$ in between the~$Y_i^{-1}$ and~$e_{jk}$.

\begin{lemma}\label{lemma: moving Y_i inverse to the right}
\begin{enumerate}[(i)]
    \item %For $n-1 \geq i > j \geq 1$ or $n \geq j > i \geq 1$, we have
    For $n \geq i \geq 1$, we have
        \begin{align*}
           Y_i^{-1} = Y_i^{-1}T_{i, n-1}^+ T_{n-1, i}^-  + (\tau^{-1} - \tau)\sum_{k=0}^{n-i-1} (T^{-1})_{i, n-k-1}^+ T_{n-k-2,i}^- Y_{n-k}^{-1}. 
        \end{align*}
    \item For $n \geq k > i \geq 1$, we have 
    \begin{equation*}
        Y_i^{-1} =
            Y_i^{-1} (\cR^{-1})_{ki}^{+} 
            +(\tau^{-1} - \tau) (T^{-1})_{k-1, i+1}^- (T^{-1})_{i, k-1}^+ Y_k^{-1}.    
    \end{equation*}
\end{enumerate}
\end{lemma}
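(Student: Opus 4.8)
The plan is to follow the proof of Lemma~\ref{lemma: moving Y_i to the right} almost verbatim, with the $Y$'s replaced by $Y^{-1}$'s. The ingredients I will use are: the quadratic relation $T_k^2 = 1 + (\tau - \tau^{-1})T_k$ from~\eqref{eqn: Hecke relations} (equivalently $T_k^{-1} = T_k - (\tau-\tau^{-1})$); the ``inverse'' form of~\eqref{eqn: T_k and Y_i}, namely $Y_k^{-1}T_k = T_k^{-1}Y_{k+1}^{-1}$ (obtained by inverting $Y_k = T_kY_{k+1}T_k$), which upon iteration gives $Y_i^{-1}T_{i,m}^+ = (T^{-1})_{i,m}^+ Y_{m+1}^{-1}$ for $i \le m \le n-1$; the fact that $Y_j^{-1}$ commutes with $T_l$ whenever $l \ne j-1, j$; and the collapsing identity $(T^{-1})_{k-1,i+1}^- T_{i+1,k-1}^+ = 1$, a special case of $T_{ij}^+(T^{-1})_{ji}^- = 1$.

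For part~(ii), I would expand $(\cR^{-1})_{ki}^+ = (T^{-1})_{k-1,i+1}^- T_i^2 T_{i+1,k-1}^+$ and push $Y_i^{-1}$ rightward. It commutes through $(T^{-1})_{k-1,i+1}^-$, and then $Y_i^{-1}T_i^2 = Y_i^{-1} + (\tau - \tau^{-1})T_i^{-1}Y_{i+1}^{-1}$. In the first resulting term the surviving $Y_i^{-1}$ commutes through $T_{i+1,k-1}^+$ and $(T^{-1})_{k-1,i+1}^- T_{i+1,k-1}^+ = 1$ leaves just $Y_i^{-1}$; in the second term, $Y_{i+1}^{-1}T_{i+1,k-1}^+ = (T^{-1})_{i+1,k-1}^+ Y_k^{-1}$ and $T_i^{-1}(T^{-1})_{i+1,k-1}^+ = (T^{-1})_{i,k-1}^+$. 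This yields $Y_i^{-1}(\cR^{-1})_{ki}^+ = Y_i^{-1} + (\tau - \tau^{-1})(T^{-1})_{k-1,i+1}^- (T^{-1})_{i,k-1}^+ Y_k^{-1}$, and rearranging (writing $-(\tau-\tau^{-1}) = \tau^{-1}-\tau$) gives the claim; the degenerate case $k=i+1$ is checked directly and is consistent.

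For part~(i), I would telescope downward in the ``width'' of the palindromic $T$-word. From $T_{i,j}^+ T_{j,i}^- = T_{i,j-1}^+ T_j^2 T_{j-1,i}^-$ and $T_j^2 = 1 + (\tau-\tau^{-1})T_j$ one gets $T_{i,j}^+ T_{j,i}^- = T_{i,j-1}^+ T_{j-1,i}^- + (\tau-\tau^{-1})T_{i,j}^+ T_{j-1,i}^-$; multiplying by $Y_i^{-1}$ on the left and using $Y_i^{-1}T_{i,j}^+ = (T^{-1})_{i,j}^+ Y_{j+1}^{-1}$ together with $[Y_{j+1}^{-1}, T_{j-1,i}^-] = 0$ produces the one-step recursion $Y_i^{-1}T_{i,j}^+ T_{j,i}^- = Y_i^{-1}T_{i,j-1}^+ T_{j-1,i}^- + (\tau-\tau^{-1})(T^{-1})_{i,j}^+ T_{j-1,i}^- Y_{j+1}^{-1}$. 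Applying this for $j = n-1, n-2, \ldots, i$ (with the convention $T_{i,i-1}^+ = T_{i-1,i}^- = 1$, so the last step reads $Y_i^{-1}T_i^2 = Y_i^{-1} + (\tau-\tau^{-1})T_i^{-1}Y_{i+1}^{-1}$) and summing the telescope gives $Y_i^{-1}T_{i,n-1}^+ T_{n-1,i}^- = Y_i^{-1} + (\tau-\tau^{-1})\sum_{j=i}^{n-1}(T^{-1})_{i,j}^+ T_{j-1,i}^- Y_{j+1}^{-1}$; rearranging and reindexing via $k = n-1-j$ yields the stated formula. The case $i = n$ is trivial, since then $T_{i,n-1}^+ = T_{n-1,i}^- = 1$ and the sum is empty.

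Since the structure is entirely dictated by the proof of Lemma~\ref{lemma: moving Y_i to the right}, I do not expect a genuine obstacle: the only delicate point is the bookkeeping of which $Y_j^{-1}$ commutes past which string of $T_l$'s, and verifying that the resulting index ranges reassemble into exactly the sums written in the statement.
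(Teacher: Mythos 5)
Your proof is correct, and it is exactly the argument the paper intends: the paper gives no separate proof for this lemma, stating only that it is "proved similarly" to Lemma~\ref{lemma: moving Y_i to the right}, and your telescoping use of $T_j^2 = 1+(\tau-\tau^{-1})T_j$ together with $Y_l^{-1}T_l = T_l^{-1}Y_{l+1}^{-1}$ and the commutations $[Y_j^{\pm1},T_l]=0$ for $l\neq j-1,j$ is precisely that adaptation. The index bookkeeping (reindexing $k=n-1-j$ in part (i), and the degenerate case $k=i+1$ in part (ii)) all checks out against the stated formulas.
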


In view of the relations presented above, 
we arrive at a $\C_\tau$-basis for the algebra~$\bH^{\fgl_n}$. It may be thought of as a PBW-type basis.
At $\tau = 1$, it reduces to the one given by formula~\eqref{eqn: PBW basis for cA} for the algebra~$\cA$. The following theorem also implies that $\bH^{\fgl_n}$ is a flat $\tau$-deformation of the algebra $\cA$ from Section~\ref{sec: quantum group}.

\begin{theorem}\label{thm: PBW basis for H_gl}
The algebra $\bH^{\fgl_n}$ has a free basis over $\C_\tau$ consisting of the monomials
\begin{equation}
    \begin{aligned}\label{eqn: PBW basis for H_gl}
         T_w e_{i_1 j_1}^{k_1} &\cdots e_{i_t j_t}^{k_t} \prod_{l = 1}^n  Y_l^{m_l},                                                
\end{aligned}
\end{equation}
where $w \in \fS_n$, $t \in \Z_{\geq 0}$, $k_s \in \Z_{>0}$, $m_l \in \Z$,
$1 \leq i_1 \leq \cdots \leq i_t \leq n$, $1 \leq j_1 \leq \cdots \leq j_t \leq n$  with $i_s = i_{s+1} \Rightarrow j_s < j_{s+1}$, and none of the indices~$i_r$ equal any of the indices $j_s$. Here $T_w$ ($w \in \fS_n$) is the standard basis of the Hecke algebra of type~$A_{n-1}$. 

The algebra $\bH^{\fgl_n}$ has a presentation by generators $T_k$ ($1 \leq k \leq n-1$), $Y_i^{\pm 1}$, $e_{ij}$ ($1 \leq i\neq j \leq n$) and relations~\eqref{eqn: Hecke relations}, \eqref{eqn: braid relations}, \eqref{eqn: T_k and Y_i},  Laurent relations for $Y_i^{\pm 1}$, and relations from Propositions~\ref{prop: relations among e_ij}--\ref{prop: moving Y_i to the right}. %and Lemmas~\ref{lemma: moving Y_i to the right}, \ref{lemma: moving Y_i inverse to the right}. 
Further, $\bH^{\fgl_n}/(\tau - 1)\bH^{\fgl_n} \cong \cA$.
\end{theorem}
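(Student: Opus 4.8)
The plan is to prove the three assertions of the theorem — the free basis, the presentation, and the identification of $\bH^{\fgl_n}/(\tau-1)\bH^{\fgl_n}$ — in that logical order, with the spanning step carrying the technical weight. First I would show that the monomials~\eqref{eqn: PBW basis for H_gl} span $\bH^{\fgl_n}$ over $\C_\tau$ by a straightening algorithm applied to an arbitrary word in the generators $T_k$, $Y_i^{\pm1}$, $e_{ij}$ ($i\neq j$), with $e_{ii}$ read as its displayed expression in $T_k$ and $Y_n^{\pm1}$. The algorithm proceeds in three stages: (i) move every $T_k$ to the far left and collapse the Hecke part into a single standard basis element $T_w$, using \eqref{eqn: Hecke relations}, \eqref{eqn: braid relations}, \eqref{eqn: T_k and Y_i}, and Proposition~\ref{prop: moving T_k to the left}; (ii) move every $Y_i^{\pm1}$ to the far right through the block of $e$'s, using Proposition~\ref{prop: moving Y_i to the right} together with Lemmas~\ref{lemma: moving Y_i to the right} and~\ref{lemma: moving Y_i inverse to the right} — exactly the case analysis sketched in the text preceding Lemma~\ref{lemma: moving Y_i to the right} — to absorb the intermediate Hecke elements these relations generate; (iii) reorder the $e$-factors into the prescribed order and eliminate every ``chained'' occurrence using the two relations of Proposition~\ref{prop: relations among e_ij}, re-substituting the $T,Y_n$-expression whenever an $e_{ii}$ is produced and looping back to~(i). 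Termination is guaranteed by a filtration by the number of $e$-factors: the relations of Proposition~\ref{prop: relations among e_ij} strictly decrease this number modulo lower terms, while those of Proposition~\ref{prop: moving Y_i to the right} keep it fixed but produce only shorter corrections; within a fixed $e$-count one refines by a monomial order recording how far the $T$'s are from the left and the $Y$'s from the right. Stage~(iii) is the $q,\tau$-deformation of the normal ordering of $\rho_{\text{JS}}(U(\fgl_n))$ in~\cite{FH} and, at $\tau=1$, of the argument of Proposition~\ref{prop: PBW basis for A}.

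Next I would establish $\C_\tau$-linear independence of the monomials~\eqref{eqn: PBW basis for H_gl} by a specialization argument. Since $\bH_n$ embeds, via its faithful polynomial representation, into $\operatorname{End}_{\C_\tau}$ of a torsion-free $\C_\tau$-module, it is itself torsion-free over the principal ideal domain $\C_\tau = \C[\tau^{\pm1}]$; hence any hypothetical relation $\sum_\alpha c_\alpha b_\alpha = 0$ can be divided by the largest power of $\tau-1$ dividing all the $c_\alpha$, after which some $c_{\alpha_0}(1)\neq 0$. Feeding this into the polynomial representation, in which every generator acts by an operator with matrix coefficients in $\C_\tau$, and then setting $\tau = 1$ yields a relation $\sum_\alpha c_\alpha(1)\,\bar b_\alpha = 0$ among operators on $\C[X_1^{\pm1},\dots,X_n^{\pm1}]$. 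At $\tau = 1$ the generators $T_k$, $Y_i^{\pm1}$, $e_{ij}$ act as $s_k$, $t_i^{\pm1}$, $E_{ij}^q$ (using that $D_i$ acts as $d_i$ there), so each $\bar b_\alpha$ is the PBW monomial~\eqref{eqn: PBW basis for cA} of $\cA$; these are linearly independent by Proposition~\ref{prop: PBW basis for A} and the faithfulness of the action of $\cA$, contradicting $c_{\alpha_0}(1)\neq 0$. (A direct leading-term computation inside $\bH_n$, in the spirit of the proof of Proposition~\ref{prop: PBW basis for A}, would be an alternative.)

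Granting the basis, the presentation follows from the standard argument: the $\C_\tau$-algebra $\mathcal H$ defined by the listed generators and relations — with $S_{ij}^\tau$ and $e_{ii}$ understood as their explicit $Y,T$-expressions, which lie in $\mathcal H$ — surjects onto $\bH^{\fgl_n}$ since every relation holds there; the straightening algorithm above applies verbatim inside $\mathcal H$, the auxiliary identities of Lemmas~\ref{lemma: T_k and D_i}, \ref{lemma: moving Y_i to the right}, \ref{lemma: moving Y_i inverse to the right} being consequences of \eqref{eqn: Hecke relations}, \eqref{eqn: braid relations}, \eqref{eqn: T_k and Y_i}; so the monomials span $\mathcal H$, and since their images are independent the surjection is an isomorphism. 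Finally, reducing modulo $\tau-1$ amounts to setting $\tau = 1$ in this presentation: \eqref{eqn: Hecke relations} becomes $T_k^2 = 1$, the $(\tau-\tau^{-1})$-proportional corrections in Propositions~\ref{prop: moving T_k to the left} and~\ref{prop: moving Y_i to the right} drop out, $S_{ij}^\tau$ becomes $S_{ij}^q$, and the surviving relations are precisely the defining relations of $\cA = \C\fS_n\ltimes A$ — those of Proposition~\ref{prop: PBW basis for A}, the crossed-product relations~\eqref{eqn: moving s_k to the right}, and the Coxeter relations. As $\bH^{\fgl_n}$ is $\C_\tau$-free on $\{b_\alpha\}$, its quotient $\bH^{\fgl_n}/(\tau-1)\bH^{\fgl_n}$ is $\C$-free on the reductions $\{\bar b_\alpha\}$, which match the PBW basis~\eqref{eqn: PBW basis for cA} of $\cA$; hence the natural relation-respecting map $\cA\to\bH^{\fgl_n}/(\tau-1)\bH^{\fgl_n}$ is an isomorphism.

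The main obstacle is the spanning step, and within it stage~(iii) together with termination: one must organize the many relations of Propositions~\ref{prop: relations among e_ij}--\ref{prop: moving Y_i to the right}, with the branchings supplied by Lemmas~\ref{lemma: moving Y_i to the right} and~\ref{lemma: moving Y_i inverse to the right}, into a terminating rewriting system with a well-chosen order and check that no overlap ambiguities survive — the $\tau$-deformed analogue of the PBW straightening carried out for $H^{\fgl_n}$ in~\cite{FH}.
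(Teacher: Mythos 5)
Your proposal is correct and follows essentially the same route as the paper: spanning by the same iterated straightening (moving the $Y_i^{\pm1}$ rightwards through the $e$'s via Proposition~\ref{prop: moving Y_i to the right} and Lemmas~\ref{lemma: moving Y_i to the right}--\ref{lemma: moving Y_i inverse to the right}, moving the $T_k$'s to the left via Proposition~\ref{prop: moving T_k to the left}, then ordering the $e$-factors with Proposition~\ref{prop: relations among e_ij}, with termination controlled by the number of $e$-factors), and linear independence by specialising to $\tau=1$ and invoking the PBW basis~\eqref{eqn: PBW basis for cA} of $\cA$. The only differences are expository: you spell out the torsion-freeness/clearing-powers-of-$(\tau-1)$ step and the surjection-from-the-abstract-presentation argument, which the paper leaves implicit.
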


\begin{proof}
    Consider any monomial in the generators: $T_k$, $Y_i^{\pm 1}$, $e_{jl}$ ($j \neq l$). In step one, we apply the relations from Proposition~\ref{prop: moving Y_i to the right} and Lemmas~\ref{lemma: moving Y_i to the right} and~\ref{lemma: moving Y_i inverse to the right} to move $Y_i^{\pm 1}$ to the right past any directly adjacent $e_{jl}$. All of those relations are linear in~$e$'s, so this does not increase the number of $e$'s in any single term. In step two, we move all $T_k$'s completely to the left end of each term by using Proposition~\ref{prop: moving T_k to the left} and relations~\eqref{eqn: T_k and Y_i}. This does not increase in any single term the number of $e$'s, and in those terms where the number of~$e$'s stayed the same, this did not increase the number of $Y$'s. We then repeat steps one and two until all $Y$'s are to the right of any $e$'s and all $T$'s are at the left end of each monomial term. We achieve this in finitely many steps. Since $T_w$ ($w \in \fS_n$) form a basis of the Hecke algebra of type~$A_{n-1}$ (which follows from relations~\eqref{eqn: Hecke relations} and~\eqref{eqn: braid relations}), we can now assume that all monomial terms take the form
    $T_w ($product of $e$'s$)\prod_{l = 1}^n  Y_l^{m_l}$.

    Next, we use the relations from Proposition~\ref{prop: relations among e_ij} for the product of $e$'s to order them in accordance with the conditions on the indices as in~\eqref{eqn: PBW basis for H_gl}. We can handle by induction the terms where the number of~$e$'s has decreased after an application of a relation from Proposition~\ref{prop: relations among e_ij}, so we care only about the quadratic terms in $e$ in those relations, and these do not introduce any new~$Y$'s nor~$T$'s.
    This proves that the monomials~\eqref{eqn: PBW basis for H_gl} span the algebra~$\bH^{\fgl_n}$. 
    
    It remains to show that these monomials are linearly independent over~$\C_\tau$. It suffices to show that for $\tau = 1$ they are linearly independent over $\C$. This holds because at $\tau = 1$ they coincide with the PBW basis for the algebra $\cA$ from Section~\ref{sec: quantum group} (see formula~\eqref{eqn: PBW basis for cA}).

    It is straightforward to see that under the correspondence $T_k \leftrightarrow s_k$, $Y_i^{\pm 1} \leftrightarrow t_i^{\pm 1}$, 
    $e_{ij} \leftrightarrow E_{ij}^q$, the defining relations of $\bH^{\fgl_n}$ are just deformations of those of the algebra $\cA$ (relations from Proposition~\ref{prop: PBW basis for A} together with~\eqref{eqn: moving s_k to the right} and Coxeter relations), which reduce to those of $\cA$ when $\tau = 1$. The last part of the statement follows.
\end{proof}

\begin{remark}
    One may also consider a non-formal version of the algebra~$\bH^{\fgl_n}$ where $\tau = \lambda \in \C^\times$, that is the $\C$-algebra $$\bH^{\fgl_n} / (\tau - \lambda)\bH^{\fgl_n} \cong \bH^{\fgl_n} \otimes_{\C_\tau} \C,$$ where we use a ring homomorphism $\varphi\colon \C_\tau \to \C$ given by $\varphi(\tau) = \lambda$. Then, it follows from Theorem~\ref{thm: PBW basis for H_gl} that (the coset representatives of) the elements~\eqref{eqn: PBW basis for H_gl} give a basis of this algebra. 
\end{remark}

\section{Centre and a double centraliser property}\label{sec: centre}
In this section, we consider the DAHA $\bH_n$ and its subalgebra $\bH^{\fgl_n}$ defined in an analogous way as in Section~\ref{sec: subalgebra}, but over the field $\C(\tau)$ of rational functions in the variable $\tau$ instead of $\C_\tau$. An analogous proof shows that this version of~$\bH^{\fgl_n}$ satisfies the direct analogue of Theorem~\ref{thm: PBW basis for H_gl} as well. We now study some further properties of this algebra.

\begin{proposition}
    The element $\tY = \prod_{i=1}^n Y_i$ belongs to the centre $\cZ(\bH^{\fgl_n})$.
\end{proposition}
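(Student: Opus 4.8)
The plan is to verify directly that $\tY = \prod_{i=1}^n Y_i$ commutes with each of the generators $T_k$, $Y_i^{\pm 1}$, and $e_{ij}$ ($i \neq j$) of $\bH^{\fgl_n}$. Commutativity with the $T_k$ is immediate from relations~\eqref{eqn: T_k and Y_i} (as already observed right after those relations), and commutativity with all $Y_i^{\pm 1}$ is clear since the $Y_i$ mutually commute. Thus the only substantive point is to show $\tY e_{ij} = e_{ij}\tY$, and for this the natural intermediate step is to establish the $q$-commutation relation between $\tY$ and the $q$-Dunkl operators.

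Concretely, I would first prove the claim that
\[
    \tY D_i \tY^{-1} = q^{-1} D_i \qquad \text{for all } i \in \{1, \dots, n\}.
\]
The base case $i = n$ is a short computation: from $\tY X_n = q X_n \tY$ one gets $\tY X_n^{-1} = q^{-1} X_n^{-1} \tY$, and $\tY$ commutes with $Y_n^{\pm 1}$, so applying these to $D_n = (q-q^{-1})^{-1} X_n^{-1}(Y_n - Y_n^{-1})$ yields $\tY D_n \tY^{-1} = q^{-1} D_n$. The general case then follows from $D_i = T_{i,n-1}^+ D_n T_{n-1,i}^-$ together with $[\tY, T_k] = 0$, since conjugation by $\tY$ passes through the Hecke-algebra factors unchanged.

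With this claim in hand, for $i \neq j$ one computes
\[
    \tY e_{ij} = \tY X_i D_j = q X_i \tY D_j = q X_i \, q^{-1} D_j \tY = X_i D_j \tY = e_{ij}\tY,
\]
using $\tY X_i = q X_i \tY$ and the claim (the same computation also gives $\tY e_{ii} = e_{ii}\tY$, though $e_{ii}$ is not among the chosen generators). Hence $\tY$ commutes with all generators of $\bH^{\fgl_n}$ and therefore lies in $\cZ(\bH^{\fgl_n})$; moreover it is invertible, with inverse $\prod_{i=1}^n Y_i^{-1} \in \bH^{\fgl_n}$.

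There is no serious obstacle here: the argument is entirely a direct computation inside the DAHA. The only place demanding a little care is the base case $\tY D_n \tY^{-1} = q^{-1} D_n$, where one must correctly track the sign of the exponent in $\tY X_n^{-1} = q^{-1} X_n^{-1} \tY$ and use that $\tY$ commutes with $Y_n^{\pm 1}$ (which is automatic as $\tY$ is a product of the $Y_i$).
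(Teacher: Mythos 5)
Your proof is correct and follows essentially the same route as the paper: a direct check that $\tY$ commutes with each generator, using $[\tY, T_k]=[\tY,Y_i^{\pm 1}]=0$ and $\tY X_i = qX_i\tY$. The only cosmetic difference is that the paper observes $[\tY, X_iX_j^{-1}]=0$ after writing $e_{ij}=(q-q^{-1})^{-1}X_iX_j^{-1}(T^{-1})_{j,n-1}^+(Y_n-Y_n^{-1})T_{n-1,j}^-$, whereas you track the cancelling factors $q$ and $q^{-1}$ through the separate relations $\tY X_i=qX_i\tY$ and $\tY D_j\tY^{-1}=q^{-1}D_j$.
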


\begin{proof}
    From the defining relations of $\bH_n$, it follows that $\tY$ commutes with all~$Y_i^{\pm 1}$ and $T_k$.
    Also,  
    $\tY X_i = q X_i \tY$, hence $[\tY, X_iX_j^{-1}] = 0$, and it follows that $\tY$ commutes with all
    \begin{equation*}
        e_{ij} = (q-q^{-1})^{-1} X_i X_j^{-1} (T^{-1})_{j, n-1}^+ (Y_n - Y_n^{-1})T_{n-1, j}^-.
    \end{equation*}
    Thus, $\tY$ commutes with the generators of $\bH^{\fgl_n}$, as required.
\end{proof} 

\begin{remark}
    We have 
    \begin{equation*}
        \lim_{q \to 1} \frac{1 - \tY}{1-q} = 
        \sum_{i=1}^n X_i y_i - c\sum_{i < j} s_{ij} = eu,
    \end{equation*}
    which is the generator of the centre~$\cZ(H^{\fgl_n})$ from Section~\ref{sec: deg-0 part of RCA}. \hfill \qedsymbol
\end{remark}

Take any element $0 \neq f \in \cZ(\bH^{\fgl_n})$. We can expand $f$ in a $\C(\tau)$-basis of monomials of the same form as those from Theorem~\ref{thm: PBW basis for H_gl}. That is, there exist finitely many monomials $M_1, M_2, \dots, M_N$ of the form~\eqref{eqn: PBW basis for H_gl}, and some
    $\lambda_1(\tau), \dots, \lambda_N(\tau) \in \C(\tau) \setminus \{0 \}$ such that 
    \begin{equation*}
        f = \lambda_1(\tau) M_1 + \cdots + \lambda_N(\tau) M_N.
    \end{equation*}
    We can assume that $\lambda_i$ do not have a pole at $\tau = 1$ and %because we could multiply by a suitable power of $\tau - 1$ and get an element of $\cZ(\bH^{\fgl_n})$ that does not have this pole and is proportional to $f$.
    %Hence, $\lambda_i(1) \in \C$ are well-defined.
    that~$\lambda_i(1) \neq 0$ for some $i$.  %otherwise we could divide out factors of $\tau - 1$ from~$f$ and get another element in $\cZ(\bH^{\fgl_n})$ proportional to $f$ that does not vanish at $\tau = 1$. 
    We have
    $$
        \lambda_1(1) M_1^{\tau=1} + \cdots +  \lambda_N(1) M_N^{\tau=1} \in 
        \cZ(\cA) = \langle \prod_{i=1}^n t_i, \ \prod_{i=1}^n t_i^{-1} \rangle,
    $$
    where we used that the centre of $\cA \cong \C\fS_n \ltimes (U_q(\fgl_n)/I_q)$ was given in Proposition~\ref{prop: centre of cA}. Here $M_i^{\tau=1}$ are the monomials $M_i$ with $Y_l$ replaced by $t_l$, $T_w$ by~$w$, and $e_{jl}$ by $E_{jl}^q$.

    Thanks to the PBW basis of monomials~\eqref{eqn: PBW basis for cA} for the algebra $\cA$, we can conclude that in the expansion of $f$, the monomials $M_i$ for which $\lambda_i(1) \neq 0$ must have the form~$\tY^m$ ($m \in \Z$). By subtracting those terms from $f$ and repeating the above argument, we arrive at the following theorem. 
    \begin{theorem}\label{thm: centre of Hgln}
        The centre $\cZ(\bH^{\fgl_n})$ is generated by $\tY^{\pm 1}$.
    \end{theorem}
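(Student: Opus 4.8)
The plan is to prove the two inclusions separately. That $\langle \tY^{\pm 1}\rangle \subseteq \cZ(\bH^{\fgl_n})$ is the content of the preceding proposition (note $\tY^{-1} = \prod_i Y_i^{-1} \in \bH^{\fgl_n}$), so the work lies in the reverse inclusion, and the idea there is a degeneration to $\tau = 1$, where the centre of $\cA$ is already described in Proposition~\ref{prop: centre of cA}.

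First I would take $0 \neq f \in \cZ(\bH^{\fgl_n})$ and write $f = \sum_{i=1}^{N} \lambda_i(\tau)\, M_i$ in the $\C(\tau)$-basis of monomials $M_i$ of the form~\eqref{eqn: PBW basis for H_gl}, with $\lambda_i(\tau) \in \C(\tau) \setminus \{0\}$. Multiplying $f$ by a suitable element of $\C(\tau)^\times$ — first clear denominators, then divide out the highest power of $\tau - 1$ common to all coefficients — I may assume each $\lambda_i$ is regular at $\tau = 1$ and $\lambda_{i_0}(1) \neq 0$ for some $i_0$; this rescaling does not affect centrality. Reducing modulo $\tau - 1$ (using the analogue of Theorem~\ref{thm: PBW basis for H_gl} for the $\C(\tau)$-version noted at the start of this section) maps $f$ to a central element $\bar f = \sum_i \lambda_i(1)\, M_i^{\tau = 1}$ of $\cA$, where $M_i^{\tau = 1}$ is $M_i$ with $T_w \mapsto w$, $e_{jl}\mapsto E^q_{jl}$, $Y_l\mapsto t_l$. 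As the index constraints in~\eqref{eqn: PBW basis for H_gl} coincide with those in~\eqref{eqn: PBW basis for cA}, the $M_i^{\tau=1}$ are pairwise distinct members of the PBW basis of $\cA$, and $M_i^{\tau=1}$ is a power of $\prod_l t_l$ exactly when $M_i = \tY^m$ for some $m \in \Z$. Since $\cZ(\cA)$ is spanned over $\C$ by the powers of $\prod_l t_l$ (Proposition~\ref{prop: centre of cA}), linear independence of the PBW basis of $\cA$ forces $\lambda_i(1) = 0$ whenever $M_i$ is not a power of $\tY$.

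Next I would split off the central part: set $g = \sum_{i\,:\, M_i = \tY^{m_i}} \lambda_i(\tau)\,\tY^{m_i} \in \langle \tY^{\pm 1}\rangle$ and $r = f - g$, which is central because $g$ is. If $r \neq 0$, then rerunning the normalise-and-reduce argument on $r$ produces $\bar r \in \cZ(\cA)$ equal to a linear combination of distinct PBW basis elements of $\cA$, none of which is a power of $\prod_l t_l$; by Proposition~\ref{prop: centre of cA} this forces $\bar r = 0$, contradicting that some coefficient of the rescaled $r$ is nonzero at $\tau = 1$. Hence $r = 0$ and $f = g \in \langle \tY^{\pm 1}\rangle$, finishing the proof.

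The point needing the most care is the reduction modulo $\tau - 1$ for the $\C(\tau)$-version of $\bH^{\fgl_n}$: one must check that the defining relations from Theorem~\ref{thm: PBW basis for H_gl} have coefficients regular at $\tau = 1$ and degenerate to the relations of $\cA$ (this is essentially what that theorem records), so that the specialisation is a well-defined algebra homomorphism onto $\cA$ sending $\cZ(\bH^{\fgl_n})$ into $\cZ(\cA)$. The other ingredients — distinctness of the $M_i^{\tau=1}$ and the characterisation of which $M_i$ reduce to powers of $\prod_l t_l$ — are routine bookkeeping with the index conditions.
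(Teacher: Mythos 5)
Your proposal is correct and follows essentially the same route as the paper: expand a central element in the PBW basis of Theorem~\ref{thm: PBW basis for H_gl}, normalise the $\C(\tau)$-coefficients to be regular and not all vanishing at $\tau=1$, specialise to $\tau=1$ to land in $\cZ(\cA)$, and invoke Proposition~\ref{prop: centre of cA} together with the PBW basis of $\cA$ to force all non-$\tY$-power monomials to have coefficients vanishing at $\tau=1$. Your splitting off of $g\in\langle\tY^{\pm1}\rangle$ and deriving a contradiction from $r=f-g\neq 0$ is just a repackaging of the paper's ``subtract those terms and repeat'' step, so the two arguments coincide in substance.
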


Let us now consider the subalgebra $\fA$ of $\bH_n$ generated by 
\begin{equation*}
    \C[X_1, \dots, X_n], \qquad \C[D_1, \dots, D_n], \qquad \C[Y_1^{\pm 1}, \dots, Y_n^{\pm 1}],
\end{equation*}
and $T_1, \dots, T_{n-1}$. We note that~$\bH^{\fgl_n} \subset \fA$. 
In the limit $\tau = 1$, those generators of~$\fA$ that are not~$T_k$ reduce to the 
generators of the $q$-Weyl algebra considered by Hayashi in~\cite{Hayashi}. 
The algebra~$\fA$ can be thought of also as a $q$-analogue of the RCA~$H_n$. Indeed, in the trigonometric limit $q \to 1$ given by~\eqref{eqn: trigonometric degeneration}, the algebra $\fA$ reduces to $H_n \subset \bH_n^{\trig}$.

\begin{remark} \label{rem: l=2 cycDAHA}
By~\cite[Section~3.7]{BEF}, the cyclotomic DAHA $\HH_{n,t}^2(Z,q^{-1})$ for $Z = (1,-1)$ is the subalgebra of~$\HH_{n,t}(q^{-1})$ generated by~$T_k$ ($1 \leq k \leq n-1$), $X_i$, $Y_i^{\pm 1}$, and $D_i^{\text{BEF}}$ ($1 \leq i \leq n$) given by~\eqref{eqn: D_i^(2)}. By using the isomorphism $h \circ g \circ \varphi \colon \HH_{n,t}(q^{-1}) \to \bH_n$ from Remark~\ref{rem: BEF's D_i}, we get that this subalgebra~$\HH_{n,t}^2(Z,q^{-1})\subset \HH_{n,t}(q^{-1})$ is isomorphic to the subalgebra of $\bH_n$ generated by $T_k$, $X_i$, $Y_i^{\pm 1}$, and
\begin{equation*}
    D_i  Y_i^{-1} T_{i,n-1}^+ T_{n-1, i}^- 
\end{equation*}
(see equality~\eqref{eqn: D_i vs D_i^(2)}), which coincides with the algebra $\fA$. That is, $\fA \cong \HH_{n,t}^2((1,-1),q^{-1})$. %and, in particular, there is an embedding $\bH^{\fgl_n} \hookrightarrow \HH_n^2((1,-1)),q^{-1})$.
\hfill \qedsymbol
\end{remark} 

%In order to prove relations~\eqref{eqn: double centraliser}, 
We will need the following basis of the algebra $\fA$. 
Another basis of~$\fA \cong \HH_{n,t}^2((1,-1),q^{-1})$ was considered in \cite{BEF} (see paragraph above Proposition~3.32 therein). 
\begin{proposition}\label{prop: basis of fA}
    The algebra $\fA$ has a $\C(\tau)$-basis 
    consisting of the monomials 
    \begin{equation}\label{eqn: PBW basis of fA}
        T_w M_X M_D M_Y,
    \end{equation}
    where $w \in \fS_n$, $M_X$ is a monomial in $X_i$, $M_Y$ a monomial in $Y_i^{\pm 1}$, and $M_D$ a monomial in $D_i$ such that for all~$i$ $M_X$ does not contain $X_i$ or $M_D$ does not contain $D_i$.
\end{proposition}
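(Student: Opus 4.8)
`Proof proposal.`

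The plan is to show that the monomials \eqref{eqn: PBW basis of fA} both span $\fA$ over $\C(\tau)$ and are linearly independent, using the known basis of $\bH^{\fgl_n}$ from Theorem~\ref{thm: PBW basis for H_gl} together with the relations in Lemma~\ref{lemma: properties of D_i} to reduce arbitrary monomials to the required normal form. The indexing restriction --- that for each $i$ at most one of $X_i$, $D_i$ appears in $M_X M_D$ --- is exactly the combinatorial shadow of the fact that $X_i D_i = e_{ii}$ and $D_i X_i = X_i D_i - S_{ii}^\tau$ both lie in $\bH^{\fgl_n}$, so any occurrence of the pair $X_i, D_i$ can be absorbed into an $e_{ii}$-type element.

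First I would establish \emph{spanning}. Take an arbitrary word in the generators $T_k$, $X_i$, $D_i$, $Y_i^{\pm1}$. Using Lemma~\ref{lemma: T_k and D_i}, relations \eqref{eqn: T_k and X_i}, \eqref{eqn: T_k and Y_i}, and the braid/Hecke relations \eqref{eqn: Hecke relations}--\eqref{eqn: braid relations}, move all $T_k$'s to the left, producing a $\C(\tau)$-combination of words of the form $T_w(\text{word in }X,D,Y)$ --- here I should be slightly careful since $T_k X_k = X_{k+1}T_k - (\tau-\tau^{-1})X_{k+1}$ etc.\ change indices and add lower terms, but this is routine and terminating by an obvious length induction. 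Next, within the $X,D,Y$ part, use Lemma~\ref{lemma: properties of D_i}~(1) and~(3) to move every $Y_i^{\pm1}$ to the right past all $X_j$ and $D_j$ --- the relevant relations are linear in the $X$'s and $D$'s (the $\cR^{\varepsilon}$-factors are Hecke elements, which get absorbed into $T_w$ after being passed leftward again), so the total $X$-plus-$D$ degree does not grow. Then use $[D_i,D_j]=0$ (Lemma~\ref{lemma: properties of D_i}~(2)), $[X_i,X_j]=0$, and Lemma~\ref{lemma: properties of D_i}~(4) to sort the $X$'s before the $D$'s: the relation $[D_j,X_i]=S_{ij}^\tau$ moves an $X$ left past a $D$ at the cost of a lower-order term, which for $i\neq j$ is $S_{ij}^\tau\in\bH^{\fgl_n}$ (a Hecke-times-$Y$ expression), and for $i=j$ is $S_{ii}^\tau$; crucially the relations \eqref{eqn: X_l D_l}, \eqref{eqn: D_l X_l} let me rewrite any surviving adjacent pair $X_i D_i$ or $D_i X_i$ as an element of the $\langle T_k, Y_i^{\pm1}\rangle$-subalgebra. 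Iterating, every $i$ for which both $X_i$ and $D_i$ would appear gets such a pair collapsed, so after finitely many steps (length induction again) the word becomes a $\C(\tau)$-combination of monomials \eqref{eqn: PBW basis of fA}.

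Next, \emph{linear independence}. Here I would pass to the polynomial (faithful) representation of $\bH_n$ on $\C(\tau)[X_1^{\pm1},\dots,X_n^{\pm1}]$. In that representation $T_w$ acts via \eqref{eqn: action of T_k}, $X_i$ by multiplication, $Y_i$ via \eqref{eqn: Y_i in terms of pi}--\eqref{eqn: action of pi}, and $D_i$ by the $q$-Dunkl-type operator; specialising $\tau=1$, the operators degenerate to $w\in\fS_n$, multiplication by $X_i$, the shift $t_i$, and $d_i$ from Section~\ref{sec: quantum group}. At $\tau=1$ a monomial $w\,M_X M_D M_Y$ acts, by the same leading-term computation used in the proof of Proposition~\ref{prop: PBW basis for A}, as $w$ composed with $M_X$ (a Laurent monomial in the $X$'s), a Laurent monomial in the $t_i$'s (coming from $M_D$ after writing $d_i = (q-q^{-1})^{-1}X_i^{-1}(t_i-t_i^{-1})$ plus lower-$t$ corrections and from $M_Y$), and an overall negative power of $X_i$ for each $D_i$ --- and because of the "$X_i$ or $D_i$, not both" restriction, distinct normal-form monomials give distinct $(w;\text{net power of each }X_i;\text{top power of each }t_i)$ data, which are linearly independent operators since $q$ is not a root of unity. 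Hence a nontrivial $\C(\tau)$-linear relation among the \eqref{eqn: PBW basis of fA} would, after clearing denominators and specialising $\tau=1$ (arranging as usual that not all coefficients vanish at $\tau=1$), give a nontrivial relation among these operators at $\tau=1$, a contradiction.

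I expect the main obstacle to be bookkeeping in the spanning argument: verifying that the reduction terminates, i.e.\ setting up the right induction (on total degree in $X$ and $D$, with a secondary induction on $Y$-degree or on the number of "bad" indices $i$ with both $X_i$ and $D_i$ present), and checking that none of the rewriting steps --- in particular passing $Y$'s rightward through the $\cR^\varepsilon$-laden relations of Lemma~\ref{lemma: properties of D_i}~(3), and the $i=j$ collapses via \eqref{eqn: X_l D_l}--\eqref{eqn: D_l X_l} --- ever increases the relevant complexity measure. The independence half is essentially a repeat of the leading-term analysis already carried out for Proposition~\ref{prop: PBW basis for A} and Theorem~\ref{thm: PBW basis for H_gl}, so it should go through with only cosmetic changes.
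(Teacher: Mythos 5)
Your plan coincides with the paper's proof: spanning is obtained by the same rewriting scheme (iteratively moving $Y_i^{\pm1}$ to the right, $T_k$'s to the left, then using Lemma~\ref{lemma: properties of D_i}~(4) to commute $X$'s past $D$'s and to collapse any $X_iD_i$ pair into a $T,Y$-expression, with recursion on the total $X$-degree), and linear independence by specialising $\tau=1$ in the faithful polynomial representation and comparing leading terms in the $t_i$'s exactly as in Proposition~\ref{prop: PBW basis for A}. The only point you leave as acknowledged bookkeeping, which the paper handles explicitly, is invoking Lemmas~\ref{lemma: moving Y_i to the right} and~\ref{lemma: moving Y_i inverse to the right} to expand $Y_i^{\pm1}$ whenever the relations of Lemma~\ref{lemma: properties of D_i}~(1),(3) have Hecke factors wedged between $Y_i^{\pm1}$ and the adjacent $X_j$ or $D_j$.
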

\begin{proof}
    Consider any monomial $M$ in the generators $T_k$, $X_i$, $D_i$, $Y_i^{\pm 1}$. Firstly, we will show that we can write $M$ as a linear combination of terms of the form~\eqref{eqn: PBW basis of fA}. We will proceed recursively based on the total power of $X$'s that appear in $M$. We apply the following procedure to $M$. 
    
    In step one, we use the relations from Lemma~\ref{lemma: properties of D_i} (1) and (3), and Lemmas~\ref{lemma: moving Y_i to the right} and~\ref{lemma: moving Y_i inverse to the right} to move $Y_i^{\pm 1}$ to the right past any directly adjacent $X_j$ or~$D_j$. 
    %Applying these relations does not lead to any increase in the number of $X$'s nor the number of $D$'s in any single term. 
    In step two, we move all $T_k$'s completely to the left end of each term by using Lemma~\ref{lemma: T_k and D_i} and relations~\eqref{eqn: T_k and X_i} and~\eqref{eqn: T_k and Y_i}. 
    %This does not increase in any single term the number of $e$'s, and in those terms where the number of~$e$'s stayed the same, this did not increase the number of $Y$'s. 
    We then repeat steps one and two until in each monomial term all $Y$'s are to the right of any $X$'s and~$D$'s, and all $T$'s are at its left end. We achieve this in finitely many steps. At the end, all the monomial terms that were produced have the same total power of $X$'s (and of $D$'s) as the original monomial $M$, %Recall that $T_w$ ($w \in \fS_n$) form a basis of the Hecke algebra of type~$A_{n-1}$, 
    %(which follows from relations~\eqref{eqn: Hecke relations} and~\eqref{eqn: braid relations}), so 
    and we can assume each of them to be of the form
    $T_w \times ($product of $X$'s and $D$'s$) \times M_Y$ for some (not necessarily same) $T_w$ and $M_Y$.

    We now apply Lemma~\ref{lemma: properties of D_i} (4). It gives that $S_{ij}^\tau = [D_i, X_j]$ can be expressed in terms of $Y$ and $T$ variables, hence we can commute~$D$ with~$X$ up to a term with a lower total power of $X$'s (which we can handle by recursion). Furthermore, whenever we encounter~$X_i D_i$, we can replace it   with an expression containing $Y$ and~$T$ only. 
    It follows that the monomials~\eqref{eqn: PBW basis of fA} span the algebra~$\fA$. 
    
    It remains to show that these monomials are linearly independent over~$\C(\tau)$. It suffices to show that for $\tau = 1$ they are linearly independent over $\C$.  
    We will work with the faithful polynomial representation of the DAHA. 
    Recall that at~$\tau = 1$, the elements $D_i$ act as the operators $d_i$ from Section~\ref{sec: quantum group}, while $T_k$ and $Y_i^{\pm 1}$ act   as $s_k$ and $t_i^{\pm 1}$, respectively. Therefore, for any $a_i, b_i \in \Z_{\geq 0}$ and $c_i \in \Z$ we have 
    \begin{equation}\label{leading terms 2}
        T_w \prod_{i=1}^n X_i^{a_i} \prod_{i=1}^n D_i^{b_i}  \prod_{i = 1}^n  Y_i^{c_i}\bigg\rvert_{\tau = 1} \sim w \prod_{i=1}^n X_i^{a_i-b_i}
        \prod_{i = 1}^n  t_i^{b_i + c_i}
        + \, \dots,
    \end{equation}
    where $\dots$ denotes terms in which the overall sum of the exponents on the~$t_i$'s is lower than in the above leading term, and $\sim$ denotes proportionality by a non-zero factor, which may depend on $q$. 

    Assume a non-trivial linear dependence of some monomials~\eqref{eqn: PBW basis of fA} at $\tau = 1$. This implies a non-trivial linear dependence of their corresponding leading (with highest degree in $t_i$) terms, whose form is shown in the right-hand side of~\eqref{leading terms 2}. 
    By the assumptions on the monomials~\eqref{eqn: PBW basis of fA}, either $a_i = 0$ or $b_i = 0$, hence different monomials~\eqref{eqn: PBW basis of fA} lead to distinct leading terms.  However, operators
    $w\prod_{i=1}^n X_i^{n_i} \prod_{i=1}^n t_i^{n'_i}$ for different $n_i, n'_i \in \Z$ and $w \in \fS_n$ are linearly independent as operators on $\C[X_1^{\pm 1}, \dots, X_n^{\pm 1}]$. %(since~$q$ is not a root of unity). 
    We arrived at a contradiction, which completes the proof.
\end{proof}

\begin{lemma}\label{lemma: deg M_X = deg M_D}
    Any monomial~\eqref{eqn: PBW basis of fA} with $\deg M_X = \deg M_D$ belongs to~$\bH^{\fgl_n}$.
\end{lemma}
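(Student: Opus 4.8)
The plan is to induct on $p:=\deg M_X=\deg M_D$. The base case $p=0$ is immediate, since then the monomial is just $T_w M_Y\in\bH^{\fgl_n}$. For the inductive step it suffices, because $T_w$ and $M_Y$ already lie in $\bH^{\fgl_n}$, to show $M_X M_D\in\bH^{\fgl_n}$. Using that the $X_i$ commute with one another and the $D_i$ commute with one another (Lemma~\ref{lemma: properties of D_i}(2)), I would write $M_X=X_{i_1}\cdots X_{i_p}$ and $M_D=D_{j_1}\cdots D_{j_p}$; the defining condition of the basis~\eqref{eqn: PBW basis of fA} forces $\{i_1,\dots,i_p\}\cap\{j_1,\dots,j_p\}=\emptyset$, so in particular $i_s\ne j_s$ for every $s$.

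The key first step is to interleave the $X$'s and $D$'s. By Lemma~\ref{lemma: properties of D_i}(4), for $i\ne j$ the element $S_{ij}^\tau=[D_i,X_j]$ lies in the subalgebra $\cH$ generated by the $T_k$ and the $Y_l^{\pm1}$, and $\cH\subseteq\bH^{\fgl_n}$. Repeatedly applying $X_aD_b=D_bX_a-S_{ba}^\tau$ (always with $a\ne b$, since one index lies in $\{i_s\}$ and the other in $\{j_s\}$) to move the $D$'s leftwards into alternating position, one obtains
\[
    M_X M_D = X_{i_1}D_{j_1}X_{i_2}D_{j_2}\cdots X_{i_p}D_{j_p} + R = e_{i_1 j_1}e_{i_2 j_2}\cdots e_{i_p j_p} + R,
\]
where $R$ is a sum of terms, each of which is a product of $p-1$ of the $X$'s, $p-1$ of the $D$'s and some factors from $\cH$, in various orders. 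The first summand is a product of generators of $\bH^{\fgl_n}$, so the whole problem reduces to showing $R\in\bH^{\fgl_n}$.

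To handle $R$, I would rewrite each of its terms in the PBW basis~\eqref{eqn: PBW basis of fA} of $\fA$, following the reduction algorithm from the proof of Proposition~\ref{prop: basis of fA}: push all $Y$'s to the right past $X$'s and $D$'s using Lemma~\ref{lemma: properties of D_i}(1),(3) and Lemmas~\ref{lemma: moving Y_i to the right},~\ref{lemma: moving Y_i inverse to the right}; push all $T$'s to the left using Lemma~\ref{lemma: T_k and D_i} and relations~\eqref{eqn: T_k and X_i},~\eqref{eqn: T_k and Y_i}; and eliminate any clash $X_mD_m$ by Lemma~\ref{lemma: properties of D_i}(4). Two features of this procedure then close the induction. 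First, $\fA$ is a graded subalgebra of $\bH_n$ with $\deg X_i=1$, $\deg D_i=-1$, $\deg Y_i^{\pm1}=\deg T_k=0$, and under this grading the basis monomials~\eqref{eqn: PBW basis of fA} of degree $0$ are precisely those with $\deg M_X'=\deg M_D'$; since $M_X M_D$ and $e_{i_1 j_1}\cdots e_{i_p j_p}$ are both homogeneous of degree $0$, so is $R$, whence every basis monomial appearing in its expansion has $\deg M_X'=\deg M_D'$. Second, the algorithm is monotone non-increasing in the number of $X$-factors — the $Y$-moves and $T$-moves preserve it, while the commutators $[D,X]=S^\tau$ and the eliminations of $X_mD_m$ strictly decrease it — and each term of $R$ starts with only $p-1$ of the $X$'s; hence every basis monomial in the expansion of $R$ has balanced degree $\le p-1<p$. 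By the inductive hypothesis each of these monomials lies in $\bH^{\fgl_n}$, so $R\in\bH^{\fgl_n}$ and therefore $M_X M_D\in\bH^{\fgl_n}$.

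I expect the last paragraph to be the delicate point: the argument hinges on reducing the remainder $R$ to PBW form producing only \emph{strictly} lower-order monomials, and this is exactly where one must combine the grading of $\fA$ (which pins down balanced $(M_X',M_D')$-degrees) with the monotonicity of the reduction algorithm of Proposition~\ref{prop: basis of fA} in the $X$-count. Checking that monotonicity carefully is the main thing to verify, although it is essentially bookkeeping already implicit in the proof of Proposition~\ref{prop: basis of fA}.
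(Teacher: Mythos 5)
Your proof is correct and follows essentially the same route as the paper: pair the $X$'s and $D$'s into a product of $e_{ij}$'s using that $S_{ij}^\tau=[D_i,X_j]$ lies in the $T,Y$-subalgebra, then re-express the remainder (which has strictly lower, still balanced, $X$-degree) in the basis~\eqref{eqn: PBW basis of fA} and conclude by induction on the degree in $X$. Your extra bookkeeping on the grading of $\fA$ and the monotonicity of the reduction algorithm just makes explicit what the paper's shorter argument leaves implicit.
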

\begin{proof}
We permute the elements $X_i$ and~$D_j$ in such a monomial so as to pair them up into a product of elements $e_{ij}$. This can be done up to a combination of terms of lower degree in $X$ with equal degree in $D$, since $S_{ij}^\tau = [D_i, X_j]$ can be expressed in terms of $Y$ and $T$ by Lemma~\ref{lemma: properties of D_i}. By re-expressing these lower degree terms via the basis~\eqref{eqn: PBW basis of fA},
the statement follows inductively by degree in~$X$.
\end{proof}

Remark~\ref{rem: l=2 cycDAHA} enables us to prove the following proposition. 
\begin{proposition}\label{prop: action on polys}
    The algebra~$\fA$ has an irreducible representation~$\psi$ on polynomials $\C(\tau)[X_1, \dots, X_n]$ given by 
    \begin{equation}\label{eqn: action on polys}
        \begin{aligned}
            &\psi(T_k) = \tau s_k + \frac{(\tau - \tau^{-1})X_{k+1}}{X_k - X_{k+1}}(s_k -1), \\
            &\psi(X_i) = X_i, \quad \psi(\tau) = \tau, \\
            &\psi(Y_i) = \tau^{n-1} \psi(T_{i,n-1}^+)\pi^{-1}\psi((T^{-1})_{1, i-1}^+), \\
            &\psi(D_i) = (q-q^{-1})^{-1}X_i^{-1}\psi\left((T^{-1})_{i, n-1}^+ (Y_n - Y_n^{-1})T_{n-1, i}^-\right),
        \end{aligned}
    \end{equation}
    where $\pi^{-1} = (n, \dots, 2, 1)t_1 = t_n (n, \dots, 2, 1)$ (see formula~\eqref{eqn: action of pi}). 
\end{proposition}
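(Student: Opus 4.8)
The plan is to exhibit the representation $\psi$ directly as a pullback of the polynomial representation of the cyclotomic DAHA $\HH_{n,t}^2((1,-1),q^{-1})$, using the identification $\fA \cong \HH_{n,t}^2((1,-1),q^{-1})$ from Remark~\ref{rem: l=2 cycDAHA}, and then check irreducibility by hand. Recall from Remark~\ref{rem: l=2 cycDAHA} that the isomorphism $\fA \to \HH_{n,t}^2((1,-1),q^{-1})$ is the restriction of $(h\circ g\circ\varphi)^{-1}\colon \bH_n \to \HH_{n,t}(q^{-1})$, and that $\HH_{n,t}^2((1,-1),q^{-1})$ is a subalgebra of $\HH_{n,t}(q^{-1})$, which in turn has its standard polynomial representation. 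An alternative, and probably cleaner, route is simply to \emph{verify} that the assignments in~\eqref{eqn: action on polys} satisfy the defining relations of $\fA$. Since $\fA$ is generated by $T_k$, $X_i$, $Y_i^{\pm1}$, $D_i$, and all of its relations have been catalogued — the Hecke and braid relations~\eqref{eqn: Hecke relations}--\eqref{eqn: braid relations}, the cross relations~\eqref{eqn: T_k and X_i}--\eqref{eqn: T_k and Y_i} and the remaining DAHA relations involving $X$ and $Y$, plus the relations of Lemma~\ref{lemma: properties of D_i} and Lemma~\ref{lemma: T_k and D_i} that pin down the $D_i$ — it suffices to check each family of relations holds for the operators in~\eqref{eqn: action on polys}.

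First I would observe that $\psi(T_k)$, $\psi(X_i)$ and $\psi(Y_i)$ are exactly the operators of the standard polynomial representation of $\bH_n$ restricted to $\C(\tau)[X_1,\dots,X_n]$: indeed \eqref{eqn: action of T_k} and \eqref{eqn: action of pi} give precisely these formulae once one rewrites $\frac{\tau-\tau^{-1}}{X_kX_{k+1}^{-1}-1}(s_k-1)$ as $\frac{(\tau-\tau^{-1})X_{k+1}}{X_k-X_{k+1}}(s_k-1)$, and $Y_i = T_{i,n-1}^+\pi^{-1}(T^{-1})_{1,i-1}^+$ follows from \eqref{eqn: Y_i in terms of pi} together with $\pi = Y_1^{-1}T_1\cdots T_{n-1}$ and the Hecke relation $T_k^2 = 1 + (\tau-\tau^{-1})T_k$ absorbed into the scalar $\tau^{n-1}$; one must check that $\C(\tau)[X_1,\dots,X_n]$ is a submodule of $\C(\tau)[X_1^{\pm1},\dots,X_n^{\pm1}]$ under this action, which is standard (the operator in \eqref{eqn: action of T_k} has no negative powers of $X_k$ after the division, and $\pi^{-1}$ visibly preserves polynomiality — this is the content of writing $\pi^{-1}=(n,\dots,2,1)t_1$). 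Hence the relations among $T_k, X_i, Y_i^{\pm1}$ are automatically satisfied. Then $\psi(D_i)$ is defined by the very formula \eqref{eqn: X_l D_l}, i.e. $X_i D_i = (q-q^{-1})^{-1}(T^{-1})_{i,n-1}^+(Y_n-Y_n^{-1})T_{n-1,i}^-$, so $\psi(D_i)$ is a well-defined operator on $\C(\tau)[X_1,\dots,X_n]$ provided the right-hand side, applied to any polynomial, is divisible by $X_i$; this divisibility is exactly what makes $D_i\in\bH_n$ act polynomially, and at $\tau=1$ it reduces to the statement that $d_i$ preserves $\C[X_1,\dots,X_n]$, noted after the Proposition quoting \cite{Hayashi}. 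With $\psi(D_i)$ so defined, all relations of Lemma~\ref{lemma: properties of D_i} and Lemma~\ref{lemma: T_k and D_i} hold because they are identities in $\bH_n$ that have already been proved, and the polynomial representation of $\bH_n$ is a representation. This establishes that $\psi$ is a representation of $\fA$.

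For irreducibility, I would argue as follows. Let $0\neq V\subseteq \C(\tau)[X_1,\dots,X_n]$ be a submodule. Using $\psi(D_i)$ repeatedly lowers total degree: since $\psi(D_i)$ sends a homogeneous polynomial of degree $d$ to one of degree $d-1$ (up to the $Y$-operators, which act by $t_i$-type $q$-scalings on monomials and preserve degree after the overall $X_i^{-1}$), applying a suitable product of the $D_i$ to a nonzero element of $V$ of minimal degree yields a nonzero constant, so $1\in V$. Then, acting by $\psi(X_i)$ (multiplication operators) generates all of $\C(\tau)[X_1,\dots,X_n]$ from $1$, whence $V$ is everything. I expect the main obstacle to be the two divisibility/well-definedness points: that $\C(\tau)[X_1,\dots,X_n]$ is stable under $\psi(T_k)$ and $\psi(\pi^{-1})$ (routine but needs a clean statement), and, more substantially, that $(T^{-1})_{i,n-1}^+(Y_n-Y_n^{-1})T_{n-1,i}^-$ applied to a polynomial is always divisible by $X_i$ — i.e. that formula \eqref{eqn: X_l D_l} really does define an operator on polynomials and not merely on Laurent polynomials. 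This should follow by noting that $D_n = (q-q^{-1})^{-1}X_n^{-1}(Y_n-Y_n^{-1})$ preserves $\C(\tau)[X_1,\dots,X_n]$ (as $Y_n-Y_n^{-1}$ kills the $X_n$-constant part and acts by $t_n$-scalings) and that conjugation by the polynomial-preserving operators $T_{i,n-1}^+$, $T_{n-1,i}^-$ keeps this property, but I would want to state this carefully, perhaps invoking the $q\to 1$ degeneration and the submodule $\C[X_1,\dots,X_n]\subset\C[X_1^{\pm1},\dots,X_n^{\pm1}]$ under $\rho$ together with flatness in $\tau$.
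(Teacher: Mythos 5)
Your first suggested route --- pulling the polynomial representation of $\HH_{n,t}^2((1,-1),q^{-1})$ back through the isomorphism of Remark~\ref{rem: l=2 cycDAHA} --- is exactly how the paper proves the statement, but you set it aside in favour of a direct verification, and that is where a genuine gap appears. The verification rests on the claim that $\psi(T_k),\psi(X_i),\psi(Y_i)$ are precisely the operators of the standard polynomial representation of $\bH_n$ restricted to $\C(\tau)[X_1,\dots,X_n]$, with the factor $\tau^{n-1}$ ``absorbed'' from the Hecke relations. That is not so: by \eqref{eqn: Y_i in terms of pi} one has $Y_i=T_{i,n-1}^+\pi^{-1}(T^{-1})_{1,i-1}^+$ exactly in $\bH_n$, with no scalar, so $\psi$ is the standard representation twisted by the rescaling automorphism $Y_i\mapsto\tau^{n-1}Y_i$ (the paper points out precisely this discrepancy in the remark following Corollary~\ref{cor: action on polys}). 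The distinction is not cosmetic: it decides the divisibility question that you yourself single out as the main obstacle. Under the untwisted action, $(Y_n-Y_n^{-1})\cdot 1=(\tau^{1-n}-\tau^{n-1})\cdot 1\neq 0$, so $X_n^{-1}(Y_n-Y_n^{-1})$ already fails to preserve polynomials on constants; your heuristic that ``$Y_n-Y_n^{-1}$ kills the $X_n$-constant part and acts by $t_n$-scalings'' is valid only at $\tau=1$, since for $\tau\neq 1$ the $Y$-operators act triangularly, not diagonally, on monomials. With the $\tau^{n-1}$ twist the constants do cancel, but establishing divisibility for \emph{all} polynomials is a genuine result --- it is exactly the content of the cyclotomic-DAHA polynomial representation of \cite{BEF} that the paper invokes --- and it does not follow from the $\tau=1$ or $q\to 1$ specialisation nor from ``flatness in $\tau$'': preservation of a subspace at a special parameter value says nothing at generic $\tau$, as the untwisted operators themselves show.

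Your irreducibility argument is the same in spirit as the paper's (lower degree with the $D_i$'s to reach $1$, then multiply up by the $X_i$'s), but as sketched it still needs the ingredient that the constant $\psi(D_1^{m_1}\cdots D_n^{m_n})v$ you produce is \emph{nonzero}; this is not automatic at generic $\tau$, where the $D_i$ do not act monomial-by-monomial. The paper secures it by normalising the coefficients of $v$ to be regular and not all vanishing at $\tau=1$ and then specialising, where $\psi(D_i)=d_i$ and the result equals $a_{\mathbf{m}'}(1)[m_1']!_q\cdots[m_n']!_q\neq 0$ because $q$ is not a root of unity; some such argument must be supplied in your write-up as well.
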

\begin{proof}
    By Remark~\ref{rem: l=2 cycDAHA}, the algebra $\fA$ maps isomorphically via $\varphi \circ g^{-1} \circ h$ to $\HH_{n,t}^2((1, -1), q^{-1})$. The latter has by~\cite[Proposition~3.6]{BEF} an action on $\C(\mathbf{t})[X_1, \dots, X_n]$ via $\rho_{\text{BEF}} \equiv \rho$ defined in~\cite[Proposition~3.3]{BEF} (with $q$ replaced by $q^{-1}$). Let $\psi' = \rho_{\text{BEF}} \circ \varphi \circ g^{-1} \circ h$, which then gives an action of $\fA$ on $\C(\mathbf{t})[X_1, \dots, X_n]$. We have 
    \begin{align*}
        &\psi'(T_k) = \rho_{\text{BEF}}(T_{n-k}^{-1}) = 
        \mathbf{t}^{-1} s_{n-k} + \frac{(\mathbf{t}^{-1} - \mathbf{t})X_{n-k}}{X_{n-k+1} - X_{n-k}}(s_{n-k}-1), \\
        &\psi'(X_i) = X_{n-i+1},  \quad
        \psi'(\tau) = \mathbf{t}^{-1}, \\
        &\psi'(Y_i) =\rho_{\text{BEF}}(Y_{n-i+1}^{-1}) =\mathbf{t}^{1-n}\rho_{\text{BEF}}((T^{-1})_{n-i,1}^-) (1, \dots, n)t_n\rho_{\text{BEF}}(T_{n-1,n-i+1}^-), \\
        &\psi'(D_i) = (q-q^{-1})^{-1}X_{n-i+1}^{-1} \psi'\left((T^{-1})_{i, n-1}^+ (Y_n - Y_n^{-1})T_{n-1, i}^-\right).
    \end{align*}
    The representation~\eqref{eqn: action on polys} of $\fA$ is obtained from the module $\C(\mathbf{t})[X_1, \dots, X_n]$ by relabelling~$\mathbf{t}$ to~$\tau^{-1}$ and~$X_i$ to~$X_{n-i+1}$.

    The proof of irreducibility is similar to that of~\cite[Proposition~2.1]{Hayashi}. Let~$V$ be a non-trivial submodule, and choose in it a non-zero element 
    $v = \sum_{\mathbf{m}} a_{\mathbf{m}} X_1^{m_1} \cdots X_n^{m_n}$, 
    where $\mathbf{m} = (m_1, \dots, m_n) \in \Z_{\geq 0}^n$ and $a_{\mathbf{m}} \in \C(\tau)$. 
    We can assume that those $a_{\mathbf{m}}$ with maximal $\sum_{i=1}^n m_i$ among $\{\mathbf{m} \in \Z_{\geq 0}^n \colon a_{\mathbf{m}} \neq 0 \}$ do not have a pole at $\tau=1$, and that at least one of them is non-zero at $\tau=1$, say
    for $\mathbf{m}' = (m_1', \dots, m_n')$. Since the action of $\psi(D_i)$ reduces the degree of a polynomial, we get that 
    $
        \psi(D_1^{m_1'} \cdots D_n^{m_n'})v \in \C(\tau)
    $
    and is well-defined at $\tau = 1$.
    Moreover, it must be a non-zero element of $\C(\tau)$ because at $\tau = 1$ it equals
    \begin{equation*}
        d_1^{m_1'} \cdots d_n^{m_n'} (v) = a_{\mathbf{m}'}(1)[m_1']!_q \cdots [m_n']!_q,
    \end{equation*}
    which belongs to $\C^\times$ as $q$ is not a root of unity. Here we use for any $m \in \Z_{\geq 0}$ the notation
    \begin{equation*}
        [m]!_q = [m]_q [m-1]_q \cdots [2]_q [1]_q, \qquad [m]_q = \frac{q^m - q^{-m}}{q-q^{-1}}, 
    \end{equation*} 
    and the operators $d_i$ are given by formula~\eqref{eqn: d_i}.
    It follows that $1 \in V$, and by acting on $1$ by combinations of~$\psi(X_i)$, we get that $V = \C(\tau)[X_1, \dots, X_n]$.
\end{proof}

\begin{corollary}\label{cor: action on polys}
    The subalgebra $\bH^{\fgl_n} \subset \fA$ acts on $\C(\tau)[X_1, \dots, X_n]$. Moreover, this action preserves for all $k \in \Z_{\geq 0}$ the subspace $\C(\tau)[X_1, \dots, X_n]^{(k)}$ of homogeneous polynomials of degree $k$, and this is an irreducible $\bH^{\fgl_n}$-module.
\end{corollary}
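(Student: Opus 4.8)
The plan is to realise $\bH^{\fgl_n}$ as the degree-zero part of a $\Z$-graded algebra and then deduce the whole statement from Proposition~\ref{prop: action on polys}. First I would observe that $\bH_n$ carries the grading with $\deg T_k=\deg Y_i^{\pm1}=0$ and $\deg X_i^{\pm1}=\pm1$, and that by~\eqref{eqn: definition of D_i} each $D_i$ is homogeneous of degree $-1$; hence $\fA\subset\bH_n$ is a graded subalgebra. Correspondingly $\C(\tau)[X_1,\dots,X_n]=\bigoplus_{k\ge0}W_k$, with $W_k\coloneqq\C(\tau)[X_1,\dots,X_n]^{(k)}$, is a graded $\psi$-module: $\psi(X_i)$ raises the polynomial degree by $1$, $\psi(D_i)$ lowers it by $1$ (as already noted in the proof of Proposition~\ref{prop: action on polys}), $\psi(T_k)$ preserves it since $(s_k-1)f$ is divisible by $X_k-X_{k+1}$ for homogeneous $f$, and then $\psi(Y_i^{\pm1})$ preserves it too by the formula~\eqref{eqn: action of pi} for $\pi^{-1}$, which rescales and cyclically permutes the variables. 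Thus $\psi(\fA^{(d)})W_k\subseteq W_{k+d}$, where $W_m\coloneqq0$ for $m<0$.

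Next I would identify $\bH^{\fgl_n}=\fA^{(0)}$. The inclusion $\bH^{\fgl_n}\subseteq\fA^{(0)}$ is immediate, the generators $T_k$, $Y_i^{\pm1}$, $e_{ij}=X_iD_j$ all having degree $0$. For the reverse inclusion, the PBW basis~\eqref{eqn: PBW basis of fA} of $\fA$ from Proposition~\ref{prop: basis of fA} is homogeneous, a basis monomial $T_wM_XM_DM_Y$ having degree $\deg M_X-\deg M_D$; hence $\fA^{(0)}$ is spanned by those basis monomials with $\deg M_X=\deg M_D$, and Lemma~\ref{lemma: deg M_X = deg M_D} says exactly that all such monomials lie in $\bH^{\fgl_n}$. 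Combining this with the previous paragraph, $\bH^{\fgl_n}=\fA^{(0)}$ acts on each $W_k$ and preserves it, which is the first part of the corollary.

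For irreducibility of $W_k$ as a $\bH^{\fgl_n}$-module, I would take a nonzero $\bH^{\fgl_n}$-submodule $V\subseteq W_k$ and consider the $\fA$-submodule $\psi(\fA)V\subseteq\C(\tau)[X_1,\dots,X_n]$ it generates. Since $V\neq0$ this is nonzero, so by the irreducibility of the $\fA$-module $\C(\tau)[X_1,\dots,X_n]$ established in Proposition~\ref{prop: action on polys} it equals the whole space. Writing $\psi(\fA)V=\sum_d\psi(\fA^{(d)})V$ with $\psi(\fA^{(d)})V\subseteq W_{k+d}$, and comparing degree-$k$ components (the $W_m$ being linearly independent), we get $W_k=\psi(\fA)V\cap W_k=\psi(\fA^{(0)})V=\psi(\bH^{\fgl_n})V=V$, the last equality because $1\in\bH^{\fgl_n}$. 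Hence $V=W_k$, proving irreducibility.

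I expect the only step requiring genuine care is the identification $\bH^{\fgl_n}=\fA^{(0)}$, which rests on Lemma~\ref{lemma: deg M_X = deg M_D} and the PBW basis of $\fA$; once that is in hand, both grading-preservation and irreducibility are formal consequences of Proposition~\ref{prop: action on polys}. (If one preferred to avoid this identification, an alternative would be a specialisation argument at $\tau=1$: the $\bH^{\fgl_n}$-module $W_k$ admits a lattice over the discrete valuation ring $\C[\tau^{\pm1}]_{(\tau-1)}$ whose special fibre is $\C[X_1,\dots,X_n]^{(k)}$ as a module over $\cA$, and irreducibility of the latter over $\cA$ would lift to $W_k$ by Nakayama's lemma — but this is longer and requires first checking that $\C[X_1,\dots,X_n]^{(k)}$ is $\cA$-irreducible.)
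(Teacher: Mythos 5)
Your proposal is correct, but it reaches irreducibility by a genuinely different route than the paper. The paper simply reruns the explicit argument from the proof of Proposition~\ref{prop: action on polys} inside $W_k=\C(\tau)[X_1,\dots,X_n]^{(k)}$: choose $v\in V$ with a coefficient $a_{\mathbf{m}'}$ that is regular and non-zero at $\tau=1$, note that $\psi(D_1^{m_1'}\cdots D_n^{m_n'})v$ is a non-zero scalar, and then observe that the composite element $X_1^{a_1}\cdots X_n^{a_n}D_1^{m_1'}\cdots D_n^{m_n'}$ lies in $\bH^{\fgl_n}$ by Lemma~\ref{lemma: deg M_X = deg M_D} (here $\sum a_i=k=\sum m_i'$), so that every monomial of degree $k$ already lies in $V$. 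You instead prove the intermediate identification $\bH^{\fgl_n}=\fA^{(0)}$ directly from the homogeneous PBW basis of Proposition~\ref{prop: basis of fA} together with Lemma~\ref{lemma: deg M_X = deg M_D}, and then invoke the general principle that for a graded algebra acting irreducibly on a graded module, each graded piece is irreducible over the degree-zero subalgebra (take $V\subseteq W_k$ non-zero, use $\psi(\fA)V=\C(\tau)[X_1,\dots,X_n]$ from Proposition~\ref{prop: action on polys}, and compare degree-$k$ components, which is legitimate since the homogeneous components of $\fA$ and of the polynomial ring decompose compatibly). Both proofs rest on the same two inputs, Lemma~\ref{lemma: deg M_X = deg M_D} and Proposition~\ref{prop: action on polys}; what your version buys is that the $\tau=1$ specialisation argument is not repeated and the equality $\bH^{\fgl_n}=\fA^{(0)}$ is made explicit at this stage (the paper only records it later, as a consequence of the double centraliser theorem), while the paper's version is more concrete, exhibiting the specific elements of $\bH^{\fgl_n}$ that map a given vector onto each monomial of degree $k$. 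Your degree bookkeeping for $\psi(T_k)$, $\psi(Y_i^{\pm1})$, $\psi(X_i)$, $\psi(D_i)$ is sound, so the preservation of $W_k$ and the final comparison $W_k=\psi(\fA^{(0)})V=V$ go through; there is no circularity, as all results you cite precede the corollary.
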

\begin{proof}
    Irreducibility is proved similarly to the proof of Proposition~\ref{prop: action on polys}. Using the same notation, this time we have $\sum_{i=1}^n m_i' = k$. An arbitrary monomial $X_1^{a_1}\cdots X_n^{a_n} \in \C(\tau)[X_1, \dots, X_n]^{(k)}$ can be obtained as 
    \begin{equation*}
        \psi(cX_1^{a_1}\cdots X_n^{a_n}D_1^{m_1'}\cdots D_n^{m_n'})v
    \end{equation*}
    for suitable $c \in \C(\tau) \setminus \{0\}$,
    where $X_1^{a_1}\cdots X_n^{a_n}D_1^{m_1'}\cdots D_n^{m_n'} \in \bH^{\fgl_n}$ by Lemma~\ref{lemma: deg M_X = deg M_D} since $\sum_{i=1}^n a_i = k = \sum_{i=1}^n m_i'$.
\end{proof}
The preceding corollary generalises the fact that the polynomial representation of the algebra $H^{\fgl_n}$ preserves the space $\C[X_1, \dots, X_n]^{(k)}$, which is an irreducible module for it, and that this space is also preserved by the algebra~$\cA$ from Section~\ref{sec: quantum group} (cf.\ also~\cite[Theorem 4.1(A)]{Hayashi} for $\tau = 1$). 
%though this is for~$U_q(A_{n-1})$, not $U_q(\fgl_n)$.

\begin{remark}
    The assignments~\eqref{eqn: action on polys} almost coincide with those of the polynomial representation of the DAHA $\bH_n$ given in Section~\ref{sec: DAHA} above, except that the image of $Y_i$ in~\eqref{eqn: action on polys} has an extra factor of $\tau^{n-1}$ (the action of~$Y_i$ in the polynomial representation can be deduced from relations~\eqref{eqn: Y_i in terms of pi}). 
    % That assignment still extends to a representation of $\bH_n$, since $\bH_n$
    % has an automorphism $T_k \to T_k$, $X_i \mapsto X_i$, $Y_i \mapsto a Y_i$ for any~$a \neq 0$.
    
    A way to think about this is that the operators from the polynomial representation on $\C(\tau)[X_1^{\pm 1}, \dots, X_n^{\pm 1}]$ in Section~\ref{sec: DAHA} formally preserve also the space $(\prod_{i=1}^n X_i)^{\log_q \tau^{n-1}}\C(\tau)[X_1^{\pm 1}, \dots, X_n^{\pm 1}]$, which induces another action of $\bH_n$ on $\C(\tau)[X_1^{\pm 1}, \dots, X_n^{\pm 1}]$ under which the subalgebra $\fA$ preserves the subspace $\C(\tau)[X_1, \dots, X_n]$ and acts as given in Proposition~\ref{prop: action on polys}.
    \hfill \qedsymbol
    \end{remark}
    
%In view of \cite[Remark~3.12]{BEF}, one might expect~$\fA$ to be characterised as the algebra of those elements of $\bH_n$ that preserve the spaces $\C_\tau[X_1, \dots, X_n]$ and 
%$(\prod_{i=1}^n X_i)^{\log_q(-1)}\C_\tau[X_1, \dots, X_n]$ under the action determined by the assignments~\eqref{eqn: action on polys}.

We are now going to show that
\begin{equation}\label{eqn: double centraliser}
    \begin{aligned}
        &\bH^{\fgl_n} = C_{\fA}(\tY), \\
        &C_{\fA}(\bH^{\fgl_n}) = \langle \tY, \tY^{-1} \rangle,
    \end{aligned}
\end{equation}
where $C_A(B) = \{a \in A \colon [a,b]=0, \, \forall b \in B \}$ denotes the centraliser. 
This statement is a $q$-generalisation of the property that
\begin{align*}
    &H^{\fgl_n} = C_{H_n}(eu), \\
    &C_{H_n}(H^{\fgl_n}) = \langle eu \rangle.
\end{align*}
The first of the latter equalities follows from the fact that the RCA~$H_n$ has a natural grading such that its faithful polynomial representation is a graded one. The element $eu$ acts (up to a constant) as the grading operator $\sum_{i=1}^n X_i \partial_{X_i}$, so it only commutes with the degree zero part of $H_n$, which is precisely $H^{\fgl_n}$. 
The second equality follows from the fact that $\deg eu = 0$, hence $eu \in H^{\fgl_n}$, so the previous sentence implies that $C_{H_n}(H^{\fgl_n}) = \cZ(H^{\fgl_n})$, which equals $\langle eu \rangle$ by~\cite{FH}.

The fact that $eu$ is essentially the grading operator has a $q$-counterpart in the following property of $\tY$.
    Since $\tY = \pi^{-n}$ \cite[p.~101]{Cherednik}, by using formula~\eqref{eqn: action of pi} we get
    \begin{equation}\label{eqn: grading operator tY}
        \tY(X_1^{a_1} X_2^{a_2} \cdots X_n^{a_n}) = q^{\sum_{i=1}^n a_i}X_1^{a_1} X_2^{a_2} \cdots X_n^{a_n}.
    \end{equation}
    That is, $\tY$ acts in the polynomial representation as a grading operator.

Let us now provide a proof of relations~\eqref{eqn: double centraliser}.
From Theorem~\ref{thm: centre of Hgln}, it follows that $\bH^{\fgl_n} \subseteq C_{\fA}(\tY)$. We now prove the reverse inclusion.
Let $f \in C_\fA(\tY)$. 
Since $\tY D_i = q^{-1}D_i \tY$, we have 
\begin{equation*}
   \tY T_w M_X M_D M_Y = q^{\deg M_X - \deg M_D} T_w M_X M_D M_Y \tY.
\end{equation*}
This implies that the expansion of $f$ in the basis of $\fA$ from Proposition~\ref{prop: basis of fA} can contain only those monomials where $\deg M_X = \deg M_D$, as $q$ is not a root of unity. 
Hence, $f \in \bH^{\fgl_n}$ by Lemma~\ref{lemma: deg M_X = deg M_D}. We have proved that $\bH^{\fgl_n} = C_{\fA}(\tY)$. 

Suppose now that $f \in C_\fA(\bH^{\fgl_n})$. Then it must, in particular, commute with $\tY \in \bH^{\fgl_n}$. Thus, by the same argument as above, we get $f \in \bH^{\fgl_n}$. Therefore, $C_\fA(\bH^{\fgl_n}) = \cZ(\bH^{\fgl_n}) = \langle \tY, \tY^{-1} \rangle$ by Theorem~\ref{thm: centre of Hgln}, as required. 

Thus, we have established the following theorem.
\begin{theorem}
    We have $C_{\fA}(\tY) = \bH^{\fgl_n}$ and $C_{\fA}(\bH^{\fgl_n}) = \langle \tY, \tY^{-1} \rangle$.
\end{theorem}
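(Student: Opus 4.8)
The plan is to reduce both equalities to three facts already available in the excerpt: the explicit $\C(\tau)$-basis $\{T_w M_X M_D M_Y\}$ of $\fA$ from Proposition~\ref{prop: basis of fA}, the fact that any such basis monomial with $\deg M_X = \deg M_D$ lies in $\bH^{\fgl_n}$ (Lemma~\ref{lemma: deg M_X = deg M_D}), and the identification $\cZ(\bH^{\fgl_n}) = \langle \tY^{\pm 1}\rangle$ of Theorem~\ref{thm: centre of Hgln}. The one extra ingredient I would record first is how $\tY$ interacts with the generators of $\fA$: it commutes with every $T_k$ and every $Y_i^{\pm 1}$, it satisfies $\tY X_i = q X_i \tY$, and — using $D_n = (q-q^{-1})^{-1}X_n^{-1}(Y_n - Y_n^{-1})$ together with $D_i = T_{i,n-1}^+ D_n T_{n-1,i}^-$ and $[\tY, T_k]=0$ — also $\tY D_i = q^{-1} D_i \tY$ for all $i$. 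Consequently $\tY$ acts on $\fA$ as a grading operator, multiplying the basis monomial $T_w M_X M_D M_Y$ by $q^{\deg M_X - \deg M_D}$ when conjugating through it.

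Granting this, I would prove $C_{\fA}(\tY) = \bH^{\fgl_n}$ in two inclusions. The inclusion $\bH^{\fgl_n} \subseteq C_{\fA}(\tY)$ is immediate since $\tY$ is central in $\bH^{\fgl_n}$. For the reverse inclusion, take $f \in C_{\fA}(\tY)$ and expand it in the basis of Proposition~\ref{prop: basis of fA}. From $\tY\,(T_w M_X M_D M_Y) = q^{\deg M_X - \deg M_D}\,(T_w M_X M_D M_Y)\,\tY$ and the linear independence of the basis monomials, the equation $\tY f = f \tY$ forces every monomial occurring in $f$ to satisfy $\deg M_X = \deg M_D$, because $q$ is not a root of unity. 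By Lemma~\ref{lemma: deg M_X = deg M_D} each such monomial lies in $\bH^{\fgl_n}$, hence so does $f$.

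For the second equality, I would argue that $\tY \in \bH^{\fgl_n}$, so any $f \in C_{\fA}(\bH^{\fgl_n})$ in particular commutes with $\tY$ and therefore lies in $\bH^{\fgl_n}$ by the first part; then $C_{\fA}(\bH^{\fgl_n}) = \cZ(\bH^{\fgl_n})$, which equals $\langle \tY, \tY^{-1}\rangle$ by Theorem~\ref{thm: centre of Hgln}.

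The only point requiring a little care is the preliminary computation that $\tY D_i = q^{-1} D_i \tY$ for all $i$ (not merely $i = n$) and checking that the basis of Proposition~\ref{prop: basis of fA} is genuinely compatible with reading off the $\tY$-weight as $\deg M_X - \deg M_D$; once these are settled, the argument is a routine root-of-unity and linear-independence argument together with an appeal to Lemma~\ref{lemma: deg M_X = deg M_D}, and I do not anticipate any serious obstacle.
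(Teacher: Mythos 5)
Your proposal is correct and follows essentially the same route as the paper: establish $\tY D_i = q^{-1}D_i\tY$, expand an element of $C_{\fA}(\tY)$ in the basis of Proposition~\ref{prop: basis of fA}, use that $q$ is not a root of unity to force $\deg M_X = \deg M_D$, conclude via Lemma~\ref{lemma: deg M_X = deg M_D}, and deduce the second equality from $\tY \in \bH^{\fgl_n}$ together with Theorem~\ref{thm: centre of Hgln}. No gaps to report.
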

This theorem implies that $\bH^{\fgl_n}$ coincides with the degree zero part of $\fA$, where the grading on $\fA$ is inherited from the DAHA.

Related to the previous considerations,
let $\fg = \langle \tY, \tY^{-1} \rangle \cong \C(\tau)[x^{\pm 1}]$. We have $\fg \subset \bH^{\fgl_n} \subset \fA$. From Corollary~\ref{cor: action on polys}, it follows that $V = \C(\tau)[X_1, \dots, X_n]$ is a $(\fg, \bH^{\fgl_n})$-bimodule, which by Proposition~\ref{prop: action on polys} is an irreducible $\fA$-module. It admits the decomposition
\begin{equation*}
        V = \bigoplus_{k=0}^\infty U_k \otimes_{\C(\tau)} W_k,
\end{equation*}
where $W_k = \C(\tau)[X_1, \dots, X_n]^{(k)}$, which by Corollary~\ref{cor: action on polys} is an irreducible module of $\bH^{\fgl_n}$, and $U_k =  \C(\tau)$ is the irreducible (one-dimensional) module of~$\fg$ determined by $\tY \mapsto q^k$ (this is by formula~\eqref{eqn: grading operator tY} the action of $\tY$ on $W_k$). If~$k \neq l \in \Z_{\geq 0}$, then $W_k \ncong W_l$, because their dimensions as vector spaces differ, and $U_k \ncong U_l$ since $q$ is not a root of unity.

\section{Related integrable systems}\label{sec: integrable systems}
  In Section~\ref{sec: D}, we considered a family of pairwise-commuting elements~$D_i$.
  We now introduce certain pairwise-commuting $\cD^{(l_1, l_2)}_i$ of a more general form  depending on additional parameters $l_1, l_2 \in \Z_{\geq 0}$, $a_j \in \C$ ($j=-l_1, \dots, l_2$). The action of symmetric combinations of $\cD^{(l_1, l_2)}_i$ on the space of symmetric Laurent polynomials $\C_\tau[X_1^{\pm 1}, \dots, X_n^{\pm 1}]^{\fS_n}$ will lead to families of commuting $q$-difference operators related to those of Macdonald--Ruijsenaars and van Diejen type. We will assume $a_{-l_1} \neq 0 \neq a_{l_2}$. We recover $D_i = (q-q^{-1})^{-1}\cD_i^{(1,1)}$ for $a_{-1} = -1$, $a_0 = 0$, and $a_1 = 1$.
  
  We define 
  $$\cD_n = \cD^{(l_1, l_2)}_n = X_n^{-1}\sum_{j=-l_1}^{l_2} a_j Y_n^j,$$ 
  and for $1 \leq i \leq n-1$, we let 
\begin{align}\label{eqn: definition of cD_i^l}
    \cD_i = \cD^{(l_1, l_2)}_i &= T_{i, n-1}^+ \cD_n T_{n-1, i}^- \nonumber \\
    &= X_i^{-1}(T^{-1})_{i, n-1}^+ \left(\sum_{j=-l_1}^{l_2} a_j Y_n^j \right)T_{n-1, i}^-.
\end{align}

We have $T_k^{-1}\cD_k T_k^{-1} = \cD_{k+1}$, and $[T_k, \cD_i] = 0$ for $i \neq k, k+1$ by an analogous proof as for Lemma~\ref{lemma: T_k and D_i}.
In Proposition~\ref{prop: D^l commute} below, we prove that~$\cD_i$ pairwise commute. Let us first set up some tools to be used in the proof.

Let $\bH_n^-$ be the (unital, associative) $\C_\tau$-algebra generated by $T_k$ ($1 \leq k \leq n-1$) and $\C_\tau[Z_1, \dots, Z_n]$, $\C_\tau[Y_1^{\pm 1}, \dots, Y_n^{\pm 1}]$ subject to the following relations: 
\begin{align}
    &(T_k-\tau)(T_k +\tau^{-1}) = 0, \nonumber \\ 
    &T_l T_{l+1} T_l = T_{l+1} T_l T_{l+1} \quad (1 \leq l \leq n-2), \quad \ [T_k, T_l] = 0 \textnormal{ if } |k-l| > 1, \nonumber \\ 
    &T_k^{-1}Z_kT_k^{-1} = Z_{k+1}, \qquad [T_k, Z_i] = 0 \textnormal{ for } i \neq k, k+1, \label{eqn: T_k and X_i inverse}\\
    &T_k^{-1} Y_k T_k^{-1} = Y_{k+1}, \quad \, [T_k, Y_i] = 0 \textnormal{ for } i \neq k, k+1, \nonumber \\ 
    &\tY Z_i = q^{-1} Z_i \tY,  \label{eqn: Y tilde relation} \\ 
    &Y_2 Z_1 = Z_1 Y_2 T_1^2,  \label{eqn: Y_2 and X_1}
\end{align}
where $\tY = \prod_{i=1}^n Y_i$.

There is an algebra homomorphism $\phi \colon \bH_n^- \to \bH_n$ given by 
\begin{align*}
    \phi(T_k) = T_k, \quad \phi(Z_i) = X_i^{-1}, \quad \phi(Y_i^{\pm 1}) = Y_i^{\pm 1},
\end{align*}
whose image contains the elements $\cD_i$.

The next proposition gives a family of endomorphisms of the algebra $\bH_n^-$.
\begin{proposition}
    Let $f(z) \in \C[z, z^{-1}]$ be an arbitrary single-variable Laurent polynomial. There is an endomorphism $\theta = \theta_f$ of $\bH_n^-$ given by $\theta(T_k) = T_k$, $\theta(Y_i) = Y_i$, and 
    \begin{equation*}
        \theta(Z_i) = T_{i, n-1}^+ Z_n f(Y_n) T_{n-1, i}^-
        \qquad \left(= Z_i (T^{-1})_{i, n-1}^+ f(Y_n) T_{n-1, i}^-\right).
    \end{equation*}
\end{proposition}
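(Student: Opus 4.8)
The plan is to verify that the stated assignments respect every defining relation of $\bH_n^-$. The Hecke relation for $T_k$, the braid relations, the relations between $T_k$ and $Y_i^{\pm1}$, and the relations $[Y_i,Y_j]=0$, $Y_iY_i^{-1}=1$ involve only the generators $T_k$ and $Y_i^{\pm1}$, which $\theta$ fixes, so these hold automatically. For \eqref{eqn: T_k and X_i inverse}, the identity $T_k^{-1}\theta(Z_k)T_k^{-1}=\theta(Z_{k+1})$ is immediate from $T_k^{-1}T_{k,n-1}^{+}=T_{k+1,n-1}^{+}$ and $T_{n-1,k}^{-}T_k^{-1}=T_{n-1,k+1}^{-}$, while $[T_k,\theta(Z_i)]=0$ for $i\ne k,k+1$ is proved exactly as in Lemma~\ref{lemma: T_k and D_i}: if $i\ge k+2$ then $T_k$ commutes with every factor of $\theta(Z_i)$, and if $i<k$ then Lemma~\ref{lemma: braid identities} gives $T_kT_{i,n-1}^{+}=T_{i,n-1}^{+}T_{k-1}$ and $T_{k-1}T_{n-1,i}^{-}=T_{n-1,i}^{-}T_k$ with $T_{k-1}$ commuting with $Z_n$ and $f(Y_n)$. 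For \eqref{eqn: Y tilde relation}, note that $\theta(\tY)=\tY$, that $\tY$ commutes with every $T_k$ (a consequence of the $T_k$--$Y_i$ relations and $[Y_i,Y_j]=0$) and with $f(Y_n)$, and that $\tY Z_n=q^{-1}Z_n\tY$; hence $\tY\theta(Z_i)=T_{i,n-1}^{+}\tY Z_nf(Y_n)T_{n-1,i}^{-}=q^{-1}\theta(Z_i)\tY$.

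The relations that take genuine work are the preservation of $[Z_i,Z_j]=0$, i.e.\ $[\theta(Z_i),\theta(Z_j)]=0$, and the relation \eqref{eqn: Y_2 and X_1}. For both, I would first establish the $\bH_n^-$-analogues of parts~(1) and~(3) of Lemma~\ref{lemma: properties of D_i} --- the commutation rules between $Y_i^{\pm1}$ and the $Z_j$ --- which follow from \eqref{eqn: Y_2 and X_1}, \eqref{eqn: T_k and X_i inverse} and the $T_k$--$Y_i$ relations by the same reasoning that produces the $Y$--$X$ commutation relations in the DAHA (cf.\ the proof of Lemma~\ref{lemma: properties of D_i}(1), which appeals to Cherednik's book). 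In particular one obtains $Y_1Z_2=T_1^2Z_2Y_1$, which is equivalent to \eqref{eqn: Y_2 and X_1} after substituting $Y_2=T_1^{-1}Y_1T_1^{-1}$ and $Z_1=T_1Z_2T_1$, together with the shifted relations $Y_{k-1}Z_k=T_{k-1}^2Z_kY_{k-1}$, which play the role that $Y_{n-1}X_n=X_nT_{n-1}^{-2}Y_{n-1}$ plays in the proof of Lemma~\ref{lemma: properties of D_i}(2).

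Granting these, \eqref{eqn: Y_2 and X_1} follows by a short computation: iterating \eqref{eqn: T_k and X_i inverse} gives $Z_n=(T^{-1})_{n-1,2}^{-}Z_2(T^{-1})_{2,n-1}^{+}$, and using this, the telescoping identity $T_{2,n-1}^{+}(T^{-1})_{n-1,2}^{-}=1$, the relation $Y_1Z_2=T_1^2Z_2Y_1$, and the $T_k$--$Y_i$ relations to commute $Y_1,Y_2$ through the long words $T_{1,n-1}^{+}$ and $T_{n-1,1}^{-}$, both $Y_2\theta(Z_1)$ and $\theta(Z_1)Y_2T_1^2$ reduce to the common expression $T_1Z_2(T^{-1})_{2,n-1}^{+}f(Y_n)T_{n-1,2}^{-}Y_1T_1$. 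The commutativity $[\theta(Z_i),\theta(Z_j)]=0$ is then obtained along the lines of the proof of Lemma~\ref{lemma: properties of D_i}(2): using $[T_k,\theta(Z_i)]=0$ for the appropriate $k$, one reduces to the single case $[\theta(Z_{n-1}),\theta(Z_n)]=0$, that is $[Z_nf(Y_n),\,T_{n-1}Z_nf(Y_n)T_{n-1}]=0$, and checks this by commuting the two factors past each other using the $Y$--$Z$ rules above and $[Z_i,Z_j]=0$. I expect this last computation, together with the Hecke-algebra bookkeeping it involves, to be the main technical obstacle; it is structurally identical to the $\tau$-deformed Dunkl-operator computations already carried out in the proof of Lemma~\ref{lemma: properties of D_i}, but must be redone with the general factor $f(Y_n)$ in place of $(q-q^{-1})^{-1}(Y_n-Y_n^{-1})$.
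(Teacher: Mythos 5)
Your proposal is correct and follows the same overall strategy as the paper (verify that the proposed images satisfy the defining relations), and your treatment of \eqref{eqn: T_k and X_i inverse} and \eqref{eqn: Y tilde relation} coincides with the paper's. The differences are in the remaining relations. For \eqref{eqn: Y_2 and X_1} the paper is more direct: it reduces the claim to the identity $T_1^{-1}Y_2^{-1}(T^{-1})_{1,n-1}^{+}f(Y_n)T_{n-1,1}^{-}Y_2=(T^{-1})_{2,n-1}^{+}f(Y_n)T_{n-1,2}^{-}T_1^{-1}$ and proves it in two lines from $T_1^{-1}Y_2^{-1}=Y_1^{-1}T_1$ and the fact that $Y_1^{\pm1}$ commutes with $T_2,\dots,T_{n-1}$ and with $f(Y_n)$; your route through the equivalent relation $Y_1Z_2=T_1^2Z_2Y_1$ and the common word $T_1Z_2(T^{-1})_{2,n-1}^{+}f(Y_n)T_{n-1,2}^{-}Y_1T_1$ also works (both of your reductions check out), it is just a little longer and does not actually require the full $\bH_n^-$-analogues of Lemma~\ref{lemma: properties of D_i}(1),(3). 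The more substantial difference is that you also verify $[\theta(Z_i),\theta(Z_j)]=0$: the paper checks only \eqref{eqn: T_k and X_i inverse}--\eqref{eqn: Y_2 and X_1} and treats the remaining relations as automatically preserved, whereas commutativity of the $Z_i$ is part of the presentation of $\bH_n^-$ (it is generated by the polynomial algebra $\C_\tau[Z_1,\dots,Z_n]$), so spelling this check out is legitimate and, strictly speaking, needed for $\theta$ to be well defined; it is not circular, since the verification takes place inside $\bH_n^-$ where $[Z_{n-1},Z_n]=0$ may be used. Your plan for it is viable and in fact shorter than you anticipate: the shifted relation $Y_{n-1}Z_n=T_{n-1}^2Z_nY_{n-1}$ gives $f(Y_n)T_{n-1}Z_n=T_{n-1}Z_nf(Y_{n-1})$ for any Laurent polynomial $f$, and then $[\theta(Z_{n-1}),\theta(Z_n)]=0$ follows from this identity together with $[T_{n-1},f(Y_{n-1})f(Y_n)]=0$ and $[Z_{n-1},Z_n]=0$; the reduction of general $(i,j)$ to this case via the already-checked relations \eqref{eqn: T_k and X_i inverse} is exactly as in the paper's proof of Lemma~\ref{lemma: properties of D_i}(2).
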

\begin{proof}
    It suffices to check that $\theta$ preserves the relations~\eqref{eqn: T_k and X_i inverse}--\eqref{eqn: Y_2 and X_1}.
    Firstly, 
    \begin{equation*}
        \theta(T_k^{-1}Z_kT_k^{-1}) = T_{k+1, n-1}^+ Z_n f(Y_n) T_{n-1, k+1}^- = \theta(Z_{k+1}), 
    \end{equation*}
    as required. Suppose now that $i \neq k, k+1$. Then either $i > k+1$, in which case it is easy to see that $\theta(T_k)$ commutes with $\theta(Z_i)$. Or $i < k$, in which case twice using Lemma~\ref{lemma: braid identities} and that $[T_{k-1}, Z_n] = 0 = [T_{k-1}, f(Y_n)]$, we get
    \begin{align*}
        &\theta(T_kZ_i) = T_k T_{i, n-1}^+ Z_n f(Y_n) T_{n-1, i}^- 
        = T_{i, n-1}^+ T_{k-1} Z_n f(Y_n) T_{n-1, i}^- \\
        &= T_{i, n-1}^+ Z_n f(Y_n) T_{k-1} T_{n-1, i}^-
        = T_{i, n-1}^+ Z_n f(Y_n) T_{n-1, i}^- T_k 
        = \theta(Z_i T_k).
    \end{align*}
    This completes the proof that $\theta$ preserves relations~\eqref{eqn: T_k and X_i inverse}.

    Secondly, we have
    \begin{align*}
        &\theta(\tY Z_i) 
        %= \tY T_{i, n-1}^+ Z_n f(Y_n) T_{n-1, i}^- 
        = T_{i, n-1}^+ \tY Z_n f(Y_n) T_{n-1, i}^- %= q^{-1} T_{i, n-1}^+ Z_n f(Y_n) \tY T_{n-1, i}^- 
        = q^{-1} T_{i, n-1}^+Z_n f(Y_n) T_{n-1, i}^- \tY 
        = q^{-1} \theta(Z_i\tY),
    \end{align*}
    hence $\theta$ preserves relations~\eqref{eqn: Y tilde relation}.

    Finally, since $\theta(Z_1) = Z_1 (T^{-1})_{1, n-1}^+ f(Y_n) T_{n-1, 1}^-$, we see, due to relation~\eqref{eqn: Y_2 and X_1}, that it will follow that $\theta(Y_2^{-1} Z_1 Y_2)  = \theta(Z_1T_1^{-2})$ if we show
    \begin{align*}
        T_1^{-1}Y_2^{-1} (T^{-1})_{1, n-1}^+ f(Y_n) T_{n-1, 1}^- Y_2 =  (T^{-1})_{2, n-1}^+ f(Y_n) T_{n-1, 2}^- T_1^{-1}.
    \end{align*}
    The left-hand side of that can be rearranged as
    \begin{align*}
        Y_1^{-1} &(T^{-1})_{2, n-1}^+ f(Y_n) T_{n-1, 1}^- Y_2 =
        (T^{-1})_{2, n-1}^+ f(Y_n) T_{n-1, 2}^- Y_1^{-1} T_1 Y_2 \\
        &= (T^{-1})_{2, n-1}^+ f(Y_n) T_{n-1, 2}^- T_1^{-1},
    \end{align*}
    as required. Thus, $\theta$ preserves the relation~\eqref{eqn: Y_2 and X_1} as well.
\end{proof}

The elements $\cD_i \in \bH_n$ defined by~\eqref{eqn: definition of cD_i^l} commute.

\begin{proposition}\label{prop: D^l commute}
     We have $[\cD_i, \cD_j]=0$ for all $i, j$.
\end{proposition}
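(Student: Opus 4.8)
The plan is to exhibit each $\cD_i$ as the image of a manifestly commuting family of elements under a composition of two algebra homomorphisms, so that $[\cD_i,\cD_j]=0$ is inherited automatically. Set $f(z) = \sum_{j=-l_1}^{l_2} a_j z^j \in \C[z,z^{-1}]$ and let $\theta = \theta_f\colon \bH_n^- \to \bH_n^-$ be the endomorphism from the preceding proposition, and $\phi\colon \bH_n^- \to \bH_n$ the homomorphism with $\phi(T_k)=T_k$, $\phi(Z_i)=X_i^{-1}$, $\phi(Y_i)=Y_i$. The first thing I would verify is the identity $\cD_i = (\phi\circ\theta)(Z_i)$ for every $i\in\{1,\dots,n\}$. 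For $i=n$ the shorthand conventions give $T_{n,n-1}^+ = T_{n-1,n}^- = 1$, so $(\phi\circ\theta)(Z_n) = \phi\big(Z_n f(Y_n)\big) = X_n^{-1}\sum_{j=-l_1}^{l_2} a_j Y_n^j = \cD_n$; and for $1\le i\le n-1$ one applies $\phi$ to $\theta(Z_i) = T_{i,n-1}^+ Z_n f(Y_n) T_{n-1,i}^-$ and compares with the definition~\eqref{eqn: definition of cD_i^l} of $\cD_i$.

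With this in hand the argument is short. By the very definition of $\bH_n^-$, the elements $Z_1,\dots,Z_n$ generate a polynomial (in particular commutative) subalgebra, so $[Z_i,Z_j]=0$ in $\bH_n^-$. Since $\theta$ is an algebra endomorphism we get $[\theta(Z_i),\theta(Z_j)] = \theta([Z_i,Z_j]) = 0$, and applying the algebra homomorphism $\phi$ gives $[\cD_i,\cD_j] = \big[(\phi\circ\theta)(Z_i),\,(\phi\circ\theta)(Z_j)\big] = 0$, which is the claim. I would also note that specialising $a_{-1}=-1$, $a_0=0$, $a_1=1$ recovers, via $D_i=(q-q^{-1})^{-1}\cD_i^{(1,1)}$, the commutativity of the $D_i$ asserted in Lemma~\ref{lemma: properties of D_i}~(2), giving the alternative proof promised in Section~\ref{sec: D}.

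I do not expect a genuine obstacle here: all the real work has already been done, namely constructing $\bH_n^-$, checking in the preceding proposition that $\theta_f$ preserves its defining relations~\eqref{eqn: T_k and X_i inverse}--\eqref{eqn: Y_2 and X_1}, and producing $\phi$. The only points that require care are bookkeeping ones — confirming that the $i=n$ boundary case is consistent with the $T^{\pm}$ shorthand, and that the ordering $Z_n f(Y_n)$ built into $\theta$ is exactly the one whose $\phi$-image is $X_n^{-1}\sum_j a_j Y_n^j$ and not its reverse. If one preferred to avoid $\bH_n^-$ altogether, the same proof could in principle be run directly inside $\bH_n$ by establishing the needed $X$--$Y$ exchange relations (as in Lemma~\ref{lemma: properties of D_i}~(1),(3)) and then mimicking the manipulations; but routing it through $\bH_n^-$ and $\theta_f$ is what makes it essentially automatic.
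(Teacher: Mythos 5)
Your proposal is correct and is essentially the paper's own proof: the paper likewise writes $\cD_i = (\phi\circ\theta_f)(Z_i)$, using $Z_i = T_{i,n-1}^+ Z_n T_{n-1,i}^-$ and $\theta_f(Z_n)=Z_nf(Y_n)$, and then transports $[Z_i,Z_j]=0$ through the homomorphism $\phi\circ\theta_f$. The bookkeeping checks you flag (the $i=n$ convention and the ordering $Z_nf(Y_n)$) are exactly the identifications the paper relies on, so there is nothing further to add.
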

\begin{proof}
Let $f(z) = \sum_{j=-l_1}^{l_2} a_j z^j$.
The pairwise commuting elements $Z_i \in \bH_n^-$ satisfy
$Z_i = T_{i, n-1}^+ Z_n T_{n-1, i}^-$,
and
\begin{equation*}
    \theta_f(Z_n) = Z_n \sum_{j=-l_1}^{l_2} a_j Y_n^j \in \bH_n^-,
\end{equation*}
hence
$
    (\phi \circ \theta_f)\left(Z_i\right) = \cD_i
$.
It follows that $[\cD_i, \cD_j]= (\phi \circ \theta_f )\left([Z_i, Z_j] \right) = 0$.
\end{proof}

\begin{remark} 
    Commutativity of $\cD_i$ in the special case of $l_2 = 0$ was proved in~\cite[Corollary~3.22~(i)]{BEF} by a different method. Indeed, the elements~$D_i^{(l)}$ considered there satisfy $(hg\varphi)(D_{n-i+1}^{(l_1)}) = a_{-l_1}^{-1}\cD_i$ for $l_2 = 0$ and~$Z_i$ expressed in terms of $a_i$. Here, $h$, $g$, and $\varphi$ are the isomorphisms from Remark~\ref{rem: BEF's D_i}.
    \hfill \qedsymbol
\end{remark}

\begin{remark}
    % It would be interesting to study the subalgebra of $\bH_n$ generated by $T_k$, $Y_i^{\pm 1}$, and $X_i \cD_j$ ($i \neq j$) for $l_1=l_2=l>1$, and $a_j$ chosen so that~$\cD_n = X_n^{-1}(Y_n - Y_n^{-1})^l$. This would be a natural generalisation to higher~$l$ of the algebra $\bH^{\fgl_n}$, which corresponds to the case $l=1$. 
    The algebra $\bH^{\fgl_n}$ is the subalgebra of $\bH_n$ generated by $T_k$, $Y_i^{\pm 1}$, and $X_i \cD_j$ ($i \neq j$) for $l_1=l_2=1$, and $a_{-1}=-1$, $a_0 = 0$, $a_1 = 1$, since then~$\cD_n = X_n^{-1}(Y_n - Y_n^{-1}) = (q-q^{-1})D_n$.
    It would be interesting to see if the subalgebra of $\bH_n$ generated by $T_k$, $Y_i^{\pm 1}$, and $X_i \cD_j$ ($i \neq j$) for more general~$l_1$, $l_2$, and $a_j$ --- equivalently, the degree zero subalgebra of a general cyclotomic DAHA ---  has good properties as well. \hfill \qedsymbol
\end{remark}

Recall the polynomial representation of $\bH_n$ on $\C_\tau[X_1^{\pm 1}, \dots, X_n^{\pm 1}]$, mentioned in Section~\ref{sec: DAHA}, in which the element $\pi^{-1}$ acts according to formula~\eqref{eqn: action of pi} as $(n, \dots, 1)t_1 = t_n (n, \dots, 1)$, the action of $X_i^{\pm 1}$ is by multiplication, and the Hecke generators $T_k$ act according to formula~\eqref{eqn: action of T_k} as
\begin{equation*}
    \tau s_k + \frac{\tau - \tau^{-1}}{X_kX_{k+1}^{-1} - 1}(s_k - 1) = \frac{\tau^{-1}X_{k+1}- \tau X_k}{X_{k+1}-X_k}s_k + \frac{(\tau - \tau^{-1})X_{k+1}}{X_{k+1}-X_k}.
\end{equation*}
It follows that the elements $T_k^{-1} = T_k + \tau^{-1} - \tau$ act as 
\begin{equation}\label{eqn: action of T_k inverse} 
    %\tau^{-1} s_k + \frac{(\tau - \tau^{-1})X_kX_{k+1}^{-1}}{X_kX_{k+1}^{-1} - 1}(s_k - 1) = 
    \frac{\tau^{-1}X_{k+1}- \tau X_k}{X_{k+1}-X_k}s_k 
    + \frac{(\tau - \tau^{-1})X_k}{X_{k+1}-X_k}.
\end{equation} 
%From this, one can (in principle) deduce the action of $\cD_i$, since 
By combining relations~\eqref{eqn: definition of cD_i^l} and~\eqref{eqn: Y_i in terms of pi}, we get 
\begin{equation}\label{eqn: cD_i^l i.t.o. pi}
    \cD_i = X_i^{-1} \left(\sum_{j=1}^{l_2} a_j \left((T^{-1})_{i, n-1}^+\pi^{-1}(T^{-1})_{1,i-1}^+\right)^j + \sum_{j=1}^{l_1}a_{-j} \left(T_{i-1,1}^- \pi T_{n-1, i}^- \right)^j + a_0\right). 
\end{equation}

We now prove that the action of symmetric combinations of $\cD_i$ preserves the subspace $\C_\tau[X_1^{\pm 1}, \dots, X_n^{\pm 1}]^{\fS_n}$. 
Let $\C[\cD_1, \dots, \cD_n]^{\fS_n}$ denote the set of all symmetric combinations of $\cD_i$, where $\fS_n$ acts by permuting the indices. 
We will make use of the following lemma.

\begin{lemma}\label{lemma: T_k and D}
    We have $[T_k, D] = 0$ for any $D \in \C[\cD_1, \dots, \cD_n]^{\fS_n}$ for all $k$.
\end{lemma}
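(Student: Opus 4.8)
The plan is to reduce the claim to showing that $T_k$ commutes with each elementary symmetric polynomial $\sum_{i_1 < \dots < i_r} \cD_{i_1} \cdots \cD_{i_r}$ (or equivalently with the power sums $\sum_i \cD_i^r$), and then to exploit the two facts already established just before the lemma: that $[T_k, \cD_i] = 0$ whenever $i \neq k, k+1$, and that $T_k^{-1}\cD_k T_k^{-1} = \cD_{k+1}$. The only interaction that is not immediately trivial is the one involving the pair $(\cD_k, \cD_{k+1})$, so the heart of the matter is to check that $T_k$ commutes with symmetric expressions in the two variables $\cD_k$ and $\cD_{k+1}$ alone.

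First I would record the consequence $T_k \cD_k = \cD_{k+1} T_k$ and $T_k \cD_{k+1} = \cD_k T_k + (\tau - \tau^{-1})(\cD_{k+1}-\cD_k)\cdot(\text{something})$; more precisely, from $T_k^{-1}\cD_k T_k^{-1} = \cD_{k+1}$ and the Hecke relation $T_k^{-1} = T_k - (\tau - \tau^{-1})$ one derives $T_k \cD_k = \cD_{k+1}T_k$ and, applying $T_k$ on the appropriate side, $T_k \cD_{k+1} = \cD_k T_k$. Indeed $T_k \cD_{k+1} T_k = \cD_k$ follows by inverting the defining relation (using that $T_k^{-2} = T_k^2 - (\tau-\tau^{-1})(T_k + T_k^{-1}) \cdot \dots$; cleaner: $T_k^{-1}\cD_k T_k^{-1} = \cD_{k+1} \Leftrightarrow \cD_k = T_k \cD_{k+1} T_k$). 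Hence both $T_k \cD_k = \cD_{k+1}T_k$ and $T_k \cD_{k+1} = \cD_k T_k$ hold, i.e. $T_k$ conjugates the pair $(\cD_k, \cD_{k+1})$ by swapping them. Since $\cD_k$ and $\cD_{k+1}$ commute (Proposition~\ref{prop: D^l commute}), for any polynomial $p(z_1,z_2)$ that is symmetric under $z_1 \leftrightarrow z_2$ we get $T_k\, p(\cD_k, \cD_{k+1}) = p(\cD_{k+1}, \cD_k)\, T_k = p(\cD_k, \cD_{k+1})\, T_k$.

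Next I would assemble this into the full statement. Any $D \in \C[\cD_1,\dots,\cD_n]^{\fS_n}$ is a polynomial in the power sums $p_r = \sum_{i=1}^n \cD_i^r$, and it suffices to treat each $p_r$. Split $p_r = (\cD_k^r + \cD_{k+1}^r) + \sum_{i \neq k, k+1} \cD_i^r$. The second summand commutes with $T_k$ termwise since $[T_k, \cD_i] = 0$ for $i \neq k, k+1$, hence $[T_k, \cD_i^r] = 0$ as well. The first summand is $p(\cD_k,\cD_{k+1})$ with $p(z_1,z_2) = z_1^r + z_2^r$ symmetric, so it commutes with $T_k$ by the previous paragraph. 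Therefore $[T_k, p_r] = 0$ for every $r$, and since $D$ is a polynomial in the $p_r$, $[T_k, D] = 0$.

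I expect the only delicate point to be the verification that $T_k \cD_{k+1} T_k = \cD_k$ — i.e. that the conjugation relation $T_k^{-1}\cD_k T_k^{-1} = \cD_{k+1}$ is genuinely an involution-type statement at the level of $T_k$, not just $T_k^{-1}$. This is a short Hecke-algebra manipulation: from $T_k^{-1}\cD_k T_k^{-1} = \cD_{k+1}$ one multiplies on left and right by $T_k$ to obtain $\cD_k = T_k \cD_{k+1} T_k$, which is exactly what is needed; no genuine obstacle arises, but it is worth spelling out so that the symmetry argument is airtight. Everything else is formal bookkeeping using commutativity of the $\cD_i$ and the three basic relations between the $T_k$ and the $\cD_i$ recorded above.
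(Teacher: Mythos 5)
Your reduction to power sums is fine, but the key step fails: the relation in the paper is $T_k^{-1}\cD_k T_k^{-1}=\cD_{k+1}$, and this does \emph{not} give the clean swap relations you assert. Multiplying $T_k^{-1}\cD_k=\cD_{k+1}T_k$ by $T_k^{-1}=T_k-(\tau-\tau^{-1})$ gives
\begin{equation*}
T_k\cD_k=\cD_{k+1}T_k+(\tau-\tau^{-1})\cD_k,\qquad
T_k\cD_{k+1}=\cD_k T_k-(\tau-\tau^{-1})\cD_k,
\end{equation*}
so each of your two identities is off by a $(\tau-\tau^{-1})\cD_k$ term. In fact the two identities you state are jointly inconsistent with the algebra: they would give $T_k^2\cD_{k+1}=\cD_{k+1}T_k^2$, hence (using $T_k^2=1+(\tau-\tau^{-1})T_k$ and $\tau^2\neq 1$) $[T_k,\cD_{k+1}]=0$, which combined with $T_k\cD_{k+1}=\cD_kT_k$ forces $\cD_k=\cD_{k+1}$. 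Consequently the statement ``$T_k\,p(\cD_k,\cD_{k+1})=p(\cD_{k+1},\cD_k)\,T_k$ for every polynomial $p$'' is false: conjugation by $T_k$ does not implement the transposition in a Hecke algebra (that only happens at $\tau=1$), and the whole point of the lemma is that the $(\tau-\tau^{-1})$-corrections cancel precisely for symmetric expressions.

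The argument can be repaired along your lines, but the cancellation has to be exhibited: using the corrected relations and $[\cD_k,\cD_{k+1}]=0$ one checks directly that $e_1=\cD_k+\cD_{k+1}$ and $e_2=\cD_k\cD_{k+1}$ commute with $T_k$ (in both cases the two correction terms cancel), and then $\cD_k^r+\cD_{k+1}^r$, being a polynomial in $e_1,e_2$ since the two elements commute, also commutes with $T_k$; combined with $[T_k,\cD_i]=0$ for $i\neq k,k+1$ this yields the lemma. The paper avoids this computation altogether: it notes that $T_k$, $\cD_1,\dots,\cD_n$ satisfy the defining relations of the (polynomial part of the) $GL_n$ affine Hecke algebra, so there is an epimorphism $T_k\mapsto T_k$, $Y_i\mapsto\cD_i$, and the claim is transported from Bernstein's theorem that $\C[Y_1,\dots,Y_n]^{\fS_n}$ lies in the centre of the affine Hecke algebra. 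Either route works, but as written your proof rests on a false identity and does not go through.
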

\begin{proof}
   The subalgebra $\langle T_1, \dots, T_{n-1}, Y_1^{\pm 1},$ $ \dots, Y_n^{\pm 1} \rangle \subset \bH_n$ is a $GL_n$-type affine Hecke algebra, whose centre 
   contains $\C[Y_1, \dots, Y_n]^{\fS_n}$ (see e.g.~\cite[Lemma~1.3.12]{Cherednik} and a historical comment in~\cite{Lusztig}).  
    We have $[\cD_i, \cD_j]=0$ by Proposition~\ref{prop: D^l commute}; also, recall that $T_k^{-1}\cD_k T_k^{-1} = \cD_{k+1}$ and $[T_k, \cD_i] = 0$ for $i \neq k, k+1$. Thus, there is an epimorphism from the subalgebra $\langle T_1, \dots, T_{n-1}, Y_1, \dots, Y_n \rangle$ to the subalgebra
    $\langle T_1, \dots, T_{n-1}, \cD_1, \dots, \cD_n \rangle$ given by $T_k \mapsto T_k$, $Y_i \mapsto \cD_i$.
    The claim follows since $T_k$ commute with any element of $\C[Y_1, \dots, Y_n]^{\fS_n}$. 
\end{proof}

\begin{proposition}\label{prop: preservation of invariants}
    Let $D \in \C[\cD_1, \dots, \cD_n]^{\fS_n}$. Then the action of $D$ on $\C_\tau[X_1^{\pm 1}, \dots, X_n^{\pm 1}]$ preserves the space of invariants 
    $\C_\tau[X_1^{\pm 1}, \dots, X_n^{\pm 1}]^{\fS_n}$.
\end{proposition}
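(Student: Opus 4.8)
The plan is to reduce the $\fS_n$-invariance of $D\cdot p$ to the commutation relation $[T_k, D]=0$ supplied by Lemma~\ref{lemma: T_k and D}, by first recording a characterization of the invariant Laurent polynomials purely in terms of the Hecke operators acting in the polynomial representation.

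First I would prove that for $p \in \C_\tau[X_1^{\pm 1}, \dots, X_n^{\pm 1}]$ one has $p \in \C_\tau[X_1^{\pm 1}, \dots, X_n^{\pm 1}]^{\fS_n}$ if and only if $T_k p = \tau p$ for every $k \in \{1, \dots, n-1\}$. The ``only if'' direction is immediate from formula~\eqref{eqn: action of T_k}: if $s_k p = p$, then the second summand of $T_k$ annihilates $p$, so $T_k p = \tau s_k p = \tau p$. For the ``if'' direction, set $v = (s_k - 1)p$, so that $s_k v = -v$ and hence $v = (X_k - X_{k+1})w$ for a unique $w \in \C_\tau[X_1^{\pm 1}, \dots, X_n^{\pm 1}]$. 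Rearranging $T_k p = \tau p$ using~\eqref{eqn: action of T_k} and $\tfrac{1}{X_k X_{k+1}^{-1} - 1} = \tfrac{X_{k+1}}{X_k - X_{k+1}}$ gives $(\tau X_k - \tau^{-1} X_{k+1})\,w = 0$; since $\C_\tau[X_1^{\pm 1}, \dots, X_n^{\pm 1}]$ is an integral domain and $\tau X_k - \tau^{-1} X_{k+1} \neq 0$ (here $\tau$ is a formal parameter), we get $w = 0$, hence $s_k p = p$. As the $s_k$ generate $\fS_n$, the characterization follows.

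Given this, the proof of the proposition is short. Let $D \in \C[\cD_1, \dots, \cD_n]^{\fS_n}$ and $p \in \C_\tau[X_1^{\pm 1}, \dots, X_n^{\pm 1}]^{\fS_n}$. By Lemma~\ref{lemma: T_k and D}, $[T_k, D] = 0$ in $\bH_n$, hence also as operators on the polynomial representation. Therefore, for every $k$,
\[
    T_k (D p) = D (T_k p) = D(\tau p) = \tau (D p),
\]
so by the characterization above $D p$ is $\fS_n$-invariant, as required. I do not expect a genuine obstacle here: all the real content sits in the inputs already established — the commutativity of the $\cD_i$ (Proposition~\ref{prop: D^l commute}) feeding Lemma~\ref{lemma: T_k and D} — and the only point needing a little care is verifying that the coefficient obtained after rearranging $T_k p = \tau p$ is a nonzero element of the Laurent polynomial ring, which holds because $\tau$ is formal.
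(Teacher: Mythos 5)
Your proof is correct and follows essentially the same route as the paper: characterise $\C_\tau[X_1^{\pm 1}, \dots, X_n^{\pm 1}]^{\fS_n}$ as the joint eigenspace $T_k p = \tau p$ for all $k$, then conclude from $[T_k, D] = 0$ (Lemma~\ref{lemma: T_k and D}). The only difference is that you spell out the ``if'' direction of the eigenvalue characterisation, which the paper asserts directly from formula~\eqref{eqn: action of T_k}; your verification of it is correct.
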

\begin{proof}
    From formula~\eqref{eqn: action of T_k}, it follows that $p \in \C_\tau[X_1^{\pm 1}, \dots, X_n^{\pm 1}]$ is $\fS_n$-invariant if and only if $T_k(p) = \tau p$ for all $k$. 
    % Indeed, if $T_k(p) = \tau p$, then
    % \begin{equation*}
    %     \frac{\tau  X_k- \tau^{-1} X_{k+1}}{X_k-X_{k+1}}(s_k(p)-p) = 0,
    % \end{equation*}
    % which forces $s_k(p) = p$.
    The claim thus follows from the fact that $D$ commutes with all $T_k$ by Lemma~\ref{lemma: T_k and D}. 
\end{proof}

Let $f$ be any operator on $\C_\tau[X_1^{\pm 1}, \dots, X_n^{\pm 1}]$ of the form 
\begin{equation*}
    f = \sum_{\substack{ i \in \{1, \dots, n\}\\ j \in \Z, \, w \in \fS_n}} g_{i,j,w} t_i^j w, \qquad g_{i,j,w} \in \C_\tau(X_1, \dots, X_n).
\end{equation*}
For instance, the action of any $D \in \C[\cD_1, \dots, \cD_n]^{\fS_n}$ can be written in that form. The operator $\Res(f)$ is defined by
\begin{equation*}
    \Res(f) \coloneqq \sum_{\substack{ i \in \{1, \dots, n\}\\ j \in \Z, \, w \in \fS_n}} g_{i,j,w} t_i^j. 
\end{equation*}
Thus, $\Res(f)$ is a $q$-difference operator with rational coefficients. On elements of the space $\C_\tau[X_1^{\pm 1}, \dots, X_n^{\pm 1}]^{\fS_n}$ it acts identically to~$f$. In particular, if the latter preserves this space then so does~$\Res(f)$. 

We note that the elements $D$ are not invariant with regard to the action of the symmetric group $\fS_n$, but the operators $\Res(D)$ are.

\begin{theorem}\label{thm: Res D}
    The operators $\Res(D)$ for $D \in \C[\cD_1, \dots, \cD_n]^{\fS_n}$ are pairwise-commuting, $\fS_n$-invariant, and preserve the space
    $\C_\tau[X_1^{\pm 1}, \dots, X_n^{\pm 1}]^{\fS_n}$. Furthermore,
an algebraic basis $p_1, \ldots, p_n\in  \C[X_1, \ldots, X_n]^{\fS_n}$ gives $n$ algebraically independent operators $\Res p_i(\cD_1, \ldots, \cD_n)$.

\end{theorem}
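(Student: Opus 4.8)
The plan is to derive the first three properties from two ingredients. The first is already available: each $D\in\C[\cD_1,\dots,\cD_n]^{\fS_n}$ preserves the space $V^{\fS_n}\coloneqq\C_\tau[X_1^{\pm1},\dots,X_n^{\pm1}]^{\fS_n}$ and acts on it exactly as the $q$-difference operator $\Res(D)$ (Proposition~\ref{prop: preservation of invariants} together with the discussion preceding the theorem). The second is an injectivity lemma: if $L=\sum_{\mu\in\Z^n}g_\mu(X)\,t_1^{\mu_1}\cdots t_n^{\mu_n}$ is a $q$-difference operator with coefficients $g_\mu\in\C_\tau(X_1,\dots,X_n)$, only finitely many nonzero, and $L$ annihilates $V^{\fS_n}$, then $L=0$. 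To prove this lemma, clear a common denominator so that all $g_\mu$ become Laurent polynomials, evaluate $L$ on a monomial symmetric function $m_\lambda=\sum_{\nu\in\fS_n\lambda}X^\nu$ for $\lambda$ dominant with consecutive gaps larger than twice the diameter of $\bigcup_\mu\operatorname{supp}(g_\mu)$, and note that among the resulting Laurent monomials exactly those with strictly decreasing exponent vector come from the subterm $\sum_\mu g_\mu(X)\,q^{\lambda\cdot\mu}X^\lambda$; vanishing of $L(m_\lambda)$ then forces $\sum_\mu g_\mu(X)\,q^{\lambda\cdot\mu}=0$ for all such $\lambda$, and since $q$ is not a root of unity the monoid characters $\lambda\mapsto q^{\lambda\cdot\mu}$ stay $\C_\tau(X)$-linearly independent on the dominant cone, giving $g_\mu=0$ for every $\mu$.

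Granting the lemma, the first three assertions follow quickly. For $D,D'\in\C[\cD_1,\dots,\cD_n]^{\fS_n}$ and $w\in\fS_n$, both $[\Res(D),\Res(D')]$ and $w\,\Res(D)\,w^{-1}-\Res(D)$ are again $q$-difference operators of the type covered by the lemma. Acting on any $p\in V^{\fS_n}$ one has $\Res(D)(p)=D(p)\in V^{\fS_n}$, hence $[\Res(D),\Res(D')](p)=[D,D'](p)=0$ since the $\cD_i$ pairwise commute (Proposition~\ref{prop: D^l commute}), and $\bigl(w\,\Res(D)\,w^{-1}-\Res(D)\bigr)(p)=w\bigl(D(p)\bigr)-D(p)=0$ since $D(p)$ is $\fS_n$-invariant. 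By the lemma both operators vanish identically, which gives the pairwise commutativity of the operators $\Res(D)$ and their $\fS_n$-invariance; that they preserve $V^{\fS_n}$ is immediate from Proposition~\ref{prop: preservation of invariants} and the fact that $\Res(D)$ acts on $V^{\fS_n}$ as $D$.

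For the final assertion the plan is to specialise at $\tau=1$. Each $\cD_i$ acts on $\C_\tau[X_1^{\pm1},\dots,X_n^{\pm1}]$ with coefficients that are Laurent polynomial in $\tau$, so specialisation at $\tau=1$ is defined and compatible with $\Res$; at $\tau=1$ one has $T_k\mapsto s_k$ and $Y_n\mapsto t_n$, whence $\cD_i\big|_{\tau=1}=X_i^{-1}(s_i\cdots s_{n-1})\bigl(\sum_{j=-l_1}^{l_2}a_jt_n^{\,j}\bigr)(s_{n-1}\cdots s_i)=X_i^{-1}f(t_i)\eqqcolon A_i$, with $f(z)=\sum_{j=-l_1}^{l_2}a_jz^{j}$ a nonzero Laurent polynomial, because conjugating $t_n$ by $s_i\cdots s_{n-1}$ gives $t_i$. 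In particular $\cD_i|_{\tau=1}$ involves no nontrivial element of $\fS_n$, so $\Res\bigl(p_i(\cD_1,\dots,\cD_n)\bigr)\big|_{\tau=1}=p_i(A_1,\dots,A_n)$ for any polynomial $p_i$. Now $A_i(X^\lambda)=f(q^{\lambda_i})\,X^{\lambda-e_i}$, so for $P=\sum_\mu c_\mu y^\mu$ one computes $P(A_1,\dots,A_n)(X^\lambda)=\sum_\mu c_\mu\Bigl(\prod_{i=1}^n\prod_{r=0}^{\mu_i-1}f(q^{\lambda_i-r})\Bigr)X^{\lambda-\mu}$; the monomials $X^{\lambda-\mu}$ are pairwise distinct, and $f$ being nonzero with $q$ not a root of unity one can choose $\lambda$ so that $\prod_{i,r}f(q^{\lambda_i-r})\neq0$, whence $A_1,\dots,A_n$ are algebraically independent operators on $\C[X_1^{\pm1},\dots,X_n^{\pm1}]$. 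Therefore $y_i\mapsto A_i$ identifies $\C[A_1,\dots,A_n]$ with a polynomial algebra, and since an algebraic basis $p_1,\dots,p_n$ of $\C[X_1,\dots,X_n]^{\fS_n}$ is in particular algebraically independent, the operators $p_1(A_1,\dots,A_n),\dots,p_n(A_1,\dots,A_n)$ are algebraically independent. Finally, any algebraic relation over $\C_\tau$ (equivalently over $\C(\tau)$) among the operators $\Res\bigl(p_i(\cD_1,\dots,\cD_n)\bigr)$ may be written $P(\dots)=0$ with $P\in\C[\tau][y_1,\dots,y_n]$ chosen so that $P|_{\tau=1}\neq0$ (divide out any factor of $\tau-1$); specialising at $\tau=1$ gives $P|_{\tau=1}\bigl(p_1(A),\dots,p_n(A)\bigr)=0$, a contradiction.

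The step I expect to be the main obstacle is the clean proof of the injectivity lemma, in particular making precise the ``disjoint leading regions'' estimate (picking $\lambda$ deep enough in the dominant cone relative to the finitely many coefficients $g_\mu$) and the accompanying character-independence step on the cone. A lesser point still requiring attention is the verification that the $\tau=1$ specialisation of all operators involved is well defined and respects composition and $\Res$, which is what legitimises the argument in the last paragraph.
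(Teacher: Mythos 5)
Your proposal is correct and follows the same overall strategy as the paper: preservation of $\C_\tau[X_1^{\pm 1},\dots,X_n^{\pm 1}]^{\fS_n}$ from Proposition~\ref{prop: preservation of invariants}, commutativity and $\fS_n$-invariance by showing the relevant operators agree on (resp.\ annihilate) symmetric Laurent polynomials using Proposition~\ref{prop: D^l commute}, and the final algebraic-independence claim by specialising at $\tau=1$, where $\cD_i$ collapses to $X_i^{-1}f(t_i)$, an operator in the single variable $X_i$. The one substantive difference is in the key ``rigidity'' step: the paper passes from agreement on invariants to equality of $q$-difference operators by citing Kirillov and Cherednik, whereas you prove a self-contained injectivity lemma (an operator $\sum_\mu g_\mu(X)t^\mu$ with rational coefficients killing all symmetric Laurent polynomials is zero) via monomial symmetric functions with large gaps and linear independence of the characters $\lambda\mapsto q^{\lambda\cdot\mu}$ on the dominant cone. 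Your lemma needs no invariance hypothesis on the operators, which makes the deduction of $\fS_n$-invariance of $\Res(D)$ slightly cleaner, at the cost of having to carry out the support-separation and character-independence argument yourself; that sketch, and the check that $\tau=1$ specialisation commutes with composition and with $\Res$ (both fine, since all coefficients lie in $\C[\tau^{\pm1}]$ with denominators only in the $X$-variables, and the decomposition $\sum_w G_w w$ is unique), are completable as you outline, and your expanded treatment of the $\tau=1$ step fills in details the paper leaves as a one-line remark.
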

\begin{proof}
Preservation of $\C_\tau[X_1^{\pm 1}, \dots, X_n^{\pm 1}]^{\fS_n}$ follows from Proposition~\ref{prop: preservation of invariants}.

Let $D, \tD \in  \C[\cD_1, \dots, \cD_n]^{\fS_n}$ and $p \in \C_\tau[X_1^{\pm 1}, \dots, X_n^{\pm 1}]^{\fS_n}$. By using Proposition~\ref{prop: D^l commute}, we get $\Res(D)\Res(\tD)p = D \tD p = \tD D p = \Res(\tD)\Res(D)p$. 
Thus $\Res(D)$ and $\Res(\tD)$ commute when restricted to $\C_\tau[X_1^{\pm 1}, \dots, X_n^{\pm 1}]^{\fS_n}$, which implies the commutativity of $\Res(D)$ and $\Res(\tD)$ (see~\cite[Theorem~4.5]{Kirillov} and~\cite[Theorem~3.3]{Ch'95}).

% Suppose that $w \Res(D) w^{-1}$ and $\Res(D)$ are not equal as operators on $\C_\tau[X_1^{\pm 1}, \dots, X_n^{\pm 1}]^{\fS_n}$ for some $w \in \fS_n$. Then there must be some $p \in \C_\tau[X_1^{\pm 1}, \dots, X_n^{\pm 1}]^{\fS_n}$ on which they act differently. However, $w \Res(D) w^{-1}p = w \Res(D)p = w Dp = Dp$ by Proposition~\ref{prop: preservation of invariants}, which is a contradiction. It follows that $\Res(D)$ are $\fS_n$-invariant operators on $\C_\tau[X_1^{\pm 1}, \dots, X_n^{\pm 1}]^{\fS_n}$.
For any $w \in \fS_n$, we have $w \Res(D) w^{-1}p = Dp$ thanks to Proposition~\ref{prop: preservation of invariants}. Thus $w \Res(D) w^{-1}$ and $\Res(D)$ are equal as operators on $\C_\tau[X_1^{\pm 1}, \dots, X_n^{\pm 1}]^{\fS_n}$. 
%That is, $\Res(D)$ are $\fS_n$-invariant operators on $\C_\tau[X_1^{\pm 1}, \dots, X_n^{\pm 1}]^{\fS_n}$.
As in the preceding paragraph, it follows that $w \Res(D) w^{-1} = \Res(D)$.

The final claim follows by specialisation $\tau=1$ which reduces $\cD_i$ to an operator in the variable $X_i$.
\end{proof}

Explicitly, for $l_1=l_2=1$, and for the degree one symmetric combination~$\sum_{i=1}^n\cD_i$, we get the following formula for the corresponding integrable Hamiltonian.
\begin{proposition}
    With $a=a_1$, $b=a_{-1}$, and $c=a_0$, we have
    \begin{equation}
        \begin{aligned}\label{eqn: vDE-like operator}
            M_{a,b,c} \coloneqq \Res&\left(\sum_{i=1}^n \cD_i^{(1,1)}\right) = a \tau^{1-n} \sum_{i=1}^n \frac{1}{X_i} \left( \prod_{\substack{j = 1 \\ j \neq i}}^n \frac{\tau^2 X_i - X_j}{X_i - X_j} \right)t_i \\
         &+ b\tau^{1-n} \sum_{i=1}^n \frac{1}{X_i} \left( \prod_{\substack{j = 1 \\ j \neq i}}^n \frac{ X_i - \tau^2 X_j}{X_i - X_j} \right)t_i^{-1} + c \sum_{i=1}^n \frac{1}{X_i}.
        \end{aligned}
    \end{equation}
\end{proposition}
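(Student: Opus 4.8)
The plan is to compute the action of $\sum_{i=1}^{n}\cD_i^{(1,1)}$ on the space $\C_\tau[X_1^{\pm1},\dots,X_n^{\pm1}]^{\fS_n}$ of symmetric Laurent polynomials. By construction $\Res\!\left(\sum_i\cD_i^{(1,1)}\right)$ acts on this space exactly as $\sum_i\cD_i^{(1,1)}$ does, and a $q$-difference operator with rational coefficients is determined by its restriction to $\C_\tau[X_1^{\pm1},\dots,X_n^{\pm1}]^{\fS_n}$ (expand it in the basis $\{t^{I}:I\in\Z^{n}\}$, evaluate on the monomial symmetric functions $m_\lambda$ with $\lambda$ deep in the dominant cone, and use that $q$ is not a root of unity); so it is enough to identify that restriction and to recognise it as the operator in \eqref{eqn: vDE-like operator}. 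I would start from \eqref{eqn: cD_i^l i.t.o. pi}, which for $l_1=l_2=1$ reads $\cD_i^{(1,1)}=X_i^{-1}\bigl(a\,(T^{-1})_{i,n-1}^+\pi^{-1}(T^{-1})_{1,i-1}^+ + b\,T_{i-1,1}^-\,\pi\,T_{n-1,i}^- + c\bigr)$, with $a=a_1$, $b=a_{-1}$, $c=a_0$.

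On a symmetric Laurent polynomial $p$ one has $T_k^{\pm1}(p)=\tau^{\pm1}p$ for every $k$, while $\pi^{-1}$ acts as the $q$-shift $t_n$ and $\pi$ as $t_1^{-1}$ (both by \eqref{eqn: action of pi}, the permutation factor being trivial on $p$). Using this, together with the braid-type identities $X_i^{-1}(T^{-1})_{i,n-1}^+=T_{i,n-1}^+X_n^{-1}$ and $X_i^{-1}T_{i-1,1}^-=(T^{-1})_{i-1,1}^-X_1^{-1}$ (both consequences of $T_kX_kT_k=X_{k+1}$), a short computation shows that for symmetric $p$ the quantity $\bigl(\sum_{i=1}^{n}\cD_i^{(1,1)}\bigr)(p)$ is the sum of: a $\tau^{1-n}a$-multiple of $\sum_{i=1}^{n}\tau^{\,n-i}T_{i,n-1}^+\!\bigl(X_n^{-1}t_np\bigr)$; a $b$-multiple of $\sum_{i=1}^{n}\tau^{\,n-i}(T^{-1})_{i-1,1}^-\!\bigl(X_1^{-1}t_1^{-1}p\bigr)=b\sum_{k=0}^{n-1}\tau^{\,n-1-k}(T^{-1})_{k,1}^-\!\bigl(X_1^{-1}t_1^{-1}p\bigr)$; and $c\sum_{i=1}^{n}X_i^{-1}p$. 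Here $X_n^{-1}t_np$ is symmetric in $X_1,\dots,X_{n-1}$ and $X_1^{-1}t_1^{-1}p$ in $X_2,\dots,X_n$.

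The crux is a pair of mirror-image symmetrisation lemmas. First, for $g$ symmetric in $X_1,\dots,X_{n-1}$,
\[
\sum_{i=1}^{n}\tau^{\,n-i}T_{i,n-1}^+(g)=\sum_{i=1}^{n}\Bigl(\,\prod_{\substack{j=1\\ j\neq i}}^{n}\frac{\tau^{2}X_i-X_j}{X_i-X_j}\Bigr)\,s_{in}(g),\qquad s_{in}=(i\,n),\ \ s_{nn}=\id ;
\]
second, for $g$ symmetric in $X_2,\dots,X_n$, the analogous sum $\sum_{k=0}^{n-1}\tau^{\,n-1-k}(T^{-1})_{k,1}^-(g)$ equals $\tau^{1-n}\sum_{i=1}^{n}\bigl(\prod_{j\neq i}\tfrac{X_i-\tau^{2}X_j}{X_i-X_j}\bigr)\,s_{1i}(g)$ with $s_{1i}=(1\,i)$. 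These are precisely the mechanism by which Macdonald--Ruijsenaars operators arise from the DAHA; I would prove them by induction on $n$ using the explicit Demazure--Lusztig action \eqref{eqn: action of T_k} of $T_k$ (together with the elementary fact $T_{n-2}(g)=\tau g$ for such $g$), checking along the way that the apparent double poles along $X_i=X_j$ cancel and the coefficients collapse to the stated single-pole products. Feeding $g=X_n^{-1}t_np$ and $g=X_1^{-1}t_1^{-1}p$ into these lemmas, and using $s_{in}\!\bigl(X_n^{-1}t_np\bigr)=X_i^{-1}t_ip$ and $s_{1i}\!\bigl(X_1^{-1}t_1^{-1}p\bigr)=X_i^{-1}t_i^{-1}p$ for symmetric $p$, the three summands above turn into exactly the three sums of \eqref{eqn: vDE-like operator}. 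Since the latter is a $q$-difference operator with rational coefficients agreeing on symmetric Laurent polynomials with $\sum_i\cD_i^{(1,1)}$, hence with $\Res\!\left(\sum_i\cD_i^{(1,1)}\right)$, the two coincide by the uniqueness noted above.

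I expect the main difficulty to be exactly the bookkeeping in the two symmetrisation lemmas: after iterating \eqref{eqn: action of T_k} one obtains a large sum of terms with rational prefactors, and one must show that these reorganise into the clean products $\prod_{j\neq i}(\cdots)$ with only simple poles, while tracking the powers of $\tau$ so as to recover the overall factor $\tau^{1-n}$ and the two distinct numerators $\tau^{2}X_i-X_j$ (in the $t_i$-term) and $X_i-\tau^{2}X_j$ (in the $t_i^{-1}$-term). The $a$-half is the lighter computation, since $Y_n$ contributes only a genuine $q$-shift of $p$; the $b$-half is more delicate but entirely parallel, and it specialises to the Baker--Forrester operator \cite{BF} and to the $l_2=0$ operators of \cite{BEF}, so one could alternatively deduce it from those references and obtain the general statement by linearity in $(a,b,c)$.
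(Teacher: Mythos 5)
Your proposal is correct in substance, but it is framed differently from the paper's proof, so a comparison is in order. The paper works with $\Res$ as a purely formal operation (drop the permutation parts) and proves, by an induction adapted from Baker--Forrester's Lemma~5.3, the telescoping identities $\Res\bigl(\sum_{i\ge m}\cD_i^+\bigr)=E_m^+$ and $\Res\bigl(\sum_{i\le m}\cD_i^-\bigr)=E_m^-$, using the recursion $\Res(\cD_{m+1}^-)=\tau^{-1}\Res(T_m^{-1}\cD_m^-)$ and the explicit Demazure--Lusztig formulas; this yields the operator identity directly, with no appeal to how the operators act on invariants. You instead evaluate $\sum_i\cD_i^{(1,1)}$ on symmetric Laurent polynomials, reduce (correctly, via $T_k(p)=\tau p$, the action of $\pi^{\pm1}$, and the commutation identities $X_i^{-1}(T^{-1})_{i,n-1}^+=T_{i,n-1}^+X_n^{-1}$ and $X_i^{-1}T_{i-1,1}^-=(T^{-1})_{i-1,1}^-X_1^{-1}$) to two partial-symmetrisation lemmas for Hecke operators acting on functions symmetric in $n-1$ of the variables, and then upgrade to an operator identity by a determination-by-restriction argument. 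The computational core is the same rational-function bookkeeping in both routes (your induction on $n$ is essentially the paper's induction on $m$ in disguise), and your two lemmas are true as stated -- I checked them at $n=2$ and against the classical identity $\sum_i\prod_{j\neq i}\frac{tX_i-X_j}{X_i-X_j}=1+t+\cdots+t^{n-1}$ -- so the strategy you outline for them is sound. What your route buys is a conceptually transparent derivation (the operator is read off from its action on invariants); what it costs is the extra injectivity step, which you only sketch: you should either carry out the Vandermonde-type argument (evaluate on $m_{N\nu}$ for a regular $\nu$ chosen so that the exponents $q^{I\cdot w\nu}$ separate the shifts, and let $N$ vary), or, more economically, note that both sides are $\fS_n$-invariant $q$-difference operators (the left side by Theorem~\ref{thm: Res D}) and invoke the fact, already used in the paper via \cite{Kirillov} and \cite{Ch'95}, that such operators are determined by their restriction to $\C_\tau[X_1^{\pm1},\dots,X_n^{\pm1}]^{\fS_n}$. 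Your closing observation that the two halves specialise to the Baker--Forrester and cyclotomic-DAHA operators is consistent with the paper, whose proof is an explicit adaptation of \cite[Lemma~5.3]{BF}.
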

The proof will follow from the next lemma. Let 
\begin{align*}
    \cD_i^+ &= X_i^{-1}(T^{-1})_{i, n-1}^+\pi^{-1}(T^{-1})_{1,i-1}^+, \\
    \cD_i^- &= X_i^{-1}T_{i-1,1}^- \pi T_{n-1, i}^-,
\end{align*}
so that by relation~\eqref{eqn: cD_i^l i.t.o. pi} we have
\begin{equation*}
    \Res\left(\sum_{i=1}^n \cD_i^{(1,1)}\right) = a \Res\left(\sum_{i=1}^n \cD_i^+\right) + b \Res\left(\sum_{i=1}^n \cD_i^-\right) + 
    c \sum_{i=1}^n \frac{1}{X_i}.
\end{equation*}
Then the following statement holds.
\begin{lemma}[\textnormal{cf.\ \cite[Lemma 5.3]{BF}}]
    For all $m \in \{1, \dots, n \}$, let
    \begin{equation*}
        E_m^+ = \tau^{1-n}\sum_{i=m}^n \frac{1}{X_i} A_{i,m} t_i, \quad \textnormal{ where } \quad  A_{i,m} = \prod_{\substack{j=m \\ j \neq i}}^n \frac{\tau^2 X_i - X_j}{X_i- X_j}.
    \end{equation*}
    Then 
    \begin{equation}\label{eqn: Em+}
        \Res\left(\sum_{i=m}^n \cD_i^+\right) = E_m^+.
    \end{equation}
    Furthermore,
    \begin{equation}\label{eqn: Em-}
        \Res\left(\sum_{i=1}^m \cD_i^-\right) = E_m^-,
    \end{equation}
    where 
    \begin{equation*}
        E_m^- = \tau^{n-2m+1}\sum_{i=1}^m \frac{1}{X_i} B_{i,m} t_i^{-1} \quad \textnormal{ with } \quad  B_{i,m} = \prod_{\substack{j=1 \\ j \neq i}}^m \frac{X_i - \tau^2 X_j}{X_i- X_j}.
    \end{equation*}
\end{lemma}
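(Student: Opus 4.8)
\emph{Reduction.} The plan is to prove \eqref{eqn: Em+} by descending induction on $m$ (from $m=n$ down to $m=1$) and \eqref{eqn: Em-} by ascending induction on $m$ (from $m=1$ up to $m=n$). The two halves are mirror images of one another: replacing $\pi^{-1}$ and $T_k^{-1}$ throughout the definition~\eqref{eqn: definition of cD_i^l} by $\pi$ and $T_k$ and reversing the order of the indices interchanges $\cD_i^{+}$ and $\cD_i^{-}$, so I describe only \eqref{eqn: Em+}. Granting the Lemma, the closed form~\eqref{eqn: vDE-like operator} of $M_{a,b,c}$ is immediate from~\eqref{eqn: cD_i^l i.t.o. pi}: indeed $\Res(\sum_{i=1}^{n}\cD_i^{(1,1)})=a\,\Res(\sum_{i=1}^{n}\cD_i^{+})+b\,\Res(\sum_{i=1}^{n}\cD_i^{-})+c\sum_{i}X_i^{-1}=aE_1^{+}+bE_n^{-}+c\sum_{i}X_i^{-1}$.

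\emph{Recursion and base case.} From the rewriting $\cD_i^{+}=X_i^{-1}(T^{-1})_{i,n-1}^{+}Y_nT_{n-1,i}^{-}$ (the case $a_1=1$, other $a_j=0$, of~\eqref{eqn: definition of cD_i^l}), using $T_mX_mT_m=X_{m+1}$ — hence $T_mX_{m+1}^{-1}=X_m^{-1}T_m^{-1}$ — together with $T_m^{-1}(T^{-1})_{m+1,n-1}^{+}=(T^{-1})_{m,n-1}^{+}$ and $T_{n-1,m+1}^{-}T_m=T_{n-1,m}^{-}$, one checks $\cD_m^{+}=T_m\cD_{m+1}^{+}T_m$, and therefore
\begin{equation*}
    \sum_{i=m}^{n}\cD_i^{+}=T_m\cD_{m+1}^{+}T_m+\sum_{i=m+1}^{n}\cD_i^{+}.
\end{equation*}
For the base case $m=n$ one has $\cD_n^{+}=X_n^{-1}Y_n$ (since $(T^{-1})_{n,n-1}^{+}=T_{n-1,n}^{-}=1$), with $Y_n=\pi^{-1}T_1^{-1}\cdots T_{n-1}^{-1}$ by~\eqref{eqn: Y_i in terms of pi}. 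A direct computation in the polynomial representation — moving the single $q$-shift carried by $\pi^{-1}$ (see~\eqref{eqn: action of pi}) to the left through $T_1^{-1},\dots,T_{n-1}^{-1}$ via~\eqref{eqn: action of T_k inverse} and the conjugation $s_kt_i^{\pm1}s_k=t_{s_k(i)}^{\pm1}$ — shows that every resulting term acquires the shift $t_n$ and that the rational prefactors sum to the scalar $\tau^{1-n}$; hence $\Res(Y_n)=\tau^{1-n}t_n$, so $\Res(\cD_n^{+})=\tau^{1-n}X_n^{-1}t_n=E_n^{+}$ (the product $A_{n,n}$ being empty).

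\emph{Inductive step.} Assume $\Res(\sum_{i=m+1}^{n}\cD_i^{+})=E_{m+1}^{+}$ and apply $\Res$ to the displayed recursion. By additivity of $\Res$ this reduces to proving $\Res(\cD_m^{+})=E_m^{+}-E_{m+1}^{+}$. Expanding $T_m$ by~\eqref{eqn: action of T_k} in $\cD_m^{+}=T_m\cD_{m+1}^{+}T_m$ and determining, term by term, which contributions have trivial reflection part once the two $T_m$-factors have been processed, the identity reduces to the elementary rational recursion $A_{i,m}=\tfrac{\tau^{2}X_i-X_m}{X_i-X_m}\,A_{i,m+1}$ for $i>m$, together with the production of the new summand $\tau^{1-n}X_m^{-1}A_{m,m}t_m$ out of the $t_m$-shift inside $\cD_m^{+}$. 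This is precisely the recursion characterising the partial Macdonald--Ruijsenaars operators, and the required bookkeeping is the one in \cite[Lemma~5.3]{BF}; \eqref{eqn: Em-} then follows by the mirror argument (ascending induction, with $\cD_1^{-}=X_1^{-1}\pi T_{n-1,1}^{-}$ treated analogously to $\cD_n^{+}$).

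\emph{Main obstacle.} The genuine difficulty is the step just described: $\Res$ is not multiplicative, and multiplication by the reflection-containing element $T_m$ does not commute with $\Res$, so the split into ``surviving'' and ``reflection'' terms must be carried out explicitly. The cleanest route I see to make this rigorous is to work in the faithful polynomial representation and evaluate on Laurent monomials $X^{a}$ with $a$ weakly decreasing: each $\cD_i^{+}$ acts triangularly with respect to the dominance order, the dominant monomial in $\cD_i^{+}(X^{a})$ has trivial reflection part and its coefficient is exactly the $t$-coefficient being sought, while the strictly lower terms — which carry the nontrivial reflections — are pinned down recursively; passing to $\Res$ then simply reads off those coefficients. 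Everything else in the argument is formal.
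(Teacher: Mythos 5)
Your overall strategy (induction on $m$ via the Hecke recursion $\cD_m^{+}=T_m\cD_{m+1}^{+}T_m$, an explicit base case, and a reduction of the step to elementary rational-function identities as in Baker--Forrester) is the same as the paper's, and your base case and the recursion $A_{i,m}=\tfrac{\tau^2X_i-X_m}{X_i-X_m}A_{i,m+1}$ are correct. But the inductive step --- which is the whole content of the lemma --- is not actually established, and the device you propose for it would not work. The missing point is precisely the one you flag as the ``main obstacle'': you never justify passing from knowledge of $\Res(\cD_{m+1}^{+})$ to a computation of $\Res(T_m\cD_{m+1}^{+}T_m)$. Your suggested fix (work in the polynomial representation, act on dominant Laurent monomials, keep only ``contributions with trivial reflection part'') rests on a misreading of $\Res$: by definition $\Res$ deletes the permutation $w$ from each normally ordered term $g_{i,j,w}t_i^jw$ but \emph{retains its coefficient}, so $\Res(f)$ is not the reflection-free component of $f$, and discarding the terms that carry nontrivial reflections computes a different operator. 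Likewise, matching leading terms of the action on dominant monomials does not determine $\Res$; one would have to control the action on all symmetric Laurent polynomials, which your triangularity sketch does not do.

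The clean resolution, which is what the paper does, consists of two elementary properties of $\Res$ that you do not state: (i) for any operator $B$ and any $k$, $\Res(BT_k^{\pm1})=\tau^{\pm1}\Res(B)$, because the two coefficient functions in the explicit action of $T_k^{\pm1}$ sum to the constant $\tau^{\pm1}$; and (ii) $\Res(BC)=\Res\bigl(B\,\Res(C)\bigr)$, since a trailing permutation is simply stripped. With these, $\Res(\cD_m^{+})=\tau\,\Res\bigl(T_m\,\Res(\cD_{m+1}^{+})\bigr)$, and since $\Res(\cD_{m+1}^{+})=E_{m+1}^{+}-E_{m+2}^{+}$ is an explicit difference operator (available from your induction hypothesis as a difference of consecutive sums), the step becomes one application of the explicit formula for the action of $T_m$ followed by the rational identity you quote --- this is exactly the computation $\Res(T_m^{-1}R_m)=\tau R_{m+1}$ carried out in the paper for the minus case (property (i) also gives the base case in one line, $\Res(\cD_n^{+})=\tau^{1-n}X_n^{-1}t_n$). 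Finally, deferring ``the bookkeeping'' wholesale to \cite[Lemma~5.3]{BF} is not quite enough here: the present setting has both the $+$ and $-$ families with different powers of $\tau$, and the paper spells out the adaptation for this reason.
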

The proof is analogous to that of~\cite[Lemma 5.3]{BF}. For convenience, we indicate here how to adapt that proof in our context.
\begin{proof}
    We give the proof of equality~\eqref{eqn: Em-}, since~\eqref{eqn: Em+} works similarly. 
    By using formulas~\eqref{eqn: action of T_k} and \eqref{eqn: action of pi}, we get
    \begin{equation*}
        \Res(\cD_i^-) = \tau^{n-i}\Res(X_i^{-1}T_{i-1,1}^- t_1^{-1}) = \tau^{n-i}\Res\left((T^{-1})_{i-1,1}^- X_1^{-1} t_1^{-1}\right).
    \end{equation*}
    In particular, $\Res(\cD_1^-) = \tau^{n-1}X_1^{-1}t_1^{-1}$ , from which equality~\eqref{eqn: Em-} for $m=1$ follows. Thus, it now suffices to show that for all $m=1, \dots, n-1$ we have
    \begin{equation}\label{eqn: Em- equivalent}
        \Res(\cD_{m+1}^-) = E_{m+1}^- - E_m^-. 
    \end{equation}
    For $i \neq m+1$, we have
    \begin{equation*}
        B_{i, m+1} = \frac{X_i - \tau^2 X_{m+1}}{X_i- X_{m+1}} \prod_{\substack{j=1 \\ j \neq i}}^m \frac{X_i - \tau^2 X_j}{X_i- X_j} = 
        \left(1 + \frac{(1-\tau^2)X_{m+1}}{X_i - X_{m+1}} \right) B_{i,m}. 
    \end{equation*}
    Hence, relation~\eqref{eqn: Em- equivalent} is equivalent to 
    \begin{equation}
    \label{eqn: Em- equivalent 2}
         \Res(\cD_{m+1}^-) = \frac{\tau^{n-2m-1}}{X_{m+1}}B_{m+1, m+1} t_{m+1}^{-1} + \sum_{i=1}^m \frac{\tau^{n-2m-1}(1-\tau^2)}{X_i-X_{m+1}} B_{i,m} t_i^{-1}.
    \end{equation}
    Let the right-hand side of equality \eqref{eqn: Em- equivalent 2} be the definition of $R_{m+1}$ for $m=0,$ $1,\dots, n-1$. We trivially have $R_1 = \Res(\cD_1^-)$.
    We note that 
    $$\Res(\cD_{m+1}^-) = \Res(T_m^{-1}\cD_m^- T_m^{-1} ) = \tau^{-1}\Res(T_m^{-1}\cD_m^-).$$
    Thus, to prove equality \eqref{eqn: Em- equivalent 2} for all $m=1, \dots, n-1$, it suffices to prove that $\Res(T_m^{-1} R_m) = \tau R_{m+1}$. Indeed, then we will get 
    $$\Res(\cD_2^-) = \tau^{-1} \Res(T_1^{-1} \cD_1^-) =  \tau^{-1} \Res(T_1^{-1} \Res(\cD_1^-)) = \tau^{-1} \Res(T_1^{-1} R_1) = R_2,$$
    as required; similarly for $\Res(\cD_3^-)$, etc.

    By using formula~\eqref{eqn: action of T_k inverse} for the action of $T_m^{-1}$, we compute
    \begin{align*}
        \Res(T_m^{-1} R_m) &= \frac{\tau^{n-2m}}{X_{m+1}}B_{m+1, m+1} t_{m+1}^{-1} 
        + \frac{\tau^{n-2m}(1-\tau^2)}{X_m-X_{m+1}} B_{m,m} t_m^{-1} \\
        &\quad + \sum_{i=1}^{m-1} \frac{\tau^{n-2m}(1-\tau^2)(X_{m+1} - \tau^2X_m)}{(X_{m+1} - X_m)(X_i-X_{m+1})} B_{i,m-1} t_i^{-1} \\
        &\quad -  \sum_{i=1}^{m-1} \frac{\tau^{n-2m}(1-\tau^2)^2X_m}{(X_{m+1} - X_m)(X_i-X_m)} B_{i,m-1} t_i^{-1}.
    \end{align*}
    The proof that $\Res(T_m^{-1} R_m) = \tau R_{m+1}$ is completed by using in the preceding equality that
    \begin{align*}
        \frac{1}{X_{m+1} - X_m}&\left( \frac{X_{m+1} - \tau^2X_m}{X_i-X_{m+1}} 
        - \frac{(1-\tau^2)X_m}{X_i - X_m} \right)B_{i,m-1} \\
        &= \frac{X_i-\tau^2 X_m}{(X_i-X_{m+1})(X_i-X_m)} B_{i,m-1} = \frac{1}{X_i - X_{m+1}}B_{i,m}
    \end{align*}
    for $i \neq m$. This completes the proof of the lemma.
\end{proof}

\begin{remark}\label{rem: vDE (3.13a)}
    The operator $M_{a,b,c}$ given by~\eqref{eqn: vDE-like operator} for a special choice of the parameters $a, b, c$ can be related to a particular limit of the operator (3.13a) from \cite{vDE} as follows. In the latter operator, let us make a translation of the center-of-mass of the form $q^{x_j} \to \kappa^{-1} q^{x_j}$ ($j = 1,...,n$) for a constant~$\kappa$, make the change of variables $X_j = q^{-x_j}$ (in particular, the additive shift operators $T_j$, $T_j^{-1}$ become respectively $t_j^{-1}$, $t_j$ in our notation), put $t = \tau^2$, and multiply the whole operator by $\kappa$. Then in the limit $\kappa \to 0$, one obtains the operator~\eqref{eqn: vDE-like operator} for $a = - \tau^{n-1} \htt_0$, $b = - \tau^{1-n}\htt_1$, and $c = \htt_0 + \htt_1$. Further specialisation of this operator at $\htt_1 = 0$ appeared in~\cite[Example~3.24]{BEF}. \hfill \qedsymbol
\end{remark}

For more general values of $l_1$ and $l_2$, and the degree one symmetric combination~$\sum_{i=1}^n\cD_i$, the following proposition takes place. 
\begin{proposition}
    We have
    \begin{align*}
        \Res&\left(\sum_{i=1}^n \cD_i^{(l_1, l_2)}\right) = \tau^{l_2(1-n)} a_{l_2}  \sum_{i=1}^n \frac{1}{X_i} \left( \prod_{k=0}^{l_2-1}  \prod_{\substack{j = 1 \\ j \neq i}}^n \frac{q^k \tau^2 X_i - X_j}{q^k X_i - X_j} \right) t_i^{l_2} \\
         &+ \tau^{l_1(1-n)} a_{-l_1} \sum_{i=1}^n \frac{1}{X_i} \left( \prod_{k=0}^{l_1-1} \prod_{\substack{j = 1 \\ j \neq i}}^n \frac{ X_i - q^k \tau^2 X_j}{X_i - q^k X_j} \right)t_i^{-l_1} + \ldots,
    \end{align*}
    where $\ldots$ denotes ``non-leading terms'', that is terms with shifts
    $\prod_{j=1}^n t_j^{k_j}$ such that 
    $-l_1 < k_j < l_2$ for all $j$. Moreover, in each term, either all $k_j$ are non-negative with $\sum_{j=1}^n k_j \leq l_2$, or all $k_j$ are non-positive with $\sum_{j=1}^n k_j \geq -l_1$. 
\end{proposition}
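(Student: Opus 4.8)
The plan is to decompose $\cD_i = \cD_i^{(l_1, l_2)} = \sum_{j=-l_1}^{l_2} a_j\, \cD_{i,j}$, where $\cD_{i,j} = X_i^{-1}(T^{-1})_{i, n-1}^+ Y_n^{\,j}\, T_{n-1, i}^-$ for $1 \le i \le n-1$ and $\cD_{n,j} = X_n^{-1} Y_n^{\,j}$ (so $\cD_{i,0} = X_i^{-1}$), and then, since $\Res$ is $\C_\tau$-linear, to compute $\Res(\sum_{i=1}^n \cD_{i,j})$ for each fixed $j$ separately and reassemble with the coefficients $a_j$. The two ``leading'' sums in the statement will come only from $j = l_2$ and $j = -l_1$, with everything else absorbed into the $\dots$; so the two tasks are (i) to determine which $q$-shifts can occur in the $j$-th piece, and (ii) to extract the coefficient of the one-variable shift $t_i^{\pm l}$ from the two extreme pieces.

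For (i), I would read $\cD_{i,j}$ in the polynomial representation through \eqref{eqn: action of T_k}, \eqref{eqn: action of T_k inverse}, \eqref{eqn: action of pi} and especially \eqref{eqn: cD_i^l i.t.o. pi}: it is a composition of multiplication operators, Hecke operators $T_k^{\pm 1}$ (which contribute only rational functions and permutations, no $q$-shift), and exactly $|j|$ copies of the single-shift operator $\pi^{-1}$ when $j > 0$ (respectively $\pi$ when $j < 0$), since $(T^{-1})_{i,n-1}^+\pi^{-1}(T^{-1})_{1,i-1}^+$ carries one $\pi^{-1}$ and $T_{i-1,1}^-\pi\, T_{n-1,i}^-$ one $\pi$. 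As each $\pi^{\mp 1}$ is a single $q$-shift composed with a permutation, putting $\Res(\sum_i \cD_{i,j})$ in normal form $\sum_{\mathbf{k}} g_{\mathbf{k}}(X)\prod_l t_l^{k_l}$ forces $\sum_l k_l = j$ with all $k_l \ge 0$ if $j > 0$ and with all $k_l \le 0$ if $j < 0$. Summing over $0 \le j \le l_2$ and $-l_1 \le j \le 0$ gives exactly the structural description of the $\dots$ terms claimed, and in particular shows that a shift concentrated in a single variable with exponent $l_2$ (resp.\ $-l_1$) can occur only in the piece $j = l_2$ (resp.\ $j = -l_1$).

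For (ii), I would argue exactly as in the proof of the $l_1 = l_2 = 1$ Lemma above, following \cite[Lemma 5.3]{BF}: introduce the partial operators $\sum_{i=m}^n \cD_{i,l_2}$ and $\sum_{i=1}^m \cD_{i,-l_1}$, use $T_k^{-1}\cD_{k,j}T_k^{-1} = \cD_{k+1,j}$ together with the explicit actions \eqref{eqn: action of T_k}, \eqref{eqn: action of T_k inverse}, \eqref{eqn: action of pi}, and run an induction on $m$ with an ansatz of the form $\tau^{l_2(1-n)}\sum_{i=m}^n X_i^{-1}\bigl(\prod_{k=0}^{l_2-1}\prod_{j\ne i,\, j\ge m} \tfrac{q^k\tau^2 X_i - X_j}{q^k X_i - X_j}\bigr) t_i^{l_2}$ together with correction terms having poles along the diagonals $X_i = X_j$. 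The nested product $\prod_{k=0}^{l_2-1}$ is produced precisely by applying $t_i$ repeatedly to the rational prefactor (each application rescales $X_i \mapsto q X_i$), reflecting the $l_2$ nested copies of $\pi^{-1}$ coming from $Y_n^{l_2}$ in \eqref{eqn: cD_i^l i.t.o. pi}; the negative-$j$ case is entirely analogous, with $\pi$ in place of $\pi^{-1}$ and $t_i^{-l_1}$ as leading shifts. I expect the main obstacle to be exactly this last step for $l_2 \ge 2$: unlike the $l_2 = 1$ situation of \cite{BF}, one must track the interleaving of the Hecke factors between the $l_2$ copies of $\pi^{-1}$ inside $Y_n^{l_2}$ and a growing family of correction terms, and verify that the telescoping identity of the type $\Res(T_m^{-1}R_m) = \tau R_{m+1}$ still closes; everything else is the linear bookkeeping of parts (i)--(ii).
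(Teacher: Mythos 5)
Your part (i) --- the bookkeeping of which shift monomials $\prod_l t_l^{k_l}$ can occur in each piece $\cD_{i,j}$, via \eqref{eqn: cD_i^l i.t.o. pi} and the fact that Hecke factors carry no $q$-shift while each $\pi^{\mp 1}$ carries exactly one --- is correct and is essentially the argument the paper uses for the structural description of the non-leading terms, including the observation that a pure $t_i^{l_2}$ (resp.\ $t_i^{-l_1}$) can only come from the $j=l_2$ (resp.\ $j=-l_1$) piece. The problem is part (ii), which is where the actual content of the proposition lies, and which you leave unproved. Your plan is to redo the Baker--Forrester-style induction of the $l_1=l_2=1$ lemma for general $l$: partial sums $\sum_{i\geq m}\cD_{i,l_2}$, an ansatz of ``leading terms plus correction terms with poles on the diagonals'', and a telescoping identity of the form $\Res(T_m^{-1}R_m)=\tau R_{m+1}$. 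You do not specify the correction terms, you do not verify that the telescoping closes, and you yourself flag this as the main obstacle. This is a genuine gap rather than routine bookkeeping: for $l_2\geq 2$ the operators $\Res\bigl(\sum_{i\geq m}\cD_{i,l_2}\bigr)$ contain multi-variable shifts, so the clean one-term-per-variable structure that makes the $l=1$ recursion work is lost, and it is not at all clear that a manageable closed ansatz exists. As written, the explicit leading coefficients --- in particular the nested product $\prod_{k=0}^{l_2-1}$ with the $q^k$-shifted factors and the prefactor $\tau^{l_2(1-n)}a_{l_2}$ --- are asserted but not derived.

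The paper's proof avoids all of this by exploiting symmetry, which you already have available from Theorem~\ref{thm: Res D}: since $\Res\bigl(\sum_i\cD_i\bigr)$ is $\fS_n$-invariant, it suffices to compute the coefficient of a \emph{single} extreme shift, say $t_1^{l_2}$. By \eqref{eqn: cD_i^l i.t.o. pi} and \eqref{eqn: action of T_k inverse}, that coefficient receives a contribution only from the one term $a_{l_2}X_1^{-1}\bigl((T^{-1})_{1,n-1}^+\pi^{-1}\bigr)^{l_2}$, and within it only from choosing the permutation part of every $T_k^{-1}$, giving $\tau^{l_2(1-n)}a_{l_2}X_1^{-1}\bigl(\prod_{j\geq 2}\frac{\tau^2X_1-X_j}{X_1-X_j}\,t_1\bigr)^{l_2}$; commuting the $t_1$'s to the right (the mechanism you correctly identified) produces the nested product over $k=0,\dots,l_2-1$, and $\fS_n$-invariance then yields the coefficient at $t_i^{l_2}$ for every $i$. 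The same argument applied to $t_n^{-l_1}$ handles the other leading sum. So the fix is not to push the telescoping through for general $l$, but to replace your step (ii) by this single-coefficient computation plus the symmetry argument you already cited in passing.
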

The proof is similar to the calculation of the leading term of a general Macdonald operator, polynomial in $Y$ variables, from \cite[Proposition~3.4]{Ch'95}.
\begin{proof}    
By using~\eqref{eqn: cD_i^l i.t.o. pi}, we get
\begin{align*}
    \Res&\bigg(\sum_{i=1}^n \cD_i^{(l_1, l_2)}\bigg) = \Res\bigg(\sum_{i=1}^n X_i^{-1} \sum_{j=1}^{l_2} a_j \left((T^{-1})_{i, n-1}^+t_n(n, \dots, 1)(T^{-1})_{1,i-1}^+\right)^j \\
    &\quad+ \sum_{i=1}^n X_i^{-1}\sum_{j=1}^{l_1}a_{-j} \left(T_{i-1,1}^- t_1^{-1}(1, \dots, n) T_{n-1, i}^- \right)^j + a_0\sum_{i=1}^n X_i^{-1} \bigg).
\end{align*}
From there we see, due to formulas~\eqref{eqn: action of T_k inverse}, that the term containing $t_1^{l_2}$ can only come from
\begin{equation*}
    \Res\bigg(a_{l_2} X_1^{-1} \left((T^{-1})_{1, n-1}^+ t_n(n, \dots, 1)\right)^{l_2}\bigg).
\end{equation*}
Hence, by using~\eqref{eqn: action of T_k inverse}, we can compute this $t_1^{l_2}$ term to be 
\begin{align*}
    \tau^{l_2(1-n)}&a_{l_2} \frac{1}{X_1} 
     \bigg(\prod_{j = 2}^n \frac{\tau^2 X_1 - X_j}{X_1 - X_j}t_1 \bigg)^{l_2}  \\
     &=
     \tau^{l_2(1-n)}a_{l_2} \frac{1}{X_1} 
     \bigg(\prod_{k=0}^{l_2-1}\prod_{j = 2}^n \frac{q^k\tau^2 X_1 - X_j}{q^kX_1 - X_j}\bigg)t_1^{l_2}.
\end{align*}
We can use $\fS_n$-invariance (see Theorem~\ref{thm: Res D}) to deduce the coefficient at $t_i^{l_2}$ for any $i$.
Similarly, one can compute explicitly the coefficient at~$t_n^{-l_1}$, and then use $\fS_n$-invariance again to complete the proof of the proposition.
\end{proof}

For example, for $l_1 = 1$ and $l_2 = 2$, we get the following integrable Hamiltonian
\begin{align*}
    &\Res\left(\sum_{i=1}^n \cD_i^{(1, 2)}\right) = \tau^{2(1-n)} a_2  \sum_{i=1}^n \frac{1}{X_i} \left( \prod_{\substack{j = 1 \\ j \neq i}}^n \frac{(\tau^2 X_i - X_j)(q \tau^2 X_i - X_j)}{(X_i - X_j)(q X_i - X_j)} \right) t_i^2 \\
         &+ q\tau^{2(1-n)} a_2\sum_{1 \leq i < j \leq n} \frac{(\tau^2-1)(\tau^2-q)(X_i + X_j)}{(qX_i - X_j)(qX_j - X_i)} 
         \left(\prod_{\substack{l = 1 \\ l \neq i,j}}^n \frac{(\tau^2 X_i - X_l)(\tau^2 X_j - X_l)}{(X_i- X_l)(X_j- X_l)} \right) t_i t_j \\
         &+ \tau^{1-n} a_{1}  \sum_{i=1}^n \frac{1}{X_i} \left( \prod_{\substack{j = 1 \\ j \neq i}}^n \frac{\tau^2 X_i - X_j}{X_i - X_j} \right) t_i
         + \tau^{1-n} a_{-1} \sum_{i=1}^n \frac{1}{X_i} \left( \prod_{\substack{j = 1 \\ j \neq i}}^n \frac{ X_i - \tau^2 X_j}{X_i -  X_j} \right)t_i^{-1} + a_0 \sum_{i=1}^n \frac{1}{X_i} 
\end{align*}
which is a generalisation of the operator~\cite[(5.18)]{ChalykhFairon}, to which it reduces for 
%$a_2 = q^{-1}\tau^{2(n-1)}$, $a_1 = \tau^{n-1} \a$, $a_0 = \beta$, 
$a_{-1} = 0$.
%, and $\tau^2 = t$. 
If we put $a_0 = a_{-1} = a_2 = 0$, then the resulting operator is gauge-equivalent to the standard Macdonald--Ruijsenaars operator (see \cite{BEF}).

\subsection{Systems with two types of particles}\label{sec: 2 types of particles}
%\subsubsection{Macdonald--Ruijsenaars system with two types of particles in an external field}
In this subsection, we obtain a generalisation of the Macdonald--Ruijsenaars system with Morse term~\cite[(2.1)]{vDE} (a particular limit of which, \cite[(3.13a)]{vDE}, was mentioned in Remark~\ref{rem: vDE (3.13a)} above). That system was introduced by van Diejen in~\cite{vD} and studied further by van Diejen and Emsiz in~\cite{vDE}. Our generalisation introduces into the system a second, different set of particles interacting with each other and also with the original set of particles. In the case of Macdonald--Ruijsenaars systems, such two-types-of-particles generalisations were considered in \cite{Ch'00, SV}.

One way to obtain such a generalisation is to make use of representation theory of the DAHA of type $GL_n$ (see Remark~\ref{rem: restriction} below). There are though advantages in taking another approach as follows. 
The operator~\cite[(2.1)]{vDE} can be obtained from the Koornwinder operator (i.e.\ the operator of Macdonald--Ruijsenaars type for the root system $BC_n$) by a limit in which the centre of mass is sent to infinity \cite{vD}. We now take the generalised Koornwinder operator~\cite[(5.12)]{FS} introduced by Silantyev and one of the authors, and apply to it an analogous centre-of-mass-to-infinity limit. 

In the operator~\cite[(5.12)]{FS}, let us make the substitutions $x_i \to x_i + R$, $y_i \to y_i + R + \log(qs^{-1})$, $a \to a e^R$, $b \to b e^R$, $c \to c e^{-R}$, $d \to d e^{-R}$, and then take $R \to \infty$. In order to make connection with the notations used in~\cite{vDE} (recall that $q = e^{\hbar/2}$ and $s = e^{\xi/2}$ in~\cite{FS}), let us in the resulting limit make the replacements $q \to q^{-1/2}$, $s = t^{1/2}$ (so that now $q = e^{-\hbar}$ and $t = e^{\xi}$), $a=t_1$, $b=t_2$, $c = t_0^{-1}$, $d = t_3^{-1}$, $x_i \to \hbar x_i$, and $y_i \to \hbar y_i$ (so that the shift operators $\cT_{x_i}^{\e \hbar}$ and $\cT_{y_i}^{\e \xi}$ for $\e \in \{\pm 1\}$ become $\cT_{x_i}^{\e}$ and $\cT_{y_i}^{-\e \log(t)/\log(q)}$, respectively). Then we get the following Hamiltonian
\begin{align}\label{eqn: generalised vDE operator}
        &H_{t_0,t_1,t_2,t_3}=\sum_{i=1}^{N_1} (1-t_1 q^{x_i})(1-t_2 q^{x_i}) \left(\prod_{\substack{j=1 \\ j \neq i}}^{N_1} \frac{t^{-1} - q^{x_i - x_j}}{1-q^{x_i - x_j}}  \right)
        \left( \prod_{j=1}^{N_2} \frac{q-q^{x_i - y_j}}{1-q^{x_i - y_j}} \right)(\cT_{x_i}-1) \nonumber \\
        &+ \frac{t_1 t_2}{qt_0 t_3} \sum_{i=1}^{N_1}(1-t_0 q^{x_i})(1-t_3 q^{x_i}) 
        \left(\prod_{\substack{j=1 \\ j \neq i}}^{N_1} \frac{t - q^{x_i - x_j}}{1-q^{x_i - x_j}} \right) \left( \prod_{j=1}^{N_2} \frac{t - q^{x_i - y_j}}{qt- q^{x_i - y_j}} \right)(\cT_{x_i}^{-1}-1) \nonumber\\
        &+ \frac{1-q}{1-t^{-1}} \sum_{i=1}^{N_2} (1-t_1 q^{y_i})(1-t_2 q^{y_i})
        \left( \prod_{j=1}^{N_1} \frac{t^{-1} - q^{y_i - x_j}}{1-q^{y_i - x_j}} \right)   \nonumber\\
        &\hspace{11em}\times \left( \prod_{\substack{j=1 \\ j \neq i}}^{N_2} \frac{q-q^{y_i - y_j}}{1-q^{y_i - y_j}} \right)
        (\cT_{y_i}^{-\log(t)/\log(q)}-1) \nonumber\\
        &+\frac{t_1 t_2(1-q^{-1})}{qt_0 t_3(1-t)} \sum_{i=1}^{N_2}
        (1-t_0 t q^{y_i+1})(1-t_3 t q^{y_i+1}) 
        \left( \prod_{j=1}^{N_1} \frac{q^{-1} - q^{y_i - x_j}}{q^{-1}t^{-1} - q^{y_i - x_j}} \right)  \nonumber\\
         &\hspace{11em}\times
        \left( \prod_{\substack{j=1 \\ j \neq i}}^{N_2} 
         \frac{q^{-1} - q^{y_i-y_j}}{1-q^{y_i-y_j}} \right)
        (\cT_{y_i}^{\log(t)/\log(q)}-1).
\end{align}

For $N_2 = 0$ and $N_1 = n$, the operator~\eqref{eqn: generalised vDE operator} reduces to the van Diejen--Emsiz operator~\cite[(2.1)]{vDE} up to a factor of $\sqrt{qt_0t_3/(t_1t_2)}$. Also, if in~\eqref{eqn: generalised vDE operator} we put $n = N_1 + N_2$, $y_i = x_{N_1+i}$, and $t=q^{-1}$ (that is $\xi = \hbar$), then we get the operator~\cite[(2.1)]{vDE} with $t=q^{-1}$.

By applying the same limiting procedure to the set of quantum integrals of the generalised Koornwinder operator found in~\cite[Proposition~5.6]{FS}, we get quantum integrals for the Hamiltonian~\eqref{eqn: generalised vDE operator}.

We also obtain a generalisation to the case of two types of particles of the Hamiltonian~\cite[(3.13a)]{vDE}. Indeed, if in~\eqref{eqn: generalised vDE operator} we put $t_3 = 1$, define $\htt_i$ ($i=0,1,2$) by $t_0 = q^{-1}\htt_1\htt_2$, $t_1 = \htt_0 \htt_2$, and $t_2 = \htt_0 \htt_1$ following~\cite[(3.12b)]{vDE}, and then take the limit $\htt_2 \to 0$, we get

\begin{align}\label{eqn: generalised vDE operator (3.13a)}
        H_{\htt_0, \htt_1} &= \sum_{i=1}^{N_1} (1-\htt_0\htt_1 q^{x_i})\left(\prod_{\substack{j=1 \\ j \neq i}}^{N_1} \frac{t^{-1} - q^{x_i - x_j}}{1-q^{x_i - x_j}}  \right)
        \left( \prod_{j=1}^{N_2} \frac{q-q^{x_i - y_j}}{1-q^{x_i - y_j}} \right)(\cT_{x_i}-1) \nonumber \\
        &+ \htt_0 {}^2 \sum_{i=1}^{N_1}(1-q^{x_i}) 
        \left(\prod_{\substack{j=1 \\ j \neq i}}^{N_1} \frac{t - q^{x_i - x_j}}{1-q^{x_i - x_j}} \right)\left( \prod_{j=1}^{N_2} \frac{t - q^{x_i - y_j}}{qt- q^{x_i - y_j}} \right)(\cT_{x_i}^{-1}-1)\nonumber \\
        &+ \frac{1-q}{1-t^{-1}} \sum_{i=1}^{N_2} (1-\htt_0\htt_1 q^{y_i})
        \left( \prod_{j=1}^{N_1} \frac{t^{-1} - q^{y_i - x_j}}{1-q^{y_i - x_j}} \right)  \left( \prod_{\substack{j=1 \\ j \neq i}}^{N_2} \frac{q-q^{y_i - y_j}}{1-q^{y_i - y_j}} \right) \nonumber \\
        &\hspace{22em}\times
        (\cT_{y_i}^{-\log(t)/\log(q)}-1) \nonumber \\
        &+\frac{\htt_0 {}^2(1-q^{-1})}{1-t} \sum_{i=1}^{N_2}
        (1- t q^{y_i+1}) 
        \left( \prod_{j=1}^{N_1} \frac{q^{-1} - q^{y_i - x_j}}{q^{-1}t^{-1} - q^{y_i - x_j}} \right)  
        \left( \prod_{\substack{j=1 \\ j \neq i}}^{N_2} 
         \frac{q^{-1} - q^{y_i-y_j}}{1-q^{y_i-y_j}} \right) \nonumber \\
        &\hspace{22em}\times
        (\cT_{y_i}^{\log(t)/\log(q)}-1).
\end{align}
For $N_2 = 0$ and $N_1 = n$, this reduces to the operator~\cite[(3.13a)]{vDE} up to a factor of $\htt_0$.

Let us now consider the analogous limit of the operator~\eqref{eqn: generalised vDE operator (3.13a)} as the limit described in Remark~\ref{rem: vDE (3.13a)}, just additionally making also the change $q^{y_i} \to \kappa^{-1}q^{y_i}$ and introducing the variables $Y_i = q^{-y_i}$. Let us denote a $q$-multiplicative shift operator in the variables $X_i$ by $t_{X_i}^q$ and analogously for the variables $Y_i$ (so that $\cT_{x_i}^{\pm 1}$ and $\cT_{y_i}^{\pm \log(t)/\log(q)}$ become respectively $t_{X_i}^{q^{\mp 1}}$ and $t_{Y_i}^{\tau^{\mp 2}}$). Then, in this limit, we obtain the multiplicative operator 

    \begin{align}\label{eqn: vDE-like operator generalised}
        \widetilde{H}_{\htt_0, \htt_1} &=\ta \sum_{i=1}^{N_1}\frac{1}{X_i}
        \left(\prod_{\substack{j=1 \\ j \neq i}}^{N_1} \frac{\tau^2X_i - X_j}{X_i - X_j} \right)\left( \prod_{j=1}^{N_2} \frac{\tau^2X_i - Y_j}{q\tau^2 X_i - Y_j} \right)t_{X_i}^q  \nonumber \\
        &\quad+\frac{\ta(1-q)}{1-\tau^{-2}} \sum_{i=1}^{N_2}
        \frac{1}{Y_i}
        \left( \prod_{j=1}^{N_1} \frac{q^{-1}Y_i - X_j}{q^{-1}\tau^{-2}Y_i - X_j} \right)  
        \left( \prod_{\substack{j=1 \\ j \neq i}}^{N_2} 
         \frac{q^{-1}Y_i -Y_j}{Y_i - Y_j} \right)
        t_{Y_i}^{\tau^{-2}} \nonumber \\
        &\quad+ \tb \sum_{i=1}^{N_1} \frac{1}{X_i}\left(\prod_{\substack{j=1 \\ j \neq i}}^{N_1} \frac{\tau^{-2}X_i - X_j}{X_i - X_j}  \right)
        \left( \prod_{j=1}^{N_2} \frac{qX_i-Y_j}{X_i - Y_j} \right)t_{X_i}^{q^{-1}} \nonumber \\
        &\quad+ \frac{\tb(1-q)}{1-\tau^{-2}} \sum_{i=1}^{N_2} \frac{1}{Y_i}
        \left( \prod_{j=1}^{N_1} \frac{\tau^{-2}Y_i - X_j}{Y_i - X_j} \right)   \left( \prod_{\substack{j=1 \\ j \neq i}}^{N_2} \frac{qY_i-Y_j}{Y_i - Y_j} \right)
        t_{Y_i}^{\tau^2} \nonumber \\
        &\quad + \tc \sum_{i=1}^{N_1} \frac{1}{X_i} + \frac{\tc (1-q)}{1-\tau^{-2}} \sum_{i=1}^{N_2}\frac{1}{Y_i},
    \end{align}
where 
$\ta = -\htt_0 {}^2$, $\tb = -\htt_0\htt_1$, and $\tc = \htt_0(\htt_0 + \htt_1)$, and where we twice used the following polynomial identity (which generalises to two types of particles the identity from the top of~\cite[p.~1621]{vDE}):
\begin{align*}
    &\sum_{i=1}^{N_1} z_i
    \left( \prod_{\substack{j=1 \\ j \neq i}}^{N_1} \frac{t z_j - z_i}{z_j - z_i} \right) \prod_{j=1}^{N_2} \frac{s w_j - z_i}{w_j - z_i} \\
    &\quad +
    \frac{1-s}{1-t} \sum_{i=1}^{N_2} w_i 
    \left(\prod_{j=1}^{N_1} \frac{t z_j - w_i}{z_j - w_i} \right)
    \prod_{\substack{j=1 \\ j \neq i}}^{N_2} \frac{s w_j - w_i}{w_j - w_i} 
    = \sum_{i=1}^{N_1}z_i + \frac{1-s}{1-t}\sum_{i=1}^{N_2} w_i,
\end{align*}
once with $z_i = X_i^{-1}$, $w_i = Y_i^{-1}$, $t = \tau^{-2}$, $s = q$, and the second time with $z_i = X_i^{-1}$, $w_i = q\tau^2Y_i^{-1}$, $t = \tau^2$, $s = q^{-1}$.

The operator~\eqref{eqn: vDE-like operator generalised} generalises the Hamiltonian $M_{a,b,c}$ given by~\eqref{eqn: vDE-like operator}, for special values of $a, b, c$, 
to a Hamiltonian of a system containing two types of particles. 
Indeed, if we put $N_2 = 0$, $N_1 = n$ in~\eqref{eqn: vDE-like operator generalised}, we recover $M_{a,b,c}$ with $a = - \tau^{n-1} \htt_0 {}^2$, $b = - \tau^{1-n}\htt_0\htt_1$, and $c = \htt_0 (\htt_0 + \htt_1)$.

\begin{remark}\label{rem: restriction}
    An alternative way to arrive at the operator~\eqref{eqn: vDE-like operator generalised} and also a version of it with arbitrary parameters $\ta$, $\tb$, $\tc$ is to apply to the Hamiltonian~\eqref{eqn: vDE-like operator} a restriction procedure, similar to those considered in~\cite{FS}, for a suitable submodule of the polynomial representation of the $GL_n$-type DAHA.

Moreover, this approach should lead to integrable generalisations of the Hamiltonians 
$\Res\left(\sum_{i=1}^n \cD_i^{(l_1, l_2)}\right)$ for general $l_1, l_2$ to the case of two types of particles. 
\end{remark}

\subsection*{Acknowledgements}
%\nohyphens{
We are very grateful to G.\,Bellamy, O.\,Chalykh, A.\,Khoroshkin, C.\,Korff, D.\,Muthiah, P.\,Samuelson, A.\,Shapiro, A.\,Silantyev, and B.\,Vlaar for useful discussions and comments.

The work of M.\,F.\ was supported by the Engineering and Physical Sciences Research Council [grant number  EP/W013053/1].  The work of M.\,V.\ was funded by a Carnegie--Caledonian PhD scholarship from the Carnegie Trust for the Universities of Scotland.
%}

{\small
\subsection*{Data availability}
Data sharing is not applicable to this article as no datasets were generated or analysed.
}


\begin{thebibliography}{}
\bibitem{AST} Arakawa, T., Suzuki, T., Tsuchiya, A. ``Degenerate double affine Hecke algebra and conformal field theory''. In: Kashiwara, M., Matsuo, A., Saito, K., Satake, I. (eds) \textit{Topological field theory, primitive forms and related topics},  Progress in Mathematics 160, Birkh\"auser, Boston (1998), pp.~1--34. 

\bibitem{BF} Baker, T.\,H., Forrester, P.\,J. ``A $q$-analogue of the type $A$ Dunkl operator and integral kernel'', \textit{Int.~Math.~Res.~Not.} \textbf{1997}, 14 (1997), pp.~667--686. 

\bibitem{BFH} Bellamy, G.,  Feigin, M., Hird, N. ``Two invariant subalgebras of rational Cherednik algebras'',  arXiv:2312.13957 (2023).

\bibitem{BM}
Bellamy, G., Martino, M. ``Affinity of Cherednik algebras on projective space'', \textit{Algebra and Number theory} \textbf{8}, 5 (2014), pp.~1151--1177.

\bibitem{BEF} Braverman, A.,  Etingof, P., Finkelberg, M. ``Cyclotomic double affine Hecke algebras'' (with an appendix by  Nakajima, H., and Yamakawa, D.), \textit{Ann.~Sci.~de l’ENS}  \textbf{53}, 5 (2020), pp.~1249--1312.  


\bibitem{BGe}
Braverman, A., Gaitsgory, D. ``Poincar\'e--Birkhoff--Witt theorem for quadratic algebras of Koszul type'', \textit{J.~Algebra} \textbf{181} (1996).

\bibitem{BG}
Brown, K.\,A., Gordon, I. ``Poisson orders, symplectic reflection algebras and representation theory'', \textit{J.~reine angew.~Math.} \textbf{559} (2003), pp.~193--216.

\bibitem{Ch'00}
Chalykh, O. ``Bispectrality for the quantum Ruijsenaars model and its integrable deformation'', \textit{J.~Math.~Phys.} \textbf{41}, 8 (2000), pp.~5139--5167.

\bibitem{Chalykh} Chalykh, O., Etingof, P. ``Orthogonality relations and Cherednik identities for multivariable
Baker--Akhiezer functions'' (with an appendix by Chalykh, O.), \textit{Adv.~Math.}, \textbf{238} (2013), pp.~246--289.

\bibitem{ChalykhFairon} Chalykh, O., Fairon, M. ``Multiplicative quiver varieties and generalised Ruijsenaars--Schneider models'', \textit{J.~Geom.~Phys.} \textbf{121} (2017), pp.~413--437.


\bibitem{Ch'91} Cherednik, I.\,V. ``A unification of Knizhnik--Zamolodchikov and Dunkl operators via affine Hecke algebras'', \textit{Invent.~Math.} \textbf{106}, 2 (1991), pp.~411--432. 

%\bibitem{Ch'92} Cherednik, I.V. ``Double affine Hecke algebras, Knizhnik--Zamolodchikov equations, and Macdonald's operators'', \textit{Int.~Math.~Res.~Not.} \textbf{9}, (1992) pp.~171--180.


%\bibitem{Ch'92} Cherednik, I.\,V. ``Quantum Knizhnik--Zamolodchikov equations and affine root systems'', \textit{Commun.~Math.~Phys.} \textbf{150}, (1992) pp.~109--136. 

\bibitem{Ch'95} Cherednik, I.\,V. ``Double affine Hecke algebras and Macdonald's conjectures'', \textit{Ann.~Math.} \textbf{141} (1995), pp.~191--216.

\bibitem{Cherednik} Cherednik, I.\,V. \textit{Double affine Hecke algebras}, London Mathematical Society Lecture Note Series 319, Cambridge University Press, New York (2005).

\bibitem{DKKV} Derkachov, S.\,E., Karakhanyan, D.\,R., Kirschner, R., Valinevich, P. ``Iterative construction of $U_q(\fsl(n+1))$ representations and Lax matrix factorisation'', \textit{Lett.~Math.~Phys.} \textbf{85} (2008), pp.~221--234.

\bibitem{vD} van Diejen, J.\,F. ``Difference Calogero--Moser systems and finite Toda chains'', \textit{J.~Math.~Phys.} \textbf{36}, 3 (1995).


\bibitem{vDE} van Diejen, J.\,F., Emsiz, E. ``Spectrum and eigenfunctions of the lattice hyperbolic Ruijsenaars--Schneider system with exponential Morse term'', \textit{Ann.~Henri Poincar\'e} \textbf{17} (2016), pp.~1615--1629.


\bibitem{Dunkl} Dunkl, C.\,F. ``Differential-difference operators associated to reflection groups'', \textit{Trans.~Amer.~Math.~Soc.} \textbf{311} (1989), pp.~167--183.

\bibitem{Etingof} Etingof, P. \textit{Calogero--Moser systems and representation theory}, Zurich Lectures in Advanced Mathematics, EMS Press (2007).

\bibitem{Etingof2}
Etingof, P. ``Cherednik and Hecke algebras of varieties with a finite group action'', \textit{Mosc.~Math.~J.} \textbf{17}, 4 (2017), pp.~635--666.


\bibitem{EG} Etingof, P., Ginzburg, V. ``Symplectic reflection algebras, Calogero--Moser space, and deformed Harish-Chandra homomorphism'', \textit{Invent.~Math.} \textbf{147}, 243 (2002). 

\bibitem{FH} Feigin, M., Hakobyan, T. ``On Dunkl angular momenta algebra'', \textit{J.~High Energy Phys.} \textbf{2015}, 107 (2015).

\bibitem{FS} Feigin, M., Silantyev, A. ``Generalized Macdonald--Ruijsenaars systems'', \textit{Adv.~Math.} \textbf{250} (2014), pp.~144--192.


\bibitem{Hayashi} Hayashi, T. ``$Q$-Analogues of Clifford and Weyl algebras --- spinor and oscillator representations of quantum enveloping algebras'', \textit{Commun.~Math.~Phys.} \textbf{127} (1990), pp.~129--144.  

\bibitem{Heckman}
Heckman, G. ``A remark on the Dunkl differential-difference operators''. In: Barker B., Sally, P. (eds) \textit{Proceedings of the conference on harmonic analysis on reductive groups}, Progress in
Mathematics, Birkh\"auser 101 (1991), pp.~181--191.

% \bibitem{Heckman2} Heckman, G. ``An elementary approach to the hypergeometric shift operators of Opdam'', \textit{Invent.~Math.} \textbf{103} (1991), pp.~341--350.  

%\bibitem{KK} Karakhanyan, D., Kirschner, R. ``Jordan--Schwinger representations and factorised Yang--Baxter operators'', \textit{Symmetry Integr.~Geom.} \textbf{6}, (2010).


\bibitem{Jimbo} Jimbo, M. ``A $q$-analogue of $U(\fgl(N+1))$, Hecke algebra, and the Yang--Baxter equation'', \textit{Lett.~Math.~Phys.} \textbf{11} (1986), pp.~247--252. 

\bibitem{Kirillov} Kirillov, A.\,A. ``Lectures on affine Hecke algebras and Macdonald's conjectures'', \textit{Bull. Am. Math. Soc.} \textbf{34}, 3 (1997), pp.~251--292.

\bibitem{KS} Klimyk, A., Schm\"udgen, K. \textit{Quantum groups and their representations}, Texts and Monographs in Physics, Springer-Verlag (1997).

\bibitem{Lusztig} Lusztig, G. ``Affine Hecke algebras and their graded version'', \textit{J.~Am.~Math.~Soc.} \textbf{2}, 3 (1989).

\bibitem{SV}
Sergeev, A.\,N., Veselov, A.\,P., ``Deformed Macdonald--Ruijsenaars operators and super Macdonald polynomials'', \textit{Comm.~Math.~Phys.} \textbf{288}, 2 (2009), pp.~653--675.

\bibitem{Suzuki} Suzuki, T. ``Rational and trigonometric degeneration of the double affine Hecke algebra of type $A$'', \textit{Int.~Math.~Res.~Not.} \textbf{2005}, 37 (2005), pp.~2249--2262. 


%\bibitem{MT} Takeuchi, M. ``Some topics on $GL_q(n)$'', \textit{J.~Algebra} \textbf{147} (1992), pp.~379--410.

\end{thebibliography}
\end{document}